\def\namedlabel#1#2{\begingroup
   \def\@currentlabel{#2}%
   \label{#1}\endgroup
}
\newcommand{\SOodd}{\mathrm{SO}(2k+1)}
\newcommand{\SOdrei}{\mathrm{SO}(3)}
\newcommand{\LRcoef}{\mathrm{c}_{\lambda}^{\mu}(\mathfrak{d})}
\newcommand{\LRtabs}{\mathrm{LR}_\lambda^{\mu}(\mathfrak{d})}
\newcommand{\LRtabsa}{\mathrm{aLR}_\lambda^{\mu}}
\newcommand{\set}[2]{\{#1:#2\}}
\newcommand{\Vtensr}{V^{\otimes r}}
\newcommand{\GL}{\mathrm{GL}}
\newcommand{\SO}{\mathrm{SO}}
\newcommand{\Des}{\mathrm{Des}}
\newcommand{\Symr}{\mathfrak{S}_r}
\newcommand{\SYT}{\mathrm{SYT}}
\newtheorem{theorem}{Theorem}[section]
\newtheorem{lemma}[theorem]{Lemma}
\newtheorem{corollary}[theorem]{Corollary}
\newtheorem{proposition}[theorem]{Proposition}
\theoremstyle{definition}
\newtheorem{definition}[theorem]{Definition}
\theoremstyle{remark}
\newtheorem{remark}[theorem]{Remark} 
\newtheorem{example}[theorem]{Example}
\newtheorem{conjecture}[theorem]{Conjecture}
\title[A Sundaram type bijection for $\mathrm{SO}(2k+1)$]{A Sundaram type bijection for $\mathrm{SO}(2k+1)$:\\ vacillating tableaux and pairs consisting of a standard Young tableau and an orthogonal Littlewood-Richardson tableau}
\author{Judith Jagenteufel}
\address{Institut für Diskrete Mathematik und Geometrie, Fakult\"at f\"ur Mathematik und Geoinformation, TU Wien, Austria,\newline Supported by the Austrian science fund (FWF): P29275}
\begin{document}

\begin{abstract} We present a bijection between vacillating tableaux and pairs consisting of a standard Young tableau and an orthogonal Littlewood-Richardson tableau for the special orthogonal group $\mathrm{SO}(2k+1)$. This bijection is motivated by the direct-sum-decomposition of the $r$th tensor power of the defining representation of $\mathrm{SO}(2k+1)$.
To formulate it, we use Kwon's orthogonal Littlewood-Richardson tableaux and introduce new alternative tableaux they are in bijection with.

Moreover we use a suitably defined descent set for vacillating tableaux to determine the quasi-symmetric expansion of the Frobenius characters of the isotypic components.
\end{abstract}

\maketitle

\section{Introduction}

We present a bijection for $\SOodd$ between vacillating tableaux and pairs consisting of a standard Young tableau and an orthogonal Littlewood-Richardson tableau.
This bijection explains the direct-sum-decomposition of a tensor power $\Vtensr$ of the defining representation $V$ of $\SOodd$ combinatorially. In particular we consider
\begin{align*}
\Vtensr=\bigoplus_{\mu} V(\mu)\otimes U(r,\mu)=\bigoplus_{\mu} V(\mu)\otimes\bigoplus_{\lambda} c_{\lambda}^{\mu}(\mathfrak{d}) S(\lambda)
\end{align*}
as an $\SOodd \times \Symr$ representation. $V(\mu)$ is an irreducible representation of $\SOodd$ and $S(\lambda)$ is a Specht module. We concentrate on $U(r,\mu)$. A basis of $U(r,\mu)$ can be indexed by vacillating tableaux. The multiplicities $c^{\mu}_{\lambda}$ can be obtained by counting orthogonal Littlewood-Richardson tableaux. A basis of $S(\lambda)$ is indexed by standard Young tableaux.

To formulate our bijection, we use Kwon’s orthogonal Littlewood-Richardson tableaux~\cite{MR3814326}. Those are defined in a very general way in terms of crystal graphs. We introduce an alternative set of orthogonal Littlewood-Richardson tableaux, which is in bijection with Kwon’s set via Bijection $A$ described by Algorithm~\ref{alg:1}. Our alternative tableaux are described in terms of skew semistandard tableaux with a reading word that is Yamanouchi. Those are similar to Sundarams symplectic tableaux~\cite{MR2941115}.  However, the additional condition we obtain is far more complicated than the one she obtained. Our new set of tableaux reduces the problem to finding a bijection between vacillating tableaux and standard Young tableaux with $2k+1$ rows, all of them with lengths of the same parity. We solve this reduced problem with Bijection $B$ described by Algorithm~\ref{alg:2}.


The question of finding such a bijection was posed by Sundaram in her 1986 thesis~\cite{MR2941115} and has been attacked several times since Sundaram’s thesis; in particular by Sundaram~\cite{MR1041447} and Proctor~\cite{MR1043509}. A key ingredient for us to find it were Kwon's orthogonal Littlewood-Richardson tableaux, defined recently in~\cite{MR3814326}.
Okada~\cite{MR3604801} recently obtained the decomposition of $U(r,\mu)$ for multiplicity free cases implicitly using representation theoretic computations. We obtain parts of these results as a special case, which are on their part special cases of Okada’s work. In fact, Okada asks for bijective proofs of his results.

One might assume that Fomin’s machinery of growth diagrams could be employed to find such a bijection. For the symplectic group this was done by Roby~\cite{MR2716353} and Krattenthaler~\cite{MR3534070}. However, for the special orthogonal group the situation appears to be quite different. In particular, at least a naive application of Fomin’s ideas does not even yield the desired bijection between vacillating tableaux and the set of standard Young tableaux in question, not even for dimension $3$.

For $\SOdrei$ a bijection was provided in~\cite{3erAlgo}. In dimension $3$ vacillating tableaux are Riordan paths: lattice paths with north-east, east and south-east steps, no steps below the $x$-axis and no east steps on the $x$-axis. This special combinatorial structure had led to stronger results there. For dimension $3$ the results we get are essentially the same as in~\cite{3erAlgo}. The only new result for dimension $3$ is the description of our alternative orthogonal Littlewood-Richardson tableaux.

An advantage of our combinatorial, bijective approach is that we obtain additional properties and consequences such as the following.

We define a suitable notion of descents for vacillating tableaux and use the classical descent set for standard Young tableaux introduced by Schützenberger. We can show that our bijection is descent preserving. Thus we obtain the quasi-symmetric expansion of the Frobenius character of the isotypic space $U(r,\mu)$:
\begin{align*}
\mathrm{ch} \,U(r,\mu)=\sum F_{\Des(w)}.
\end{align*}
where $F_D$ denotes a fundamental quasi-symmetric function, the sum runs over all vacillating tableaux $w$ of length $r$ and shape $\mu$ and $\Des(w)$ denotes the descent set of $w$.

Among others, this property justifies our bijection to be called “Sundaram-like”, as she described a similar bijection for the defining representation of the symplectic group in her thesis~\cite{MR2941115}. There exists a similar (but less complicated) definition for descents in oscillating tableaux, which are used in the symplectic case instead of vacillating tableaux, and which Sundaram’s bijection preserves. Thus there also exists a similar quasi-symmetric expansion of the Frobenius character, obtained for the symplectic group by Rubey, Sagan and Westbury in~\cite{MR3226822}.

\section{Background}

\subsection{Schur-Weyl duality} 

Considering the general linear group we start with the \enquote{classical Schur-Weyl duality}
\begin{align*}
\Vtensr \cong \bigoplus_{\substack{\lambda \vdash r \\ \ell(\lambda) \leq n}} V^{\GL}(\lambda) \otimes S(\lambda).
\end{align*}
Here $V$ is a complex vector space of dimension $n$. The general linear group $\GL(V)$ acts diagonally (and on each position by matrix multiplication) and the symmetric group $\Symr$ permutes tensor positions. Thus we consider a $\GL(V)\times \Symr$ representation. $V^{\GL}(\lambda)$ is an irreducible representation of $\GL(V)$ and $S(\lambda)$ is a Specht module.

Now we consider a vector space $V$ of odd dimension $n=2k+1$. To obtain a similar decomposition, we use the restriction from $\GL(V)$ to $\SO(V)$
\begin{align}
\label{eq:BranchingRule}
V(\lambda)\downarrow^{\GL(V)}_{\SO(V)} \cong \bigoplus_{\substack{\mu \text{ a partition} \\ \ell(\mu) \leq k}} c_{\lambda}^{\mu}(\mathfrak{d}) V^{SO}(\mu ),
\end{align}
where $\LRcoef$ is the multiplicity of the irreducible representation $V^{\SO}(\mu)$ of $\SO(V)$ in $V^{\GL}(\lambda)$. For $\ell(\lambda)\leq k$ this simplifies to the classical branching rule due to Littlewood.

Combining Schur-Weyl duality and the branching rule stated above we obtain an isomorphism of $\SO(V)\times \Symr$ representations
\begin{align*}
\Vtensr\cong \bigoplus_{\substack{\lambda \vdash r \\ \ell(\lambda) \leq n}} \Big( \bigoplus_{\substack{ \mu \text{ a partition}\\ \ell(\mu) \leq k}} c_{\lambda}^{\mu}(\mathfrak{d}) V^{SO}(\mu ) \Big) \otimes S(\lambda)
 = \bigoplus_{\substack{\mu \text{ a partition} \\ \ell(\mu) \leq k}} V^{SO}(\mu )\otimes U(r,\mu)
\end{align*}
 with isotypic components of weight $\mu$
\begin{align*}
U(r,\mu)=\bigoplus_{\substack{\lambda \vdash r \\ \ell(\lambda) \leq n}}  \LRcoef S(\lambda).
\end{align*}

The isomorphism of $\SO(V)$ representations (e.g. Okada~\cite[Cor. 3.6]{MR3604801}),
\begin{align*}
V^{SO}(\mu )\otimes V\cong\bigoplus_{\substack{\ell(\lambda)\leq k\\ \lambda=\mu\pm \square\\ \text{or } \lambda=\mu \text{ and } \ell(\mu)=k}} V^{SO}(\lambda)
\end{align*}
implies that a basis of $U(r,\mu)$ can be indexed by so called vacillating tableaux of shape $\mu$, defined in Section~\ref{sec:VacTab}.
Kwon defined orthogonal Littlewood-Richardson tableaux, as set that is counted by $\LRcoef$. We present Kwon's definition, as well as a new combinatorial description in Section~\ref{sec:KwonsOLRT} and introduce or new alternative tableaux in Section~\ref{sec:BijA}.
A basis of $S(\lambda)$ can be indexed by standard Young tableaux.
Therefore we are interested in a bijection between vacillating tableaux and pairs that consist of a standard Young tableau and an orthogonal Littlewood-Richardson tableau.

Moreover we introduce descent sets for vacillating tableau (see Section~\ref{sec:VacTab}). We show that our bijection preserves these descents, and follow the approach taken by Rubey, Sagan and Westbury~\cite{MR3226822} for the symplectic group. This enables us to describe the quasi-symmetric expansion of the Frobenius character (see the textbook by Stanley~\cite{MR1676282}). Recall that the Frobeinus character can can be defined by the requirement that it be an isometry and 
\begin{align*}
\mathrm{ch}\, S(\lambda)=s_\lambda=\sum_{Q\in \SYT(\lambda)} F_{\Des(Q)}
\end{align*}
where $s_\lambda$ is a Schur function, $\Des(Q)$ denotes the descent set of a standard Young tableau (see Section~\ref{sec:Descents}) and $F_D$ is the \emph{fundamental quasi-symmetric function}
\begin{align*}
F_D=\sum_{\substack{i_1\leq i_2\leq \dots\leq i_r\\ j\in D \Rightarrow i_j<i_{j+1}}} x_{i_1}x_{i_2}\dots x_{i_r}.
\end{align*}

Therefore we obtain the following theorem.
\begin{theorem}
\begin{align*}
\mathrm{ch} \,U(r,\mu)=\sum F_{\Des(w)},
\end{align*}
where the sum runs over all vacillating tableaux $w$ of length $r$ and shape $\mu$ and $\Des(w)$ is the descent set of $w$.
\end{theorem}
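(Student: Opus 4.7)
The plan is to combine the bijection constructed earlier in the paper with the classical quasi-symmetric expansion $s_\lambda=\sum_{Q\in\SYT(\lambda)}F_{\Des(Q)}$. Starting from the decomposition
\begin{align*}
U(r,\mu)=\bigoplus_{\substack{\lambda\vdash r\\ \ell(\lambda)\leq n}}\LRcoef\, S(\lambda),
\end{align*}
I would take Frobenius characters, use linearity, and expand each $s_\lambda$ in fundamental quasi-symmetric functions to obtain
\begin{align*}
\mathrm{ch}\,U(r,\mu)=\sum_{\lambda}\LRcoef\sum_{Q\in\SYT(\lambda)}F_{\Des(Q)}=\sum_{(Q,T)}F_{\Des(Q)},
\end{align*}
where the last sum ranges over all pairs $(Q,T)$ with $Q\in\SYT(\lambda)$ and $T\in\LRtabs$, for $\lambda\vdash r$ with $\ell(\lambda)\leq n$.

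Next, I would invoke the main bijection of the paper, the composition of Bijection $A$ (Algorithm~\ref{alg:1}) and Bijection $B$ (Algorithm~\ref{alg:2}), which identifies these pairs $(Q,T)$ with vacillating tableaux $w$ of length $r$ and shape $\mu$. Reindexing the sum along this bijection and replacing $\Des(Q)$ by $\Des(w)$ at the end, the theorem reduces to the descent-preservation claim
\begin{align*}
\Des(w)=\Des(Q)
\end{align*}
whenever $w$ corresponds to a pair whose standard-Young-tableau component is $Q$.

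Descent preservation is therefore the main obstacle, and I expect to verify it by tracing through the two sub-bijections separately. Bijection $A$ leaves the standard Young tableau part untouched and only reshuffles the Littlewood-Richardson tableau, so it preserves $\Des(Q)$ trivially. The substantive work is in Bijection $B$: one has to check, move by move along the algorithm, that position $i$ is a descent of the resulting standard Young tableau exactly when the $i$-th and $(i+1)$-th steps of the vacillating tableau satisfy the descent condition defined in Section~\ref{sec:VacTab}. This can be set up in parallel to the symplectic analysis of Rubey-Sagan-Westbury~\cite{MR3226822}, with the genuinely new case being the horizontal (no-change) step that reflects the parity constraint on the row lengths for $\SOodd$; controlling the descent behaviour at such steps is where I expect the main technical difficulty to lie.
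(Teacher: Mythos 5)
Your proposal is correct and follows essentially the same route as the paper: the theorem is obtained by combining the decomposition $U(r,\mu)=\bigoplus_\lambda \LRcoef\,S(\lambda)$ with $\mathrm{ch}\,S(\lambda)=\sum_{Q\in\SYT(\lambda)}F_{\Des(Q)}$ and then reindexing along the (descent-preserving) composite bijection, with the real work deferred to the descent-preservation of Bijection $B$ (Theorem~\ref{theo:algo2DescPres}). The only point to keep in mind is that Bijection $B$ is applied to the enlarged tableau $\tilde{Q}$ rather than to $Q$ itself, so one must also note that the appended entries (the $\mu$-horizontal strip and the possible prepended column) sit outside the range $1,\dots,r-1$ after the trimming steps and hence do not disturb $\Des(Q)$.
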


\subsection{Standard Young Tableaux and Skew Semistandard Tableaux}

We now introduce some well known concepts in order to clarify notation. For a textbook treatment see~\cite{MR1676282}.

\begin{definition}
A \emph{partition $\lambda\vdash n$} of a nonnegative integer $n$ is a sequence of positive integers $(\lambda_1,\lambda_2,\dots,\lambda_k)$ such that $\lambda_1\geq\lambda_2\geq\dots\geq\lambda_k>0$ and $\lambda_1+\lambda_2+\dots+\lambda_k=n$. The length $\ell(\lambda)$ of a partition $\lambda$ is the number of integers in this sequence namely $k$.

A \emph{Young diagram} of a partition $\lambda$ is a collection of left-adjusted cells such that each row consists of $\lambda_i$ cells.

The conjugate partition $\lambda'$ is the partition belonging to the transposed Young diagram of the partition $\lambda$.

Let $\mu$ and $\lambda$ be partitions such that $\mu\subseteq \lambda$ (thus $\mu_i\leq \lambda_i$). The \emph{skew shape $\lambda\backslash \mu$} is the Young diagram of $\lambda$ with the cells of the Young diagram of $\mu$ missing. The partition $\mu$ is the inner shape while the partition $\lambda$ is the outer shape.

A \emph{horizontal strip} is a skew shape such that no two cells are in the same column.
\end{definition}

\begin{definition}
A \emph{semistandard Young tableau} of shape $\lambda$ is obtained by a filling of the cells (with natural numbers) of the Young diagram of shape $\lambda$ such that each row is weakly increasing and each column is strictly increasing.

We also consider \emph{skew semistandard tableaux} where we take the Young diagram of a skew shape instead. We sometimes regard the missing cells as empty cells.

A \emph{reversed (skew) semistandard tableau} is a filling such that each row is weakly decreasing and each column is strictly decreasing.

The \emph{type} of a (reversed) semistandard Young tableau is $\mu=(\mu_1,\mu_2\dots\mu_l)$ where $\mu_i$ is the number of $i$'s in the tableau. 

A \emph{standard Young tableau} of shape $\lambda$ is a semistandard Young tableau with entries $1,2,\dots,|\lambda|$. Thus rows are also strictly increasing. We write SYT($\lambda$) for the set of standard Young tableaux of shape $\lambda$.

A tableau is \emph{column} (respectively \emph{row}) strict if its columns (respectively rows) are strictly increasing. 
\end{definition}

By abuse of notation we call a horizontal strip in a tableau a collection of entries whose cells form a horizontal strip in the Young diagram. 

\begin{definition}
The \emph{Robinson-Schensted correspondence} maps a word $w_1,\dots,w_m$ with $w_i\in \mathbb{N}$ to a pair $(P,Q)$ consisting of a semistandard Young tableau $P$, the insertion tableau, and a standard Young tableau $Q$, the recording tableau. (If and only if $w$ is a permutation the insertion tableau $P$ is also a standard Young tableau.)

To construct it we start with empty tableaux $P$ and $Q$. We insert positions $w_i$ of $w$ from left to right into $P$. We insert $w_i$ into the first row using the following procedure:

Element $e$ gets inserted into row $j$ as follows:
\begin{itemize}
\item If all elements in row $j$ are smaller than or equal to $e$, (or row $j$ is empty) place $e$ to the end of row $j$.
\item Otherwise search for the leftmost element $f$, that is larger than $e$, in row $j$. Put $e$ to its spot and insert $f$ into row $j+1$ using the same procedure again. We say that $f$ got \enquote{bumped} into the next row.
\end{itemize}
Insert $i$ into $Q$, where a new cell in $P$ was added.
\end{definition}

\begin{definition}
The \emph{reading word} of a (skew) (semi)standard Young tableau is the word obtained by concatenating the rows from bottom to top.
\end{definition}

\begin{definition}
A word $w$ with entries in the natural numbers $w_1,w_2,\dots,w_l$ is called a Yamanouchi word (or lattice permutation) if for all $i$ and any initial sequence $s$ the number of $i$'s in $s$ is at least as great as the number of $(i+1)$'s in $s$. 

A word $w_1,w_2,\dots,w_m$ is a \emph{reverse Yamanouchi word} if $w_m,\dots,w_2,w_1$ is Yamanouchi.
\end{definition}

For reverse Yamanouchi words the following theorem holds (see~\cite{MR675953}):

\begin{theorem}
\label{theo:YamanouchiWordSSYT}
If and only if a word $w$ is a reverse Yamanouchi word, the insertion tableau $P$ obtained by Robinson-Schensted is of the form

\begin{tikzpicture}[scale=0.35]
  \draw (0,6) -- (6,6);
  \draw (0,5) -- (6,5);
  \draw (0,4) -- (4.5,4);
  \draw (0,3) -- (3,3);
  \draw (0,2) -- (1,2);
  \draw (0,6) -- (0,2);
  \draw (1,6) -- (1,2);
  \draw (2,4) -- (2,3);
  \draw (3,4) -- (3,3);
  \draw (3.5,5) -- (3.5,4);
  \draw (4.5,5) -- (4.5,4);
  \draw (5,6) -- (5,5);
  \draw (6,6) -- (6,5);
  \draw (0.5,5.5) node {1};
  \draw (3.1,5.5) node {$\dots$};
  \draw (5.5,5.5) node {1};
  \draw (0.5,4.5) node {2};    
  \draw (2.3,4.5) node {$\dots$};
  \draw (4,4.5) node {2};
  \draw (0.5,3.5) node {3};
  \draw (1.60,3.5) node {$\dots$};
  \draw (2.5,3.5) node {3};
  \draw (0.5,2.77) node {$\vdots$};
\end{tikzpicture}.
\end{theorem}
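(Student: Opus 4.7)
The plan is to deduce both directions from the interplay between RSK and Knuth equivalence. Recall the classical fact that two words have the same RSK insertion tableau iff they are Knuth equivalent (related by a sequence of elementary Knuth moves); in particular, every word $w$ is Knuth equivalent to the row reading word $r_{P(w)}$ of its insertion tableau $P(w)$. The theorem will follow from two auxiliary statements: \textbf{(A)} elementary Knuth moves preserve the reverse Yamanouchi property, and \textbf{(B)} the reading word of a semistandard Young tableau $T$ is reverse Yamanouchi if and only if $T$ has row $i$ filled entirely with the entry $i$ for every $i$. Because Knuth moves are reversible, (A) and (B) together yield the chain: $w$ is reverse Yamanouchi $\iff r_{P(w)}$ is reverse Yamanouchi $\iff P(w)$ is of the claimed form.

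For Lemma (B) the forward direction is immediate: the reading word of the special tableau of shape $\mu$ equals $\ell^{\mu_\ell}(\ell-1)^{\mu_{\ell-1}}\cdots 1^{\mu_1}$ where $\ell=\ell(\mu)$, and every suffix of this word visibly satisfies the required count inequality because $\mu$ is a partition. For the converse I would induct on the row index of $T$. In the base case, if some entry $a_j$ of row~$1$ were $\ge 2$, then the suffix of $r_T$ beginning at $a_j$ consists of the tail of row~$1$, hence is weakly increasing with all entries $\ge a_j$; this suffix contains no $(a_j-1)$'s but at least one $a_j$, contradicting the reverse Yamanouchi condition at index $a_j-1$. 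The inductive step is analogous: assuming rows $1,\dots,i-1$ are already of the special form, column-strictness of $T$ forces row $i$'s entries to be $\ge i$, and any entry strictly larger than $i$ produces a suffix with no $i$'s but at least one letter $\ge i+1$, again a violation.

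For Lemma (A) I would compare suffixes before and after an elementary Knuth move. For either move ($yzx \to yxz$ with $x<y\le z$, or $xzy \to zxy$ with $x\le y<z$) exactly one suffix has a changed multiset of letters: $x\beta$ becomes $z\beta$ in the first case, and $zy\beta$ becomes $xy\beta$ in the second. For each relevant index $i$ one checks that the corresponding count inequality for the new suffix follows from the reverse Yamanouchi condition applied to some suffix of the original word. The critical sub-case for the first move is $y=z=x+1$: here the needed strict excess of $x$'s over $(x+1)$'s in $\beta$ is not forced by the shorter suffixes $x\beta$ or $zx\beta$ alone, but it is forced by the longest relevant suffix $yzx\beta=(x+1)(x+1)x\beta$, which contributes two extra $(x+1)$'s and only one extra $x$ on top of $\beta$. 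The remaining sub-cases (including the analogous degenerate case $x=y$ for the second move) are treated similarly. This case-checking is the main technical obstacle; once it is settled, combining Lemmas (A) and (B) yields the biconditional.
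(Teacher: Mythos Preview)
The paper does not supply its own proof of this statement; it is quoted as a known fact with a reference to Lothaire's \emph{Combinatorics on words}. There is therefore nothing in the paper to compare your proposal against.

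On its own merits, your argument is correct and is essentially the classical route. The reduction to Lemmas~(A) and~(B) via the Knuth-equivalence characterisation of the insertion tableau is standard. Your treatment of Lemma~(B) is fine; for the inductive step one can sharpen the wording slightly by letting $a$ be the leftmost entry of row~$i$ exceeding $i$ and noting that the suffix of $r_T$ beginning at that cell contains no letter $a-1$ but at least one $a$, which directly violates the condition at index $a-1$ (your phrasing ``no $i$'s but at least one letter $\ge i+1$'' also works after cascading the inequalities $\#i\ge\#(i{+}1)\ge\cdots$). For Lemma~(A), your identification of the unique suffix whose multiset changes under each elementary move, and of the delicate sub-case $y=z=x+1$ for the first move, is accurate; the remaining three directions (the reverse of the first move and both directions of the second) are handled by the same method, in each case invoking the longest available suffix through the three-letter window ($yzx\beta$, $yxz\beta$, $xzy\beta$, or $zxy\beta$) to extract the needed strict inequality in $\beta$.
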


\subsubsection{Descents of Standard Young Tableaux}

\begin{definition}
Let $Q\in \SYT(\lambda)$ be a standard Young tableau. An entry $j$ is a \emph{descent} if $j+1$ is in a row below $j$. We define the \emph{descent set} of $Q$ as:
$\Des(Q)=\set{j}{j \text{ is a descent of } Q}$.
\end{definition}
\begin{example}
\label{ex:DescSYT}
The following standard Young tableau has descent set $\{2,3,5,7,12\}$. Descents $j$ are bold, $j+1$ are italic.

\hspace{5cm}
\begin{tikzpicture}[scale=0.35]
  \draw (0,6) -- (6,6);
  \draw (0,5) -- (6,5);
  \draw (0,4) -- (2,4);
  \draw (0,3) -- (2,3);
  \draw (0,2) -- (2,2);
  \draw (0,1) -- (2,1);
  \draw (0,6) -- (0,1);
  \draw (1,6) -- (1,1);
  \draw (2,6) -- (2,1);
  \draw (3,6) -- (3,5);
  \draw (4,6) -- (4,5);
  \draw (5,6) -- (5,5);
  \draw (6,6) -- (6,5);
  \draw (0.5,5.5) node {1};
  \draw (1.5,5.5) node {\textbf{2}};
  \draw (0.5,4.5) node {\textbf{\textit{3}}};
  \draw (0.5,3.5) node {\textit{4}};
  \draw (1.5,4.5) node {\textbf{5}};
  \draw (0.5,2.5) node {\textit{6}};
  \draw (1.5,3.5) node {\textbf{7}};
  \draw (0.5,1.5) node {\textit{8}};
  \draw (1.5,2.5) node {9};
  \draw (2.5,5.5) node {10};
  \draw (3.5,5.5) node {11};
  \draw (4.5,5.5) node {\textbf{12}};
  \draw (1.5,1.5) node {\textit{13}};
  \draw (5.5,5.5) node {14};
\end{tikzpicture}
\end{example}

\subsubsection{Concatenation of Standard Young tableaux}

\begin{definition}
The \emph{concatenation} $Q$ of two standard Young tableaux $Q_1$ and $Q_2$ is obtained as follows. First add the largest entry of $Q_1$ to each entry of $Q_2$ to obtain the tableau $\widetilde{Q_2}$. Then append row $i$ of $\widetilde{Q_2}$ to row $i$ of $Q_1$ to obtain $Q$.
\end{definition}

This procedure is associative, thus we can consider the concatenation of several standard Young tableaux. We say a standard Young tableau $Q$ is the concatenation of $m$ standard Young tableaux if we can find standard Young tableaux $Q_1,\dots,Q_m$ such that $Q$ is the concatenation of those. We will be interested only in those concatenations where all tableaux have either rows of even length, or row lengths of the same parity, each.

\begin{example}
\label{ex:ConCatSYTDef}
We concatenate two standard Young tableaux

\hspace{3cm}\begin{tikzpicture}[scale=0.35]
  \draw (0,6) -- (6,6);
  \draw (0,5) -- (6,5);
  \draw (0,4) -- (2,4);
  \draw (0,3) -- (2,3);
  \draw (0,6) -- (0,3);
  \draw (1,6) -- (1,3);
  \draw (2,6) -- (2,3);
  \draw (3,6) -- (3,5);
  \draw (4,6) -- (4,5);
  \draw (5,6) -- (5,5);
  \draw (6,6) -- (6,5);
  \draw (0.5,5.5) node {1};
  \draw (1.5,5.5) node {2};
  \draw (0.5,4.5) node {3};
  \draw (1.5,4.5) node {4};
  \draw (2.5,5.5) node {5};
  \draw (3.5,5.5) node {6};
  \draw (0.5,3.5) node {7};
  \draw (1.5,3.5) node {8};
  \draw (4.5,5.5) node {9};
  \draw (5.5,5.5) node {10};
  \draw (0,1);
\end{tikzpicture}
\raisebox{1cm}{and}
\begin{tikzpicture}[scale=0.35]
  \draw (0,6) -- (1,6);
  \draw (0,5) -- (1,5);
  \draw (0,4) -- (1,4);
  \draw (0,3) -- (1,3);
  \draw (0,2) -- (1,2);
  \draw (0,1) -- (1,1);
  \draw (0,6) -- (0,1);
  \draw (1,6) -- (1,1);
  \draw (0.5,5.5) node {1};
  \draw (0.5,4.5) node {2};
  \draw (0.5,3.5) node {3};
  \draw (0.5,2.5) node {4};
  \draw (0.5,1.5) node {5};
\end{tikzpicture}
\raisebox{1cm}{and obtain}
\begin{tikzpicture}[scale=0.35]
  \draw (0,6) -- (7,6);
  \draw (0,5) -- (7,5);
  \draw (0,4) -- (3,4);
  \draw (0,3) -- (3,3);
  \draw (0,2) -- (1,2);
  \draw (0,1) -- (1,1);
  \draw (0,6) -- (0,1);
  \draw (1,6) -- (1,1);
  \draw (2,6) -- (2,3);
  \draw (3,6) -- (3,3);
  \draw (4,6) -- (4,5);
  \draw (5,6) -- (5,5);
  \draw (6,6) -- (6,5);
  \draw (7,6) -- (7,5);
  \draw (0.5,5.5) node {1};
  \draw (1.5,5.5) node {2};
  \draw (0.5,4.5) node {3};
  \draw (1.5,4.5) node {4};
  \draw (2.5,5.5) node {5};
  \draw (3.5,5.5) node {6};
  \draw (0.5,3.5) node {7};
  \draw (1.5,3.5) node {8};
  \draw (4.5,5.5) node {9};
  \draw (5.5,5.5) node {10};
  \draw (6.5,5.5) node {11};
  \draw (2.5,4.5) node {12};
  \draw (2.5,3.5) node {13};
  \draw (0.5,2.5) node {14};
  \draw (0.5,1.5) node {15};
\end{tikzpicture}
\raisebox{1cm}{.}

The first tableau itself is a concatenation of standard Young tableaux. The parts are the tableau containing only numbers $1$ up to  $8$ and two single cells containing $1$. If one is interested in a concatenation of tableaux with row lengths of the same parity, we can also take the tableau containing numbers $1$ up to $8$ and as second tableau, the one rowed tableau containing $1$ and $2$. (Empty rows $j$ are counted as rows of even length for $j<n$.)
\end{example}

\subsection{Vacillating Tableaux}
\label{sec:VacTab}

We define vacillating tableaux (as defined by Sundaram in~\cite[Def. 4.1]{MR1041447}) in three different ways, once as sequence of Young diagrams, once in terms of highest weight words and once as certain $k$-tuples of lattice paths.
\begin{definition}
\begin{enumerate}
\item A ($(2k+1)$-\emph{orthogonal}) \emph{vacillating tableau} of length $r$ is a sequence of Young diagrams $\emptyset=\mu^0, \mu^1,\dots,\mu^r=\mu$ each of at most $k$ parts, such that:
\begin{itemize}
\item $\mu^i$ and $\mu^{i+1}$ differ in at most one cell,
\item $\mu^i=\mu^{i+1}$ only occurs if the $k$th row of cells is non-empty.
\end{itemize}
The partition belonging to the final Young diagram $\mu$ is the \emph{shape} of the tableau.
\item A ($(2k+1)$-\emph{orthogonal}) \emph{highest weight word} is a word $w$ with letters in $\{\pm 1, \pm 2, \dots, \pm k, 0\}$ of length $r$ such that for every initial segment $s$ of $w$ the following holds (we write $\#i$ for the number of $i$'s in $s$):
\begin{itemize}
\item $\#i - \#(-i) \geq 0$,
\item $\#i-\#(-i)\geq \#(i+1)-\#(-i-1)$,
\item if the last letter is $0$ then $\#k-\#(-k)>0$.
\end{itemize}
The partition $(\#1-\#(-1),\#2-\#(-2),\dots, \#k - \#(-k))$ is the \emph{weight} of a highest weight word. The vacillating tableau corresponding to a word $w$ is the sequence of weights of the initial segments of $w$.
\item Riordan paths are Motzkin paths without horizontal steps on the $x$-axis. They consist of up (north-east) steps, down (south-east) steps, and horizontal (east) steps, such that there is no step beneath the $x$-axis and no horizontal step on the $x$-axis.

A $k$-tuple of Riordan paths of length $r$ is a vacillating tableau of length $r$ if it meets the following conditions:
\begin{itemize}
\item The first path is a Riordan path of length $r$.
\item Path $i$ has steps where path $i-1$ has horizontal steps. Path $i$ is never higher than path $i-1$.
\end{itemize}
For a better readability we sometimes label the steps with $1,\dots,r$ in order to see which steps belong together and shift paths together.

The corresponding highest weight word is described as follows: A value $i$ is an up-step in path $i$ and a horizontal step in paths $1$ up to $i-1$. Similarly a value $-i$ is a down-step in path $i$ and a horizontal step in paths $1$ up to $i-1$ and a value $0$ is a horizontal step in every path, including $k$.
\end{enumerate}
By abuse of terminology we refer to all three objects as \emph{vacillating tableaux}.
\end{definition}

\begin{example}
\label{ex:VacTab}
The same object once written as a vacillating tableau, once as a highest weight word and once as a Riordan path. To the left we draw the Riodan path labeled and the second path shifted together in the way we described above.

\noindent \begin{tikzpicture}[scale=0.31]
\draw (0,6.5) node{$\emptyset$};
\draw (1,7)--(2,7)--(2,6)--(1,6)--(1,7);
\draw (3.5,7)--(4.5,7)--(4.5,5)--(3.5,5)--(3.5,7);
\draw (3.5,6)--(4.5,6);
\draw (6,7)--(8,7)--(8,6)--(7,6)--(7,5)--(6,5)--(6,7);
\draw (6,6)--(7,6)--(7,7);
\draw (8.5,7)--(10.5,7)--(10.5,6)--(9.5,6)--(9.5,5)--(8.5,5)--(8.5,7);
\draw (8.5,6)--(9.5,6)--(9.5,7);
\draw (11,7)--(13,7)--(13,6)--(12,6)--(12,5)--(11,5)--(11,7);
\draw (11,6)--(12,6)--(12,7);
\draw (13.5,7)--(15.5,7)--(15.5,6)--(13.5,6)--(13.5,7);
\draw (14.5,6)--(14.5,7);
\draw (16,7)--(17,7)--(17,6)--(16,6)--(16,7);
\draw (18.5,7)--(19.5,7)--(19.5,5)--(18.5,5)--(18.5,7);
\draw (18.5,6)--(19.5,6);
\draw (21,7)--(22,7)--(22,6)--(21,6)--(21,7);
\draw (23.5,6.5) node{$\emptyset$};
\draw (0.5,4.5) node{1};
\draw (3,4.5) node{2};
\draw (5.5,4.5) node{1};
\draw (8,4.5) node{0};
\draw (10.5,4.5) node{0};
\draw (13,4.5) node{-2};
\draw (15.5,4.5) node{-1};
\draw (18,4.5) node{2};
\draw (20.5,4.5) node{-2};
\draw (23,4.5) node{-1};
\draw (-0.5,1.5)--(1.75,2.5)--(4.25,2.5)--(6.75,3.5)--(9.25,3.5)--(11.75,3.5)--(14.25,3.5)--(16.75,2.5)--(19.25,2.5)--(21.75,2.5)--(24.25,1.5);
\draw(1.75,0)--(4.25,1);
\draw(6.75,1)--(9.25,1)--(11.75,1)--(14.25,0);
\draw(16.75,0)--(19.25,1)--(21.75,0);
\end{tikzpicture}
\hspace{1.2cm}
\begin{tikzpicture}[scale=0.35]
\draw (0,0)  --
(1,1) node[midway, above] {1}--
(2,1) node[midway, above] {2}--
(3,2) node[midway, above] {3}--
(4,2) node[midway, above] {4}--
(5,2) node[midway, above] {5}--
(6,2) node[midway, above] {6}--
(7,1) node[midway, above] {7}--
(8,1) node[midway, above] {8}--
(9,1) node[midway, above] {9}--
(10,0) node[midway, above] {10};
\draw (2,-1.5)  --
(3,-0.5) node[midway, above] {2}--
(4,-0.5) node[midway, above] {4}--
(5,-0.5) node[midway, above] {5}--
(6,-1.5) node[midway, above] {6}--
(7,-0.5) node[midway, above] {8}--
(8,-1.5) node[midway, above] {9};
\end{tikzpicture}
\end{example}

\subsubsection{Descents of Vacillating Tableaux}
\label{sec:Descents}

\begin{definition}
We define descents for vacillating tableaux using highest weight words. A letter $w_i$ of $w$ is a \emph{descent} if there exists a directed path from $w_i$ to $w_{i+1}$ in the crystal graph for the defining representation of $\SOodd$
\[  1\to 2\to \dots\to k \to 0 \to -k \to \dots \to -1\]
and $w_iw_{i+1} \neq j(-j)$ if for the initial segment $w_1,\dots,w_{i-1}$ holds $\#j-\#(-j)=0$.

We define the \emph{descent set} of $w$ as $\Des(w)= \set{j}{ j \text{ is a descent of } w}$.
\end{definition}

In our tuple of paths a descent is a convex edge of consecutive steps, but not an up-step followed from a down-step on the bottom.

\begin{example}
The following vacillating tableau has descent set $\{2,3,5,7,12\}$. The corresponding positions are circled. Note that $10$ is no descent, as $10,11$ are on bottom level. (It is no coincidence that the standard Young tableau in Example~\ref{ex:DescSYT} has the same descents as they are assigned to each other by Bijection $B$.)

\hspace{4cm}\begin{tikzpicture}[scale=0.35]
\draw (0,0)  --
(1,1) node[midway, above] {1}--
(2,2) node[midway, above] {2} node[draw=black,circle,inner sep=0cm,minimum size=0.2cm] {}--
(3,2) node[midway, above] {3}--
(4,2) node[midway, above] {4}--
(5,2) node[midway, above] {5}--
(6,2) node[midway, above] {6}--
(7,2) node[midway, above] {7} node[draw=black,circle,inner sep=0cm,minimum size=0.2cm] {}--
(8,1) node[midway, above] {8}--
(9,1) node[midway, above] {9}--
(10,1) node[midway, above] {10}--
(11,1) node[midway, above] {11}--
(12,2) node[midway, above] {12} node[draw=black,circle,inner sep=0cm,minimum size=0.2cm] {}--
(13,1) node[midway, above] {13}--
(14,0) node[midway, above] {14};
\draw (2.5,-1.5)  --
(3.5,-0.5) node[midway, above] {3} node[draw=black,circle,inner sep=0cm,minimum size=0.2cm] {}--
(4.5,-0.5) node[midway, above] {4}--
(5.5,0.5) node[midway, above] {5} node[draw=black,circle,inner sep=0cm,minimum size=0.2cm] {}--
(6.5,-0.5) node[midway, above] {6}--
(7.5,-0.5) node[midway, above] {7}--
(8.5,-1.5) node[midway, above] {9}--
(9.5,-0.5) node[midway, above] {10}--
(10.5,-1.5) node[midway, above] {11};
\end{tikzpicture}
\end{example}

\subsubsection{Concatenation of Vacillating Tableaux}

\begin{definition}
The concatenation of vacillating tableaux of shape $\emptyset$ is obtained by writing them side by side. If we writing them labeled, we adjust the labels such that they are increasing from left to right.
\end{definition}

\begin{example}
The following vacillating tableau is the concatenation of three vacillating tableaux, first the steps $1$ to $8$, then $9,10$, and third the steps $11$ to $15$. (We will see that it corresponds to the standard Young tableau of Example~\ref{ex:ConCatSYTDef} under Bijection $B$.)

\hspace{4cm}\begin{tikzpicture}[scale=0.35]
\draw (0,0)--
(1,1) node[midway, above] {1}--
(2,2) node[midway, above] {2}--
(3,2) node[midway, above] {3}--
(4,2) node[midway, above] {4}--
(5,2) node[midway, above] {5}--
(6,2) node[midway, above] {6}--
(7,1) node[midway, above] {7}--
(8,0) node[midway, above] {8}--
(9,1) node[midway, above] {9}--
(10,0) node[midway, above] {10}--
(11,1) node[midway, above] {11}--
(12,1) node[midway, above] {12}--
(13,1) node[midway, above] {13}--
(14,1) node[midway, above] {14}--
(15,0) node[midway, above] {15};
\draw (2,-1.5)--
(3,-0.5) node[midway, above] {3}--
(4,-1.5) node[midway, above] {4}--
(5,-0.5) node[midway, above] {5}--
(6,-1.5) node[midway, above] {6};
\draw (11,-1.5)--
(12,-0.5) node[midway, above] {12}--
(13,-0.5) node[midway, above] {13}--
(14,-1.5) node[midway, above] {14};
\end{tikzpicture}
\end{example}

\subsection{Crystal Graphs}
\label{sec:CrystalGraphs}

In this section we summarize some properties of crystal graphs. In particular, we describe a certain crystal graph, that we need for defining orthogonal Littlewood-Richardson tableaux. For more information on crystals see the textbook by Hong and Kang~\cite{MR1881971}.

\medskip

\emph{Crystal graphs} are certain acyclic directed graphs where vertices have finite in- and out-degree and each edge is labeled by a natural number. We only use crystal graphs whose vertices are labeled with certain tableaux.

For each vertex $C$ there is at most one outgoing edge labeled with $i$. If such an edge exists we denote its target by $f_i(C)$. Otherwise $f_i(C)$ is defined to be the distinguished symbol $\varnothing$.
Analogously there is at most one incoming edge labeled with $i$ and we define $e_i(C)$ as the tableau obtained by following an incoming edge labeled with $i$. We denote by $\varphi_i(C)$ (respectively $\varepsilon_i(C)$) the number of times one can apply $f_i$ (respectively $e_i$) to $C$.

We consider infinite crystal graphs. However, for the crystal graphs we consider, it holds that if we fix a natural number $\ell$ and delete all edges labeled with $\ell$ or larger, as well as all vertices that have incoming edges labeled with $\ell$ or larger, we obtain a finite crystal graph. Thus a lot of properties proven for finite crystal graphs hold also for our infinite crystal graphs.

The crystal graphs we consider are all tensor products of the following crystal graph.

\begin{definition}
The crystal graph of one-column tableaux is defined as follows:
\begin{enumerate}
\item The vertices are column strict tableaux with a single column and positive integers as entries.
\item Suppose that $i\in \mathbb{N}, i>0$ is an entry in a tableau $C$ but $i+1$ is not. Then $f_i(C)$ is the tableau one obtains by replacing $i$ by $i+1$.
Otherwise $f_i(C)=\varnothing$.
\item Suppose that neither $1$ nor $2$ is an entry in a tableau $C$. Then $f_0(C)$ is the tableau one obtains by adding a domino
\begin{tikzpicture}[scale=0.30]
\draw (3,14) -- (3,16);
\draw (4,14) -- (4,16);
\draw (3,14) -- (4,14);
\draw (3,15) -- (4,15);
\draw (3,16) -- (4,16);
\draw (3.5,14.5) node{2};
\draw (3.5,15.5) node{1};
\end{tikzpicture}
on top of $C$. Otherwise $f_0(C)=\varnothing$.
\end{enumerate}
See Figure~\ref{fig:Crystal} for an example.
\end{definition}

\begin{figure}
\centering
\begin{tikzpicture}[scale=0.35]
\draw (-6.1,-3.4) node{$\vdots$};
\draw (-2.5,-3.4) node{$\vdots$};
\draw (1.5,-3.4) node{$\vdots$};
\draw (3.5,-4.4) node{$\vdots$};
\draw (5.5,-5.4) node{$\vdots$};
\draw [<-] (5.5,-4.9) -- (3.7,-2.1) node[midway, above]{\footnotesize 0};
\draw [<-] (1.7,-2.9) -- (3.3,-2.1) node[midway, above]{\footnotesize 5};
\draw [<-] (3.5,-3.9) -- (3.5,-2.1) node[midway, left]{\footnotesize 3};
\draw [<-] (1.3,-2.9) -- (-0.3,-2.1) node[midway, above]{\footnotesize 2};
\draw [<-] (-2.5,-2.9) -- (-0.7,-2.1) node[midway, above]{\footnotesize 6};
\draw [<-] (-2.7,-2.9) -- (-4.3,-2.1) node[midway, above]{\footnotesize 1};
\draw [<-] (-6.3,-2.9) -- (-4.7,-2.1) node[midway, above]{\footnotesize 7};
\draw [<-] (5.7,-4.9) -- (7.5,-4.1) node[midway, above]{\footnotesize 4};
\draw (7,-4) -- (7,0);
\draw (8,-4) -- (8,0);
\draw (7,-4) -- (8,-4);
\draw (7,-3) -- (8,-3);
\draw (7,-2) -- (8,-2);
\draw (7,-1) -- (8,-1);
\draw (7,0) -- (8,0);
\draw (7.5,-3.5) node{4};
\draw (7.5,-2.5) node{3};
\draw (7.5,-1.5) node{2};
\draw (7.5,-0.5) node{1};
\draw (3,-2) -- (3,0);
\draw (4,-2) -- (4,0);
\draw (3,-2) -- (4,-2);
\draw (3,-1) -- (4,-1);
\draw (3,0) -- (4,0);
\draw (3.5,-1.5) node{5};
\draw (3.5,-0.5) node{3};
\draw (-1,-2) -- (-1,0);
\draw (0,-2) -- (0,0);
\draw (-1,-2) -- (0,-2);
\draw (-1,-1) -- (0,-1);
\draw (-1,0) -- (0,0);
\draw (-0.5,-1.5) node{6};
\draw (-0.5,-0.5) node{2};
\draw (-5,-2) -- (-5,0);
\draw (-4,-2) -- (-4,0);
\draw (-5,-2) -- (-4,-2);
\draw (-5,-1) -- (-4,-1);
\draw (-5,0) -- (-4,0);
\draw (-4.5,-1.5) node{7};
\draw (-4.5,-0.5) node{1};
\draw [<-] (3.5,0.1) -- (1.7,0.9) node[midway, above]{\footnotesize 2};
\draw [<-] (-0.3,0.1) -- (1.3,0.9) node[midway, above]{\footnotesize 5};
\draw [<-] (-0.7,0.1) -- (-2.3,0.9) node[midway, above]{\footnotesize 1};
\draw [<-] (-4.5,0.1) -- (-2.7,0.9) node[midway, above]{\footnotesize 6};
\draw [<-] (3.7,0.1) -- (5.3,0.9) node[midway, above]{\footnotesize 4};
\draw [<-] (7.5,0.1) -- (5.7,0.9) node[midway, above]{\footnotesize 0};
\draw (5,1) -- (5,3);
\draw (6,1) -- (6,3);
\draw (5,1) -- (6,1);
\draw (5,2) -- (6,2);
\draw (5,3) -- (6,3);
\draw (5.5,1.5) node{4};
\draw (5.5,2.5) node{3};
\draw (1,1) -- (1,3);
\draw (2,1) -- (2,3);
\draw (1,1) -- (2,1);
\draw (1,2) -- (2,2);
\draw (1,3) -- (2,3);
\draw (1.5,1.5) node{5};
\draw (1.5,2.5) node{2};
\draw (-3,1) -- (-3,3);
\draw (-2,1) -- (-2,3);
\draw (-3,1) -- (-2,1);
\draw (-3,2) -- (-2,2);
\draw (-3,3) -- (-2,3);
\draw (-2.5,1.5) node{6};
\draw (-2.5,2.5) node{1};
\draw [<-] (5.5,3.1) -- (3.7,3.9) node[midway, above]{\footnotesize 2};
\draw [<-] (1.7,3.1) -- (3.3,3.9) node[midway, above]{\footnotesize 4};
\draw [<-] (1.3,3.1) -- (-0.3,3.9) node[midway, above]{\footnotesize 1};
\draw [<-] (-2.5,3.1) -- (-0.7,3.9) node[midway, above]{\footnotesize 5};
\draw (3,4) -- (3,6);
\draw (4,4) -- (4,6);
\draw (3,4) -- (4,4);
\draw (3,5) -- (4,5);
\draw (3,6) -- (4,6);
\draw (3.5,4.5) node{4};
\draw (3.5,5.5) node{2};
\draw (-1,4) -- (-1,6);
\draw (0,4) -- (0,6);
\draw (-1,4) -- (0,4);
\draw (-1,5) -- (0,5);
\draw (-1,6) -- (0,6);
\draw (-0.5,4.5) node{5};
\draw (-0.5,5.5) node{1};
\draw [<-] (3.3,6.1) -- (1.7,6.9) node[midway, below]{\footnotesize 1};
\draw [<-] (3.7,6.1) -- (5.5,6.9) node[midway, below]{\footnotesize 3};
\draw [<-] (-0.5,6.1) -- (1.3,6.9) node[midway, below]{\footnotesize 4};
\draw (1,7) -- (1,9);
\draw (2,7) -- (2,9);
\draw (1,7) -- (2,7);
\draw (1,8) -- (2,8);
\draw (1,9) -- (2,9);
\draw (1.5,7.5) node{4};
\draw (1.5,8.5) node{1};
\draw (5,7) -- (5,9);
\draw (6,7) -- (6,9);
\draw (5,7) -- (6,7);
\draw (5,8) -- (6,8);
\draw (5,9) -- (6,9);
\draw (5.5,7.5) node{3};
\draw (5.5,8.5) node{2};
\draw [->] (3.3,9.9) -- (1.5,9.1) node[midway, above]{\footnotesize 3};
\draw [->] (3.8,9.9) -- (5.5,9.1) node[midway, above]{\footnotesize 1};
\draw (3,10) -- (3,12);
\draw (4,10) -- (4,12);
\draw (3,10) -- (4,10);
\draw (3,11) -- (4,11);
\draw (3,12) -- (4,12);
\draw (3.5,10.5) node{3};
\draw (3.5,11.5) node{1};
\draw [<-] (3.5,12.1) -- (3.5,13.9) node[midway, right]{\footnotesize 2};
\draw (3,14) -- (3,16);
\draw (4,14) -- (4,16);
\draw (3,14) -- (4,14);
\draw (3,15) -- (4,15);
\draw (3,16) -- (4,16);
\draw (3.5,14.5) node{2};
\draw (3.5,15.5) node{1};
\end{tikzpicture}
\caption{The component with even-length tableaux of a crystal consisting of one-column tableaux.}
\label{fig:Crystal}
\end{figure}
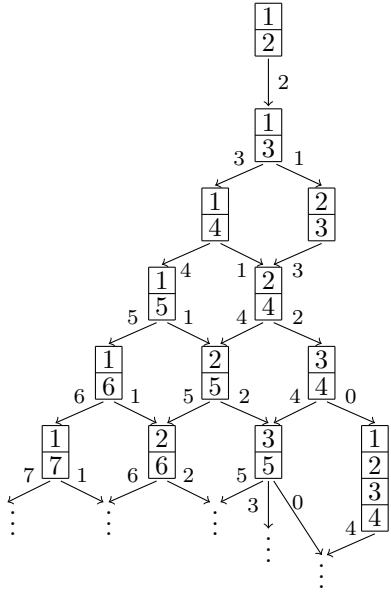

We define now the tensor product of crystal graphs. We will use the tensor products of the crystal graph defined above for defining orthogonal Littlewood-Richardson tableaux.

\begin{definition}
The tensor product  $B_1\otimes B_2$  of two crystal graphs $B_1$ and $B_2$ is a crystal graph with vertex set $B_1 \times B_2$ and edges satisfying:
\begin{align*}
f_i(b\otimes b')&=\left\{ \begin{array}{ll} b\otimes f_i(b') & \text{if } \varepsilon_i(b)<\varphi_i(b')\\ f_i(b)\otimes b' & \text{otherwise} \end{array} \right.\\
e_i(b\otimes b')&=\left\{ \begin{array}{ll} e_i(b)\otimes b'  & \text{if } \varepsilon_i(b)>\varphi_i(b')\\ b\otimes e_i(b') & \text{otherwise} \end{array} \right.
\end{align*}
and
\begin{align*}
\varphi_i(b\otimes b')&= \varphi_i(b) + \mathrm{max}(0,\varphi_i(b')-\varepsilon_i(b))\\
\varepsilon_i(b\otimes b')&= \varepsilon_i(b') + \mathrm{max}(0,\varepsilon_i(b)-\varphi_i(b')).\\
\end{align*}
\end{definition}






\section{Kwon's Orthogonal Littlewood-Richardson Tableaux}
\label{sec:KwonsOLRT}

In this section we will first present Kwon's orthogonal Littlewood-Richardson tableaux. This description is very general, so we give a new, explicit formulation of his orthogonal Littlewood-Richardson tableaux afterwards.

Although Kwon considers $\mathrm{O}(n)$, for odd $n$ ($n=2k+1$) we get $\SO(n)$ as a special case. In this case $V(\lambda)\downarrow^{\mathrm{O}(2k+1)}_{\SO(2k+1)}$ is an irreducible $\SO(2k+1)$ representation and every such representation is isomorphic to such a restriction (see for example Okada~\cite[Sect. 2.4]{MR3604801}).

We start with introducing some notation we will use.

\begin{definition}
Let $T$ be a two column skew semistandard tableau of shape $(2^b,1^m)/(1^a)$, with $b\geq a \geq 0$ and $m>0$.

The \emph{tail} of $T$ is the part where only the first column exists, that is, the lower $m$ entries of the first column. The topmost tail position is the \emph{tail root} and the rest of the tail is the \emph{lower tail}.

The \emph{fin} of $T$ is the largest entry in the second column.

The \emph{residuum} of $T$ is the number of positions the second column can be shifted down while maintaining semistandardness. In particular, the residuum of $T$ is at most $\min(a,m)$.
\end{definition}

\begin{definition}
For a partition $\mu$ with at most $k$ parts we define the crystal graph $B^{\mathfrak{d}}(\mu)$ as follows. It is the subgraph of the the tensor product of $n=2k+1$ one column crystal graphs, whose vertices are tuples $(T_1,T_2,\dots,T_{\ell(\mu)},S)$ of skew semistandard tableaux such that:
\begin{itemize}
\item Each $T_j$ has shape  $(2^{b_j},1^{\mu_j})/(1^{a_j})$, with $b_j\geq a_j \geq 0$, $b_j,a_j$ even and residuum at most one.
\item $S$ is of rectangular outer shape and has $n-2\ell(\mu)$ (possibly empty) columns, all whose lengths have the same parity. We say $S$ is even if its columns have even length, and $S$ is odd otherwise.
\end{itemize}
\end{definition}

\begin{lemma}[Defining Lemma, Kwon] The crystal $T^\mathfrak{d}(\mu)$ is the subgraph of $B^{\mathfrak{d}}(\mu)$ whose vertices are in the same component as one of the following highest weight elements:
\begin{itemize}
\item $T_j$ has its left column filled with $1,2,\dots,\mu_j$ and its right column empty.
\item Either $S$ is empty or $S$ is a single row of $n-2\ell(\mu)$ entries equal to $1$.
\end{itemize}
\end{lemma}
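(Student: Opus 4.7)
The plan is to verify directly that the tuples described by the two bullets are highest-weight vertices of $B^{\mathfrak{d}}(\mu)$, and then to match the collection of such vertices against the highest weights appearing in Kwon's original abstract definition of $T^{\mathfrak{d}}(\mu)$. Since every connected component of a regular crystal is generated by its unique highest-weight element, this identification pins down $T^{\mathfrak{d}}(\mu)$ as the subgraph of $B^{\mathfrak{d}}(\mu)$ reachable from the listed elements under the crystal operators.

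First I would check the highest-weight property factor by factor in the tensor product $B^{\otimes(2k+1)}$. Each factor of a candidate element is one of four simple tableaux: a filled column $1,2,\dots,\mu_j$, an empty right column of some $T_j$, an empty column of $S$, or a length-one column with entry $1$ from $S$. For $i\geq 1$ the local $e_i$ cannot act on any of these: in the filled columns $1,\dots,\mu_j$ the entry $i$ is always present above any $i+1$, and the remaining factors contain no entry larger than $1$. By the tensor product signature rule $e_i$ therefore annihilates the whole element. The case $i=0$ is more delicate because a filled column with $\mu_j\geq 2$ does satisfy $\varepsilon_0\geq 1$; however, the accompanying empty right column of the same $T_j$ has $\varphi_0=1$, and the signature-matching procedure for the tensor product cancels these contributions, provided the ordering of the factors within each $T_j$ places the right column next to the left column. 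A short computation with signatures then gives $\varepsilon_0=0$ on the whole tuple.

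Next, weights are identified. The left column of $T_j$ contributes $\mu_j$ to coordinate $j$, and $S$ contributes either $0$ (in the even case) or $n-2\ell(\mu)$ (in the odd case) to coordinate $1$. Matching these against the highest weights appearing in Kwon's construction, one sees that the candidates listed in the lemma exhaust all highest weights of the subgraph $T^{\mathfrak{d}}(\mu)$ as defined by Kwon, differing only by the discrete even-versus-odd parity of $S$. Uniqueness of the highest-weight element in each connected component then forces the subgraph generated by the listed elements to coincide with $T^{\mathfrak{d}}(\mu)$.

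The main obstacle will be the bookkeeping for the $0$-operator, which is non-local on the tensor product, together with the verification that the structural constraints on the candidates, namely the residuum-at-most-one condition on each $T_j$ and the same-parity-of-columns condition on $S$, are preserved by the $f_i$'s and $e_i$'s. Preservation of these constraints is what prevents the generated subgraph from spilling outside the combinatorial model cut out by the definition of $B^{\mathfrak{d}}(\mu)$, and it is precisely what makes this explicit description equivalent to Kwon's abstract one.
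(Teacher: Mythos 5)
There is a mismatch of scope here that you should be aware of: the paper gives no proof of this statement at all. It is imported from Kwon~\cite{MR3814326} and functions in this paper as the \emph{operative definition} of $T^{\mathfrak{d}}(\mu)$ (hence the name ``Defining Lemma''). So there is no argument in the paper to compare yours against, and any proof must ultimately be an argument about Kwon's original construction.

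That is also where the genuine gap in your proposal lies. Your first step --- checking $\varepsilon_i=0$ for $i\geq 1$ on the listed tuples via the signature rule --- is fine and essentially routine. But the decisive step is the sentence ``Matching these against the highest weights appearing in Kwon's construction, one sees that the candidates listed in the lemma exhaust all highest weights of the subgraph $T^{\mathfrak{d}}(\mu)$ as defined by Kwon.'' That matching \emph{is} the content of the lemma; asserting it does not prove it. To carry it out you would need Kwon's independent characterization of $T^{\mathfrak{d}}(\mu)$ (which is not reproduced in this paper) and a weight-plus-multiplicity comparison against it, and neither is supplied. Two further points need care. First, your treatment of $e_0$ presumes that highest weight elements must satisfy $\varepsilon_0=0$; the paper's own definition of $\LRtabs$ only imposes $\varepsilon_i(L)=0$ for $i\neq 0$, so the role of the $0$-operator in the highest weight condition is exactly the kind of detail that must be taken from Kwon rather than guessed, and your cancellation argument ($\varepsilon_0=1$ of the filled left column against $\varphi_0=1$ of the empty right column) depends on an ordering of tensor factors that the paper never fixes. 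Second, the appeal to ``every connected component of a regular crystal is generated by its unique highest-weight element'' needs justification here, since the crystals in question are infinite; the paper only remarks that suitable truncations are finite, and one would have to check that the uniqueness statement survives that limit.
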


\begin{definition}
The set of \emph{orthogonal Littlewood-Richardson tableaux} is \[\LRtabs =\set{L\in T^{\mathfrak{d}}(\mu)}{\text{ $i$ occurs in $L$ exactly $\lambda'_i$ times and } \varepsilon_i(L)=0 \text{ for } i \neq 0}\]
with $\ell(\lambda)\leq n=2k+1$ and $\ell(\mu)\leq k$. 
\end{definition}

As announced before the set of orthogonal Littlewood-Richardson tableaux is counted by $\LRcoef$. See~\cite[Theorem 5.3]{MR3814326}. This is one of the main results of \cite{MR3814326}.

\begin{theorem}[Kwon]
 $\LRcoef=|\LRtabs|$
\end{theorem}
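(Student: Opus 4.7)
The plan is to interpret $T^{\mathfrak{d}}(\mu)$ as a $\GL$-crystal, identify $\LRtabs$ with the set of $\GL$-highest weight elements of appropriate weight in it, and then recognise the character of $T^{\mathfrak{d}}(\mu)$ as the universal $\SO(2k+1)$-character attached to $\mu$ in the sense of Koike and Terada. Combining these two identifications will yield $|\LRtabs|=\LRcoef$.

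First I would forget the operators $e_0,f_0$ and regard $T^{\mathfrak{d}}(\mu)$ purely as a $\GL$-crystal via $e_i,f_i$ for $i\geq 1$. By standard Kashiwara theory, every connected component of a $\GL$-subcrystal of a tensor product of column crystals is isomorphic to the crystal of a polynomial irreducible $V^{\GL}(\nu)$ and contains a unique highest weight vertex whose $\GL$-weight is $\nu$. Consequently, the number of $\GL$-highest weight vertices of $T^{\mathfrak{d}}(\mu)$ with the prescribed content $\lambda'$ (which is exactly $|\LRtabs|$) equals the multiplicity of $V^{\GL}(\lambda)$ in the virtual $\GL$-module whose character is $\sum_{L\in T^{\mathfrak{d}}(\mu)}x^{\mathrm{content}(L)}$; here the switch between $\lambda$ and $\lambda'$ reflects the column-oriented convention built into the construction of $B^{\mathfrak{d}}(\mu)$.

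It therefore remains to identify this character with the universal orthogonal character attached to $\mu$, i.e.\ with $\sum_\nu \mathrm{c}_\nu^\mu(\mathfrak{d})\,s_\nu$, which by Koike and Terada encodes the branching $\GL\downarrow\SO(2k+1)$. I would compute the character of $T^{\mathfrak{d}}(\mu)$ combinatorially by summing weighted monomials over the vertices $(T_1,\dots,T_{\ell(\mu)},S)$ described in the Defining Lemma. The shapes of the $T_j$, namely a column of height $\mu_j$ extended by an even horizontal strip with residuum at most one, are designed to produce the pairing factors $\prod_{i<j}(1-x_ix_j)^{-1}$ appearing in Koike and Terada's formula, while the rectangular tableau $S$ with columns of common parity contributes the tail specific to the odd-dimensional orthogonal group.

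The main obstacle will be precisely this last character computation: one must verify that the intricate residuum and parity conditions built into $B^{\mathfrak{d}}(\mu)$ combine to realise the Koike--Terada formula exactly, and that the subcrystal $T^{\mathfrak{d}}(\mu)$ cut out by the highest weight elements listed in the Defining Lemma does not produce any extraneous components. An attractive alternative would be to argue by induction on $|\mu|$ using Pieri-type branching rules, or to construct a weight-preserving bijection between $\LRtabs$ and another combinatorial model already known to enumerate $\LRcoef$ (such as Koike--Terada's tableaux or Sundaram-style tableaux adapted to the odd orthogonal group), which would bypass the direct character identification altogether.
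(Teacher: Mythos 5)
The paper does not prove this statement at all: it is quoted as an external result, namely Theorem 5.3 of Kwon's paper \cite{MR3814326}, so there is no internal argument to compare yours against. That said, your outline is broadly consistent with the strategy actually used by Kwon: regard $T^{\mathfrak{d}}(\mu)$ as a $\GL$-crystal by discarding $e_0,f_0$, identify $\LRtabs$ with the $\GL$-highest weight vertices of content $\lambda'$ (the condition $\varepsilon_i(L)=0$ for $i\neq 0$ is exactly $\GL$-highest weight), and match the resulting generating function against the stable branching rule of Littlewood/Koike--Terada.

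However, as written your proposal is a plan rather than a proof, and the gap sits exactly where all the content of the theorem lies. First, the claim that the character of $T^{\mathfrak{d}}(\mu)$ equals the universal orthogonal character $\sum_\nu \mathrm{c}_\nu^\mu(\mathfrak{d})\,s_\nu$ is not verified; the residuum and parity constraints on the $T_j$ and on $S$, and above all the admissibility conditions \eqref{eq:H}--\eqref{eq:A2_} cutting $T^{\mathfrak{d}}(\mu)$ out of $B^{\mathfrak{d}}(\mu)$, are precisely what must be shown to reproduce the factor $\prod_{i<j}(1-x_ix_j)^{-1}$ together with the correct odd-orthogonal tail, and you acknowledge this is left open. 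Second, some foundational points need care before the ``standard Kashiwara theory'' step applies: the crystal here is infinite, so one must invoke the truncation argument (delete edges labelled $\geq\ell$) to reduce to finite crystals before asserting that every connected component has a unique highest weight vertex; and one must check that the $\mathfrak{d}$-type admissibility conditions are stable under the $\GL$-crystal operators, so that $T^{\mathfrak{d}}(\mu)$ really is a union of full $\GL$-components rather than an arbitrary subset. Without these verifications the argument does not yet establish $|\LRtabs|=\LRcoef$; completing them amounts to reproving the main theorem of \cite{MR3814326}.
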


For two column skew shape semistandard tableaux we define admissibility which tells us if an element of $B^{\mathfrak{d}}(\mu)$ is in $T^\mathfrak{d}(\mu)$. To do so we need for a skew semistandard tableau consisting of a left and a right column $T=(T^L,T^R)$ the pairs $(^LT,^RT)$ and $(T^{L^*},T^{R^*})$:

\begin{definition}
Let $T=(T^L,T^R)$ be a two column skew semistandard tableau.

We define the pair $(^LT,^RT)$ of two one-column, column strict tableaux as follows. Beginning at the bottom, we slide each cell of $T^R$ down as far as possible, not beyond the bottom cell of $T^L$ and so that the entry of its left neighbor is not larger. Then ${^RT}$ consists of all entries $T^R$, together with those in $T^L$ that have no right neighbor. ${^LT}$ consists of the remaining entries in $T^L$.

If $T$ has residuum $1$, we define additionally the pair $(T^{L^*},T^{R^*})$ of two one-column, column strict tableaux as follows. Beginning on the top, we slide each cell of $T^L$ up as far as possible, not beyond the top cell of $T^R$ and so that the entry of its right neighbor is not smaller. Then $T^{L^{*}}$ consists of all entries in $T^L$, together with the largest entry in $T^R$ that has no left neighbor. Note that such an entry must exist because $T$ has residuum $1$ and $a$ is even, thus $a\geq 2$. $T^{R^{*}}$ consists of the remaining entries in $T^R$.
\end{definition}

See Figure~\ref{fig:Admissible} for examples.

\begin{definition}[Kwon]
For a single column $C$, let $C(i)$ be the $i$th entry from the bottom and $\mathrm{ht}(C)$ its length.

Let $T$ and $U$ be two two-column skew semistandard tableaux with tails of length $\mu_T$ and $\mu_U$ such that $\mu_T\geq\mu_U>0$ and residuum $r_T\leq 1$ and $r_U\leq 1$, respectively. The pair  $(T,U)$ is \emph{admissible}, if the following conditions are met: 
\begin{align}
\mathrm{ht}(T^R) \leq \mathrm{ht}(U^L)-\mu_U+2r_Tr_U \tag{H}
\label{eq:H}
\end{align}
\begin{align}
 T^R(i)\leq  {^LU(i)} & \quad \text{ if }  r_T\cdot r_U=0\tag{A1}
\label{eq:A1}\\
T^{R^*}(i)\leq  {^LU(i)} & \quad \text{ if }  r_T\cdot r_U=1\nonumber
\end{align}
\begin{align}
{^RT}(i+\mu_T -\mu_U)\leq U^L(i) & \quad \text{ if } r_T\cdot r_U=0\tag{A2}
\label{eq:A2}\\
{^RT}(i+\mu_T -\mu_U)\leq U^{L^*}(i) & \quad \text{ if } r_T\cdot r_U=1\nonumber
\end{align}

Let $T$ be a two-column skew semistandard tableaux with tail of length $\mu_T>0$ and residuum $r_T\leq 1$. Let $S$ be a skew semistandard tableau of rectangular outer shape with first column $S^L$ and columns with lengths of the same parity. The pair $(T,S)$ is \emph{admissible}, if the following conditions are met:
\begin{align}
\mathrm{ht}(T^R) \leq \mathrm{ht}(S^L) & \quad \text{ if } S \text{ is even}\tag{H$'$}
\label{eq:H_}\\
\mathrm{ht}(T^R) \leq \mathrm{ht}(S^L)-1+2r_T & \quad \text{ otherwise }\nonumber
\end{align}
\begin{align}
 T^R(i)\leq  {S^L(i)} & \quad \text{ if $S$ is even or }  r_T=0 \tag{A1$'$}
\label{eq:A1_}\\
T^{R^*}(i)\leq  {S^L(i)} & \quad \text{ otherwise}\nonumber
\end{align}
\begin{align}
{^RT}(i+\mu_T-1)\leq S^L(i) & \quad \text{ if $S$ is odd and }  r_T=0\tag{A2$'$}
\label{eq:A2_}\\
{^RT}(i+\mu_T)\leq S^L(i) & \quad \text{ otherwise}\nonumber
\end{align}

\end{definition}

\begin{theorem}[Kwon]
\label{theo:KwonLRTabs}
Let $L=(T_1,T_2,\dots,T_{\ell(\mu)},S)$ be a vertex in $B^{\mathfrak{d}}(\mu)$. Then $L$ is a vertex of $T^\mathfrak{d}(\mu)$ if and only if any pair of successive tableaux in $L$ is admissible.
\end{theorem}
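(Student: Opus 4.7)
The plan is to prove both directions by exploiting the fact that $T^\mathfrak{d}(\mu)$ is defined as the union of connected components of specified highest weight elements. The strategy is to show that admissibility is invariant under the Kashiwara operators $e_i, f_i$, and then to identify exactly the admissible highest weight elements in $B^\mathfrak{d}(\mu)$.

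For the forward direction, I would first verify admissibility directly for the highest weight elements described in the Defining Lemma. Since each $T_j$ has an empty right column and a tail filled with $1, 2, \dots, \mu_j$ from top to bottom, $\mathrm{ht}(T_j^R) = 0$ makes the height conditions \eqref{eq:H} and \eqref{eq:H_} trivial, the left-column conditions \eqref{eq:A1} and \eqref{eq:A1_} are vacuous, and \eqref{eq:A2} and \eqref{eq:A2_} become equalities after unwinding the explicit entries. Then I would show that every crystal edge preserves admissibility. Using the tensor product rule, $f_i$ modifies at most one entry in exactly one factor (or adds a $1,2$ domino when $i = 0$), and the signature computation in that same rule controls on which factor $f_i$ acts. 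This reduces the preservation of each inequality to a local check on how the quantities $T^R(i)$, ${^LU(i)}$, $U^L(i)$, ${^RT(i)}$, $T^{R^*}(i)$, $U^{L^*}(i)$, and the residua respond to a single elementary change, broken into sub-cases according to: which pair is affected ($(T_j, T_{j+1})$ versus $(T_{\ell(\mu)}, S)$), the parity of $S$, whether a residuum changes, and whether $i = 0$ or $i > 0$.

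For the reverse direction, starting from an admissible vertex $L$, I would iteratively apply the $e_i$ until reaching a highest weight element $L_0$ in the same connected component. Invariance of admissibility implies that $L_0$ is admissible. Combining the condition $\varepsilon_i(L_0) = 0$ for $i \geq 1$, rewritten via the tensor product rules as constraints on the entries of every factor, with the admissibility inequalities, should force the normal form listed in the Defining Lemma: each $T_j^R$ becomes empty, each $T_j^L$ contains exactly $1, 2, \dots, \mu_j$, and $S$ is either empty or a row of $1$'s of length $n - 2\ell(\mu)$. Since $L_0$ has the prescribed form, $L \in T^\mathfrak{d}(\mu)$.

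The hard part is the invariance step, which fragments into many sub-cases because the structure of the pairs $(^LT, {^RT})$ and $(T^{L^*}, T^{R^*})$ can shift non-trivially under an elementary change, especially at the crystal edges where $r_T \cdot r_U$ transitions between $0$ and $1$. A useful organizing device is to observe that the passage from $T$ to $(^LT, {^RT})$ (and analogously to $(T^{L^*}, T^{R^*})$) mimics a tensor product decomposition of two one-column crystals; from this viewpoint the inequalities \eqref{eq:A1}, \eqref{eq:A1_}, \eqref{eq:A2}, \eqref{eq:A2_} can be recast as comparisons of one-column column-strict tableaux along a single crystal edge, a situation where invariance is classical. The height conditions \eqref{eq:H}, \eqref{eq:H_} together with the residuum transitions remain the most delicate bookkeeping, and account for the apparent complexity of the admissibility axioms.
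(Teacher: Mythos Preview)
The paper does not contain a proof of this theorem. It is stated as a result of Kwon, attributed to \cite{MR3814326}, and used as a black box: the paper's own contribution in this section is Theorem~\ref{theo:LRTabs}, which translates Kwon's admissibility conditions into the explicit list \eqref{eq:HeightProperty}--\eqref{eq:GapProperty} via Lemmas~\ref{lem:TensorEpsilonZero}, \ref{lem:TypeProperty}, and \ref{lem:TailProperty}. So there is no proof here against which to compare your proposal.

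That said, your outline is the standard strategy one would expect for a statement of this type, and is presumably close to Kwon's own argument: verify the claim on the explicit highest weight elements, then propagate along crystal edges by showing the admissibility inequalities are stable under each $e_i$ and $f_i$. You correctly identify the delicate point, namely that the auxiliary constructions $({}^LT,{}^RT)$ and $(T^{L^*},T^{R^*})$ can reorganize when a single entry changes, and that the residuum may jump between $0$ and $1$, which switches which branch of \eqref{eq:A1}, \eqref{eq:A2}, \eqref{eq:A1_}, \eqref{eq:A2_} applies. Your proposal is a reasonable sketch, but be aware that what you have written is a plan rather than a proof: the invariance step is where all the work lies, and you have not actually carried out any of the case analysis. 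If you want to complete this, you should consult Kwon's paper directly, since the present paper gives you no further help on this point.
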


See Figure~\ref{fig:Admissible} for an example.

\begin{figure}
\hspace{1.7cm}\begin{tikzpicture}[scale=0.35]
  \draw (1,6)--(2,6);
  \draw (1,5)--(2,5);
  \draw (0,4)--(2,4);
  \draw (0,3)--(2,3);
  \draw (0,2)--(2,2);
  \draw (0,1)--(1,1);
  \draw (0,0)--(1,0);
  \draw (0,-1)--(1,-1);
  \draw (0,4) -- (0,-1);
  \draw (1,6) -- (1,-1);
  \draw (2,6) -- (2,2);
  \draw (0.5,3.5) node {1};
  \draw (0.5,2.5) node {2};
  \draw (0.5,1.5) node {3};
  \draw (0.5,0.5) node {7};
  \draw (0.5,-0.5) node {8};
  \draw (1.5,5.5) node {1};
  \draw (1.5,4.5) node {2};
  \draw (1.5,3.5) node {3};
  \draw (1.5,2.5) node {6};
  \draw (3.5,8)--(4.5,8);
  \draw (3.5,7)--(4.5,7);   
  \draw (2.5,6)--(4.5,6);
  \draw (2.5,5)--(4.5,5);   
  \draw (2.5,4)--(4.5,4);
  \draw (2.5,3)--(4.5,3);
  \draw (2.5,2)--(4.5,2);
  \draw (2.5,2)--(3.5,2);
  \draw (2.5,1)--(3.5,1);
  \draw (2.5,6) -- (2.5,1);
  \draw (3.5,8) -- (3.5,1);
  \draw (4.5,8) -- (4.5,2);
  \draw (3,5.5) node {1};
  \draw (3,4.5) node {2};
  \draw (3,3.5) node {3};
  \draw (3,2.5) node {4};
  \draw (3,1.5) node {5};
  \draw (4,7.5) node {1};
  \draw (4,6.5) node {2};
  \draw (4,5.5) node {3};
  \draw (4,4.5) node {4};
  \draw (4,3.5) node {5};
  \draw (4,2.5) node {6};
  \draw (9,14)--(10,14);
  \draw (9,13)--(10,13);
  \draw (9,12)--(10,12);
  \draw (9,11)--(10,11);
  \draw (8,10)--(10,10);
  \draw (8,9)--(10,9);
  \draw (5,8)--(10,8);
  \draw (5,7)--(10,7);
  \draw (5,6)--(10,6);
  \draw (5,5)--(10,5);
  \draw (5,4)--(10,4);
  \draw (5,3)--(10,3);
  \draw (5,2)--(10,2);
  \draw (5,8) -- (5,2);
  \draw (6,8) -- (6,2);
  \draw (7,8) -- (7,2);
  \draw (8,10) -- (8,2);
  \draw (9,14) -- (9,2);
  \draw (10,14) -- (10,2);
  \draw (5.5,7.5) node {1};
  \draw (5.5,6.5) node {2};
  \draw (5.5,5.5) node {3};
  \draw (5.5,4.5) node {4};
  \draw (5.5,3.5) node {5};
  \draw (5.5,2.5) node {6};
  \draw (6.5,7.5) node {1};
  \draw (6.5,6.5) node {2};
  \draw (6.5,5.5) node {3};
  \draw (6.5,4.5) node {4};
  \draw (6.5,3.5) node {5};
  \draw (6.5,2.5) node {6};
  \draw (7.5,7.5) node {1};
  \draw (7.5,6.5) node {2};
  \draw (7.5,5.5) node {3};
  \draw (7.5,4.5) node {4};
  \draw (7.5,3.5) node {5};
  \draw (7.5,2.5) node {6};
  \draw (8.5,9.5) node {1};
  \draw (8.5,8.5) node {2};
  \draw (8.5,7.5) node {3};
  \draw (8.5,6.5) node {2};
  \draw (8.5,5.5) node {5};
  \draw (8.5,4.5) node {6};
  \draw (8.5,3.5) node {7};
  \draw (8.5,2.5) node {8};
  \draw (9.5,13.5) node {1};
  \draw (9.5,12.5) node {2};
  \draw (9.5,11.5) node {3};
  \draw (9.5,10.5) node {4};
  \draw (9.5,9.5) node {5};
  \draw (9.5,8.5) node {6};
  \draw (9.5,7.5) node {7};
  \draw (9.5,6.5) node {8};
  \draw (9.5,5.5) node {9};
  \draw (9.5,4.5) node {10};
  \draw (9.5,3.5) node {11};
  \draw (9.5,2.5) node {12};
\end{tikzpicture}\vspace{-0.4cm}

\hspace{0.455cm}\begin{tikzpicture}[scale=0.35]
  \draw (-2,3) node{$T^LT^R=$};
  \draw (1,6)--(2,6);
  \draw (1,5)--(2,5);
  \draw (0,4)--(2,4);
  \draw (0,3)--(2,3);
  \draw (0,2)--(2,2);
  \draw (0,1)--(1,1);
  \draw (0,0)--(1,0);
  \draw (0,-1)--(1,-1);
  \draw (0,4) -- (0,-1);
  \draw (1,6) -- (1,-1);
  \draw (2,6) -- (2,2);
  \draw (0.5,3.5) node {1};
  \draw (0.5,2.5) node {2};
  \draw (0.5,1.5) node {3};
  \draw (0.5,0.5) node {7};
  \draw (0.5,-0.5) node {8};
  \draw (1.5,5.5) node {1};
  \draw (1.5,4.5) node {2};
  \draw (1.5,3.5) node {3};
  \draw (1.5,2.5) node {6};
    \draw[->] (1.5,2) -- (1.5,1);
\end{tikzpicture}
\hspace{-0.2cm}
\begin{tikzpicture}[scale=0.35]
  \draw (-0.6,3) node{$\mapsto$};
  \draw (1,5)--(2,5);
  \draw (0,4)--(2,4);
  \draw (0,3)--(2,3);
  \draw (0,2)--(2,2);
  \draw (0,1)--(2,1);
  \draw (0,0)--(1,0);
  \draw (0,0)--(1,0);
  \draw (0,-1)--(1,-1);
  \draw (0,4) -- (0,-1);
  \draw (1,5) -- (1,-1);
  \draw (2,5) -- (2,1);
  \draw (0.5,3.5) node {1};
  \draw (0.5,2.5) node {2};
  \draw (0.5,1.5) node {3};
  \draw (0.5,0.5) node {7}; 
    \draw[->] (1,0.5) -- (1.5,0.5);
    \draw (0.5,0.5) node[draw, circle, scale=1.2] {};
  \draw (0.5,-0.5) node {8}; 
    \draw[->] (1,-0.5) -- (1.5,-0.5);
    \draw (0.5,-0.5) node[draw, circle, scale=1.2] {};
  \draw (1.5,4.5) node {1};
  \draw (1.5,3.5) node {2};
  \draw (1.5,2.5) node {3};
  \draw (1.5,1.5) node {6};
\end{tikzpicture}
\hspace{-0.2cm}
\begin{tikzpicture}[scale=0.35]
  \draw (-0.6,3) node{$\mapsto$};
  \draw (0,5)--(2,5);
  \draw (0,4)--(2,4);
  \draw (0,3)--(2,3);
  \draw (0,2)--(2,2);
  \draw (1,1)--(2,1);
  \draw (1,0)--(2,0);
  \draw (1,-1)--(2,-1);
  \draw (0,5) -- (0,2);
  \draw (1,5) -- (1,-1);
  \draw (2,5) -- (2,-1);
  \draw (0.5,4.5) node {1};
  \draw (0.5,3.5) node {2};
  \draw (0.5,2.5) node {3};
  \draw (1.5,0.5) node {7};
  \draw (1.5,-0.5) node {8}; 
  \draw (1.5,4.5) node {1};
  \draw (1.5,3.5) node {2};
  \draw (1.5,2.5) node {3};
  \draw (1.5,1.5) node {6};
  \draw (4,3) node{$={^LT}{^RT}$};
\end{tikzpicture}
\begin{tikzpicture}[scale=0.35]
  \draw (-2,3) node{$T^LT^R=$};
  \draw (1,6)--(2,6);
  \draw (1,5)--(2,5);
  \draw (0,4)--(2,4);
  \draw (0,3)--(2,3);
  \draw (0,2)--(2,2);
  \draw (0,1)--(1,1);
  \draw (0,0)--(1,0);
  \draw (0,-1)--(1,-1);
  \draw (0,4) -- (0,-1);
  \draw (1,6) -- (1,-1);
  \draw (2,6) -- (2,2);
  \draw (0.5,3.5) node {1};
  \draw (0.5,2.5) node {2};
  \draw (0.5,1.5) node {3};
  \draw (0.5,0.5) node {7};
  \draw (0.5,-0.5) node {8};
  \draw (1.5,5.5) node {1};
  \draw (1.5,4.5) node {2};
  \draw (1.5,3.5) node {3};
  \draw (1.5,2.5) node {6};
    \draw[->] (0.5,4) -- (0.5,5);
\end{tikzpicture}
\hspace{-0.2cm}
\begin{tikzpicture}[scale=0.35]
  \draw (-0.6,3) node{$\mapsto$};
  \draw (0,6)--(2,6);
  \draw (0,5)--(2,5);
  \draw (0,4)--(2,4);
  \draw (0,3)--(2,3);
  \draw (0,2)--(2,2);
  \draw (0,1)--(1,1);
  \draw (0,0)--(1,0);
  \draw (0,6) -- (0,0);
  \draw (1,6) -- (1,0);
  \draw (2,6) -- (2,2);
  \draw (0.5,5.5) node {1};
  \draw (0.5,4.5) node {2};
  \draw (0.5,3.5) node {3};
  \draw (0.5,1.5) node {7};
  \draw (0.5,0.5) node {8}; 
  \draw (0.5,0);
    \draw[->] (1,2.5) -- (0.5,2.5);
    \draw (1.5,2.5) node[draw, circle, scale=1.2] {};
  \draw (1.5,5.5) node {1};
  \draw (1.5,4.5) node {2};
  \draw (1.5,3.5) node {3};
  \draw (1.5,2.5) node {6};
\end{tikzpicture}
\hspace{-0.2cm}
\begin{tikzpicture}[scale=0.35]
  \draw (-0.6,3) node{$\mapsto$};
  \draw (0,6)--(2,6);
  \draw (0,5)--(2,5);
  \draw (0,4)--(2,4);
  \draw (0,3)--(2,3);
  \draw (0,2)--(1,2);
  \draw (0,1)--(1,1);
  \draw (0,0)--(1,0);
  \draw (0,6) -- (0,0);
  \draw (1,6) -- (1,0);
  \draw (2,6) -- (2,3);
  \draw (0.5,5.5) node {1};
  \draw (0.5,4.5) node {2};
  \draw (0.5,3.5) node {3};
  \draw (0.5,2.5) node {6};
  \draw (0.5,1.5) node {7};
  \draw (0.5,0.5) node {8}; 
  \draw (0.5,0);
  \draw (1.5,5.5) node {1};
  \draw (1.5,4.5) node {2};
  \draw (1.5,3.5) node {3};
  \draw (4.5,3) node{$=T^{L^*}T^{R^*}$};
\end{tikzpicture}

\hspace{0.455cm}
\begin{tikzpicture}[scale=0.35]
  \draw (0.5,3) node{$U^LU^R=$};
  \draw (3.5,8)--(4.5,8);
  \draw (3.5,7)--(4.5,7);   
  \draw (2.5,6)--(4.5,6);
  \draw (2.5,5)--(4.5,5);   
  \draw (2.5,4)--(4.5,4);
  \draw (2.5,3)--(4.5,3);
  \draw (2.5,2)--(4.5,2);
  \draw (2.5,1)--(3.5,1);
  \draw (2.5,6) -- (2.5,1);
  \draw (3.5,8) -- (3.5,1);
  \draw (4.5,8) -- (4.5,2);
  \draw (3,5.5) node {1};
  \draw (3,4.5) node {2};
  \draw (3,3.5) node {3};
  \draw (3,2.5) node {4};
  \draw (3,1.5) node {5};
  \draw (4,7.5) node {1};
  \draw (4,6.5) node {2};
  \draw (4,5.5) node {3};
  \draw (4,4.5) node {4};
  \draw (4,3.5) node {5};
  \draw (4,2.5) node {6};
    \draw[->] (4,2) -- (4,1);
\end{tikzpicture}
\hspace{-0.2cm}
\begin{tikzpicture}[scale=0.35]
  \draw (1.9,3) node{$\mapsto$};
  \draw (3.5,7)--(4.5,7);   
  \draw (2.5,6)--(4.5,6);
  \draw (2.5,5)--(4.5,5);   
  \draw (2.5,4)--(4.5,4);
  \draw (2.5,3)--(4.5,3);
  \draw (2.5,2)--(4.5,2);
  \draw (2.5,1)--(4.5,1);
  \draw (2.5,6) -- (2.5,1);
  \draw (3.5,7) -- (3.5,1);
  \draw (4.5,7) -- (4.5,1);
  \draw (3,5.5) node {1};
  \draw (3,4.5) node {2};
  \draw (3,3.5) node {3};
  \draw (3,2.5) node {4};
  \draw (3,1.5) node {5};
  \draw (4,6.5) node {1};
  \draw (4,5.5) node {2};
  \draw (4,4.5) node {3};
  \draw (4,3.5) node {4};
  \draw (4,2.5) node {5};
  \draw (4,1.5) node {6};
    \draw[->] (3,6) -- (3,7);
\end{tikzpicture}
\hspace{-0.2cm}
\begin{tikzpicture}[scale=0.35]
  \draw (1.9,3) node{$\mapsto$};
  \draw (2.5,7)--(4.5,7);   
  \draw (2.5,6)--(4.5,6);
  \draw (2.5,5)--(4.5,5);   
  \draw (2.5,4)--(4.5,4);
  \draw (2.5,3)--(4.5,3);
  \draw (2.5,2)--(4.5,2);
  \draw (3.5,1)--(4.5,1);
  \draw (2.5,7) -- (2.5,2);
  \draw (3.5,7) -- (3.5,1);
  \draw (4.5,7) -- (4.5,1);
  \draw (3,6.5) node {1};
  \draw (3,5.5) node {2};
  \draw (3,4.5) node {3};
  \draw (3,3.5) node {4};
  \draw (3,2.5) node {5};
  \draw (4,6.5) node {1};
  \draw (4,5.5) node {2};
  \draw (4,4.5) node {3};
  \draw (4,3.5) node {4};
  \draw (4,2.5) node {5};
  \draw (4,1.5) node {6};
  \draw (6.5,3) node{$={^LU}{^RU}$};
\end{tikzpicture}
\hspace{-0.07cm}
\begin{tikzpicture}[scale=0.35]
  \draw (0.5,3) node{$U^LU^R=$};
  \draw (3.5,8)--(4.5,8);
  \draw (3.5,7)--(4.5,7);   
  \draw (2.5,6)--(4.5,6);
  \draw (2.5,5)--(4.5,5);   
  \draw (2.5,4)--(4.5,4);
  \draw (2.5,3)--(4.5,3);
  \draw (2.5,2)--(4.5,2);
  \draw (2.5,1)--(3.5,1);
  \draw (2.5,6) -- (2.5,1);
  \draw (3.5,8) -- (3.5,1);
  \draw (4.5,8) -- (4.5,2);
  \draw (3,5.5) node {1};
  \draw (3,4.5) node {2};
  \draw (3,3.5) node {3};
  \draw (3,2.5) node {4};
  \draw (3,1.5) node {5};
  \draw (4,7.5) node {1};
  \draw (4,6.5) node {2};
  \draw (4,5.5) node {3};
  \draw (4,4.5) node {4};
  \draw (4,3.5) node {5};
  \draw (4,2.5) node {6};
    \draw[->] (3,6) -- (3,7);
\end{tikzpicture}
\hspace{-0.2cm}
\begin{tikzpicture}[scale=0.35]
  \draw (1.9,3) node{$\mapsto$};
  \draw (2.5,8)--(4.5,8);
  \draw (2.5,7)--(4.5,7);   
  \draw (2.5,6)--(4.5,6);
  \draw (2.5,5)--(4.5,5);   
  \draw (2.5,4)--(4.5,4);
  \draw (2.5,3)--(4.5,3);
  \draw (2.5,2)--(4.5,2);
  \draw (2.5,1);
  \draw (2.5,8) -- (2.5,2);
  \draw (3.5,8) -- (3.5,2);
  \draw (4.5,8) -- (4.5,2);
  \draw (3,7.5) node {1};
  \draw (3,6.5) node {2};
  \draw (3,5.5) node {3};
  \draw (3,4.5) node {4};
  \draw (3,3.5) node {5};
  \draw (4,7.5) node {1};
  \draw (4,6.5) node {2};
  \draw (4,5.5) node {3};
  \draw (4,4.5) node {4};
  \draw (4,3.5) node {5};
  \draw (4,2.5) node {6};
    \draw[->] (3.5,2.5) -- (3,2.5);
    \draw (4,2.5) node[draw, circle, scale=1.2] {};
\end{tikzpicture}
\hspace{-0.2cm}
\begin{tikzpicture}[scale=0.35]
  \draw (1.9,3) node{$\mapsto$};
  \draw (2.5,8)--(4.5,8);
  \draw (2.5,7)--(4.5,7);   
  \draw (2.5,6)--(4.5,6);
  \draw (2.5,5)--(4.5,5);   
  \draw (2.5,4)--(4.5,4);
  \draw (2.5,3)--(4.5,3);
  \draw (2.5,2)--(3.5,2);
  \draw (2.5,1);
  \draw (2.5,8) -- (2.5,2);
  \draw (3.5,8) -- (3.5,2);
  \draw (4.5,8) -- (4.5,3);
  \draw (3,7.5) node {1};
  \draw (3,6.5) node {2};
  \draw (3,5.5) node {3};
  \draw (3,4.5) node {4};
  \draw (3,3.5) node {5};
  \draw (3,2.5) node {6};
  \draw (4,7.5) node {1};
  \draw (4,6.5) node {2};
  \draw (4,5.5) node {3};
  \draw (4,4.5) node {4};
  \draw (4,3.5) node {5};
  \draw (7,3) node{$=U^{L^*}U^{R^*}$};
\end{tikzpicture}
\caption{An orthogonal Littlewood-Richardson tableau in $\LRtabs$ with $n=5$, $k=3$, $\lambda=(12,8,8,6,6,6,6,3,3)$ and $\mu=(3,1)$. We calculated the columns we need to prove its admissibility.}
\label{fig:Admissible}
\end{figure}
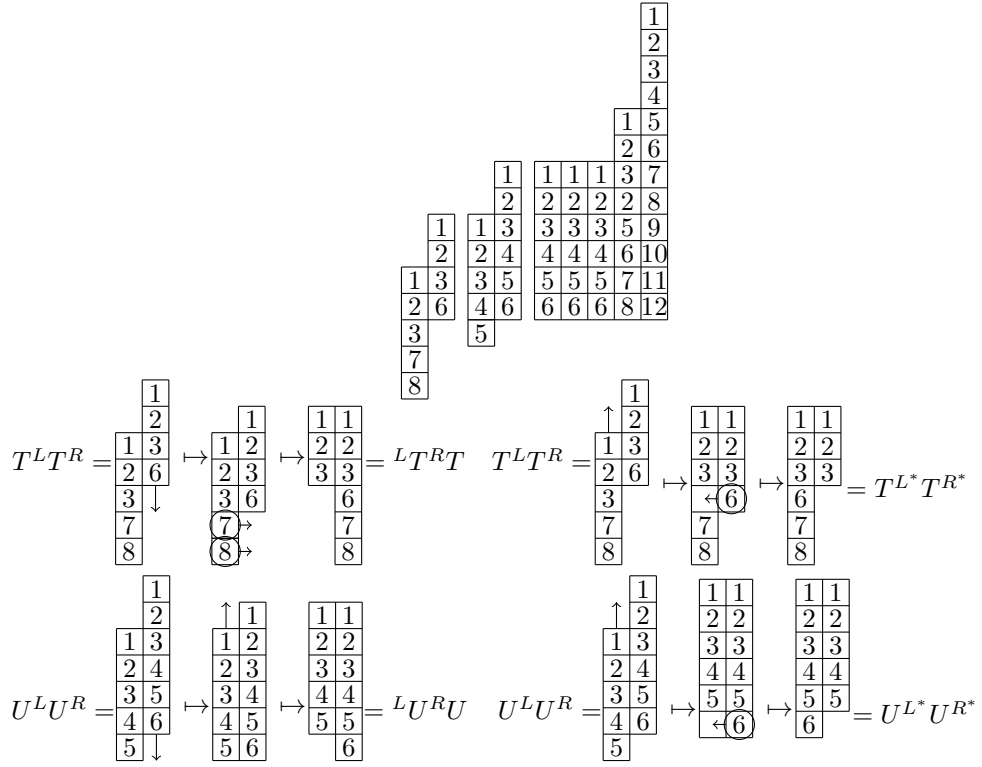

\begin{remark}
\label{rem:LRTabsSubSet}
Let $L\in\LRtabs$ be an orthogonal Littlewood-Richardson tableau. Moreover let  $\tilde{L}=(T_j,T_{j+1},\dots,T_{\ell(\mu)},S)$ be the tableau, which is obtained from $L$ by deleting the first $j-1$ semistandard tableaux. Due to Theorem~\ref{theo:KwonLRTabs} $\tilde{L}$ is an orthogonal Littlewood-Richardson tableau in $\mathrm{LR}_{\lambda}^{\tilde{\mu}}(\mathfrak{d})$, where $\tilde{\mu}=(\mu_j,\mu_{j+1},\dots\mu_{\ell(\mu)})$.
\end{remark}

We give now an explicit description of Kwon's orthogonal Littlewood-Richardson tableaux. For it we need the concept of gaps and slots.

\begin{definition}
Let $T$ be a semistandard tableau. A position $j>1$ of $T$ is a \emph{gap} if $j-1$ is not in the same column as $j$. A position $j>0$ of $T$ is a \emph{slot} if $j+1$ is not in the same column as $j$.
\end{definition}

Note that above a gap there is either a slot or nothing and below a slot there is either a gap or nothing. In the first tableau of Figure~\ref{fig:Admissible} the $3$ and the $8$ in the  first column and the $3$ in the second column are slots, while the $7$ in the first column is a gap and the $6$ in the second column is both, a gap and a slot.

\begin{theorem}
\label{theo:LRTabs}
Let $\lambda\vdash r$, $\ell(\lambda)\leq n(=2k+1)$, $\ell(\mu)\leq k$. Let $L=(T_1,T_2,\dots,T_{\ell(\mu)},S)$ be a vertex in $B^{\mathfrak{d}}(\mu)$. Then $L$ is an orthogonal Littlewood-Richardson tableau in $\LRtabs$ for $\SO(n)$ if and only if for all $i$ there are $\lambda_i'$ $i$'s in $L$ and the following conditions are met:
\begin{enumerate}
\item[{(H)}\namedlabel{eq:HeightProperty}{H}]$b_i\leq b_{i+1}-a_{i+1}+2r_ir_{i+1}$ for $1\leq i \leq \ell(\mu)-1$.
\item[{(H$'$)}\namedlabel{eq:Height_Property}{H$'$}] $b_{\ell(\mu)}\leq ht(S^L)$ if $S$ is even and $
b_{\ell(\mu)}\leq ht(S^L)-1+2r_{\ell(\mu)}$  if $S$ is odd.
\item[{(S)}\namedlabel{eq:SProperty}{S}] $S$ contains no gap.
\item[{(T1)}\namedlabel{eq:TypeProperty}{T1}] Tableaux $T_1,T_2,\dots,T_{\ell(\mu)}$ are of one of the following three types.
\begin{enumerate}
\item Type 1 tableaux have residuum 0. Gaps can be only in the tail.
\item Type 2 tableaux have residuum 1. Gaps can be only in the lower tail.
\item Type 3 tableaux have residuum 1. The fin is a gap. Other gaps can be only in the lower tail.
\end{enumerate}
If $T_i$ is of type 3, $i<\ell(\mu)$, $T_{i+1}$ has residuum 1 and the fin of $T_i$ is not larger than the fin of $T_{i+1}$. If $T_{\ell(\mu)}$ is of type 3, $S$ is odd.

If $T_{\ell(\mu)}$ is of type 1 and $S$ is odd, the tail root is smaller than or equal to $S^L(1)$, the bottommost position in the first column of $S$.
\item[{(T2)}\namedlabel{eq:TailProperty}{T2}] The tails shifted together such that they share the top line form a semistandard Young tableau.
\item[{(G)}\namedlabel{eq:GapProperty}{G}] For each gap $j$ there is a slot $j-1$ in a column to the right. This can be in the same Tableau $T_i$ or in another one that is right of $T_i$ in $L$ including $S$. More precisely, if there are $m$ gaps $j$ there are $m$ slots $j-1$ such that we can build pairs of a gap and a slot such that each slot is to the right of its gap.
\end{enumerate}
\end{theorem}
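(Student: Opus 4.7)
The plan is to show that Kwon's admissibility conditions from Theorem~\ref{theo:KwonLRTabs}, together with the highest-weight requirement $\varepsilon_i(L)=0$ for $i\neq 0$ built into the definition of $\LRtabs$, are equivalent to the explicit combinatorial conditions \eqref{eq:HeightProperty}--\eqref{eq:GapProperty}. The letter-counting condition appears in both characterizations unchanged, so it plays no role in the translation. I would split the remaining argument into two almost independent blocks: block (i), that the highest-weight condition is equivalent to the conjunction of \eqref{eq:SProperty} and \eqref{eq:GapProperty}; and block (ii), that pairwise admissibility of successive tableaux in $L$ is equivalent to the conjunction of \eqref{eq:HeightProperty}, \eqref{eq:Height_Property}, \eqref{eq:TypeProperty} and \eqref{eq:TailProperty}.

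For block (i), I would apply the signature rule for tensor products of crystals to each color $i$. Inside any one-column factor, an entry $i+1$ contributes a minus sign exactly when $i$ is absent immediately below it (that is, $i+1$ is a gap), while an entry $i$ contributes a plus sign exactly when $i+1$ is absent immediately above it (that is, $i$ is a slot). After cancelling all adjacent minus-plus pairs in the concatenation of these signed words over all factors, $\varepsilon_i(L)=0$ says precisely that every gap $i+1$ is matched with some slot $i$ that appears strictly to its right in the tensor reading order; this is \eqref{eq:GapProperty}. Since $S$ is the rightmost factor of $L$, any gap surviving inside $S$ would produce an uncancelled minus, so $S$ must be gap-free, which is \eqref{eq:SProperty}.

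For block (ii), the height inequalities \eqref{eq:HeightProperty} and \eqref{eq:Height_Property} are literal transcriptions of Kwon's \eqref{eq:H} and \eqref{eq:H_}. Condition \eqref{eq:TailProperty} arises from rewriting \eqref{eq:A1} and \eqref{eq:A2} in both their starred and unstarred versions: once one unwinds the sliding procedures defining ${^LT}$, ${^RT}$, $T^{L^*}$, $T^{R^*}$, the inequalities become exactly the assertion that stacking the tails of $T_1,\dots,T_{\ell(\mu)}$ along a common top line produces a semistandard filling, which I would verify row by row. Condition \eqref{eq:TypeProperty} then classifies the only gap patterns inside a single $T_j$ that are simultaneously compatible with \eqref{eq:A1}--\eqref{eq:A2} and with the gap-slot pairing \eqref{eq:GapProperty}: any gap outside the tail (or outside the lower tail, in the residuum-$1$ case, unless it sits at the fin) forces a matching slot whose required position is forbidden by the admissibility inequalities. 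The fin-as-gap (type $3$) subcase produces a single gap that can only be absorbed by a slot in the immediately following factor, and this one constraint forces both the residuum of $T_{j+1}$ and the fin comparison; an analogous argument in the last position forces $S$ to be odd.

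The main obstacle will be block (ii), and specifically the case analysis underpinning \eqref{eq:TypeProperty}. Kwon's admissibility comes in two versions (with and without stars) depending on the parity pattern $r_{T_j}r_{T_{j+1}}\in\{0,1\}$, and the identity of the slot cancelling a type-$3$ fin-gap depends nontrivially on this pattern and on the parity of $S$. I would organize the argument as an induction on $j$ running from right to left through $L$, using \eqref{eq:SProperty} and the gaps already resolved in factors further right as the inductive hypothesis; the base case uses the $S$-version inequalities \eqref{eq:H_}, \eqref{eq:A1_}, \eqref{eq:A2_} rather than \eqref{eq:H}, \eqref{eq:A1}, \eqref{eq:A2}, but the structural shape of the argument is the same.
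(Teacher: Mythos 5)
Your overall architecture coincides with the paper's: it likewise splits the proof into (i) a lemma showing that $\varepsilon_i(L)=0$ for $i\neq 0$ is equivalent to the conjunction of (S) and (G), proved via the tensor-product rule for $\varepsilon_i$ and $\varphi_i$ exactly as in your signature-rule argument, and (ii) lemmas translating Kwon's pairwise admissibility into (H), (H$'$), (T1), (T2), with (H)/(H$'$) being literal transcriptions. One imprecision in your block (i): a gap inside $S$ does not automatically leave an uncancelled minus merely because $S$ is the rightmost factor, since it could a priori be matched by a slot in a column of $S$ further to the right; one needs the extra observation that $S$ is skew semistandard with rows weakly increasing, so any slot to the right of a gap within $S$ is too large to cancel it.

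The substantive problem is in block (ii). You assert that (A1) and (A2) (starred and unstarred) "become exactly" the statement (T2) that the stacked tails form a semistandard tableau, and then derive (T1) as a consequence of (A1), (A2) and (G). This is both factually wrong and internally inconsistent: if (A1) and (A2) together were equivalent to (T2) alone, then (T1) would be redundant given (T2), (G) and the height conditions, which it is not. The correct correspondence, and the one the paper establishes, is that (A1)/(A1$'$) --- the inequalities comparing $T^R$ or $T^{R^*}$ against ${}^LU$ or $S^L$ --- are equivalent to (T1) minus the tail-root condition for residuum-zero tableaux (they control where gaps may sit outside the tail and force the three types), while (A2)/(A2$'$) --- the inequalities involving ${}^RT$ shifted by $\mu_T-\mu_U$ --- are equivalent to (T2) together with that remaining tail-root condition. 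Moreover the order of these two steps matters: the explicit descriptions of ${}^LT_i$, ${}^RT_i$, $T_i^{L^*}$, $T_i^{R^*}$ that you need in order to "unwind the sliding procedures" in (A2) (for instance, that for residuum $1$ the tableau ${}^LT_i$ is $T_i^L$ with only the lower tail removed) are themselves consequences of (A1) and of the resulting gap placement, so (T1) must be in hand before (A2) can be read row by row as semistandardness of the stacked tails. Your right-to-left induction is the correct vehicle (the paper inducts by peeling off $T_1$, using that any suffix of an orthogonal Littlewood-Richardson tableau is again one), but as written the middle step would not go through without reassigning which admissibility condition is responsible for which combinatorial property.
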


\begin{remark}
Properties \eqref{eq:H} and \eqref{eq:HeightProperty}, as well as  Properties  \eqref{eq:H_} and \eqref{eq:Height_Property} are just reformulations of each other. That is why we named them identically.
\end{remark}

\begin{lemma}
\label{lem:TensorEpsilonZero}
Let $L$ be in $B^{\mathfrak{d}}(\mu)$. Then $\varepsilon_i(L)=0 \text{ for } i \neq 0$ if and only if \eqref{eq:SProperty} and \eqref{eq:GapProperty}.
\end{lemma}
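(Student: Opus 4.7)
The plan is to translate the condition $\varepsilon_i(L)=0$ for $i\neq 0$ into the standard signature rule for tensor products of crystals, and match it against the two combinatorial statements (S) and (G).

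First I would verify the single-column dictionary: for a one-column tableau $C$ and $i\geq 1$, $\varepsilon_i(C)=1$ exactly when $C$ contains $i+1$ but not $i$ (that is, $i+1$ is a gap of $C$), and $\varphi_i(C)=1$ exactly when $C$ contains $i$ but not $i+1$ (that is, $i$ is a slot of $C$); otherwise both are zero, and the two cases are mutually exclusive. This is immediate from the definition of the one-column crystal. Regarding $L$ as the tensor product of its columns read left to right (columns of $T_1$, then $T_2$, \dots, then of $S$), the signature rule for $\otimes$ says that $\varepsilon_i(L)$ equals the number of unmatched ``$-$''s after cancelling adjacent ``$-+$'' pairs in the induced sign sequence. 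Therefore $\varepsilon_i(L)=0$ if and only if the gaps at $i+1$ can be paired injectively with slots at $i$ lying strictly to their right.

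For the backward direction, assuming (S) and (G): condition (S) ensures that all ``$-$''-signs come from gaps inside the $T_j$'s, and (G) supplies precisely the required injective pairing with slots to the right. Hence $\varepsilon_i(L)=0$ for every $i\geq 1$. For the forward direction, applied to $L=(T_1\otimes\dots\otimes T_{\ell(\mu)})\otimes S$ the tensor formula $\varepsilon_i(X\otimes S)=\varepsilon_i(S)+\max\{0,\varepsilon_i(X)-\varphi_i(S)\}$ gives $\varepsilon_i(L)\geq\varepsilon_i(S)$, so $\varepsilon_i(S)=0$ for all $i\geq 1$. I establish (S) by induction on the columns of $S$ from right to left. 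The rightmost column $S_t$ admits no cancellation of its own gaps (nothing lies to its right), hence must be $\{1,2,\dots,\ell_t\}$. Assuming inductively that $S_{p+1},\dots,S_t$ are each of the form $\{1,\dots,\ell_q\}$, a gap at $j$ in $S_p$ would demand a slot at $j-1$ in some $S_q$ with $q>p$; since such an $S_q$ contributes a slot only at $i=\ell_q$, we would need $j=\ell_q+1$. But semistandardness of $S$ together with $S_q(r')=r'-r+\ell_q$ gives $S_p(r')\leq S_q(r')\leq\ell_q<j$ for every row $r'\leq r$ present in $S_p$, contradicting $j\in S_p$. Hence $S_p$ has no gap, completing (S). Once (S) holds, all $-$-signs stem from the $T_j$'s, and the assumption $\varepsilon_i(L)=0$ read off the sign sequence is exactly the existence of the matching demanded by (G).

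The delicate point I expect to be the main obstacle is the (S) half of the forward direction: one must exclude that slots inside the $T_j$'s could absorb gaps of $S$. The resolution is built into the signature rule: such slots lie to the \emph{left} of $S$ and so cannot cancel any $-$ coming from $S$, reducing matters to a right-to-left analysis inside $S$, where the semistandard structure combined with the very restricted form $\{1,\dots,\ell_q\}$ forced on each already-processed column leaves no room for a gap to appear.
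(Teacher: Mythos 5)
Your proposal is correct and follows essentially the same route as the paper: the single-column dictionary between $\varepsilon_i,\varphi_i$ and gaps/slots, the tensor-product (signature) rule reducing $\varepsilon_i(L)=0$ to an injective gap--slot matching with slots to the right, and the observation that $S$ can have no gaps because its rightmost column has none and the remaining columns are then forced to be initial intervals. You merely spell out in full the right-to-left induction on the columns of $S$ that the paper compresses into the remark that ``slots to the right are too big.''
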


\begin{proof}
If and only if $\varepsilon_j(C)>0$ a column $C$ contains a gap $j$. In this case $\varepsilon_j(C)=1$. On the other hand if and only if $\varphi_j(C)>0$ a column $C$ contains a slot $j$. In this case $\varphi_j(C)=1$.

The tensor product tells us $\varepsilon_j(b\otimes b')=\varepsilon_j(b')+\max(0,(\varepsilon_j(b)-\varphi_j(b')))$ and therefore $\varepsilon_j(b\otimes b')\geq \varepsilon_j(b')$.

For a a tensor product consisting of several columns to have $\varepsilon_j=0$ this means that the first column needs to contain no gap and \eqref{eq:GapProperty}. Because $S$ is a skew semistandard tableau and the rightmost column has no gaps, it cannot have gaps, because slots to the right are to big.
\end{proof}

\begin{remark}
This also shows that the filling of such a tableau is a partition.
\end{remark}

\begin{lemma}
\label{lem:TypeProperty}
Let $L=(T_1, T_2, \dots, T_{\ell(\mu)}, S)$ be a tableau in $B^{\mathfrak{d}}(\mu)$ such that \eqref{eq:H}, \eqref{eq:H_}, \eqref{eq:SProperty} and \eqref{eq:GapProperty} hold. Then if and only if \eqref{eq:A1} and \eqref{eq:A1_} hold also \eqref{eq:TypeProperty} without the tail root condition for residuum zero tableaux holds.
\end{lemma}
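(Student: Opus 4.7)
The plan is to reduce the lemma to local statements about consecutive pairs and analyze residuum combinations. Since both \eqref{eq:A1} and \eqref{eq:A1_} are conditions on consecutive pairs (either $(T_i,T_{i+1})$ or $(T_{\ell(\mu)},S)$), and since \eqref{eq:TypeProperty} without the tail root condition splits into a per-tableau condition describing where gaps may occur and an inter-tableau consistency condition that activates only when some $T_i$ is of type 3, I plan to fix one pair $(T,U)$ and prove the equivalence separately for each residuum combination. By Remark~\ref{rem:LRTabsSubSet}, no global coupling is lost by treating pairs independently.

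First I will handle the case $r_T=0$, where the inequality reads $T^R(i)\leq {}^LU(i)$ (respectively $T^R(i)\leq S^L(i)$). The translation I will use is: a gap at some entry $j$ in $T^L$ above the tail forces the entry immediately to the right in $T^R$ to be at least $j+1$; tracking this against $^LU$ (whose shape and content are controlled by the semistandardness of $U$ together with \eqref{eq:H}, \eqref{eq:H_}, and \eqref{eq:SProperty}) I will exhibit a witness index $i$ at which the inequality fails. Conversely, if all gaps of $T$ lie in the tail (type 1), the compressed form of $T^R$ is then easily compared entrywise with $^LU$. The analogous argument with $r_T=1$, $r_U=0$ (or $S$ even) still employs $T^R$ directly, but now the residuum shifts $T^R$ upward by one relative to the left column, so the inequality rules out gaps at or above the tail root and pins down type 2 exactly.

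Second, for $r_T=r_U=1$ (or $r_T=1$ and $S$ odd) the conditions use the starred column $T^{R\ast}$, which removes the fin from the right column and appends it to $T^{L\ast}$. I will show that this deformation is precisely what is needed so that (a) type 2 tableaux satisfy the inequality for the same reason as in the unstarred case, and (b) type 3 tableaux (where the fin is a gap) satisfy $T^{R\ast}(i)\leq {}^LU(i)$ (or $\leq S^L(i)$) if and only if the fin of $T$ does not exceed the fin of $U$ (respectively $S^L(1)$), which is exactly the inter-tableau condition appearing in \eqref{eq:TypeProperty}. The requirement that $T_{i+1}$ have residuum~$1$ when $T_i$ is of type 3, and that $S$ be odd when $T_{\ell(\mu)}$ is of type 3, will emerge because in the complementary situation \eqref{eq:A1}/\eqref{eq:A1_} would invoke the unstarred $T^R$, and the gap at the fin would then immediately violate the inequality.

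The main obstacle I expect is the bookkeeping that connects gap positions in $T^L$ with the entries of $T^R$ or $T^{R\ast}$ at the correct heights, and tracing through the definitions of $^LU$, $U^{L\ast}$, and $S^L$ carefully enough to locate the witness index in each case. Properties \eqref{eq:H}, \eqref{eq:H_}, \eqref{eq:SProperty}, and \eqref{eq:GapProperty} will enter precisely at this step: \eqref{eq:H} and \eqref{eq:H_} guarantee that $^LU$ (or $S^L$) is long enough for the comparison to be well-defined at the relevant index, while \eqref{eq:SProperty} and \eqref{eq:GapProperty} ensure that the entries of $^LU$ are small enough that a large entry in $T^R$ arising from a forbidden gap really produces a violation.
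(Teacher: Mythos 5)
There is a genuine gap in your reduction. You claim that, by Remark~\ref{rem:LRTabsSubSet}, ``no global coupling is lost by treating pairs independently'', and you then plan to show that a forbidden gap in $T$ produces a witness index at which \eqref{eq:A1} fails for the adjacent pair $(T,U)$. Neither step is justified. The restriction on gaps in \eqref{eq:TypeProperty} is enforced through \eqref{eq:GapProperty}: a gap with entry $j$ is legitimate precisely when a slot $j-1$ exists somewhere to its right, and that slot may sit in any of $T_{i+1},\dots,T_{\ell(\mu)},S$, not only in the adjacent tableau. A tableau $T$ with a gap above the fin can perfectly well satisfy $T^R(m)\leq{}^LU(m)$ for every $m$; what rules the gap out is that the whole chain of inequalities \eqref{eq:A1}, \eqref{eq:A1_} over all pairs forces every slot in the entire suffix to the right to be at least as large as the relevant entry of $T^R$, so that the matching slot demanded by \eqref{eq:GapProperty} cannot exist. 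This is why the paper's proof is a right-to-left induction whose inductive hypothesis is precisely the global statement that there is no slot smaller than the bottommost position of $T_i^{R^*}$ (respectively the fin, respectively the tail root) in $T_i$ \emph{or anywhere to the right}; the admissibility of the single pair $(T_1,T_2)$ only transfers this bound one step further to the left. Your independent-pairs decomposition discards exactly this propagated slot information, and with it the only mechanism by which \eqref{eq:A1} can forbid a gap.

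A secondary point: Remark~\ref{rem:LRTabsSubSet} says that suffixes of orthogonal Littlewood--Richardson tableaux are again such tableaux; it licenses an induction on suffixes, not a decomposition into independent pairs, and invoking it while proving the characterization itself is slightly circular. If you restructure your argument as an induction from $S$ leftwards, carrying the ``no small slot to the right'' statements as part of the hypothesis, then the local case analysis by residuum that you sketch (including the role of $T^{R^*}$ for the type~3 fin and the parity/residuum constraint on the tableau to the right when the fin is a gap) becomes essentially the paper's proof.
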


\begin{proof}
We first show inductively that the following two statements hold if and only if \eqref{eq:A1} and \eqref{eq:A1_} hold.
\begin{itemize}
\item Suppose $T_i$ has residuum 1.
\begin{itemize}
\item Then $T_i^{R^*}$ is $T_i^R$ without the fin and ${^LT_i}$ is $T_i^L$ without the lower tail. The tail root is not a gap.
\item There is no slot smaller than the bottommost position of $T_i^{R^*}$ in $T_i^R$ or to the right. There is no slot smaller than the tail root in $T_i^L$ or to the right.
\item If the fin is a gap, then $T_{i+1}$ has also residuum 1, or if $i=\ell(\mu)$, $S$ is odd.
\end{itemize}
\item Suppose $T_i$ has residuum 0.
\begin{itemize}
\item Then ${^LT_i}$ is $T_i^L$ without the tail.
\item There is no slot smaller than the fin in $T_i^R$ or to the right. There is no slot smaller than the bottommost position of ${^LT_i}$ in $T_i^L$ or to the right.
\end{itemize}
\end{itemize}
This implies that there are no gaps at and above the positions in question, because slots to the right are to big.

In the base case $L=S$ we can argue that this is equivalent to $S$ being a skew semistandard tableau.

In the induction step we consider $T_1$. (Compare with Remark~\ref{rem:LRTabsSubSet}.) If $T_1$ has residuum 1, it holds that:
\begin{itemize}
\item $T^{R^{*}}_1$ contains one position less than $T^R_1$. Let us call this position $l_1$. Suppose that $l_1$ is not the fin. In this case there exists a position $l_3$ directly below $l_1$. As $l_1$ is not in $T^{R^{*}}_1$, there exists a position $l_2$ in $T^{L}_1$, that is shifted next to $l_3$ when determining $T^{R^{*}}_1$. Therefore $l_1<l_2\leq l_3$.
If $l_2-1$ is in $T_1^L$, it is at most one position above $l_2$, thus directly besides $l_1$, which is a contradiction. Therefore $l_2$ is a gap. If $l_2-1=l_1$, either $l_3$ is a gap or $l_1$ is no slot. Thus either $l_2$ or $l_3$ is a gap with no slot in $T_1$. However $l_3$ is in $T^{R^{*}}_1$ and therefore smaller than or equal to the bottommost position of ${^LT_2}$ (or $S^L$ if $\ell(\mu)=1$, respectively). We have seen by induction that there are no smaller slots to the right. This is a contradiction.
\item The bottommost position of $T^{R^{*}}_1$ (the position above the fin) is smaller than or equal to the bottommost position of ${^LT_2}$ (or $S^L$ if $\ell(\mu)=1$, respectively). Thus there is no slot that is small enough for this position or one above to be a gap.
\item Because $T^{R^{*}}_1$ is $T^R_1$ without the fin, the tail root is shifted above the fin when calculating $T_1^{R^*}$. Therefore it is smaller than or equal to the bottommost position of $T_1^{R^*}$. By the same argumentation as above, neither it nor a position above is a gap and no slot is smaller than it.
\item If we consider the procedure to obtain ${^LT_1}$ we see that the fin is placed besides the tail root due to residuum 1, and therefore only the lower tail is shifted right.
\end{itemize}
If $T_1$ has residuum 0, it holds that:.
\begin{itemize}
\item The fin is smaller than or equal to the smallest slot to the right. Therefore it is no gap and there are no gaps above. The same holds for the position above the tail root.
\item Due to residuum 0, nothing is shifted besides the tail root when calculating ${^LT_1}$, thus the whole tail changes column.
\end{itemize}

On the other hand, if those statements hold, the inequalities that hold for the bottommost positions of the considered columns, the column strictness and the lack of gaps imply \eqref{eq:A1} and \eqref{eq:A1_}.

We prove now that those statements hold if and only if \eqref{eq:TypeProperty} without the tail root condition for residuum zero tableaux holds. The statements about the slots imply where gaps are. On the other hand, if the gaps are where they are described in \eqref{eq:TypeProperty} and \eqref{eq:HeightProperty} and \eqref{eq:Height_Property} hold, then we also get the inequalities between the slots in question. Finally the statements about ${^LT_i}$ and $T^{R*}_i$ follow from the residuum and the places where a gap can be.
\end{proof}

\begin{lemma}
\label{lem:TailProperty}
Let $L=(T_1, T_2, \dots, T_{\ell(\mu)}, S)$ be a tableau in $B^{\mathfrak{d}}(\mu)$ such that \eqref{eq:H}, \eqref{eq:H_}, \eqref{eq:SProperty}, \eqref{eq:GapProperty} and \eqref{eq:TypeProperty} without the tail root condition for residuum zero tableaux hold. Then if and only if \eqref{eq:A2} and \eqref{eq:A2_} hold also \eqref{eq:TailProperty} and the tail root condition for residuum zero tableaux of \eqref{eq:TypeProperty} hold.
\end{lemma}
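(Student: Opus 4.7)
My plan is to follow the inductive scheme of Lemma~\ref{lem:TypeProperty}, processing the constituents of $L$ from left to right, or equivalently stripping $T_1$ off and applying the inductive hypothesis to $(T_2,\dots,T_{\ell(\mu)},S)$, which still satisfies all the standing assumptions by the reasoning of Remark~\ref{rem:LRTabsSubSet}. The base case is just $S$ (together with at most one preceding tableau), where \eqref{eq:TailProperty} involves no nontrivial row comparison and the only content is the tail root condition, handled by \eqref{eq:A2_}. The inductive step then reduces to analyzing a single pair $(T,U)$ with $U\in\{T_{j+1},S\}$.

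For an inner pair $(T_j,T_{j+1})$ the goal is to show that \eqref{eq:A2} is equivalent to the comparison between the tails of $T_j$ and $T_{j+1}$ being weakly increasing row by row when top-aligned (which is precisely the content of \eqref{eq:TailProperty} restricted to this pair, since individual tails are already column-strict). The key observation, which I would harvest from the structural statements proved in Lemma~\ref{lem:TypeProperty}, is that under the standing hypotheses the bottom $\mu_{T_j}$ entries of ${^RT_j}$ are exactly the tail of $T_j$ in the same order (the sliding of $T_j^R$ never pushes a cell below the tail root), and similarly the bottom $\mu_{T_{j+1}}$ entries of $T_{j+1}^L$ or $T_{j+1}^{L^*}$ are exactly the tail of $T_{j+1}$. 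Consequently ${^RT_j}(i+\mu_{T_j}-\mu_{T_{j+1}})\leq T_{j+1}^L(i)$ (resp.\ $T_{j+1}^{L^*}(i)$) for $i=1,\dots,\mu_{T_{j+1}}$ translates verbatim into the top-aligned tail comparison, because top-alignment raises the bottom of $T_{j+1}$'s tail by exactly $\mu_{T_j}-\mu_{T_{j+1}}$ rows relative to $T_j$'s tail.

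For the final pair $(T_{\ell(\mu)},S)$ the remaining statement is that \eqref{eq:A2_} is equivalent to the tail root condition from \eqref{eq:TypeProperty}. Since $S$ has no tail, the only nontrivial comparison offered by top-alignment is between the bottommost part of $T_{\ell(\mu)}$'s tail and the top of $S^L$, and the shift ``$+\mu_T-1$ versus $+\mu_T$'' in \eqref{eq:A2_} is precisely what toggles whether the tail root enters that comparison. When $S$ is odd and $r_{T_{\ell(\mu)}}=0$, the shift is $+\mu_T-1$ and \eqref{eq:A2_} becomes: tail root of $T_{\ell(\mu)}\leq S^L(1)$, which is exactly the tail root condition. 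In the remaining cases ($S$ even, or $r_{T_{\ell(\mu)}}=1$) the shift is $+\mu_T$, so the tail root drops out and \eqref{eq:A2_} collapses to a vacuous inequality, matching the fact that the tail root condition is not imposed in those cases either.

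The main technical obstacle is the case analysis driven by $r_T\cdot r_U\in\{0,1\}$. In the residuum-$1$ case the fin of $T$ participates in the construction of ${^RT}$, and on the $U$-side $U^L$ is replaced by $U^{L^*}$, which can include a cell transplanted from $U^R$. I would check explicitly, using column-strictness, the residuum hypothesis, and the gap/slot pattern guaranteed by \eqref{eq:TypeProperty}, that after these substitutions the lower $\mu_T$ entries of ${^RT}$ and the lower $\mu_U$ entries of $U^{L^*}$ still coincide with the respective tails, so that the translation in the previous paragraph remains valid verbatim. This bookkeeping is of the same flavor as in Lemma~\ref{lem:TypeProperty}, and once it is settled both directions of the equivalence follow.
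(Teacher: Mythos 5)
There is a genuine gap, and it sits exactly at the step you defer to ``bookkeeping''. Your key observation --- that the bottom $\mu_{T}$ entries of ${^RT}$ are the tail of $T$, and the bottom $\mu_{U}$ entries of $U^{L}$ or $U^{L^*}$ are the tail of $U$ --- is false whenever the relevant tableau has residuum $1$. The sliding that produces $({^LT},{^RT})$ pushes the fin of $T$ down \emph{beside} the tail root (that is what residuum $\geq 1$ means), so the tail root acquires a right neighbour and stays in ${^LT}$; only the lower tail migrates, and ${^RT}(\mu_T)$ is the \emph{fin}, not the tail root. Dually, $U^{L^*}$ acquires the fin of $U$, which (being strictly larger than the tail root and strictly smaller than the first lower-tail entry, by residuum exactly $1$) is sorted in between them, so the bottom $\mu_U$ entries of $U^{L^*}$ are again lower tail plus fin rather than the tail. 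The explicit check you propose in your last paragraph would therefore come out negative, and your ``verbatim translation'' breaks precisely in the tail-root row of \eqref{eq:TailProperty}: there \eqref{eq:A2} compares fin with fin (or fin with tail root), while \eqref{eq:TailProperty} compares tail root with tail root, and neither inequality implies the other on its own.

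This mismatch is not fatal to the lemma, but repairing it is the actual content of the paper's proof, which your proposal does not supply. One needs the fact, extracted from the proof of Lemma~\ref{lem:TypeProperty}, that for residuum-$1$ tableaux the tail root is strictly smaller than the fin, together with \eqref{eq:HeightProperty}/\eqref{eq:Height_Property} and the no-gaps-above-the-tail-root part of \eqref{eq:TypeProperty} (which pin the tail root down to $b-a+1$ and the fin to $b$): from these, both the fin-versus-fin instance of \eqref{eq:A2} and the tail-root row of \eqref{eq:TailProperty} are automatic consequences of the standing hypotheses, so they drop out of the equivalence, while the remaining rows of the two conditions do match up entry by entry as you describe. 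Your treatment of the pair $(T_{\ell(\mu)},S)$ has the right shape for the case $S$ odd, $r_{T_{\ell(\mu)}}=0$, but calling the other branch of \eqref{eq:A2_} ``vacuous'' is also inaccurate --- it imposes real inequalities between $T^R$ and $S^L$ which must be seen to follow from \eqref{eq:Height_Property} and the gap structure rather than being empty.
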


\begin{proof}
Due to what we have seen before about ${^LT_i}$ and $T^{R^*}_i$ this holds once we argue, that for residuum 1 tableaux the tail root is smaller than or equal to the fin.

The tail root condition for residuum zero tableaux and $S$ odd is equivalent to the second condition of \eqref{eq:A2_}.
\end{proof}

Now Theorem~\ref{theo:LRTabs} follows directly from Lemmas~\ref{lem:TensorEpsilonZero}, \ref{lem:TypeProperty} and~\ref{lem:TailProperty}.

We finish this section by proving further properties about orthogonal Littlewood-Richardson tableaux we will use later on.

\begin{proposition}
\label{prop:LRtabs1}
If $T_i$ is of type 2 or 3 the tail root is a slot. 
\end{proposition}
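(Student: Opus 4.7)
The plan is to argue by contradiction: assume the tail root $t$ of $T_i$ is not a slot, and derive that $T_i$ has residuum at least~$2$, contradicting the defining property of type 2 or 3. Write $c_1<c_2<\dots<c_{b_i}$ and $d_{a_i+1}<d_{a_i+2}<\dots<d_{b_i+\mu_i}$ for the entries of the second and first columns of $T_i$ read from top to bottom, so that the fin is $c_{b_i}$ and $t=d_{b_i+1}$. If $\mu_i=1$, the tail has only one cell and $t$ is trivially a slot. For $\mu_i\ge 2$, since column~1 is strictly increasing, $t+1$ can appear in column~1 below $t$ only in the position immediately below, so the tail root is a slot if and only if $d_{b_i+2}\ne t+1$.

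Next I would use the no-gap conditions of \eqref{eq:TypeProperty} to pin down all entries of $T_i$ outside the lower tail. The absence of a gap at $d_{a_i+1}$, the topmost entry in column~1, forces $d_{a_i+1}=1$, and the no-gap conditions in the two-column region propagate inductively to give $d_{a_i+j}=j$ for $j=1,\dots,b_i-a_i$; the non-gap condition at the tail root (required in both types 2 and 3, since the tail root is neither in the lower tail nor the fin) then yields $t=b_i-a_i+1$. Analogously, the absence of gaps in column~2 above the fin forces $c_j=j$ for $1\le j\le b_i-1$, with $c_{b_i}=b_i$ in type~2 and $c_{b_i}\ge b_i+1$ in type~3. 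Since residuum $\ge 1$ requires $a_i\ge 1$ and $a_i$ is even, $a_i\ge 2$.

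Under the assumption $d_{b_i+2}=t+1=b_i-a_i+2$, I would then verify that shifting column~2 of $T_i$ down by two produces a semistandard skew tableau. The shape of the shifted tableau is $(2^{b_i+2},1^{\mu_i-2})/(2^2,1^{a_i-2})$, which is a valid skew shape since $a_i\ge 2$ and $\mu_i\ge 2$; semistandardness reduces to $d_{k+2}\le c_k$ for $a_i-1\le k\le b_i$. With the values derived above, each check with $k\le b_i-1$ becomes $a_i\ge 2$, and the check at $k=b_i$ reads $b_i-a_i+2\le c_{b_i}$, which holds in type~2 by $a_i\ge 2$ (giving $c_{b_i}=b_i\ge b_i-a_i+2$) and in type~3 by $c_{b_i}\ge b_i+1\ge b_i-a_i+2$. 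Thus the residuum of $T_i$ is at least $2$, the desired contradiction, so $d_{b_i+2}\ne t+1$ and the tail root is a slot. The main obstacle is executing the no-gap induction cleanly to pin down the entries of the two-column region and of column~2; once that structure is in hand, the double-shift check reduces to straightforward arithmetic.
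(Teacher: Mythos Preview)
Your proof is correct and follows the same route as the paper's: both use the no-gap structure imposed by \eqref{eq:TypeProperty} to see that the only shift-by-$2$ condition that can possibly fail is the comparison of the entry just below the tail root with the fin, so residuum exactly~$1$ forces that entry to exceed the fin and hence to differ from $t+1$. The paper compresses this into two sentences by quoting the inequality ``tail root $<$ fin'' from the proof of Lemma~\ref{lem:TypeProperty} and stating the residuum consequence directly, whereas you re-derive the explicit entries of $T_i$ from \eqref{eq:TypeProperty} and verify all the shift-by-$2$ inequalities by hand; the two arguments are logically equivalent, yours simply more self-contained.
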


\begin{proof}
We have seen in the proof of Lemma~\ref{lem:TypeProperty}, that the tail root is strictly smaller than the fin. Since the residuum is exactly $1$, the entry below the tail root, if it exists, is larger than the fin.
\end{proof}

\begin{proposition}
\label{prop:LRtabs2}
If the fin of a tableau $T_i$ exists, it is even and not larger than the fin of $T_{i+1}$, which then also exists.
\end{proposition}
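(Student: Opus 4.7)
The plan is a case analysis on the type of $T_i$ from \eqref{eq:TypeProperty}, using the explicit structure of column~2 together with \eqref{eq:HeightProperty}, \eqref{eq:GapProperty}, and Proposition~\ref{prop:LRtabs1}, with induction on $\ell(\mu) - i$ to reduce to cases already handled.

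First, I would pin down column~2 of $T_i$ explicitly. In each of the three types of \eqref{eq:TypeProperty}, gaps in column~2 above the fin are forbidden, forcing the entries above the fin to form the consecutive string $1, 2, \ldots, b_i - 1$. Hence in types 1 and 2 the fin equals $b_i$, and in type 3 the fin lies in $\{b_i + 1, b_i + 2, \ldots\}$. Since $b_i$ is even by definition, evenness is immediate in types 1 and 2.

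For the inequality and the existence of the fin of $T_{i+1}$: in type 3 this is already part of \eqref{eq:TypeProperty}, once one observes that $T_{i+1}$ having residuum one forces $a_{i+1} \geq 2$ (as $a_{i+1}$ is even and positive) and hence $b_{i+1} > a_{i+1}$, so column~2 of $T_{i+1}$ is non-empty. In types 1 and 2, I would extract both from \eqref{eq:HeightProperty}: in each sub-case of $(r_i, r_{i+1})$, the inequality $b_i \leq b_{i+1} - a_{i+1} + 2 r_i r_{i+1}$ combined with $b_i \geq 2$ and $a_{i+1}$ even gives $b_{i+1} \geq b_i$, whence the fin of $T_{i+1}$ is at least $b_{i+1} \geq b_i$, the fin of $T_i$.

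The remaining and hardest step is evenness of the fin in type 3. Writing $f$ for the fin, $f$ is a gap, so \eqref{eq:GapProperty} produces a slot of value $f - 1$ in some column strictly to the right of column~2 of $T_i$. I would dispatch each possible host of this slot in turn. The clean case is when the slot lies in column~2 of some $T_j$ with $j > i$: it cannot be the fin of $T_j$, since the inequality chain already proved would then give $f \leq f - 1$, so $T_j$ is of type 3 and the slot is the entry $b_j - 1$ just above its fin, making $f = b_j$, which is even. I expect the main obstacle to be the cases when the slot lies in column~1 of some $T_j$, or in $S$. In the latter case, one argues that iterating \eqref{eq:TypeProperty} along the chain starting at $T_i$ forces $S$ to be odd whenever the responsibility for the slot reaches $S$, so the column of $S$ containing the slot has odd length, delivering $f - 1$ odd. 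For the column-1 case I would use \eqref{eq:TailProperty} (the tails aligned at the top form a semistandard tableau) together with Proposition~\ref{prop:LRtabs1} and the structural description of column-1 entries derived inside the proof of Lemma~\ref{lem:TypeProperty} to rule out even values for the candidate slot, reducing each possibility (bottom of the tail, the tail root of a residuum-one tableau, or a position just above a lower-tail gap) either to an already-handled column-2 case via the tail alignment or directly to an odd value by parity of $b_j$ and the inductively established evenness of the fins further to the right.
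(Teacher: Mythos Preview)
Your treatment of types 1 and 2, and of the inequality in type 3, matches the paper. The divergence is in the type-3 evenness argument, and there your sketch has a real gap.

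The problematic step is the claim that ``iterating \eqref{eq:TypeProperty} along the chain starting at $T_i$ forces $S$ to be odd whenever the responsibility for the slot reaches $S$.'' Iterating \eqref{eq:TypeProperty} from a type-3 $T_i$ only gives that $T_{i+1}$ has residuum~1, i.e.\ is of type~2 or~3. The chain of type-3 tableaux can terminate at some type-2 $T_m$, after which $T_{m+1}$ may be of type~1 and $S$ may well be even. So your justification does not establish $S$ odd. The same difficulty infects your column-1 case: once a type-1 tableau $T_m$ appears to the right, the slot $b_m-a_m$ at the top of its tail is even, and neither tail alignment \eqref{eq:TailProperty} nor the inductive evenness of later fins rules it out as a slot of value $f-1$.

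What actually disposes of these cases is not an oddness argument but a size argument: one shows that slots in the first type-1 tableau to the right and everything beyond it (including $S$ when even) are at least $f$, hence cannot equal $f-1$. This uses \eqref{eq:HeightProperty} and \eqref{eq:Height_Property} together with the fin chain you already established: if $T_m$ is the first type-1 tableau to the right then $T_{m-1}$ is of type~1 or~2 (else $T_m$ would have residuum~1), so its fin equals $b_{m-1}$, and \eqref{eq:HeightProperty} with $r_m=0$ gives $b_{m-1}\le b_m-a_m$; combined with $f\le\text{fin}(T_{m-1})=b_{m-1}$ this bounds all slots in $T_m^L$ below by $f$. The paper organises the whole proof around this first residuum-0 tableau $T_j$: slots strictly between $T_i$ and $T_j$ (tail roots and positions above fins of residuum-1 tableaux) are odd because $b-a+1$ is odd, and slots at $T_j$ and beyond are too large. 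Your slot-by-slot case analysis can be completed, but only by importing this size bound; the parity-and-induction line you propose for the far cases does not close the argument on its own.
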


\begin{proof}
The fin of $T_i$ is even for type $1$ or $2$, as $T_i^R$ has no gap and even length. We show for these cases that the fin is smaller than or equal to the fin of $T_{i+1}$. If $T_{i}$ or $T_{i+1}$ is of type $1$, $T_{i+1}^L$ without tail is at least as long as $T_i^R$ by \eqref{eq:H}. Therefore, as $a_{i+1}\geq 0$, also $T_{i+1}^R$ is at least as long as $T_i^R$. If both tableaux have residuum $1$, $T_{i+1}^L$ without tail plus $2$ is at least as long as $T_i^R$ by \eqref{eq:H} and $T_{i+1}^R$ is longer than $T_{i+1}^L$ by at least $2$. The claim follows as the fin of $T_i$ is equal to the length of $T_i^R$ and the fin of $T_{i+1}$ is larger than or equal to the length of $T_{i+1}^R$.

If $T_i$ is of type $3$, we know that the fin of $T_i$ is not larger than the fin of $T_{i+1}$ by assumption. We show for this case that the fin is even. We do so by showing that any possible slot is odd.

Let $T_{j}$ be the next tableau of residuum $0$ to the right of $T_i$, if this exists, or $T_{\ell(\mu)}$, otherwise. Tableaux between $T_i$ and $T_j$ are therefore of type $3$ or $2$. Tableaux of type 3 have at least two odd slots, namely the position above the fin and the tail root. Tableaux of type 2 have at least one odd slot, namely the tail root. Other slots need to be at least as large as the fin. Therefore slots between $T_j$ and $T_i$, that are small enough for the fin of one of those tableaux or $T_i$ to be their slot, are also odd.

It remains to show, that there is no even slot right of $T_j$ (and in $T_j$ if it is of type 1), that is small enough for any fin of $T_i$ or a tableau between $T_i$ and $T_j$ to be its gap.

If $T_j$ is of type $2$ or $3$, it is directly left of $S$. As $S$ contains no gap by \eqref{eq:SProperty}, slots in $S$ are in the bottom line. If $S$ is odd, the slots of $S$ are also odd. If $S$ is even, $T_j$ is of type $2$, due to \eqref{eq:TypeProperty} and any slot of $S$ is larger than the fin of $T_2$ due to \eqref{eq:Height_Property}.

If $T_j$ is of type $1$, slots of $T_j$ are at least as large as the fin of $T_{j-1}$ due to \eqref{eq:HeightProperty} and because the fin of $T_{j-1}$ is not a gap, as it is of type 2 \eqref{eq:TypeProperty}. Due to \eqref{eq:HeightProperty} and \eqref{eq:Height_Property} (and because gaps are the fin or in the tail by \eqref{eq:TypeProperty}) this also holds for slots further to the right.
\end{proof}

\section{Alternative Orthogonal Littlewood-Richardson Tableaux}
\label{sec:BijA}

In this section we define an alternative set of Littlewood-Richardson tableaux in terms of skew tableaux.

Moreover define a bijection (Bijection $A$) between Kwon's orthogonal Littlewood-Richardson ta\-bleaux and our new tableaux. We will use our new set of tableaux in the main bijection (Bijection $B$) to map pairs consisting of a standard Young tableau and a Littlewood-Richardson tableau to a vacillating tableau.

\subsection{Definition and Examples}

\begin{definition}
\label{def:aoLRT}
We define the set of alternative orthogonal Littlewood-Richardson tableaux  $\LRtabsa$ as follows. A tableau $L\in\LRtabsa$ is a reverse skew semistandard tableau of inner shape $\lambda$ and type $\mu$ (thus the filling consists of $\mu_j$ $j$'s, for all $j$). The outer shape has $2k+1$ possibly empty rows, whose lengths have all the same parity.
The following two properties are satisfied.

\begin{enumerate}
\item The reading word is a Yamanouchi word. This is satisfied if and only if the $j$th cell from left, labeled $i$ is above the $j$th cell from left labeled $i-1$ for all $i>1$.

\item We go through the reading word of $L$ from right to left. Let $p$ be the current position. We define a sequence $v_p$ of positions of the reading word. The first entry of $v_p$ is $p$. If $m-1$ entries of $v_p$ are defined, let $e$ be entry number $m-1$. We search now for entry number $m$. For that we consider entries whose letter is larger than the letter of $e$ and which are in exactly $m-1$ sequences of positions right of $p$ (thus sequences already defined). If this set is nonempty we search for the smallest letter in it and take the rightmost position with this letter as entry $m$. If it is empty, $v_p$ has no more entries.

Let $r_p$ be the row $p$ is in.
Now we define the value $o_p$ to be the number of entries in $v_p$  with the following properties. It is the rightmost occurrence of its letter and if number $m$ in $v_p$ all $v_{\tilde{p}}$ with $\tilde{p}\neq p$ in the same row as $p$, have at most $m-1$ entries.

We require $r_p\geq 2 |v_p| - o_p$.
\end{enumerate}
\end{definition}

\begin{example}
\label{ex:aoLRT}
Two alternative orthogonal Littlewood-Richardson tableaux. 

\hspace{2cm}
\begin{tikzpicture}[scale=0.35]
  \draw (4.5,5)--(8.5,5);
  \draw (4.5,4)--(8.5,4);
  \draw (4.5,3)--(8.5,3);
  \draw (4.5,2)--(8.5,2);
  \draw (4.5,1)--(6.5,1);
  \draw (4.5,0)--(6.5,0);
  \draw (4.5,5) -- (4.5,0);
  \draw (5.5,5) -- (5.5,0);
  \draw (6.5,5) -- (6.5,0);
  \draw (7.5,5) -- (7.5,2);
  \draw (8.5,5) -- (8.5,2);
  \draw[blue] (5,0.5) node {1};
  \draw[blue] (6,0.5) node {1};
  \draw[blue] (8,2.5) node {1};
  \draw[violet] (6,1.5) node {2};
  \draw[violet] (7,2.5) node {2};
  \draw[red] (6,2.5) node {3};
  \draw (5,-1);
\end{tikzpicture}
\hspace{2cm}
\begin{tikzpicture}[scale=0.35]
  \draw (0,7) -- (8,7);
  \draw (0,6) -- (8,6);
  \draw (0,5) -- (8,5);
  \draw (0,4) -- (8,4);
  \draw (0,3) -- (8,3);
  \draw (0,2) -- (6,2);
  \draw (0,1) -- (4,1);
  \draw (0,7) -- (0,1);
  \draw (1,7) -- (1,1);
  \draw (2,7) -- (2,1);
  \draw (3,7) -- (3,1);
  \draw (4,7) -- (4,1);
  \draw (5,7) -- (5,2);
  \draw (6,7) -- (6,2);
  \draw (7,7) -- (7,3);
  \draw (8,7) -- (8,3);
  \draw[red] (4.5,3.5) node {3};
  \draw[red] (6.5,4.5) node {3};
  \draw[red] (7.5,4.5) node {3};
  \draw[violet] (4.5,2.5) node {2};
  \draw[violet] (5.5,3.5) node {2};
  \draw[violet] (6.5,3.5) node {2};
  \draw[violet] (7.5,3.5) node {2};
  \draw[blue] (0.5,1.5) node {1};
  \draw[blue] (1.5,1.5) node {1};
  \draw[blue] (2.5,1.5) node {1};
  \draw[blue] (3.5,1.5) node {1};
  \draw[blue] (5.5,2.5) node {1};
      \draw (0,1);
\end{tikzpicture}

We write the reading word as a sequence of entries $l_p$ where $l$ is the letter and $p$ counts the position. The reading words are: $(1_1,1_2,2_3,3_4,2_5,1_6)$ and $(1_1,1_2,1_3,1_4,2_5,1_6,3_7,2_8,2_9,2_{10},3_{11},3_{12})$

Then we have the following $v$'s, where rows are separated by semicolons: $(1_6)$, $(2_5)$, $(3_4)$; $(2_3,3_4)$; $(1_2,2_5,3_4)$, $(1_1,2_3)$ and $(3_{12})$, $(3_{11})$; $(2_{10},3_{12})$, $(2_9,3_{11})$, $(2_8)$, $(3_7)$; $(1_6,2_{10},3_{12})$, $(2_5,3_7)$; $(1_4,2_9,3_{11})$, $(1_3,2_8,3_7)$, $(1_2,2_5)$, $(1_1)$.

$1_6$ in the second tableau is in row $5$, which is fine as $3_{12}$ is counted by $o$.
\end{example}

\begin{proposition}
\label{prop:2ndPropYamanouchi}
We can obtain the sequences $v_e$ by using Robinson-Schensted on the reversed reading word of $L$. In particular $v_e$ can be defined as the set of elements that got bumped during the insertion process of $e$.

Therefore, by Theorem~\ref{theo:YamanouchiWordSSYT}, the first property is satisfied if and only if the tableau one obtains by Robinson-Schensted on the reversed reading word is of the form as described in~\ref{theo:YamanouchiWordSSYT}. This is satisfied if and only if every element $j$ is bumped exactly $j$ times. In terms of our $v_e$'s this means that every $j$ is in exactly $j$ $v_e$'s.
\end{proposition}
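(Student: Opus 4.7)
The plan is twofold. First, prove that the sequence $v_p$ of Definition~\ref{def:aoLRT} coincides with the bumping cascade produced by row-inserting $w_p$ into the Robinson--Schensted insertion tableau built from the reversed reading word. Second, derive the ``iff'' statements from Theorem~\ref{theo:YamanouchiWordSSYT}.

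For the first part I would proceed by reverse induction on $p$, inserting $w_r, w_{r-1}, \dots, w_1$ one at a time. The invariant to maintain after processing positions $r, r-1, \dots, p+1$ is two-fold: (a) for every $q > p$ the sequence $v_q$ of Definition~\ref{def:aoLRT} equals the sequence $(w_q, b_1, b_2, \dots)$ in which $b_i$ is the entry bumped from row $i$ during the insertion of $w_q$; (b) an entry of the current insertion tableau sits in row $m$ if and only if it belongs to exactly $m$ of the sequences $v_{p+1}, \dots, v_r$. In the induction step from $p+1$ to $p$, each clause of the rule in Definition~\ref{def:aoLRT} lines up with one step of the cascade started by $w_p$: ``in exactly $m-1$ sequences right of $p$'' translates by (b) to ``currently in row $m-1$''; ``smallest letter strictly larger than the previous entry'' translates to the leftmost-strictly-larger rule of RSK; and ``rightmost original position among entries with that letter'' translates to the leftmost position of that letter in the row, because under processing in decreasing order of original position, entries of equal letter enter a row in decreasing order of original position, so the one RSK actually bumps is the one with the largest such position. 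After thus extending (a) to $q = p$, invariant (b) is restored because the entries of $v_p$ other than $w_p$ each move one row down and simultaneously gain $v_p$ as an additional sequence, while $w_p$ occupies a new cell in row $1$ and belongs to $v_p$ alone.

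For the second part, Theorem~\ref{theo:YamanouchiWordSSYT} says that the reversed reading word is reverse Yamanouchi --- equivalently, the reading word itself is Yamanouchi --- exactly when the RSK insertion tableau has its $j$-th row entirely filled with $j$'s, i.e., every entry with letter $j$ sits in row $j$. By the first part this is equivalent to every entry with letter $j$ belonging to exactly $j$ of the sequences $v_p$, which, summed over the $\mu_j$ occurrences of the letter $j$, is the final claim of the proposition.

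The main obstacle is the bookkeeping in the induction step: Definition~\ref{def:aoLRT} phrases the bumping rule purely in terms of the previously defined sequences $v_{p+1}, \dots, v_r$, without any reference to a tableau, while RSK phrases it in terms of row positions in the current insertion tableau. Bridging the two requires the row-membership-via-sequence-count half of the invariant together with the tiebreaking observation that the leftmost-in-row entry of a given letter has the rightmost original position. Once those two facts are secured, the remainder of the argument is a mechanical verification.
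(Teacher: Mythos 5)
Your proposal is correct and follows essentially the same route as the paper: an induction matching each step of the RS bumping cascade with the rule defining $v_p$, using the invariant that an entry lies in row $m$ exactly when it belongs to $m$ of the previously defined sequences, together with the observation that equal letters occupy a row left-to-right in decreasing order of original position (so the leftmost strictly larger entry bumped by RS is the rightmost such position in the reading word). Your version merely makes the two-part invariant and the tiebreaking step more explicit than the paper does.
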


\begin{proof}
We show inductively that a position gets bumped if and only if it is in the current $v_e$. Therefore elements in the $j$-row were in $j$ $v_e$'s before.

For the base case we consider the first element of an $v_e$. This is always the one we are inserting. Thus it ends up in the first row. On the other hand an element that ends up in the first row, does so only during the insertion process of itself, thus when it is the first element of an $v_e$.

Now if an element is in $j$ different $v_e$'s, by induction hypothesis it got bumped $j$ times thus it is now in row $j$. Now if it is element number $j+1$ in a $v_e$, it is the rightmost one of the smallest letter that is larger than the letter of element number $j$. As elements of the same value get inserted into a row from left to right in Robinson-Schensted, this is the rightmost element in the reading word. The same observation leads to the other direction.
\end{proof}

\subsection{Formulation of Bijection $A$}

Bijection $A$ is formulated by Algorithm~\ref{alg:1}. Its inverse is formulated by Algorithm~\ref{alg:u1}. It maps an orthogonal Littlewood-Richardson tableau of Kwon in $\LRtabs$ to an alternative orthogonal Littlewood-Richardson tableau in $\LRtabsa$.

\begin{algorithm}[h]
\label{alg:1}
\SetKwInOut{Input}{input}\SetKwInOut{Output}{output}
\Input{orthogonal Littlewood-Richardson tableau $L=(T_1,T_2,\dots,T_{\ell(\mu)},S)\in \LRtabs$}
\Output{alternative orthogonal Littlewood-Richardson tableau $\tilde{L} \in \LRtabsa$}
let $\tilde{L}$ be the Young diagram of $S$, reflected on $y=x$\;
\For{$i={\ell(\mu)},{\ell(\mu)}-1,\dots,1$}
  {
  for each $l$ in $T_i$ add an empty cell into column $l$ of $\tilde{L}$\;
  \lIf{$T_i$ has Type 1}{add below of cells coming from the tail of $T_i$ a cell with entry $i$}
  \lElse{add below of cells coming from the lower tail and the fin of $T_i$ a cell with entry $i$}
  sort each column such that empty cells are on top and entries are weakly decreasing\;
  \For{rows $r$ from top to bottom}
    {
    \While(\tcc*[f]{merge}){there is a $j$ left of an $l$ such that $j<l$ in $r$}
      {
      put the rightmost such $l$ and the $i$ from the same column one column to the left\;
      shift cells that were below $l$ upwards and sort the column to the left such that entries are weakly decreasing\;}
      
    \While(\tcc*[f]{shift}){in the row below of $r$ are elements with no cell to the left}
    {
    shift those elements and their lower neighbors one column to the left\;
      }
    }
  \While(\tcc*[f]{correct parity}){not all rows have the parity of $S$}
  {
  shift the rightmost $i$ of the bottommost row with different parity as $S$ to the next such row above\;
  }
}
\Return $\tilde{L}$\;
\caption{Orthogonal Littlewood-Richardson Tableaux: obtaining the alternative}
\end{algorithm}

\begin{algorithm}[h]
\label{alg:u1}
\SetKwInOut{Input}{input}\SetKwInOut{Output}{output}
\Input{alternative orthogonal Littlewood-Richardson tableau $\tilde{L}\in\LRtabsa$}
\Output{orthogonal Littlewood-Richardson tableau $L=(T_1,T_2,\dots,T_{\ell(\mu)},S)\in\LRtabs$}
\For{$i=1,2,\dots,\ell(\mu)$}
  {
  \If{there is an odd number of $i$'s in a row that is not row $2i+1$}
    {\For(\tcc*[f]{correct parity}){all such rows}{put the rightmost such $i$ one row below\;}}
  \For{rows $r$ from bottom to top}
    {
    \While{there are $i$'s or vertically neighbored pairs $i,j$ that can be shifted to the right such that there is still a cell directly above them and $i$ or $j$ is in $r$}
    {shift them one column to the right\tcc*{shift}}
    \While(\tcc*[f]{merge}){there are $i<j_1<j_2$ in a column such that: the column to the right is shorter by at least two such that there exists a cell in the row above $j_2$, contains no $i$ and no $j_2$; $j_3$, the position right of $j_2$, satisfies $j_2>j_1>j_3$ if it exists; $j_2$ is the topmost position in its column satisfying this and $j_2$ is in $r$}
      {put $i$ and $j_2$ one to the column to the right\;
      shift cells below $j_2$ upwards and sort the column to the right such that entries are weakly increasing\;}
    }
  \lFor{$i$ in $\tilde{L}$}
    {mark an unmarked empty cell in the same column, delete $i$ and its cell}
  \For{$l$; marked cells in column $l$}
  {insert a cell labeled $l$ into the tail of the two rowed tableau $T_i$; delete it in $\tilde{L}$\; shift the remaining cells upwards\;}
  \If{row $(2i+1)$ or $(2i)$ are non-empty}
    {for each cell in column $l$ in row $(2i+1)$ (respectively $(2i)$) insert a cell labeled $l$ to the first (respectively second) column of $T_i$ such that they are sorted increasingly\;}
  \lIf{both new columns are of odd length (without tail)}
    {put the topmost tail position to the right column, shift the left column one position down}
  }
 
reflect $\tilde{L}$ by $x=y$ and fill each column with $1,2,\dots$ to obtain $S$\;
let $L$ be $(T_1,T_2,\dots,T_{\ell(\mu)},S)$ and \Return{($L$, $Q$)}
\caption{Orthogonal Littlewood-Richardson Tableaux: obtaining the original}
\end{algorithm}

\subsection{Examples explaining Bijection $A$}
\begin{example}
We consider an orthogonal Littlewood-Richardson tableau and apply Algorithm~\ref{alg:1}.

\noindent \begin{tikzpicture}[scale=0.35]
  \draw (1,6)--(2,6);
  \draw (1,5)--(2,5);
  \draw (0,4)--(2,4);
  \draw (0,3)--(2,3);
  \draw (0,2)--(2,2);
  \draw (0,1)--(1,1);
  \draw (0,0)--(1,0);
  \draw (0,4) -- (0,0);
  \draw (1,6) -- (1,0);
  \draw (2,6) -- (2,2);
  \draw (0.5,3.5) node {1};
  \draw (0.5,2.5) node {2};
  \draw (0.5,1.5) node {3};
  \draw[blue] (0.5,0.5) node {7};
  \draw (1.5,5.5) node {1};
  \draw (1.5,4.5) node {2};
  \draw (1.5,3.5) node {3};
  \draw[blue] (1.5,2.5) node {6};
  \draw (3.5,8)--(4.5,8);
  \draw (3.5,7)--(4.5,7);   
  \draw (2.5,6)--(4.5,6);
  \draw (2.5,5)--(4.5,5);   
  \draw (2.5,4)--(4.5,4);
  \draw (2.5,3)--(4.5,3);
  \draw (2.5,2)--(4.5,2);
  \draw (2.5,2)--(3.5,2);
  \draw (2.5,1)--(3.5,1);
  \draw (2.5,6) -- (2.5,1);
  \draw (3.5,8) -- (3.5,1);
  \draw (4.5,8) -- (4.5,2);
  \draw (3,5.5) node {1};
  \draw (3,4.5) node {2};
  \draw (3,3.5) node {3};
  \draw (3,2.5) node {4};
  \draw (3,1.5) node {5};
  \draw (4,7.5) node {1};
  \draw (4,6.5) node {2};
  \draw (4,5.5) node {3};
  \draw (4,4.5) node {4};
  \draw (4,3.5) node {5};
  \draw[violet] (4,2.5) node {6};
  \draw (5,8)--(6,8);
  \draw (5,7)--(6,7);
  \draw (5,6)--(6,6);
  \draw (5,5)--(6,5);
  \draw (5,4)--(6,4);
  \draw (5,3)--(6,3);
  \draw (5,2)--(6,2);
  \draw (5,8) -- (5,2);
  \draw (6,8) -- (6,2);
  \draw (5.5,7.5) node {1};
  \draw (5.5,6.5) node {2};
  \draw (5.5,5.5) node {3};
  \draw (5.5,4.5) node {4};
  \draw (5.5,3.5) node {5};
  \draw (5.5,2.5) node {6};
\end{tikzpicture}
\hspace{-0.1cm}
\begin{tikzpicture}[scale=0.35]
 \draw (0,0) node{};
 \draw (0,8) node{};
 \draw [->,decorate,
decoration={snake,amplitude=.4mm,segment length=2mm,post length=1mm, pre length=1mm}] (0,5) -- (2,5);
\end{tikzpicture}
\begin{tikzpicture}[scale=0.35]
  \draw (0,8) -- (0,7);
  \draw (1,8) -- (1,7);
  \draw (2,8) -- (2,7);
  \draw (3,8) -- (3,7);
  \draw (4,8) -- (4,7);
  \draw (5,8) -- (5,7);
  \draw (6,8) -- (6,7);
  \draw (0,8)--(6,8);
  \draw (0,7)--(6,7);
  \draw (5.5,0);
\end{tikzpicture}
\hspace{-0.1cm}
\begin{tikzpicture}[scale=0.35]
  \draw (-0.7,5) node{$\mapsto$};
  \draw (0,8) -- (0,5);
  \draw (1,8) -- (1,5);
  \draw (2,8) -- (2,5);
  \draw (3,8) -- (3,5);
  \draw (4,8) -- (4,5);
  \draw (5,8) -- (5,5);
  \draw (6,8) -- (6,5);
  \draw (0,8)--(6,8);
  \draw (0,7)--(6,7);
  \draw (0,6)--(6,6);
  \draw (0,5)--(6,5);
  \draw[violet] (5.5,5.5) node{2};
  \draw (5.5,0);
\end{tikzpicture}
  \hspace{-0.1cm}
  \begin{tikzpicture}[scale=0.35]
  \draw (-0.7,5) node{$\mapsto$};
  \draw (0,8) -- (0,3);
  \draw (1,8) -- (1,3);
  \draw (2,8) -- (2,3);
  \draw (3,8) -- (3,3);
  \draw (4,8) -- (4,5);
  \draw (5,8) -- (5,3);
  \draw (6,8) -- (6,3);
  \draw (7,8) -- (7,6);
  \draw (0,8)--(7,8);
  \draw (0,7)--(7,7);
  \draw (0,6)--(7,6);
  \draw (0,5)--(6,5);
  \draw (0,4)--(3,4);
  \draw (5,4)--(6,4);
  \draw (0,3)--(3,3);
  \draw (5,3)--(6,3);
  \draw[violet] (5.5,4.5) node{2};
  \draw[blue] (5.5,3.5) node{1};
  \draw[blue] (6.5,6.5) node{1};
  \draw (5.5,0);
\end{tikzpicture}
\hspace{-0.1cm}
\begin{tikzpicture}[scale=0.35]
  \draw (-0.7,5) node{$\mapsto$};
  \draw (0,8) -- (0,3);
  \draw (1,8) -- (1,3);
  \draw (2,8) -- (2,3);
  \draw (3,8) -- (3,3);
  \draw (4,8) -- (4,3);
  \draw (5,8) -- (5,5);
  \draw (6,8) -- (6,5);
  \draw (7,8) -- (7,7);
  \draw (8,8) -- (8,7);
  \draw (0,8)--(8,8);
  \draw (0,7)--(8,7);
  \draw (0,6)--(6,6);
  \draw (0,5)--(6,5);
  \draw (0,4)--(4,4);
  \draw (0,3)--(4,3);
  \draw[violet] (3.5,4.5) node {2};
  \draw[blue] (3.5,3.5) node {1};
  \draw[blue] (7.5,7.5) node {1};
  \draw (5.5,0);
\end{tikzpicture}
Doing so we insert first $T_2$ and then $T_1$. When inserting $T_2$, which is of type 2, we add a cell containing $2$ below the cell coming from the fin and use neither \emph{merge} nor \emph{shift} nor \emph{correct parity}. When inserting $T_1$, which is of type 3, we \emph{shift} the pair $2,1$ to the left and put the other $1$ to a row above in \emph{correct parity}.
\end{example}

\begin{example}
We consider another orthogonal Littlewood-Richardson tableau and apply again Algorithm~\ref{alg:1}.

\begin{tikzpicture}[scale=0.35]
  \draw (0,3)--(1,3);
  \draw (0,2)--(1,2);
  \draw (0,1)--(1,1);
  \draw (0,0)--(1,0);
  \draw (0,3) -- (0,0);
  \draw (1,3) -- (1,0);
  \draw[blue] (0.5,2.5) node {1};
  \draw[blue] (0.5,1.5) node {3};
  \draw[blue] (0.5,0.5) node {4};
  \draw (2.5,3)--(3.5,3);
  \draw (2.5,2)--(3.5,2);
  \draw (2.5,1)--(3.5,1);
  \draw (2.5,3) -- (2.5,1);
  \draw (3.5,3) -- (3.5,1);
  \draw[violet] (3,2.5) node {1};
  \draw[violet] (3,1.5) node {4};
  \draw (6,5)--(7,5);
  \draw (6,4)--(7,4);
  \draw (5,3)--(7,3);
  \draw (5,2)--(6,2);
  \draw (5,3) -- (5,2);
  \draw (6,5) -- (6,2);
  \draw (7,5) -- (7,3);
  \draw (6.5,4.5) node {1};
  \draw (6.5,3.5) node {2};
  \draw[red] (5.5,2.5) node {3};
  \draw (7.5,5)--(8.5,5);
  \draw (7.5,4)--(8.5,4);
  \draw (7.5,3)--(8.5,3);
  \draw (7.5,5) -- (7.5,3);
  \draw (8.5,5) -- (8.5,3);
  \draw (8,4.5) node {1};
  \draw (8,3.5) node {2};
\end{tikzpicture}
\begin{tikzpicture}[scale=0.35]
 \draw (0,0) node{};
 \draw (0,5) node{};
 \draw [->,decorate,
decoration={snake,amplitude=.4mm,segment length=2mm,post length=1mm, pre length=1mm}] (0,3) -- (2,3);
\end{tikzpicture}
\begin{tikzpicture}[scale=0.35]
  \draw (4.5,5)--(6.5,5);
  \draw (4.5,4)--(6.5,4);
  \draw (4.5,5) -- (4.5,4);
  \draw (5.5,5) -- (5.5,4);
  \draw (6.5,5) -- (6.5,4);
  \draw (5,0);
\end{tikzpicture}
\hspace{-0.1cm}
\begin{tikzpicture}[scale=0.35]
  \draw (3.5,3) node{$\mapsto$};
  \draw (4.5,5)--(7.5,5);
  \draw (4.5,4)--(7.5,4);
  \draw (4.5,3)--(7.5,3);
  \draw (4.5,5) -- (4.5,3);
  \draw (5.5,5) -- (5.5,3);
  \draw (6.5,5) -- (6.5,3);
  \draw (7.5,5) -- (7.5,3);
  \draw[red] (7,3.5) node{3};
  \draw (5,0);
\end{tikzpicture}
\begin{tikzpicture}[scale=0.35]
  \draw (3.5,3) node{$\mapsto$};
  \draw (4.5,5)--(8.5,5);
  \draw (4.5,4)--(8.5,4);
  \draw (4.5,3)--(6.5,3);
  \draw (4.5,5) -- (4.5,3);
  \draw (5.5,5) -- (5.5,3);
  \draw (6.5,5) -- (6.5,3);
  \draw (7.5,5) -- (7.5,4);
  \draw (8.5,5) -- (8.5,4);
  \draw[red] (8,4.5) node{3};
  \draw (5,0);
\end{tikzpicture}
\begin{tikzpicture}[scale=0.35]
  \draw (3.5,3) node{$\mapsto$};
  \draw (4.5,5)--(8.5,5);
  \draw (4.5,4)--(8.5,4);
  \draw (4.5,3)--(6.5,3);
  \draw (7.5,3)--(8.5,3);
  \draw (4.5,2)--(5.5,2);
  \draw (7.5,2)--(8.5,2);
  \draw (4.5,1)--(5.5,1);
  \draw (4.5,5) -- (4.5,1);
  \draw (5.5,5) -- (5.5,1);
  \draw (6.5,5) -- (6.5,3);
  \draw (7.5,5) -- (7.5,2);
  \draw (8.5,5) -- (8.5,2);
  \draw[red] (8,3.5) node{3};
  \draw[violet] (8,2.5) node{2};
  \draw[violet] (5,1.5) node{2};
  \draw (5,0);
\end{tikzpicture}
\begin{tikzpicture}[scale=0.35]
  \draw (3.5,3) node{$\mapsto$};
  \draw (4.5,5)--(8.5,5);
  \draw (4.5,4)--(8.5,4);
  \draw (4.5,3)--(7.5,3);
  \draw (4.5,2)--(6.5,2);
  \draw (4.5,1)--(5.5,1);
  \draw (4.5,5) -- (4.5,1);
  \draw (5.5,5) -- (5.5,1);
  \draw (6.5,5) -- (6.5,2);
  \draw (7.5,5) -- (7.5,3);
  \draw (8.5,5) -- (8.5,4);
  \draw[red] (7,3.5) node{3};
  \draw[violet] (6,2.5) node{2};
  \draw[violet] (5,1.5) node{2};
  \draw (5,0);
\end{tikzpicture}

\hspace{2.3cm}
\begin{tikzpicture}[scale=0.35]
  \draw (3.5,3) node{$\mapsto$};
  \draw (4.5,5)--(8.5,5);
  \draw (4.5,4)--(8.5,4);
  \draw (4.5,3)--(8.5,3);
  \draw (4.5,2)--(6.5,2);
  \draw (4.5,2)--(5.5,2);
  \draw (4.5,5) -- (4.5,2);
  \draw (5.5,5) -- (5.5,2);
  \draw (6.5,5) -- (6.5,2);
  \draw (7.5,5) -- (7.5,3);
  \draw (8.5,5) -- (8.5,3);
  \draw[red] (7,3.5) node{3};
  \draw[violet] (6,2.5) node{2};
  \draw[violet] (8,3.5) node{2};
  \draw (5,0);
\end{tikzpicture}
\begin{tikzpicture}[scale=0.35]
  \draw (3.5,3) node{$\mapsto$};
  \draw (4.5,5)--(8.5,5);
  \draw (4.5,4)--(8.5,4);
  \draw (4.5,3)--(8.5,3);
  \draw (4.5,2)--(8.5,2);
  \draw (4.5,1)--(5.5,1);
  \draw (6.5,1)--(8.5,1);
  \draw (4.5,0)--(5.5,0);
  \draw (4.5,5) -- (4.5,0);
  \draw (5.5,5) -- (5.5,0);
  \draw (6.5,5) -- (6.5,1);
  \draw (7.5,5) -- (7.5,1);
  \draw (8.5,5) -- (8.5,1);
  \draw[red] (7,2.5) node{3};
  \draw[violet] (6,2.5) node{2};
  \draw[violet] (8,2.5) node{2};
  \draw[blue] (5,0.5) node{1};
  \draw[blue] (7,1.5) node{1};
  \draw[blue] (8,1.5) node{1};
  \draw (5,0);
\end{tikzpicture}
\begin{tikzpicture}[scale=0.35]
  \draw (3.5,3) node{$\mapsto$};
  \draw (4.5,5)--(8.5,5);
  \draw (4.5,4)--(8.5,4);
  \draw (4.5,3)--(8.5,3);
  \draw (4.5,2)--(6.5,2);
  \draw (7.5,2)--(8.5,2);
  \draw (4.5,1)--(6.5,1);
  \draw (7.5,1)--(8.5,1);
  \draw (4.5,0)--(6.5,0);
  \draw (4.5,5) -- (4.5,0);
  \draw (5.5,5) -- (5.5,0);
  \draw (6.5,5) -- (6.5,0);
  \draw (7.5,5) -- (7.5,1);
  \draw (8.5,5) -- (8.5,1);
  \draw[red] (6,2.5) node{3};
  \draw[violet] (6,1.5) node{2};
  \draw[violet] (8,2.5) node{2};
  \draw[blue] (5,0.5) node{1};
  \draw[blue] (6,0.5) node{1};
  \draw[blue] (8,1.5) node{1};
  \draw (5,0);
\end{tikzpicture}
\begin{tikzpicture}[scale=0.35]
  \draw (3.5,3) node{$\mapsto$};
  \draw (4.5,5)--(8.5,5);
  \draw (4.5,4)--(8.5,4);
  \draw (4.5,3)--(8.5,3);
  \draw (4.5,2)--(7.5,2);
  \draw (4.5,1)--(7.5,1);
  \draw (4.5,0)--(6.5,0);
  \draw (4.5,5) -- (4.5,0);
  \draw (5.5,5) -- (5.5,0);
  \draw (6.5,5) -- (6.5,0);
  \draw (7.5,5) -- (7.5,1);
  \draw (8.5,5) -- (8.5,3);
  \draw[red] (6,2.5) node{3};
  \draw[violet] (6,1.5) node{2};
  \draw[violet] (7,2.5) node{2};
  \draw[blue] (5,0.5) node{1};
  \draw[blue] (6,0.5) node{1};
  \draw[blue] (7,1.5) node{1};
  \draw (5,0);
\end{tikzpicture}
\begin{tikzpicture}[scale=0.35]
  \draw (3.5,3) node{$\mapsto$};
  \draw (4.5,5)--(8.5,5);
  \draw (4.5,4)--(8.5,4);
  \draw (4.5,3)--(8.5,3);
  \draw (4.5,2)--(8.5,2);
  \draw (4.5,1)--(6.5,1);
  \draw (4.5,0)--(6.5,0);
  \draw (4.5,5) -- (4.5,0);
  \draw (5.5,5) -- (5.5,0);
  \draw (6.5,5) -- (6.5,0);
  \draw (7.5,5) -- (7.5,2);
  \draw (8.5,5) -- (8.5,2);
  \draw[blue] (5,0.5) node {1};
  \draw[blue] (6,0.5) node {1};
  \draw[blue] (8,2.5) node {1};
  \draw[violet] (6,1.5) node {2};
  \draw[violet] (7,2.5) node {2};
  \draw[red] (6,2.5) node {3};
  \draw (5,0);
\end{tikzpicture}

Doing so we insert first $T_3$ then $T_2$ and in the end $T_1$. All three of them are of type 1. When inserting $T_3$ we use only \emph{correct parity} to put the $3$ upwards. When inserting $T_2$ we first \emph{shift} the pair $3,2$ to the left and then put the other $2$ upwards in \emph{correct parity}. When inserting $T_1$ we first \emph{merge} the pair $3,1$ with the $2$ to the left. Then we \emph{shift} the pair $2,1$ to the left and in the end we put this $1$ upwards in \emph{correct parity}.
\end{example}

\begin{example}
The empty cells of our tableaux can be determined by the filling of Kwon's tableaux. However this shape does not define our tableaux by far. As the following tableaux show it neither defines where to add the filled cells:

\hspace{2cm}\begin{tikzpicture}[scale=0.35]
  \draw (6,5)--(7,5);
  \draw (6,4)--(7,4);
  \draw (5,3)--(7,3);
  \draw (5,2)--(6,2);
  \draw (5,1)--(6,1);
  \draw (5,3) -- (5,1);
  \draw (6,5) -- (6,1);
  \draw (7,5) -- (7,3);
  \draw (6.5,4.5) node {1};
  \draw (6.5,3.5) node {2};
  \draw (5.5,2.5) node {3};
  \draw (5.5,1.5) node {4};
  \draw (7.5,5)--(8.5,5);
  \draw (7.5,4)--(8.5,4);
  \draw (7.5,3)--(8.5,3);
  \draw (7.5,5) -- (7.5,3);
  \draw (8.5,5) -- (8.5,3);
  \draw (8,4.5) node {1};
  \draw (8,3.5) node {2};
\end{tikzpicture}
\begin{tikzpicture}[scale=0.35]
  \draw (3.5,3) node{$\mapsto$};
  \draw (4.5,5)--(8.5,5);
  \draw (4.5,4)--(8.5,4);
  \draw (4.5,3)--(8.5,3);
  \draw (4.5,5) -- (4.5,3);
  \draw (5.5,5) -- (5.5,3);
  \draw (6.5,5) -- (6.5,3);
  \draw (7.5,5) -- (7.5,3);
  \draw (8.5,5) -- (8.5,3);
  \draw (7,3.5) node{1};
  \draw (8,3.5) node{1};
  \draw (5,0);
\end{tikzpicture}
\raisebox{0.7cm}{ whereas }
\begin{tikzpicture}[scale=0.35]
  \draw (5,3)--(6,3);
  \draw (5,2)--(6,2);
  \draw (5,1)--(6,1);
  \draw (5,3) -- (5,1);
  \draw (6,3) -- (6,1);
  \draw (5.5,2.5) node {1};
  \draw (5.5,1.5) node {2};
  \draw (7.5,7)--(8.5,7);
  \draw (7.5,6)--(8.5,6);
  \draw (7.5,5)--(8.5,5);
  \draw (7.5,4)--(8.5,4);
  \draw (7.5,3)--(8.5,3);
  \draw (7.5,7) -- (7.5,3);
  \draw (8.5,7) -- (8.5,3);
  \draw (8,6.5) node {1};
  \draw (8,5.5) node {2};
  \draw (8,4.5) node {3};
  \draw (8,3.5) node {4};
\end{tikzpicture}
\begin{tikzpicture}[scale=0.35]
  \draw (3.5,3) node{$\mapsto$};
  \draw (4.5,5)--(8.5,5);
  \draw (4.5,4)--(8.5,4);
  \draw (4.5,3)--(6.5,3);
  \draw (4.5,2)--(6.5,2);
  \draw (4.5,5) -- (4.5,2);
  \draw (5.5,5) -- (5.5,2);
  \draw (6.5,5) -- (6.5,2);
  \draw (7.5,5) -- (7.5,4);
  \draw (8.5,5) -- (8.5,4);
  \draw (5,2.5) node{1};
  \draw (6,2.5) node{1};
  \draw (5,0);
\end{tikzpicture}
\raisebox{0.7cm}{;}

\noindent nor does it define how to fill those cells:

\hspace{0.7cm}\begin{tikzpicture}[scale=0.35]
  \draw (2.5,3)--(3.5,3);
  \draw (2.5,2)--(3.5,2);
  \draw (2.5,1)--(3.5,1);
  \draw (2.5,3) -- (2.5,1);
  \draw (3.5,3) -- (3.5,1);
  \draw (3,2.5) node {1};
  \draw (3,1.5) node {3};
  \draw (6,5)--(7,5);
  \draw (6,4)--(7,4);
  \draw (5,3)--(7,3);
  \draw (5,2)--(6,2);
  \draw (5,3) -- (5,2);
  \draw (6,5) -- (6,2);
  \draw (7,5) -- (7,3);
  \draw (6.5,4.5) node {1};
  \draw (6.5,3.5) node {2};
  \draw (5.5,2.5) node {5};
  \draw (7.5,7)--(8.5,7);
  \draw (7.5,6)--(8.5,6);
  \draw (7.5,5)--(8.5,5);
  \draw (7.5,4)--(8.5,4);
  \draw (7.5,3)--(8.5,3);
  \draw (7.5,7) -- (7.5,3);
  \draw (8.5,7) -- (8.5,3);
  \draw (8,6.5) node {1};
  \draw (8,5.5) node {2};
  \draw (8,4.5) node {3};
  \draw (8,3.5) node {4};
\end{tikzpicture}
\begin{tikzpicture}[scale=0.35]
  \draw (3.5,3) node{$\mapsto$};
  \draw (4.5,5)--(10.5,5);
  \draw (4.5,4)--(10.5,4);
  \draw (4.5,3)--(8.5,3);
  \draw (4.5,2)--(6.5,2);
  \draw (4.5,5) -- (4.5,2);
  \draw (5.5,5) -- (5.5,2);
  \draw (6.5,5) -- (6.5,2);
  \draw (7.5,5) -- (7.5,3);
  \draw (8.5,5) -- (8.5,3);
  \draw (9.5,5) -- (9.5,4);
  \draw (10.5,5) -- (10.5,4);
  \draw (6,2.5) node{1};
  \draw (8,3.5) node{1};
  \draw (10,4.5) node{2};
  \draw (5,0);
\end{tikzpicture}
\raisebox{0.7cm}{ whereas }
\begin{tikzpicture}[scale=0.35]
  \draw (2.5,3)--(3.5,3);
  \draw (2.5,2)--(3.5,2);
  \draw (2.5,1)--(3.5,1);
  \draw (2.5,3) -- (2.5,1);
  \draw (3.5,3) -- (3.5,1);
  \draw (3,2.5) node {1};
  \draw (3,1.5) node {5};
  \draw (6,5)--(7,5);
  \draw (6,4)--(7,4);
  \draw (5,3)--(7,3);
  \draw (5,2)--(6,2);
  \draw (5,3) -- (5,2);
  \draw (6,5) -- (6,2);
  \draw (7,5) -- (7,3);
  \draw (6.5,4.5) node {1};
  \draw (6.5,3.5) node {2};
  \draw (5.5,2.5) node {3};
  \draw (7.5,7)--(8.5,7);
  \draw (7.5,6)--(8.5,6);
  \draw (7.5,5)--(8.5,5);
  \draw (7.5,4)--(8.5,4);
  \draw (7.5,3)--(8.5,3);
  \draw (7.5,7) -- (7.5,3);
  \draw (8.5,7) -- (8.5,3);
  \draw (8,6.5) node {1};
  \draw (8,5.5) node {2};
  \draw (8,4.5) node {3};
  \draw (8,3.5) node {4};
\end{tikzpicture}
\begin{tikzpicture}[scale=0.35]
  \draw (3.5,3) node{$\mapsto$};
  \draw (4.5,5)--(10.5,5);
  \draw (4.5,4)--(10.5,4);
  \draw (4.5,3)--(8.5,3);
  \draw (4.5,2)--(6.5,2);
  \draw (4.5,5) -- (4.5,2);
  \draw (5.5,5) -- (5.5,2);
  \draw (6.5,5) -- (6.5,2);
  \draw (7.5,5) -- (7.5,3);
  \draw (8.5,5) -- (8.5,3);
  \draw (9.5,5) -- (9.5,4);
  \draw (10.5,5) -- (10.5,4);
  \draw (6,2.5) node{1};
  \draw (8,3.5) node{2};
  \draw (10,4.5) node{1};
  \draw (5,0);
\end{tikzpicture}
\raisebox{0.7cm}{.}
\end{example}

\subsection{Properties and Proofs for Bijection $A$}

\begin{theorem}
\label{theo:algo1WellDef}
Algorithm~\ref{alg:1} is well-defined and returns an alternative orthogonal Littlewood-Richardson tableau.
\end{theorem}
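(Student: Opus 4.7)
The plan is to induct on $i$, processing the two-column tableaux $T_{\ell(\mu)}, T_{\ell(\mu)-1}, \dots, T_1$ in the order of the outer \textbf{for} loop. After stage~$i$ I maintain as induction hypothesis that the partial tableau $\tilde{L}^{(i)}$ is a reverse skew semistandard tableau whose outer shape has at most $2k+1$ rows all of the parity of $S$, whose labelled cells contain exactly $\mu_j$ copies of $j$ for each $j\geq i$, whose reading word restricted to these labels is Yamanouchi, and which satisfies $r_p\geq 2|v_p|-o_p$ at every labelled position $p$. The base case is $\tilde{L}$ equal to the reflection of $S$ in the diagonal $y=x$; by the definition of $S$ inside $B^{\mathfrak{d}}(\mu)$, $S$ has at most $2k+1-2\ell(\mu)$ columns all of a single parity, so its reflection satisfies all shape constraints, and the remaining invariants are vacuous since no cell yet carries a label.

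For the inductive step I first verify local well-definedness. Inserting an empty cell into column~$l$ for each entry~$l$ of $T_i$ and placing an $i$-cell below each cell coming from the tail of $T_i$ (or the lower tail and the fin in types~2 and~3) produces a new multiset of column contents that the subsequent sort renders weakly decreasing from below. The two inner \textbf{while} loops terminate: \emph{merge} strictly decreases the number of inversions $(j,l)$ with $j<l$ and $j$ left of $l$ inside row~$r$; \emph{shift} strictly decreases the sum of column indices of cells in the row below~$r$; and \emph{correct parity} strictly decreases the number of rows with parity different from that of $S$. The delicate point is that every invocation of \emph{merge} actually finds an $i$-cell sharing the column of the rightmost offending~$l$: I would derive this from the type condition~\eqref{eq:TypeProperty}, the tail condition~\eqref{eq:TailProperty}, and Proposition~\ref{prop:LRtabs2}, which together imply that any column-strictness violation existing at this moment must have been introduced by the freshly inserted $i$-cell sitting beneath~$l$.

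The main obstacle is verifying the descent condition $r_p\geq 2|v_p|-o_p$. By Proposition~\ref{prop:2ndPropYamanouchi}, $|v_p|$ equals the row in which the entry at~$p$ lands under Robinson--Schensted applied to the reversed reading word, so the inequality is a geometric bound on the row of~$p$ weakened by the correction term~$o_p$. I would match it step by step with Kwon's height conditions~\eqref{eq:HeightProperty} and~\eqref{eq:Height_Property}: the row into which an $i$-cell is initially placed is controlled by the right-column length $b_i$ of $T_i$, which those conditions bound in terms of $a_{i+1}, b_{i+1}, r_i, r_{i+1}$, and this is precisely the data governing $|v_p|-o_p$ for labels $\geq i+1$ by the inductive hypothesis; each subsequent \emph{correct parity} operation lifts an $i$-cell by two rows while simultaneously creating a rightmost occurrence that the definition of $o_p$ was designed to capture, so the inequality is preserved. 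The Yamanouchi property then follows from Proposition~\ref{prop:2ndPropYamanouchi} combined with Theorem~\ref{theo:YamanouchiWordSSYT}: each $i$-cell is placed directly beneath a tail-or-fin cell of $T_i$ at insertion, and the \emph{merge}, \emph{shift}, and \emph{correct parity} operations preserve the relative column order between labels, so the leftmost $m$ occurrences of each label remain above the corresponding occurrences of the next lower label through every later stage.
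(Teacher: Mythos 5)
Your overall strategy---induction over the outer \textbf{for} loop, maintaining semistandardness, the label multiset, the Yamanouchi property, the inequality $r_p\geq 2|v_p|-o_p$ and the shape/parity constraints as invariants, with the reflected $S$ as base case---is exactly the paper's, and your termination monovariants together with the reduction of \emph{merge}'s well-definedness to the presence of a freshly inserted $i$-cell match the paper's first two lemmas.

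There are, however, genuine gaps in the two hardest steps. First, the inequality $r_p\geq 2|v_p|-o_p$ cannot be read off from \eqref{eq:HeightProperty} and \eqref{eq:Height_Property} in the way you suggest: those conditions only control where cells are \emph{initially} placed, whereas $|v_p|$ is defined through the bumping sequences of the reading word, and these sequences are reshuffled by every \emph{merge} and \emph{shift}. The paper's argument is a local case analysis of all $2\times 2$ configurations $(j_1,j_2;j_3,j_4)$ in adjacent columns, showing that each operation only permutes (or swaps between positions in the same row) the multiset recording how often each entry is \enquote{marked}, followed by separate arguments for the newly inserted $i$-cells and for the $o_p$-correction under \emph{correct parity}; none of this is implied by the height conditions alone, so this step as you describe it would not go through. (Incidentally, \emph{correct parity} moves an $i$ up by one row, not two.) Second, your justification of the Yamanouchi property---that \emph{merge}, \emph{shift} and \emph{correct parity} \enquote{preserve the relative column order between labels}---is false as stated, since these operations move cells between columns by design; what must be shown, and what the paper shows, is that whenever the cell labelled $i+1$ has moved left, the matching $i$-cell is forced into a subsequent \emph{merge} or \emph{shift} that carries it along, which requires the argument about the existence of an $l$-cell above the inserted $i$. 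Finally, you do not address the well-definedness of \emph{correct parity} (that the bottommost wrong-parity row always contains an $i$ to move and a target row exists); in the paper this needs its own case analysis on the type of $T_i$, the parity of $S$, and the tail-root condition of \eqref{eq:TypeProperty}.
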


\begin{proof}[Outline of the proof]
We will prove this theorem by induction. In particular we show that after every iteration $i$ of the outer for-loop the rules for alternative orthogonal Littlewodd-Richardson tableaux are satisfied, if we would subtract $(i-1)$ from every entry. For the base case we get the shape of $S$ reflected, which satisfies our conditions for $\mu=\emptyset$ and $k=0$. The induction step is  shown by the following lemmas.
\end{proof}

We state some properties following from the formulation of the algorithm first. We refer to parts or operations in the algorithms by the comments placed next to them.

\begin{corollary}
\label{cor:LongerColum}
\begin{enumerate}
\item In Algorithm~\ref{alg:1} there are two types of rows that get longer during the inner for-loop. One type consists of those rows in which the new $i$'s are inserted and the rows directly above. Those get longer by one for each such $i$. The other type consists of the bottommost two rows which get longer by values of the same parity.

\item \emph{Correct parity} can be reformulated to the following and still leads to the same result.

Go through $\tilde{L}$ from bottom to top. If the current row has a length of a different parity than $S$, put the rightmost $i$ to the next row such that it is the leftmost $i$ in this new row. (Shift other $i$'s one column to the right.)

\item Unfilled positions form a Young diagram of a partition (and not a skew-shape).
\end{enumerate}
\end{corollary}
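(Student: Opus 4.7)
The plan is to verify each of the three claims by direct inspection of Algorithm~\ref{alg:1}, with induction on the outer loop index $i$ providing the framework for parts (1) and (3).

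For part (1), I would track how row lengths in $\tilde{L}$ change during a single iteration of the outer loop. The initial insertion of cells splits into two kinds: each of the $\mu_i$ cells in the tail of $T_i$ (or in the lower tail together with the fin, for types 2 and 3) produces a pair consisting of an empty cell and a cell labeled $i$ placed directly below it, so these contribute exactly one cell to each row that receives a new $i$ and to the row immediately above it. The remaining entries of $T_i$ (those in the upper left column and the right column) produce only empty cells, which the column sort pushes to the bottom of their respective columns. Using the height condition~\eqref{eq:HeightProperty} together with the type analysis in~\eqref{eq:TypeProperty}, these leftover cells settle in the bottom two rows of $\tilde{L}$, coming in column pairs whose lengths have matching parity. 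Finally, the merge, shift and correct-parity operations only redistribute cells within already-used rows: merge moves a vertical pair $(l,i)$ one column to the left with a compensating upward collapse, shift performs a similar horizontal move on an unpaired top-of-column pair, and correct parity moves $i$-cells one row at a time. None of these operations introduces cells into a row not previously accounted for.

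For part (2), the equivalence of the two formulations of correct parity is a short direct check. In the original, the rightmost $i$ of the bottommost wrong-parity row $r$ is moved to the next wrong-parity row $r'$ above, and rows strictly between $r$ and $r'$ have the correct parity throughout. In the reformulation, processing bottom-to-top and moving the rightmost $i$ up by exactly one row each time produces the same cumulative effect: the intermediate rows temporarily gain an $i$ on the left and lose one on the right, flipping parity twice and hence restoring it, while rows $r$ and $r'$ end with a net change of $-1$ and $+1$ $i$-cells respectively. Since the $i$-cells are indistinguishable, the two procedures yield identical tableaux.

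For part (3), I would argue by induction on the outer iteration, maintaining the invariant that the unfilled positions of $\tilde{L}$ form a Young diagram of a partition at every intermediate stage. The base case is immediate since $\tilde{L}$ begins as the reflection of the rectangle $S$. For the inductive step, after the column sort each column has its empty cells on top, so it suffices to check that the heights of the empty portions are weakly decreasing from left to right. This monotonicity is preserved by merge (which transfers two cells from the top of one column into the left-adjacent column), by shift (which moves a vertical pair one column to the left into an existing slot), and by correct parity (which rearranges only filled $i$-cells and leaves unfilled positions untouched).

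The main obstacle is the bookkeeping in part (1), in particular verifying that the cascading upward collapse triggered by merge contributes only to rows already in the two identified strips, and that the bottom two rows truly gain cells of matching parity. This relies on the $T_i$-type classification~\eqref{eq:TypeProperty} together with the height condition~\eqref{eq:HeightProperty} being preserved throughout the iteration.
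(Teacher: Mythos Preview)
Your treatment of parts (1) and (2) is broadly in the spirit of the paper's proof. For part (2) you argue directly that the row-by-row version and the jump-to-next-wrong-parity version produce identical output; the paper instead observes (using part (1)) that wrong parity can only arise in a row where an $i$ was inserted, so an $i$ is always available to move. Both arguments work, and yours is a reasonable alternative.

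Part (3), however, has a genuine gap. You set up an induction and then check that \emph{merge}, \emph{shift} and \emph{correct parity} preserve the Young-diagram shape of the unfilled cells. But these three operations move only \emph{filled} cells; the number of empty cells in each column is untouched by all of them. So that part of your argument, while correct, is trivial. What you never verify is the nontrivial step: after executing ``for each $l$ in $T_i$ add an empty cell into column $l$ of $\tilde{L}$'', why are the empty-cell heights still weakly decreasing from left to right? Your inductive hypothesis gives the shape before this step, and you resume the argument ``after the column sort'', skipping exactly the place where something must be shown.

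The paper's proof of (3) is a one-liner: the number of empty cells in column $l$ of $\tilde{L}$ equals the number of entries $l$ in the tuple $(T_i,\dots,T_{\ell(\mu)},S)$, and by Remark~\ref{rem:LRTabsSubSet} this truncated tuple is itself an orthogonal Littlewood--Richardson tableau, so properties~\eqref{eq:GapProperty} and~\eqref{eq:SProperty} (via the remark following Lemma~\ref{lem:TensorEpsilonZero}) force its content to be a partition. That is the missing ingredient; without invoking \eqref{eq:GapProperty} and \eqref{eq:SProperty} there is no reason the newly inserted empty cells should respect the Young-diagram condition.
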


\begin{proof}
\begin{enumerate}
\item For cells that do not come from the lower tail and the tail root / fin we consider \eqref{eq:HeightProperty} and \eqref{eq:Height_Property}. Due to those there is for each newly inserted empty cell an already inserted one coming from $T_{i+1}$. If $r_{T_i}=r_{T_{i+1}}=1$ or $S$ odd and $r_{T_{\ell(\mu)}}=1$ this holds for the tail root and the non-tail-parts except for the fin. Otherwise this holds for the non-tail-parts.

Adding cells for the lower tail and the tail root / fin and adding cells with $i$ corresponding to them extends columns by two. \emph{Merge} or \emph{shift} preserves this until the point where only an $i$ is \emph{shifted}. In this case this is the last movement of this $i$ and it is still in the row below, the one that gets longer too.

\item Wrong parity is caused by columns getting longer by one. Therefore, if the row above the bottommost row with wrong parity has the right parity, there is also an $i$ (an odd number of $i$'s in fact). Iterating this argument completes the proof.

\item This follows directly from \eqref{eq:GapProperty} and \eqref{eq:SProperty}.\qedhere
\end{enumerate}
\end{proof}

\begin{lemma}
Each step of the outer for-loop is well-defined if it adds a new $T_{i}$ to an $\tilde{L}$ as demanded.
\end{lemma}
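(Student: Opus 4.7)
The plan is to proceed by downward induction on $i$, with the inductive hypothesis that the intermediate $\tilde{L}$ entering iteration $i$ is the alternative orthogonal Littlewood--Richardson tableau associated (via Remark~\ref{rem:LRTabsSubSet}) to the truncated Kwon tableau $(T_{i+1},\dots,T_{\ell(\mu)},S)$. The base case $i=\ell(\mu)+1$ is immediate: before any iteration runs, $\tilde{L}$ is the reflected Young diagram of $S$, whose rows all have the parity of $S$ and contain no filled cells. Under this hypothesis, the lemma reduces to showing that each of the substeps in iteration $i$ -- the insertion of empty cells, the placement of fresh $i$-labels, the column sorting, the inner row loop (\emph{merge} and \emph{shift}), and \emph{correct parity} -- can be carried out as specified.

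For the insertion of empty cells and the placement of fresh $i$-labels, I would invoke \eqref{eq:HeightProperty} and \eqref{eq:Height_Property}: these say the heights of $T_i^R$ and $T_{i+1}^L$ (respectively $S^L$) are compatible in the way needed for every new empty cell to land atop an existing row of $\tilde{L}$. The placement of $i$-labels is unambiguous because, by the type description in \eqref{eq:TypeProperty}, the tail (or the lower tail together with the fin) is a uniquely determined contiguous set of cells in $T_i$.

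The main work is verifying the inner while loops. For \emph{merge}, I need to show that whenever a $j$ lies left of an $l$ in row $r$ with $j<l$, the column of $l$ contains an $i$ to pair with. Here I would use that the two-column structure of $T_i$ combined with \eqref{eq:TypeProperty} forces every entry of the right column of $T_i$ either to sit directly opposite an entry of the left column, or to be the fin of a type 3 tableau; in the first case, the fresh $l$ has a freshly placed $i$ immediately below it, and in the second case the fin has already been paired with an $i$ through \emph{correct parity} in the previous iteration. Combined with Corollary~\ref{cor:LongerColum}(1), this guarantees that the $i$ below $l$ persists up to the moment the merge is performed. For \emph{shift}, Corollary~\ref{cor:LongerColum}(3) ensures that the unfilled positions form a partition, so moving a cell in row $r+1$ one column to the left lands it under an existing cell of row $r$ without producing an invalid skew shape. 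For \emph{correct parity}, I would combine Corollary~\ref{cor:LongerColum}(1) and (2) to conclude that the only rows whose parity differs from that of $S$ at the end of the inner loop are rows that gained a cell from a freshly placed $i$; hence the rightmost cell in each such row is an $i$ available to be lifted, and the reformulation in Corollary~\ref{cor:LongerColum}(2) shows that the iterative lift always finds a target row above.

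The main obstacle, as anticipated, is the merge step: one must track, through the repeated rearrangements of the earlier iterations and through the sorting performed at the start of iteration $i$, a pairing between each non-tail cell of a right column and its matching left-column cell, so that when the fresh $l$ collides with a smaller $j$ its partner $i$ is still present in the same column. I would formulate this as an invariant maintained by \emph{shift} and \emph{merge} themselves: both operations move two vertically adjacent cells of the same column together, so that the vertical pairing established at the insertion step survives the loop. Verifying this invariant is straightforward but notation-heavy, and it is the step where Proposition~\ref{prop:LRtabs2} (evenness of the fin) and the type conditions of \eqref{eq:TypeProperty} are actually used.
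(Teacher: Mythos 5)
Your overall architecture (downward induction on $i$, then checking each substep) matches the paper's, and your treatment of \emph{shift} and of the empty-cell insertion via \eqref{eq:HeightProperty} and \eqref{eq:Height_Property} is fine. But there are two genuine gaps. The first and most serious is in \emph{correct parity}: you claim that the only rows whose parity goes wrong are rows that gained a freshly placed $i$, so that the rightmost cell of such a row is an $i$ ready to be lifted. That claim is exactly what needs proving, and it is false as a local statement: a bottommost row can change parity by receiving an empty cell coming from the fin of a type 2 or 3 tableau (or, when $S$ is odd, from a type 1 tableau whose left column consists only of the tail) with no $i$ placed below it in that row. The paper's proof devotes its longest case analysis to precisely this situation, using the evenness and monotonicity of fins (Proposition~\ref{prop:LRtabs2}), the residuum of $T_{i+1}$, the parity of $S$ and the tail-root condition in \eqref{eq:TypeProperty} to show that a chain of \emph{merge}s eventually deposits an $i$ into the offending row. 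None of that appears in your proposal.

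The second gap is termination. Well-definedness of the three while-loops requires that they stop, and for \emph{correct parity} this is not automatic: each lift fixes the parity of two rows at once (the source loses a cell, the target gains one), so the loop can only terminate if the rows of wrong parity always come in pairs, i.e.\ if the total number of cells added in iteration $i$ is even. The paper verifies this by the count $2b_i-a_i+2\mu_{T_i}$ with $a_i$ even; your proposal never makes this global parity argument, and Corollary~\ref{cor:LongerColum}(2), which you cite, presupposes rather than supplies it. (Your \emph{merge} argument is also organized differently from the paper's --- the paper inducts on how a column can acquire a row-disorder, namely only by receiving an $i$ or by a previous \emph{merge}, whereas you track a vertical pairing invariant; that difference is one of route rather than of substance, provided the invariant were actually carried through the sorting and through \emph{correct parity} of earlier iterations, which is where your sketch stays vague.)
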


\begin{proof}
There are three steps, in which this is not obvious:
\begin{itemize}
\item There is always an $i$ in the current row when we \emph{merge}.

Columns that get longer by inserted cells of the non-tail parts cannot cause a \emph{merge}-situation because then also the columns to the left get longer.
Besides this, a column can only get longer by inserting an $i$, \emph {merge} or \emph{shift}.  Only at insert $i$ and \emph{merge} a former inserted $l$ can move to a different row. Thus only in those cases a \emph{merge}-situation can arise. Therefore we can show inductively that there is always an $i$ in such a column.

\item  There are always $i$'s to find and places to put them at correct parity.

For rows that are not the two bottommost ones this is follows from Corollary~\ref{cor:LongerColum}. We make rows longer in pairs. If we make them longer by an odd number, there needs to be enough space to put an element of the upper row to the lower one due to parity reasons. (If the latter got longer together with the one above, then this also got longer. We can iterate this argument.)

Now we consider the bottommost rows.
We start with the case that $S$ is even and consider a newly inserted $T_i$. Suppose that the bottommost row is of odd length and contains no $i$. This means that an empty cell has been added but no $i$ has been put below it. As non-tail parts of our $T_i$ have even length, $T_i$ needs to be of type $2$ or $3$.

If $T_{i+1}$ has residuum $0$ (thus $T_i$ is of type 2), the bottommost row so far had even length and was longer than or of equal length as $T_i^R$ by \eqref{eq:HeightProperty}. Therefore the fin of $T_i$ is placed into the second row from bottom, with an $i$ below. This $i$ is put into the bottommost row, which is a contradiction. At this point there are no other $j$ left or below to this.

If $T_{i+1}$ has residuum $1$ it is also of type $2$ or $3$. In both cases the bottommost rows (those which consist of odd many empty cells, thus those above the fin of $T_{i+1}$) contain each one $j$ increasingly from bottom to top. Now if we insert the new fin, it is inserted to one of these spots, as it is even and smaller than or equal to the fin of $T_i$. A sequence of \textit{merge} puts the $i$ into the bottommost row and the $i+1$ to the row above, as in each step a $j$ is put into the row of an $j-1$.

Now we consider the case that $S$ is odd. We have to show that there are $i$'s in a row, if this is of even length. Even columns without $i$ cannot come from type 2 or 3 tableaux thus we have to consider type 1. If this is the first tableau from right thus directly left of $S$, the additional condition prevents this. Otherwise, we know that the tail root is smaller than or equal to the tail root to the right. Now we can argue exactly as above, but with the tail root instead of the fin.

\item While-loops stop.

The first while-loop stops after several steps because \emph{merge} always works, as we have seen in the previous point, and moves $i$'s to the left. As there are finitely many columns, this has to stop at some point.

The second while-loop stops as after some steps everything is \emph{shifted} to the left.

The third while-loop stops as after some steps all rows have the same parity, because the parity of $S$ equals the parity of the number of elements in $\tilde{L}$. This holds as the number of elements in $S$ has the same parity as $S$ and when inserting $T_i$ we insert $2b_i-a_i+\mu_{T_i}$ empty cells and $\mu_{T_i}$ $i$'s, which gives an even number of added cells to $\tilde{L}$. \qedhere
\end{itemize}
\end{proof}

\begin{lemma}
After each insertion of a $T_i$ we obtain a reversed skew semistandard tableau.
\end{lemma}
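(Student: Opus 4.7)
My plan is to prove this by induction on the outer for-loop counter $i$, running from $\ell(\mu)$ down to $1$. The inductive hypothesis I would maintain is: at the start of iteration $i$, $\tilde{L}$ is a reverse skew semistandard tableau whose entries all exceed $i$. The base case is immediate because before the loop $\tilde{L}$ is the reflected Young diagram of $S$, which contains no entries.

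In the inductive step I would track the two defining conditions (columns strictly decreasing, rows weakly decreasing) through each sub-step of iteration $i$: first the insertion of empty cells and $i$-labels followed by the column sort; then the merge while-loop; then the shift while-loop; and finally the correct-parity while-loop. After the first sub-step, columns are strictly decreasing because the pre-existing entries all exceed $i$ and are strictly decreasing in each column by induction, while at most one $i$-label is inserted per column; for the latter I would verify that for a two-column skew semistandard tableau $T_i$ of residuum $\leq 1$, the cells receiving an $i$ below them (the tail cells in Type~1, or the lower tail together with the fin in Type~2/3) have pairwise distinct column indices inside $T_i$. This follows from column-strictness of $T_i$ together with, in the fin-versus-lower-tail case, the residuum $1$ condition. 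Rows will in general not yet be weakly decreasing — that is what the merge and shift loops are going to repair.

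For the merge and shift loops I would argue move-by-move. A merge moves a pair $(l,i)$ one column to the left and resorts the destination column; by Corollary~\ref{cor:LongerColum} an accompanying $i$ always exists, the source column loses exactly two cells (preserving its shape within the outer partition), and after resorting, the destination column absorbs $l$ in the unique slot between entries $> l$ and entries $\leq l$, so column-strictness is maintained. Shifts are easier and do not alter column contents, only column indices. For correct parity I would invoke the reformulation in Corollary~\ref{cor:LongerColum}(2): the rightmost $i$ of a wrong-parity row is moved to become the leftmost $i$ of the row above. Since $i$ is currently the smallest label in $\tilde{L}$ and all $i$'s sit at the bottoms of their columns, this move neither introduces a duplicate within a column nor destroys row-weak-decrease.

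The hard part will be the merge loop. The technical crux is verifying that in each merge the destination column does not already contain the value $l$ — so that no duplicate is introduced after resorting — and that the resulting column remains a legitimate piece of a reverse skew diagram. I anticipate this needs a case split on the types of $T_j$ for $j > i$ whose cells are being interacted with, using the gap/slot conditions \eqref{eq:TypeProperty} and \eqref{eq:GapProperty} together with the height bounds \eqref{eq:HeightProperty} and \eqref{eq:Height_Property}, in essentially the same spirit as the well-definedness argument in the preceding lemma.
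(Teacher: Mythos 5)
Your overall architecture matches the paper's: reduce to local sortedness plus the claim that no column ever contains two equal entries, and check this claim separately for the insertion step, \emph{merge}, \emph{shift}, and \emph{correct parity}. However, you have not actually proved the statement you yourself identify as the crux, namely that after a \emph{merge} the destination column does not already contain the value $l$ being moved into it. You only announce that you ``anticipate'' a case split on the types of the $T_j$ using the gap/slot and height conditions. That case split is not needed and is not what the paper does: the paper closes this gap with a short local argument --- if the destination column already contained an $l$, look at the cell directly to its right (which lies in the source column); row-sortedness of the already-processed rows forces that cell to be at most $l$, while column-strictness of the source column (whose relevant entry is the $l$ being merged, or something above it) forces it to be larger, a contradiction. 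Without this (or an equivalent) argument your proof is incomplete at exactly the point where completeness matters most.

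There is a second, smaller gap in your treatment of \emph{correct parity}. You justify ``no duplicate is introduced'' by saying that $i$ is the smallest label and all $i$'s sit at the bottoms of their columns. That observation does not exclude the moved $i$ landing in a column whose bottom cell is already an $i$ --- that configuration is precisely the duplicate you must rule out. The paper's argument here is a parity count: rows are lengthened in pairs, so if a row acquires odd length there must be room in the row above that is not already occupied by an $i$; this is what guarantees the relocated $i$ finds a free column. Your insertion-step argument (at most one $i$ inserted per column, via column-strictness and the residuum condition on $T_i$) and your handling of \emph{shift} are fine and agree with the paper.
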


\begin{proof}
Each column is sorted such that fillings decrease and empty cells are on top as the columns get sorted after any operation that might change this. Each row is sorted the same way as due to Corollary~\ref{cor:LongerColum} the empty cells build a Young diagram, thus are left in each row, and unsorted filled cells get eliminated by \emph{merge}.
Therefore it suffices to show that there is at most one $j$ in each column.

As we insert at most one $i$ to each column, there is at most one $i$ in each column at the begin of an iteration. The same holds by induction for $j$ with $j>i$.

We show that this holds also at the end of this iteration.

We note that the only situations where an $l$ or $i$ moves to another column are \emph{merge}, \emph{shift} and \emph{correct parity}.

First we consider \emph{merge}: Whenever there is a $j$ in left of an $l$ with $j<l$, this happens exactly if below the $l$ is a newly inserted $i$ or, if there was such a situation in the column to the right too, but not in the column of $j$, or if there was a shift of $l$ and $i$. Therefore there cannot be an $i$ in the column to the left because otherwise entries in this column would have moved one down too and $j<l$ would have caused a disorder before. Thus after \emph{merge} the number of $i$ in each column is still at most one. What remains to show is that there cannot be another $l$ in the new column of $l$. Therefore we consider the position directly to the right of this other $l$. This needs to be smaller, as rows above are sorted and it needs to be larger as columns are sorted. This is a contradiction.

After \emph{merge} the current row is sorted. Therefore \emph{shifting} cells in the same row to the left does not increase the number of $j$'s to two for any $j$ and any column.

Finally we consider \emph{correct parity}. Suppose an $i$ is put into a column where already an $i$ is. 
This would mean that there is not enough space for this $i$ to be put into this row, if the other $i$ would not be here. However we make rows longer in pairs, and if we make them longer by an odd number, there needs to be enough space to put an element of the upper row to the lower one due to parity reasons.
\end{proof}

\begin{lemma}
After each insertion of a $T_i$ the first property of alternative orthogonal Littlewood-Richardson tableaux holds.
\end{lemma}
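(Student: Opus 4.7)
The plan is to argue by downward induction on $i$, showing that after iteration $i$ of the outer \textbf{for}-loop the first property of Definition~\ref{def:aoLRT} holds once the filled cells are relabeled by subtracting $i-1$. The base case $i=\ell(\mu)$ is trivial, since only the single letter $\ell(\mu)$ is present and the Yamanouchi condition is vacuous. For the inductive step I assume the property holds after iteration $i+1$, so the cells labeled $i+1,\dots,\ell(\mu)$ are already arranged in Yamanouchi position; what must be checked is that inserting $T_i$ both preserves these relations and establishes the new one between labels $i$ and $i+1$.

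Preservation of the relations among labels in $\{i+1,\dots,\ell(\mu)\}$ would be handled by inspecting the three operations of the inner loop. \emph{Correct parity} only moves $i$-labels, so it leaves higher labels untouched; \emph{shift} transports an $i$ together with its upper neighbor as a block, keeping the vertical ordering of higher labels undisturbed; \emph{merge} moves a pair (newly inserted $i$, existing $l$) one column to the left and reshuffles the column $l$ left, and using Corollary~\ref{cor:LongerColum} one checks that the left-to-right order of any label $\geq i+1$ above the merged row is retained. For the new relation I would trace each $i$-label back to the tail cell (or lower-tail/fin cell) which spawned it. By \eqref{eq:HeightProperty} and \eqref{eq:Height_Property}, every non-tail cell contributed by $T_i$ is placed into a column which already contained a cell produced by $T_{i+1}$, so the new empty cells sit strictly above existing $(i+1)$-labels; the tail-derived cells, with their $i$ beneath, extend these columns downward by exactly two rows. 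The reformulation of \emph{correct parity} in Corollary~\ref{cor:LongerColum}(2) then guarantees that whenever an $i$ is lifted in order to fix parity, it still lands below an $(i+1)$, because the induction hypothesis places an $(i+1)$ in the required position to its right. Counting occurrences from the left, the $j$-th $i$ is in a row strictly below the $j$-th $(i+1)$, which is the Yamanouchi condition.

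The main obstacle I expect is the bookkeeping when \emph{merge} interacts with \emph{correct parity}: a merge may push a freshly inserted $i$ into a column whose contents are about to be resorted, and one has to argue that the left-to-right order of $i$'s relative to the $(i+1)$'s is not violated at any intermediate stage. The cleanest route I see is to isolate a local invariant, namely that each $i$-label has directly above it either an empty cell or a label $\geq i+1$, and that across the tableau this stacking respects the left-to-right count of occurrences; then verify that every atomic operation of Algorithm~\ref{alg:1} (insertion of an $i$, sorting a column, merge, shift, correct parity) preserves this invariant. Once the invariant is in place the Yamanouchi property for the pair $(i,i+1)$ follows immediately, completing the inductive step.
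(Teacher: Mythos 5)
Your overall architecture (induction over the iterations of the outer for-loop, then a case analysis of \emph{merge}, \emph{shift} and \emph{correct parity}) is the same as the paper's, but the proposal has a genuine gap at its foundation. The first property is the statement that the $j$th cell labeled $i$, counted from the left, lies in a row above the $j$th cell labeled $i-1$; this is a global left-to-right matching between occurrences of consecutive letters. The paper anchors this matching with Property \eqref{eq:TailProperty}: because the tails, shifted to share their top line, form a semistandard tableau, each $i$ is \emph{initially} inserted weakly to the left of and below the $(i+1)$ it must be paired with. Your substitute argument via \eqref{eq:HeightProperty} and \eqref{eq:Height_Property} only controls where the \emph{empty} cells of the non-tail parts land (namely in columns already occupied), and says nothing about the horizontal position of the new $i$-labels relative to the existing $(i+1)$-labels; without \eqref{eq:TailProperty} the pairing you need never gets off the ground. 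Relatedly, your proposed local invariant --- each $i$ has directly above it an empty cell or a label $\geq i+1$ --- is strictly weaker than the Yamanouchi condition and does not imply it, so even if you verified it for every atomic operation the conclusion would not follow ``immediately.''

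The second gap is precisely the case you flag as ``the main obstacle'' and then leave unresolved, and it is where most of the paper's proof lives: when the corresponding $(i+1)$ has already changed column through \emph{merge} or \emph{shift} and the new $i$ is inserted to its right, one must show that the configuration forces a cascade of \emph{merge} steps that carries the $i$ back to the left of its $(i+1)$ (the paper argues there must be an $l$ above the inserted $i$, because otherwise the empty cells would not form a Young diagram, and that each merge/shift is followed by another merge). Likewise, for \emph{correct parity} the needed statement is not that ``the induction hypothesis places an $(i+1)$ to the right,'' but a parity argument on column lengths showing that \emph{correct parity} can never lift an $i$ into the column of its own $(i+1)$. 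Both of these arguments are essential and absent from the proposal.
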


\begin{proof}
Due to \eqref{eq:TailProperty} each element $i$ is inserted left below of the $(i+1)$ directly to the right in the tails. We show that operations in the outer for-loop do not change that.

If $(i+1)$ is still in the column where it was inserted or, due to \emph{correct parity}, to the right, $i$ gets inserted left of $(i+1)$. In this case neither \emph{merge} nor \emph{shift} can change this.

Now we consider the case that $(i+1)$ has changed column in \emph{merge} or \emph{shift}. If this happened and $i$ is inserted to the right of it, we show that there needs to be an $l$ above of $i$, thus \emph{merge} also takes place for $i$. (If there is such an $l$, there was a position on top of the upper \emph{merged} or \emph{shifted} position, that now ends up to the right.) Where the empty cell belonging to $i$ gets inserted, there needs to be an $l$ or no cell and an empty cell to the left for the empty cells to form a tableau. No cell is not possible as $(i+1)$ was inserted to the same column (or to the right) and changed column in \emph{merge} or \emph{shift} and there is an empty cell to the left. A sequence of \emph{merge} and \emph{shift} will be followed by a sequence of \emph{merge} by the same argument. (We always put two elements leftwards and the upper one will be the next candidate for \emph{merge} as it is smaller than the element it is \emph{shifted} to, because it was in the same column on top on it.)

What is left to consider is \emph{correct parity}. In this case the row of $i$ has odd (respectively even, depending on the parity of $S$) length and $i$ is the rightmost position in it. Thus $i$ is in an odd (respectively even) column. If $(i+1)$ is in a column to the right of  $i$, $i$ still ends up in the same column or to the left. If $i$ is in the same column as $(i+1)$ we show that there cannot happen correct parity. The column where $(i+1)$ is now, got longer when it was inserted (or \emph{merged/shifted to}). As it is still there and it is an odd (respectively even) column, the column to the left also got longer such that they had the same length. Thus it too contains an $(i+1)$, but no $i$, which is a contradiction.

$l$ also changes column. If $(l-1)$ is in the same column, this was next to $j$ before this column got longer through $i$. This means by induction $l-1\leq j$ but since $j<l$ we obtain $l-1=j$. Thus the $(l-1)$ in the original column of $l$ is not its according $(l-1)$. (The $(l-1)$ in the column $l$ is moved to is its according $(l-1)$.)
\end{proof}

\begin{lemma}
After each insertion of a $T_i$ also the second property of alternative orthogonal Littlewood-Richardson tableau holds.
\end{lemma}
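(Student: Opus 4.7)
The plan is to continue the inductive framework set up for Theorem~\ref{theo:algo1WellDef}: assume that after the insertions of $T_{\ell(\mu)}, T_{\ell(\mu)-1}, \dots, T_{i+1}$ the second property of Definition~\ref{def:aoLRT} holds for the current $\tilde{L}$ (with its entries read as labels shifted by $i$), and then show that the insertion of $T_i$ preserves this condition. The base case $\tilde{L}$ equal to the reflected shape of $S$ is vacuous since there are no filled cells. Throughout, I would invoke Proposition~\ref{prop:2ndPropYamanouchi} to reinterpret $v_p$ as the bumping chain of the Robinson--Schensted insertion of the reversed reading word up through position $p$, so that $|v_p|$ is exactly the RSK row-index of $p$; this turns the inequality $r_p \geq 2|v_p| - o_p$ into a statement about how row positions in $\tilde{L}$ are related to RSK rows.

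First I would handle the newly inserted entries labeled $i$. Since $i$ is the smallest label present, each such entry $p$ has $|v_p|=1$ and the required inequality reduces to $r_p \geq 2 - o_p$. Corollary~\ref{cor:LongerColum} pins down exactly which rows can receive a new $i$: either the row directly below a freshly added empty cell, which by construction lies in row $\geq 2$, or after \emph{correct parity} a row immediately above, in which case the shifted $i$ is the rightmost $i$ of its row and no $v_{\tilde p}$ in the same row has a second entry, so $o_p \geq 1$ and the bound still holds. For older entries, now labeled $\geq i+1$, I would argue that the addition of new $i$'s can append at most one new entry (the rightmost small-enough $i$) to any $v_p$, and that exactly those $p$ whose chains are extended sit in a row that either has not changed or has grown by at least $2$, preserving the inequality.

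The operational steps of the outer for-loop would each be analysed separately. \emph{Shift} leaves both $r_p$ and all sequences $v_p$ intact, since it moves whole cells horizontally by one column without reordering the reading word's RSK behaviour. \emph{Merge} moves a pair $(l,i)$ one column to the left and resorts the column, which can only shorten or translate bumping chains of entries further right. \emph{Correct parity} moves an $i$ up one row; there $r_p$ grows by $1$, and I would use the parity argument of Corollary~\ref{cor:LongerColum}(2) to show that the affected row has the precise slack needed for $o_p$ to absorb any chain extension produced through this new position. The type structure from Theorem~\ref{theo:LRTabs} together with the tail-root and fin properties of Propositions~\ref{prop:LRtabs1} and~\ref{prop:LRtabs2} are what make bumping chains line up with row heights at every step.

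The main obstacle I expect is the interaction between \emph{merge} cascades and the definition of $o_p$: a single new $i$ can trigger several successive merges, and each such merge reorganises the reading word in a way that simultaneously rearranges many sequences $v_{\tilde p}$. The delicate bookkeeping is to verify that whenever a merge causes the current $i$ to be appended to some $v_{\tilde p}$ at position $m$, the side-condition ``all $v_{\tilde q}$ with $\tilde q \neq \tilde p$ in the same row as $\tilde p$ have at most $m-1$ entries'' holds, so that $o_{\tilde p}$ actually gains the $+1$ that balances the inequality. The even-fin property of Proposition~\ref{prop:LRtabs2} is where I expect this bookkeeping to close, since it forces the relevant column and row parities to align so that the extra $+1$ is available precisely when and where it is needed.
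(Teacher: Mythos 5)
Your plan has the right overall shape---induction over the insertions of the $T_i$, the RSK reinterpretation of the $v_p$ via Proposition~\ref{prop:2ndPropYamanouchi}, and a case split over \emph{shift}, \emph{merge}, \emph{correct parity} and the newly inserted $i$'s---and this matches the skeleton of the paper's argument. But the proposal stops exactly where the real work begins, and two of its load-bearing claims are not right as stated. First, the assertion that \emph{merge} ``can only shorten or translate bumping chains of entries further right'' is too optimistic: the paper's proof consists precisely of an explicit local analysis of four entries $j_1<j_2$, $j_3<j_4$ in adjacent columns, and in the case $j_2<j_4$ the chains are genuinely reorganised ($j_2$ and $j_3$ swap how many sequences they belong to), which is only admissible because $j_2$ ends up one row below where $j_3$ was. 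Without that computation---or the paper's reformulation in terms of marking, with the monotonicity observation that the number of elements marked $m$ times never decreases---the claim that merges are harmless is unsubstantiated, and you yourself flag this as the ``main obstacle'' without resolving it.

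Second, \emph{correct parity} moves the rightmost $i$ to the row \emph{above}, so $r_p$ \emph{decreases} by one and the inequality $r_p\geq 2|v_p|-o_p$ becomes harder, not easier; your statement that $r_p$ grows by $1$ has the direction reversed, and the compensating $+1$ in $o_p$ is exactly the delicate point the paper spends several paragraphs on (the ``only''-condition on rightmost elements and the argument about the leftmost interfering element $m$). Finally, you lean on Propositions~\ref{prop:LRtabs1} and~\ref{prop:LRtabs2} (tail root a slot, fin even) to ``close the bookkeeping,'' but these play no role in the paper's proof of this lemma; the relevant inputs are Corollary~\ref{cor:LongerColum} and the structure of the merge/shift cascade itself. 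So while your decomposition is sound, the proof as proposed has a genuine gap at its core and would need the explicit local case analysis (or an equivalent invariant) to go through.
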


\begin{proof}
We start this proof by reformulating the second property:

Instead of putting elements into a $v_e$ we can also mark them using the same rules. We remember how often an element was marked and count which element was marked at with position during considering $e$.
Now we observe the following:
\begin{itemize}
\item It only matters how often an element is marked. It is not important by which elements it was marked.
\item Whenever we consider an $i$ and mark elements, if we mark an element the $(j+1)$-th time, we mark another, smaller element, the $j$-th time. Thus the number elements marked $j$ times does never decrease.
\end{itemize}
Now we prove the statement.

First we have a closer look at what happens locally, when there is an $i$ inserted (or \emph{merge} happens) in one column but not in its neighbor.

To do so we first consider a column together with its left neighbor. We examine four elements in a pattern as below with $j_1<j_2$, $j_3<j_4$ and $j_2\geq j_3$. \emph{Merge} or insert $i$ happens in the right column. Thus this is placed downwards by one. If $j_1\geq j_3$ and $j_2\geq j_4$ nothing changes, while if $j_1< j_3$ and $j_2\geq j_4$ or $j_2<j_4$ we merge.

\begin{tikzpicture}[scale=0.35]
  \draw (1,4)--(2,4);
  \draw (0,3)--(2,3);
  \draw (0,2)--(2,2);
  \draw (0,1)--(1,1);
  \draw (0,3) -- (0,1);
  \draw (1,4) -- (1,1);
  \draw (2,4) -- (2,2);
  \draw (1.5,3.5) node{$j_4$};
  \draw (1.5,2.5) node{$j_3$};
  \draw (0.5,2.5) node{$j_2$};
  \draw (0.5,1.5) node{$j_1$};
  \draw (0,0);
\end{tikzpicture}
\hspace{-1cm} is changed into:
 if $j_1\geq j_3$ and $j_2\geq j_4$:
\begin{tikzpicture}[scale=0.35]
  \draw (0,3)--(2,3);
  \draw (0,2)--(2,2);
  \draw (0,1)--(2,1);
  \draw (0,3) -- (0,1);
  \draw (1,3) -- (1,1);
  \draw (2,3) -- (2,1);
  \draw (1.5,2.5) node{$j_4$};
  \draw (1.5,1.5) node{$j_3$};
  \draw (0.5,2.5) node{$j_2$};
  \draw (0.5,1.5) node{$j_1$};
  \draw (0,0);
\end{tikzpicture}
 if $j_1< j_3$ and $j_2\geq j_4$: 
\begin{tikzpicture}[scale=0.35]
  \draw (0,3)--(2,3);
  \draw (0,2)--(2,2);
  \draw (0,1)--(1,1);
  \draw (0,0)--(1,0);
  \draw (0,3) -- (0,0);
  \draw (1,3) -- (1,0);
  \draw (2,3) -- (2,2);
  \draw (1.5,2.5) node{$j_4$};
  \draw (0.5,1.5) node{$j_3$};
  \draw (0.5,2.5) node{$j_2$};
  \draw (0.5,0.5) node{$j_1$};
  \draw (0,0);
\end{tikzpicture}
if $j_2<j_4$:
\begin{tikzpicture}[scale=0.35]
  \draw (0,3)--(2,3);
  \draw (0,2)--(2,2);
  \draw (0,1)--(1,1);
  \draw (0,0)--(1,0);
  \draw (0,3) -- (0,0);
  \draw (1,3) -- (1,0);
  \draw (2,3) -- (2,2);
  \draw (0.5,2.5) node{$j_4$};
  \draw (1.5,2.5) node{$j_3$};
  \draw (0.5,1.5) node{$j_2$};
  \draw (0.5,0.5) node{$j_1$};
  \draw (0,0);
\end{tikzpicture}

\noindent Now we determine which elements get marked if no other elements interfere:
\begin{itemize}
  \item $j_1\geq j_3$ and $j_2\geq j_4$: $\{j_4\}$, $\{j_3,j_4\}$, $\{j_2\}$, $\{j_1,j_2\}$. After the insertion process this changes to $\{j_4\}$,  $\{j_2\}$, $\{j_3,j_4\}$, $\{j_1,j_2\}$, which only changed the order.
\item $j_1< j_3$ and $j_2\geq j_4$: $\{j_4\}$, $\{j_3,j_4\}$, $\{j_2\}$, $\{j_1,j_3, j_4\}$. After the insertion process this changes to $\{j_4\}$, $\{j_2\}$, $\{j_3,j_4\}$, $\{j_1,j_3, j_4\}$, which also only changed the order.
\item For $j_2<j_4$ we distinguish the cases $j_1<j_3$ and $j_1\geq j_3$: $\{j_4\}$, $\{j_3,j_4\}$, $\{j_2\}$, $\{j_1,j_3, j_4\}$ or $\{j_1,j_2, j_4\}$. After the insertion process this changes to $\{j_3\}$, $\{j_4\}$, $\{j_2,j_4\}$, $\{j_1,j_3, j_4\}$ or $\{j_1,j_2, j_4\}$, which is more than just a change of order but does not change anything about what is marked afterwards. $j_3$ and $j_2$ swap number of marked elements, which is allowed as $j_2$, which number increases, is one row below of where $j_3$ was. (The same row would have been sufficient.)
\end{itemize}

Now we consider a column together with its right neighbor. Again we examine four elements in a pattern as below with $j_1<j_2$, $j_3<j_4$, $j_1\geq j_3$ and $j_2\geq j_4$ (and therefore $j_2>j_3$). \emph{Merge} or insert $i$ happens in the right column. Thus this is placed downwards by one. Everything ends up sorted, so no \emph{merge} happens:

\begin{tikzpicture}[scale=0.35]
  \draw (0,3)--(2,3);
  \draw (0,2)--(2,2);
  \draw (0,1)--(2,1);
  \draw (0,3) -- (0,1);
  \draw (1,3) -- (1,1);
  \draw (2,3) -- (2,1);
  \draw (1.5,2.5) node{$j_4$};
  \draw (1.5,1.5) node{$j_3$};
  \draw (0.5,2.5) node{$j_2$};
  \draw (0.5,1.5) node{$j_1$};
  \draw (0,0);
\end{tikzpicture}
is changed into
\begin{tikzpicture}[scale=0.35]
  \draw (1,3)--(2,3);
  \draw (0,2)--(2,2);
  \draw (0,1)--(2,1);
  \draw (0,0)--(1,0);
  \draw (0,2) -- (0,0);
  \draw (1,3) -- (1,0);
  \draw (2,3) -- (2,1);
  \draw (1.5,2.5) node{$j_4$};
  \draw (1.5,1.5) node{$j_3$};
  \draw (0.5,1.5) node{$j_2$};
  \draw (0.5,0.5) node{$j_1$};
  \draw (0,0);
\end{tikzpicture}.

\noindent Now we consider the marked elements in the insertion process: 
$\{j_4\}$,  $\{j_2\}$, $\{j_3,j_4\}$, $\{j_1,j_2\}$ which changes to $\{j_4\}$,  $\{j_3,j_4\}$, $\{j_2\}$, $\{j_1,j_2\}$ which is again only a change of the order.

As a second step we show that there are no relevant changes in those columns to the left and to the right. Once we have shown this, we can conclude, that $j>i$ still satisfy the third property, if we ignore elements counted by $o$.

To see this we can argue that anything even more to the left of a column that got changed is larger. Thus it marks even larger elements. In the case that it marks elements that would have been marked by $j_1$ to $j_4$ due to their change of rows, they simply swap which elements they mark. As they used to be in the same row, the third property still holds.

Everything more to the right of a column that got changed is smaller,  and takes smaller elements, still it could be that the same kind of swapping occurs.

In the third step we have a closer look at what happens to $i$. If an $i$ is inserted and not changed by \emph{correct parity}, it is two rows below from where the elements one row above are. It can mark at most one element more, which still satisfies the third property.

\emph{Correct parity} puts $i$ one row up. If there are other $i$'s in this row we can argue as above. Otherwise we see that it can mark at most one element more than those elements one row above. Lets call the rightmost one of them $j$. Suppose this $j$ has only one row for every rightmost element in $v_j$ and our $i$ is not the rightmost $i$.

(If our $i$ is the rightmost one, it can take only rightmost elements, and can take more elements as $o$. If $j$ has more rightmost element $i$ can take also more elements as $o$. This is because the element left of $i$ can take at most as many elements as $j$ but is the row of $i$ now. Thus the \enquote{only} condition holds.)

For $j$ to be taken in $v_i$, elements that are smaller or equal but more to the right have to be taken by other $i$. Let's call the leftmost such element $m$. Below $m$ there is no row without a number because if there would be one, an element of $v_j$ would be in the same row as one of $v_m$, that cannot satisfy the \enquote{only} condition. Thus when we insert another $i$ below $m$ that is not moved in correct parity above or besides $m$, either this $i$ or another $i$ left of it moves a position of $v_j$. Thus $v_i$ has less elements too and satisfies the third condition.

Finally we consider elements that are counted by an $o$. Normally they just stay the same as every other element. When they are put one row down, it can happen that they count one time less as an $o$ element, which is fine, as they also went down one row (and with them those which mark it). This happens if only this columns gains length and not the one directly to the left.
\end{proof}

\begin{lemma}
$\tilde{L}$ has at most $2(k-i+1)+1$ rows, if $S$ is odd, this number is met.
\end{lemma}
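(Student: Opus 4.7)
The plan is to proceed by reverse induction on $i$, tracking the number of rows of $\tilde{L}$ after the iteration on $T_i$ of the outer for-loop has completed.

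For the base case, I note that before any iteration runs, $\tilde{L}$ is just the reflection of $S$; since $S$ has $n-2\ell(\mu)=2(k-\ell(\mu))+1$ columns of the same parity, $\tilde{L}$ has at most $2(k-\ell(\mu))+1$ rows, with equality whenever $S$ is odd (so that every column of $S$ is non-empty). This matches the claim at $i=\ell(\mu)+1$.

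For the inductive step I assume the bound $2(k-i)+1$, with equality when $S$ is odd, holds after processing $T_{i+1}$, and I show that the iteration on $T_i$ increases the row count by at most $2$, and by exactly $2$ when $S$ is odd. The upper bound comes directly from Corollary~\ref{cor:LongerColum}: the only rows that grow during the iteration are (a) rows in which the new $i$'s are inserted together with the rows directly above, or (b) the bottommost two rows of the final $\tilde{L}$. Rows of type (a) must have existed before the iteration, since each $i$ is placed directly below an empty cell sitting in a previously occupied row. Hence only the bottommost two rows can appear as new, capping the increase at $2$.

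For the equality when $S$ is odd I combine this with a parity invariant: the \emph{correct parity} step at the end of each iteration forces every row of $\tilde{L}$ to have length of the same parity as the columns of $S$, so by induction every row has odd length throughout the algorithm. By Corollary~\ref{cor:LongerColum} the bottommost two rows grow by values of the same parity as $S$, that is, by odd amounts. If either of those two rows had already existed before the iteration, its pre-iteration (odd) length plus an odd growth would be even, contradicting the invariant. Hence both of the bottommost rows must be newly created, and the row count grows by exactly $2$, giving $2(k-i)+3=2(k-i+1)+1$.

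The hard part will be the exactness claim when $S$ is odd. The upper bound is an almost immediate consequence of the classification in Corollary~\ref{cor:LongerColum}, but ruling out a row-count increment of $0$ relies on the parity invariant, which in turn rests on the \emph{correct parity} step functioning as advertised in every case. Verifying that all operations in the inner for-loop preserve this parity invariant, and in particular that \emph{correct parity} can always be carried out and never over- or under-corrects, is where I expect most of the work to be.
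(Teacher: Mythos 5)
Your overall scaffold (base case from the reflected $S$, then ``each iteration adds at most two rows, and exactly two when $S$ is odd'') is the same skeleton the paper uses, but both halves of your inductive step have real gaps. For the upper bound, the claim that rows receiving new $i$'s ``must have existed before the iteration'' is false: the empty cell added to column $l$ can extend that column past the previous bottom row, and the $i$ placed beneath it then sits two rows below the old bottom — this is precisely what happens in the equality case, so your upper-bound argument contradicts your own tightness argument. The actual content of the upper bound is a per-column statement: each value occurs at most twice in $T_i$, so each column of $\tilde{L}$ receives at most two empty cells, and one must additionally rule out a column receiving two empty cells \emph{and} a cell labeled $i$ (which would make it grow by three). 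The paper does this by observing that an element lying in both the tail and the right column of $T_i$ could only be the tail root (which produces no $i$), and that the fin's value cannot occur in both columns because of the length gap between them. Corollary~\ref{cor:LongerColum} alone, which merely classifies which rows lengthen, does not give you this.

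For the equality when $S$ is odd, the parity argument does not go through. The ``same parity'' in Corollary~\ref{cor:LongerColum} means the two growth values of the bottommost rows agree with \emph{each other} (they come from the non-tail column lengths $b_i$ and $b_i-a_i$, both even since $a_i,b_i$ are even), not that they are odd when $S$ is odd; a pre-existing bottommost row of odd length can grow by an even amount (including zero) and remain odd, so no contradiction arises and nothing forces new rows to appear. What actually has to be shown is that the first (longest) column of $\tilde{L}$ gains exactly two cells in every iteration, and this requires the case analysis the paper performs on the type of $T_i$: residuum-one tableaux contain a $1$ in each column; a type-1 tableau whose left column properly contains its tail contributes an empty cell and an $i$ to column $1$; and the remaining case of a type-1 tableau whose left column is pure tail is handled via \eqref{eq:TailProperty} (minimality of its tail root) together with the second defining property of alternative tableaux, which forces the later $i$'s to land below the previously bottommost row. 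None of that structural analysis can be replaced by the parity invariant.
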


\begin{proof}
Each column grows by adding empty cells and cells labeled $i$. \emph{Merge} and \emph{shift} only lead to grows of columns untouched so far. There are at most two numbers with the same value in $T_i$, thus at most two new empty cells in each column. We show that if there are two, no $i$ is inserted to the same column. Recall that $i$'s are inserted below lower tail elements and the tail root or the fin, depending on the parity.

If a tableau has residuum zero, there cannot be an element that is in both, in the tail and in the right column. If a tableau has residuum 1, such a tail element could only be the tail root which produces no $i$ either. The number of the fin cannot be in both columns, as the left column without the tail is shorter by at least two. Therefore no row can grow by more than two.

It remains to show the second claim. Thus we consider odd $S$ and $T_i$ that do not contain two $1$'s. Tableaux with residuum one contain a $1$ in each column, as neither the position above the fin (which exists) nor the tail root nor any position above one of them is a gap. Thus we only consider tableaux of type $1$.

If only the left column contains a $1$, thus the right one is empty, we add $i$ an empty cell in column 1.

Tableaux with no $1$ in the left column consist only of the tail and a right column, if the tableau to the right had a left column larger than its tail (due to \eqref{eq:HeightProperty}). All those tail elements get inserted together with an $i$. Now the tail root needs to be smaller than or equal to all other tail roots by \eqref{eq:TailProperty}. Residuum $1$ tableaux produce two $1$'s, two $2$'s up to two such tail roots. Type $1$ tableaux right of residuum 1 tableaux (respectively $S$) also do so due to \eqref{eq:HeightProperty} (respectively \eqref{eq:Height_Property}). Thus the first tableau of type $1$, $T_j$, whose left column consists only of the tail inserts a $j$ into row $2j+1$. Now as the second property of alternative tableau holds, $i$'s that come afterwards will end up below, and their column will grow by two. (It is not possible that they end up there by a \emph{shift} where only one element is \emph{shifted}, as this would need two $(i+1)$'s belonging to one $i$ which is a contradiction.)
\end{proof}

We have now proven every Lemma that proves Theorem~\ref{theo:algo1WellDef}. Next we show the same for the reversed algorithm.

\begin{lemma}
\label{theo:ualgo2WellDef}
Algorithm~\ref{alg:u1} is well-defined. 

More precisely, after each iteration $i$ of the outer for-loop we obtain an alternative orthogonal Littlewood-Richardson tableau $\tilde{L}$ if we would decrease numbers by $i-1$.
\end{lemma}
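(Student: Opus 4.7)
The plan is to prove this by induction on $i$, in direct parallel to the proof of Theorem~\ref{theo:algo1WellDef}. The iterations of the outer for-loop in Algorithm~\ref{alg:u1} are designed to exactly undo the iterations of Algorithm~\ref{alg:1} in reverse order, so the task is to check that each operation finds its required input, produces a well-defined output, and that after iteration $i$ the residual diagram $\tilde{L}$ (with every entry reduced by $i-1$) meets Definition~\ref{def:aoLRT} for the truncated partition $(\mu_{i+1},\dots,\mu_{\ell(\mu)})$. The base case is trivial: before the first iteration $\tilde{L}$ is by hypothesis an element of $\LRtabsa$.

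For the inductive step I would proceed via the following lemmas, mirroring those used for Algorithm~\ref{alg:1}. First, analogously to Corollary~\ref{cor:LongerColum}, verify that the rows that shrink during iteration $i$ come in the two types that were grown by Algorithm~\ref{alg:1}: rows which contain newly extracted $i$'s (together with the row directly above), and the two bottommost rows whose parity is being restored. From this it follows that \emph{correct parity} in Algorithm~\ref{alg:u1} is always applicable and deterministic, since the parity mismatch flag exactly marks rows grown by an odd amount in Algorithm~\ref{alg:1}. Second, show that the \emph{shift} step in Algorithm~\ref{alg:u1} is precisely the inverse of the final \emph{shift} step of Algorithm~\ref{alg:1} on the current row: because the latter always shifts pairs to the leftmost available position maintaining the reverse skew semistandard condition, the reversed process moves unique pairs to the rightmost position still maintaining the condition and an $i$ still immediately underneath. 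Third, check that the pattern triggering \emph{merge} (the four-cell pattern $i<j_1<j_2$ with right neighbor $j_3$ satisfying $j_2>j_1>j_3$) is exactly the inverse of the four-cell pattern in the proof of Theorem~\ref{theo:algo1WellDef} (the case $j_2<j_4$), so the inverse operation is well-defined and restores semistandardness.

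After these local operations, the remaining \enquote{extraction} phase fills $T_i$ by reading off its marked cells (which become the right-column cells that were originally added) together with the explicitly recorded tail entries. Using the properties of the original tableau established in Section~\ref{sec:KwonsOLRT} (in particular \eqref{eq:TypeProperty} and \eqref{eq:TailProperty}), one verifies that the extracted $T_i$ has the correct residuum, the correct tail length $\mu_i$, and that its two columns are column strict. The admissibility of the pair $(T_i, T_{i+1})$ or $(T_{\ell(\mu)}, S)$ follows by reading backwards the inequalities established in Lemmas~\ref{lem:TensorEpsilonZero}, \ref{lem:TypeProperty} and~\ref{lem:TailProperty}.

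The main obstacle will be the inductive preservation of the second property of Definition~\ref{def:aoLRT} (the condition $r_p\geq 2|v_p|-o_p$) after the extraction of $T_i$. I would handle it by the same local analysis used in the proof of the corresponding lemma for Algorithm~\ref{alg:1}: track how the sets $v_p$ change under a single local move, using the four-cell patterns drawn in that proof, and observe that in each case either the multiset of row assignments of bumped elements is preserved, or a pair of elements merely swaps roles between consecutive rows, which does not invalidate the inequality. Special attention must be paid to the elements counted by $o_p$, which may acquire or lose a row together with the move, and to the case where an extracted $i$ had been shifted up by \emph{correct parity} in the forward direction: undoing this move requires that the extracted $i$ be the \emph{rightmost} $i$ in its row, which is guaranteed by the reformulation of \emph{correct parity} in Corollary~\ref{cor:LongerColum}(2). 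Finally, the Yamanouchi (first) property is preserved because \emph{merge}, \emph{shift} and \emph{correct parity} in the reverse direction keep each occurrence of $j$ weakly above the corresponding occurrence of $j-1$; this is a direct consequence of the local case analysis in the proof of Theorem~\ref{theo:algo1WellDef}, run in reverse.
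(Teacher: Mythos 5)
Your outline has the right overall shape (induction over the outer for-loop, mirroring the forward proof), but it rests on a premise you are not entitled to at this point: almost every step is justified by saying that Algorithm~\ref{alg:u1} ``exactly undoes'' what Algorithm~\ref{alg:1} did --- e.g.\ ``the parity mismatch flag exactly marks rows grown by an odd amount in Algorithm~\ref{alg:1}'', ``the shift step is precisely the inverse of the final shift step'', ``the four-cell pattern is exactly the inverse of the four-cell pattern''. That is the content of Theorem~\ref{theo:algo1ualgo1Inverse}, which is proved separately and later. This lemma must establish well-definedness of Algorithm~\ref{alg:u1} on an \emph{arbitrary} element of $\LRtabsa$, because surjectivity of Algorithm~\ref{alg:1} onto $\LRtabsa$ is exactly what is at stake; if you assume the input arose from Algorithm~\ref{alg:1}, the bijection argument becomes circular (you would only get injectivity of the forward map). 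The paper's proof therefore argues directly from the two defining properties of $\LRtabsa$: in particular, the second property ($r_p\geq 2|v_p|-o_p$) is what guarantees that there are always enough empty cells to mark when extracting the $i$'s (with \emph{correct parity} supplying the missing cell in the exceptional $o$ cases), and the first (Yamanouchi) property is shown to survive \emph{merge} by a direct case analysis of whether $(j_2+1)$ could have been taken instead of $j_2$, deriving a contradiction from $j_2>j_1>j_3$ in natural numbers. Neither of these direct arguments appears in your outline, and without them the well-definedness claim is unsupported.

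Two smaller points. First, you defer the admissibility of the pairs $(T_i,T_{i+1})$ and the type/fin/tail-root properties of the extracted $T_i$ to this lemma; in the paper those are separate subsequent lemmas feeding into Theorem~\ref{theo:ualgo1LRtabs}, while the present lemma only needs the \emph{remaining} diagram $\tilde L$ to stay in $\LRtabsa$ (after renumbering) and the loop to terminate. Second, for the preservation of the second property your local four-cell analysis is the right idea and matches the paper, but the paper's key additional observation --- that when an $i$ is extracted the elements that move up a row can afford to lose that row, because the extracted $i$ itself required two more rows than they did --- is the step that actually closes the inequality; ``a pair of elements merely swaps roles between consecutive rows'' does not by itself cover this case.
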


\begin{proof}
Due to construction the algorithm is well-defined once we ensure that we have always enough cells to mark. We will see this during the proof. First we show that $\tilde{L}$ is a reverse skew semistandard tableau and the two properties of an alternative orthogonal Littlewood-Richardson tableau hold if they held before.
\begin{itemize}
\item Again, to show that $\tilde{L}$ is a reverse skew semistandard tableau, it suffices to show that everything is sorted and that there is at most one $j$ in each column. Again no operation puts more than one $i$ into the same column. Columns get sorted after deleting something, a violation of the row order is be prevented by \emph{merge}, because a violation occurs exactly when the condition of \emph{merge} arises.

If an empty cell is erased left of another empty cell and thus shifts a cell labeled $j_2$ to an empty cell, or deleting shifts a smaller entry $j_2$ next to a bigger one $j_3$, this column is at least three cells larger, otherwise there would have been a \emph{shift}. In this case we obtain \emph{merge} and define $j_1$ to be the largest entry between $j_2$ and $i$.

\item The only operation which can destroy the first property (that there is always a $j$ below a $(j+1)$) is \emph{merge} at $j_2$. This makes a problem if there is a $(j_2+1)$ in the same column, and this $j_2$ is the one belonging to it. For $(j_2+1)$ not to  be taken instead of $j_2$ one of the following conditions must be met: either $(j_2+1)$ is in the column to the right or $j_2<j_4$ where $j_4$ is the position right of $(j_2+1)$. In the former case $j_2$ belongs to that, which is a contradiction. In the latter case we obtain $(j_2+1)>j_4>j_2$ which is also a contradiction as all three numbers are natural numbers.

\item For the second property we can do a similar case study of local changes as we did before for Algorithm~\ref{alg:1}. However there are some steps we have to consider more precisely.

When an $i$ gets extracted, elements move one row up. This row however, was not necessary then, because the $i$ causing this move needed two more rows for its formula, so the moved elements needed at least two less.

For $o$ we also argue similar as for Algorithm~\ref{alg:1}. If those which count for $o$ are put upwards, but not the ones to the left, they are counted as $o$ once more as before. This needs to be the case as we might not have this \enquote{not necessary} row in the $o$ case.
\end{itemize}

Now we show that the row parity is constant. We shift one $i$ for $i$'s that are in odd sequences to the left. Thus the shifted $i$'s shorten the row where they were by two (this $i$ and their empty cell), while the other $i$'s (an even number) shorten this row and the one above by an even number each.

The tableau has at most $2i+1$ rows after an iteration as lower ones are taken as left and right column of the new tableau.

To see that there are enough cells to mark we consider the second condition of the alternative orthogonal Littlewood-Richardson tableau. This ensures one position to mark for positions, except for some $o$ cases. In this cases \emph{correct parity} puts it one row above.
\end{proof}

\begin{lemma}
Each iteration of the outer for-loop produces a tableau of one of the three types.
\end{lemma}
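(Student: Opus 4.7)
The plan is to show that the two-column skew tableau $T_i$ built during the $i$-th iteration of the outer for-loop of Algorithm~\ref{alg:u1} always satisfies condition~\eqref{eq:TypeProperty}. My strategy is first to establish the structural form of $T_i$ at the end of the iteration, and then to perform a case distinction on the parity of its two non-tail columns.

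First I would argue, using Lemma~\ref{theo:ualgo2WellDef}, that after the \emph{shift} and \emph{merge} sub-loops complete within the iteration, the bottom rows of $\tilde L$ have been rearranged so that the marked cells (coming from the entries of value $i$) form a canonical tail, and so that rows $2i$ and $2i+1$ of $\tilde L$ contain exactly the cells that will populate the non-tail part of $T_i$. Since $\tilde L$ is a reverse skew semistandard tableau, reading the marked cells by row yields a weakly increasing sequence of column indices, producing a valid tail of $T_i$; likewise, inserting the contents of rows $2i+1$ and $2i$ into the left and right columns in sorted order produces column-strict non-tail columns.

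Next I would show that the residuum of $T_i$ is at most one. Before the final correction, both non-tail columns have the same length, as determined by whether $S$ is odd or even together with the earlier \emph{correct parity} step. If both lengths are even, no correction is invoked, the residuum is zero, and I would verify that all gaps are confined to the tail, yielding Type~1. If both lengths are odd, the final correction moves the topmost tail entry to the right column and shifts the left column down; the right column then exceeds the left non-tail column by exactly one, so the residuum equals one. I would then split further into Type~2 and Type~3 according to whether the moved entry (which becomes the fin) is a gap.

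The main obstacle will be the gap analysis: one has to exclude a gap of $T_i$ strictly above the lower tail (other than the possible gap at the fin produced by the final correction). I would reduce this to showing that the column indices appearing in the left and right non-tail columns, as well as in the tail, are ``consecutive'' in the appropriate sense; this combines the Yamanouchi condition of alternative orthogonal Littlewood--Richardson tableaux via Proposition~\ref{prop:2ndPropYamanouchi} with the observation that the \emph{merge} loop precisely eliminates the local inversions that would otherwise produce such gaps. Finally, the Type~3 sub-case additionally requires comparing the fin of $T_i$ with that of $T_{i+1}$, which is controlled by the height inequality \eqref{eq:Height_Property} applied to the state of $\tilde L$ at the previous iteration together with the second property of $\LRtabsa$ enforcing the existence of a suitable slot above.
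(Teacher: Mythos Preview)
Your proposal contains a genuine error and misses the paper's key arguments.

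First, you claim that ``before the final correction, both non-tail columns have the same length.'' This is false: the left and right non-tail columns of $T_i$ come from rows $2i+1$ and $2i$ of $\tilde L$ respectively, and these rows generally have different lengths. What the paper actually proves is that they have the same \emph{parity}. This is done by a case analysis on whether \emph{correct parity} moved an $i$ from row $2i$ into row $2i+1$: if not, then row $2i$ contains an even number of $i$'s, and since each $i$ in row $2i+1$ shortens both rows $2i$ and $2i+1$ upon extraction (while \emph{shift} and \emph{merge} preserve parity), the resulting row lengths share a parity; if so, moving that $i$ shortens row $2i$ by two, and the same conclusion follows. Your appeal to ``whether $S$ is odd or even together with the earlier \emph{correct parity} step'' does not establish this.

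Second, your argument that ``if both lengths are even, no correction is invoked, the residuum is zero'' is incomplete. Even column lengths alone do not force residuum zero; you need that the right column cannot be slid down past the tail root. The paper's reason is short and essential: the entries placed into the tail (coming from marked cells) are strictly larger than the entries placed into the non-tail columns (coming from rows $2i$ and $2i+1$), so before the final if-query the residuum is automatically $0$. This same observation is what pins down the gap structure --- any gap in the right column can only be the fin, arising precisely when the final if-query moves the topmost tail entry there --- so your separate ``gap analysis'' via Yamanouchi words and \emph{merge} is unnecessary and does not get at the actual mechanism.

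Finally, the fin comparison between $T_i$ and $T_{i+1}$ in the Type~3 case is handled in the paper by a separate lemma; you are over-scoping by folding it into this one.
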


\begin{proof}
The shape follows from well-definedness and the last if-query once we can show that there are never one even and one odd row to be taken for the left and the right column of $T_i$.
Thus we consider rows $2i+1$ and $2i$ and distinguish two cases.

In the first case no $i$ is put to row $2i+1$ in correct parity.
Therefore there are even many $i$'s in row $2i$. Thus the parity of row $2i$ after extracting is the same as the parity of row $2i+1$, as $i$'s in row $2i+1$  shorten both, row $2i$ and $2i+1$. \emph{Shift} or \emph{merge} do not change this.
In the second case an $i$ is put from row $2i$ to row $2i+1$ in correct parity. In the end this shortens row $2i$ by two, so parity is still preserved, by the same argument as in the first case.

Because elements that were put originally to the tail are larger than other elements, the residuum of $T_i$ is $0$ before the last if-query.

This also shows that a gap in the right column is the fin. If it is one the residuum is 1.

Moreover the tail root is not a gap for residuum one tableaux, as it comes from the left column without tail.

$T_i$ is semistandard as row $2i+1$ cannot be longer than row $2i$.
\end{proof}

\begin{lemma}
The fin of such a tableau is even.
\end{lemma}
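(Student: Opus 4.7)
The plan is to split into two cases according to whether the last if-query of Algorithm~\ref{alg:u1} activates. Throughout I will use two facts established in the proof of the previous lemma: rows $2i+1$ and $2i$ of $\tilde{L}$ have the same (post-extraction) parity, and the residuum of $T_i$ is $0$ just before the if-query.

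First I would treat the case where the if-query does not activate. Then both row lengths are even, and the right column of $T_i$ is populated by the (sorted) column indices $l$ of the cells in row $2i$; the largest such $l$, the fin, equals the length of row $2i$, hence is even.

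Next I would treat the case where the if-query activates, so both row lengths are odd. Because the residuum is $0$ just before the if-query, the tail root is strictly larger than every entry of the right column (otherwise one could shift the right column down by one and preserve semistandardness). Hence when the if-query moves the tail root to the right column it becomes the new bottom entry and therefore the new fin. Consequently the fin equals the topmost tail position, i.e., the smallest column index $l$ such that $\tilde{L}$ contains an $i$-labelled cell at the start of iteration~$i$.

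The hard part will be to prove that this smallest $i$-column is even in the odd-odd case. I plan to combine the Yamanouchi condition (the first defining property of alternative orthogonal Littlewood-Richardson tableaux) with the bound $r_p\geq 2|v_p|-o_p$ (the second property), together with the invariant, used in the previous lemma, that after parity correction only row $2i+1$ may carry an odd number of $i$-cells. A short parity computation on the allowed positions of $i$-cells---keyed to the odd length of rows $2i$ and $2i+1$---should then rule out the leftmost $i$-cell being in an odd column, completing the case analysis.
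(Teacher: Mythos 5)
Your reduction is sound as far as it goes: the case split on the last if-query is equivalent to the paper's split on whether the fin is a gap, the even--even case is handled correctly (fin equals the even length of row $2i$), and in the odd--odd case you correctly identify that the new fin is the tail root, i.e.\ the column index of the leftmost $i$-cell, so that everything reduces to showing this column index is even. The problem is that this last claim is precisely the entire content of the lemma in the hard case, and you do not prove it --- you only announce a plan (``I plan to combine\dots'', ``a short parity computation\dots should then rule out\dots''). The paper's proof spends all of its effort exactly here, and it does so by a dynamic argument about Algorithm~\ref{alg:u1}: it first shows that immediately after \emph{correct parity} the leftmost $i$ sits in an even column (using the parity of $S$ and the fact that an $i$ in an odd column of a wrong-parity row would have been moved), and then shows that \emph{shift} and \emph{merge} cannot change the parity of the column of the leftmost $i$ (same-row $i$'s are shifted together; a \emph{merge} either propagates or leaves the row unchanged), using that row-length parities are constant and all rows other than $2i+1$ carry evenly many $i$'s. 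Your proposed static route via the Yamanouchi condition and the bound $r_p\geq 2|v_p|-o_p$ is a genuinely different strategy, but it is unexecuted, and it would in any case have to account for the fact that \emph{shift} and \emph{merge} move $i$-cells between columns after \emph{correct parity}, so the column of the leftmost $i$ at extraction time is not directly read off from the alternative tableau $\tilde L$ at the start of iteration $i$.

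A secondary soft spot: your inference ``residuum $0$ just before the if-query $\Rightarrow$ the tail root exceeds every entry of the right column'' is not automatic, since the residuum can also vanish because $a_i=0$ (equal column lengths above the tail) rather than because the tail-root row blocks the shift; you need the stronger fact, established in the proof of the preceding lemma, that the tail elements are larger than the non-tail elements of the left column, and then an argument that the tail root actually exceeds the length of row $2i$ so that it lands at the bottom of the right column. As written, the proposal is an outline whose crux is missing rather than a complete proof.
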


\begin{proof}
The fin is even if it is no gap. If its a gap, rows $2i$ and $2i+1$ are of odd length before extracting them. The leftmost $i$ after \emph{correct parity} is in an even column. (If $S$ would be odd and this $i$ would be in an odd column, it changes in correct parity. If $S$ is even, it needs to be in row $2i+1$ and was there before.) Neither \emph{shift} nor \emph{merge} can change this, as $i$'s that are in the same row can be \emph{shifted} to the right together and if \emph{merge} occurs, there can either be another \emph{merge} for the $i$ to the left or they stay in the row where they are. As the parity of row lengths is constant and there are even many $i$'s in other rows, this is sufficient.
\end{proof}

\begin{lemma}
Let $T_i$ be of type 3. If $i<\ell(\mu)$, $T_{i+1}$ cannot be of type 1. If $i=\ell(\mu)$, $S$ is odd.
\end{lemma}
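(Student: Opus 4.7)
The plan is to exploit the invariant from Lemma~\ref{theo:ualgo2WellDef} that after each iteration of Algorithm~\ref{alg:u1} the remaining $\tilde L$ is again an alternative orthogonal Littlewood-Richardson tableau, so in particular all of its surviving rows share a common parity. I first translate the hypothesis that $T_i$ is of type~$3$ into a statement about $\tilde L$ at iteration $i$: since type~$3$ forces residuum~$1$, the last if-query of iteration $i$ must have fired, and this requires both rows $2i+1$ and $2i$ (after \emph{correct parity}, \emph{shift}, \emph{merge}, and the $i$-extraction) to have odd length at that point.

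Next I would establish that the common parity of all surviving rows at the end of iteration $i$ is odd. The operations of iteration $i$ modify row lengths only in controlled ways: \emph{correct parity} moves a single $i$ from a row to the row directly below, \emph{merge} and \emph{shift} move pairs of vertically neighbouring cells between adjacent columns (so they change the lengths of two consecutive rows by the same amount), and the extraction step removes an $i$ together with its marked empty cell, which lies in the same column but possibly in a different row. A bookkeeping in the spirit of the parity arguments used in the previous lemma (``the fin is even'') and in Corollary~\ref{cor:LongerColum} shows that the net parity change suffered by any row $r>2i+1$ during iteration $i$ is even. Combined with rows $2i$ and $2i+1$ being of odd length at the if-query point, the shared-parity invariant then forces every surviving row of $\tilde L$ to have odd length.

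For $i=\ell(\mu)$, this immediately yields that $S$ is odd: reflecting $\tilde L$ along $x=y$ turns rows into columns, so filling by $1,2,\dots$ produces $S$ with every column of odd length. For $i<\ell(\mu)$, rows $2i+2$ and $2i+3$ have odd length at the start of iteration $i+1$; the same parity accounting applied to iteration $i+1$ shows that these rows remain odd up to the if-query, which is therefore forced to fire. Hence $T_{i+1}$ has residuum~$1$ and cannot be of type~$1$. The main obstacle is the parity accounting itself: one must verify that \emph{merge}---which moves an $i$ and a $j_2$ to the neighbouring column and shifts cells below $j_2$ upward---together with \emph{correct parity} and the tail extraction alter each higher row's length by an even amount, a verification very close in flavour to the case analyses already carried out in the proofs of the surrounding lemmas.
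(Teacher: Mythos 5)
There is a genuine gap. Your argument uses only that $T_i$ has residuum $1$ (equivalently, that the last if-query of iteration $i$ fired, so that rows $2i+1$ and $2i$ are odd at that moment); it never uses the feature that distinguishes type $3$ from type $2$, namely that the fin of $T_i$ is a gap. Since type $2$ tableaux also have residuum $1$, an argument of this shape would equally prove that a type $2$ tableau forces $T_{i+1}$ to have residuum $1$ (or $S$ to be odd), and that statement is false: condition \eqref{eq:TypeProperty} places no constraint on the successor of a type $2$ tableau, and valid configurations with $T_i$ of type $2$, $T_{i+1}$ of type $1$ and $S$ even do occur (the proof of Proposition~\ref{prop:LRtabs2} explicitly treats a residuum-$0$ tableau immediately to the right of a type $2$ tableau). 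So some step of the plan must fail.

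The step that fails is the transfer of parity from the extracted rows to the surviving ones. Your bookkeeping --- that every surviving row changes length by an even amount during iteration $i$ --- combined with the shared-parity invariant only shows that the surviving rows keep the common parity they had at the start of the iteration; and because \emph{correct parity} in Algorithm~\ref{alg:1} restores the parity of $S$ in every row at the end of every forward iteration, that common parity is always the parity of $S$, independently of the type of $T_i$. The oddness of rows $2i+1$ and $2i$ at the if-query point gives no information about this common parity, because those two rows are modified differently from the surviving ones before they are measured (they lose the $i$'s and the tail's empty cells). The paper's proof instead tracks the number of \emph{empty} cells in the single row that will supply the left column of $T_{i+1}$ (respectively the first column of $S$) --- a quantity not controlled by the row-length parity invariant --- and it is precisely the gap property of the fin (forcing the cell that becomes the fin to be extracted from a row strictly above row $2i$), together with the evenness of the fin and a count of the $i$'s, $(i+1)$'s and $(i+2)$'s in the three relevant rows, that forces this number of empty cells to be odd. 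The same issue recurs in your final step: during iteration $i+1$ the rows $2i+2$ and $2i+3$ are exactly the ones being dismantled, so the even-parity-change accounting does not apply to them either.
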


\begin{proof}
A tableau $T_i$ of type 3 is formed in the last if-query where the tail root becomes the fin. The fin has to come from a row strictly above of row $2i$ as it is a gap. We show now that the bottommost row after extracting $T_i$, thus row $2i-1$ consist of odd many empty cells. As this will get into the left column of $T_{i+1}$ we ensure residuum $1$ or an odd $S$ by what we have seen before.

As the fin of $T_i$ is even, if something is extracted from row $2i-1$ during extracting $T_i$ this row has odd many empty cells afterwards. Suppose that row $2i-1$ has even many empty cells and nothing is extracted from it. Row $2i+1$ has as many $i$'s as there are $(i+1)$'s in row $2i$ and $(i+2)$'s in row $2i-1$. Otherwise $i$ would not be \emph{shifted} and there are to less $j$ involved for \emph{merge}. (If an $i$ would not end up below an $(i+1)$, that is below an $(i+2)$, something would have been extracted from row $2i-1$.) Moreover we can conclude that no $i$ was put to row $2i+1$ in \emph{correct parity}. By the same argument $i$'s that are in row $2i$ have exactly as many $(i+1)$'s in row $2i-1$. This number is even because nothing is changed in \emph{correct parity}. We can conclude that row $2i$ contains odd many empty cells and several $i$'s, while row $2i-1$ contains even many empty cells, the same number of $(i+1)$'s and an even number of $i$'s. Thus the parity of the rows is different. This is a contradiction.
\end{proof}

\begin{lemma}
If $T_i$ is of type 1 and the tail root is a gap then either $i\neq \ell(\mu)$ or the tail root is odd.
\end{lemma}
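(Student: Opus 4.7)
The plan is to mimic the preceding lemma which shows that the fin of the algorithm's output is even, applying the same row-parity bookkeeping to the tail root. Assume $T_{\ell(\mu)}$ has type~1, in particular residuum~$0$, and suppose the tail root $j$ is a gap; I want to derive that $j$ must then be odd.

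First, I recall that in Algorithm~\ref{alg:u1} the tail labels of $T_{\ell(\mu)}$ are exactly the column indices of the positions of $\tilde L$ that hold an $\ell(\mu)$ just before the extraction, so the tail root $j$ equals the smallest column index containing such an $\ell(\mu)$. Since this is the final iteration of the outer for-loop, the residual of $\tilde L$ after the extraction is $S$ itself and the row/column parity invariant is already in force: every row of $\tilde L$ has the common parity of~$S$, and by Corollary~\ref{cor:LongerColum} the empty cells of $\tilde L$ form a Young diagram. The gap condition at $j$ amounts to: column $j-1$ of $\tilde L$ has no cell in row $2\ell(\mu)+1$.

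Next, assume for contradiction that $j$ is even. Because the empty cells form a partition, column $j-1$ of $\tilde L$ is strictly shorter than column $j$, and by the common-parity invariant their length gap is at least two. I would then trace how the $\ell(\mu)$ in column $j$ was placed, distinguishing two sub-cases: \emph{(a)} it already sat in row $2\ell(\mu)+1$ before the \emph{correct parity} step, or \emph{(b)} it was moved to row $2\ell(\mu)+1$ by \emph{correct parity} from a row further down. In case~\emph{(a)} the cell labelled $\ell(\mu)+1$ immediately above it, combined with the first property of alternative orthogonal Littlewood-Richardson tableaux (Definition~\ref{def:aoLRT}), forces an occupied cell at column $j-1$ of the same row, contradicting the gap. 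In case~\emph{(b)} \emph{correct parity} is triggered only when row $2\ell(\mu)+1$ would otherwise have the wrong parity; the same column-index parity analysis as in the fin-even lemma then forces the rightmost $\ell(\mu)$ to land in an odd-indexed column, i.e.~$j$ odd, again a contradiction.

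The hard part will be the bookkeeping in case~\emph{(b)}: tracking how \emph{shift}, \emph{merge} and \emph{correct parity} interact to determine which $\ell(\mu)$ ends up in column $j$, and verifying that an even value of $j$ is incompatible with the parity of row $2\ell(\mu)+1$. Once this is carried out as in the fin-even proof, the parity conclusion for $j$ is immediate.
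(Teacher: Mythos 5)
Your proposal is a plan rather than a proof: the decisive step --- your case \emph{(b)} --- is deferred with ``once this is carried out as in the fin-even proof, the parity conclusion for $j$ is immediate''. That bookkeeping is precisely the content of the lemma, so as written nothing is established. Moreover, several of the concrete assertions you do commit to are incorrect. First, ``column $j-1$ of $\tilde{L}$ is strictly shorter than column $j$'' cannot hold: the outer shape of $\tilde{L}$ is a Young diagram, so column lengths weakly decrease from left to right; the gap condition only says that row $2\ell(\mu)+1$ has length less than $j-1$, which yields no such comparison, and the common-parity invariant is a statement about rows, not columns, so it does not force adjacent columns to differ by two (the shape $(4,2)$ has all rows even but adjacent columns of lengths $2$ and $1$). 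Second, in case \emph{(a)} you invoke ``the cell labelled $\ell(\mu)+1$ immediately above'', but $\ell(\mu)$ is the largest letter occurring in $\tilde{L}$, so no such cell exists. Third, in case \emph{(b)} you have \emph{correct parity} moving an $\ell(\mu)$ upward into row $2\ell(\mu)+1$ from a row further down; in Algorithm~\ref{alg:u1} \emph{correct parity} moves entries one row \emph{below}, so the scenario you describe belongs to the insertion direction of Algorithm~\ref{alg:1}, not to the extraction the lemma is about.

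For comparison, the paper's argument is global and needs no case analysis on where an individual $\ell(\mu)$ sits. Assuming the tail root is even and a gap with $i=\ell(\mu)$, it first deduces that after extracting $T_{\ell(\mu)}$ some row ends at an odd position, hence by the common row parity every row of $\tilde{L}$ has odd length. On the other hand, rows $2i+1$ and $2i$ become $T^L$ without the tail and $T^R$, which have even length because $T_i$ has residuum $0$, and the leftmost $i$ must lie in row $2i$ or above for the tail root to be a gap; this forces row $2i+1$ to have even length, a contradiction. To salvage your approach you would need to replace the local column comparison by this row-parity contradiction and actually carry out the tracking you postponed.
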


\begin{remark}
Once we have shown \eqref{eq:HeightProperty}, \eqref{eq:Height_Property} and  \eqref{eq:GapProperty}, it follows that the tail root needs an even slot to the right if it is a gap. In the case that $S$ is odd, this would be the fin. Due to \eqref{eq:Height_Property} this is smaller by at least one than $S^L(1)$, which makes the tail root smaller than or equal to it.
\end{remark}

\begin{proof}
We consider a tableau $T_i$ of type 1 such that the tail root is a gap and even and $i=\ell(\mu)$. Thus $i$'s are the last numbers in $\tilde{L}$. Therefore, at least one row ends with an odd position once $T_i$ is extracted. Thus $\tilde{L}$ had rows with odd length. Now we consider row $2i+1$ and $2i$ before extracting $T_i$. Those get $T^L$ without tail and $T^R$. Thus they are even. The leftmost $i$ is in row $2i$ or above in order to take something from a different row, which is necessary for the gap. Thus row $2i+1$ has even length which is a contradiction.
\end{proof}

\begin{lemma}
\eqref{eq:TailProperty} holds between two consecutive tableaux.
\end{lemma}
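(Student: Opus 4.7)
The plan is to reduce property \eqref{eq:TailProperty} to a column-dominance inequality between the $i$'s and $(i+1)$'s in $\tilde L$ at their respective moments of extraction, and then derive the dominance from the first property of alternative orthogonal Littlewood-Richardson tableaux together with the fact that \emph{shift}, \emph{merge} and reverse \emph{correct parity} never move an entry strictly to the left.

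First, by the extraction step of Algorithm~\ref{alg:u1}, the tail of $T_i$ read top-to-bottom is the strictly increasing list of column indices of the $i$'s in $\tilde L$ just before they are marked and deleted. Since each tail is a single strictly increasing column, property \eqref{eq:TailProperty} amounts to the inequality $c^{(i)}_j \le c^{(i+1)}_j$ for every $j \le \mu_{i+1}$, where $c^{(i)}_j$ denotes the $j$th smallest column of an $i$ in $\tilde L$ at its extraction; the column-strict increase within each tail is automatic and the combined shape is clearly a Young diagram.

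Second, I would establish the column-dominance inequality at the moment of extraction. At the start of iteration $i$ the first property of Definition~\ref{def:aoLRT} holds for $\tilde L$; via the Yamanouchi reformulation in Proposition~\ref{prop:2ndPropYamanouchi}, this forces $c^{(i)}_j \le c^{(i+1)}_j$ when both are read off the current $\tilde L$. Reverse \emph{correct parity} preserves columns; the \emph{shift} and \emph{merge} operations of iteration $i$ move $i$'s (and any $(i+1)$'s attached as the upper partner of a vertical pair) only to the right. The proof of Lemma~\ref{theo:ualgo2WellDef} shows that the first property is maintained throughout, so the column-dominance survives every \emph{shift} and \emph{merge} in iteration $i$; consequently it still holds when the $i$'s are extracted. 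After extraction, iteration $i+1$ again only moves $(i+1)$'s to the right, so their sorted columns only increase, and the column-dominance persists until the extraction of the $(i+1)$'s, at which point it yields exactly \eqref{eq:TailProperty}.

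The main obstacle is the monotonicity argument of the second step, and in particular making precise the assertion that preservation of the first property under \emph{shift} and \emph{merge} is equivalent to preservation of column-dominance. The cleanest approach is to argue pairwise: track the movement of each matched pair (the $j$th $i$ and the $j$th $(i+1)$) under a joint \emph{shift} or \emph{merge}, using the specific form of the operations in Algorithm~\ref{alg:u1}, and check that if the inequality $c^{(i)}_j \le c^{(i+1)}_j$ held before the operation it continues to hold afterward. This case analysis mirrors the one carried out in the proof of Lemma~\ref{theo:ualgo2WellDef}, and the shared-parity constraint on row lengths (built into the definition of alternative orthogonal Littlewood-Richardson tableaux) is used implicitly to rule out the column-inverted skew configurations that would otherwise be consistent with Yamanouchi alone.
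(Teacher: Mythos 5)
Your reduction of \eqref{eq:TailProperty} to the column--dominance inequality $c^{(i)}_j\le c^{(i+1)}_j$ at the respective extraction times is the right starting point and matches what the paper does implicitly. The gap is in your second step. You claim that because the first property of Definition~\ref{def:aoLRT} is maintained, current-position column dominance survives every \emph{shift} and \emph{merge} of iteration $i$, and that iteration $i+1$ then only moves $(i+1)$'s to the right. Neither half holds. A \emph{merge} in iteration $i$ of Algorithm~\ref{alg:u1} moves the $i$ together with some $j_2>j_1>i$ one column to the right while the intermediate entry $j_1$ (which may be the partner $(i+1)$) stays put; immediately afterwards the $i$ can sit strictly to the right of its corresponding $(i+1)$, so dominance between \emph{current} positions genuinely fails at the end of iteration $i$. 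Moreover, in iteration $i+1$ the \emph{correct parity} step moves $(i+1)$'s to the \emph{left}, so "their sorted columns only increase" is not true either. The paper's proof is organized around precisely the configuration your invariance claim would exclude: it shows that whenever a \emph{merge} carries an $i$ past an $(i+1)$ in its column, either that $(i+1)$ was not the true partner, or the \emph{merge} leaves the $(i+1)$'s column shorter by two with all its empty cells acquiring right neighbours, which forces the $(i+1)$ to undergo a compensating \emph{merge} or \emph{shift} when $T_{i+1}$ is extracted (and it can follow the same rightward path through subsequent \emph{shifts} and \emph{merges}). This deferred-compensation argument is the essential content of the lemma and is absent from your proposal.

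Two smaller points. The implication "first property $\Rightarrow$ column dominance" is not what Proposition~\ref{prop:2ndPropYamanouchi} says (that proposition concerns the sequences $v_e$ and Robinson--Schensted insertion); it does hold for a genuine reverse skew semistandard tableau with Yamanouchi reading word, but it needs its own short argument, and it cannot be invoked at the intermediate states of an iteration because those states are not row-sorted --- the \emph{merge} is exactly the operation that restores row order, so the hypothesis of the implication fails at the moments you need it. Finally, your pairwise case analysis in the last paragraph is pointed in the right direction, but it should be aimed at proving that a violation created in iteration $i$ is repaired by a forced move of the partner in iteration $i+1$, not at proving that each single operation preserves the inequality.
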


\begin{proof}
All $i$'s have corresponding $(i+1)$'s to the right. Thus, if this is not changed, they take smaller or equal numbers. We show that if \emph{merge} occurs and $i$ ends up in a column to the right, then either another \emph{merge} occurs for $(i+1)$, or a \emph{shift}, or that this was not the corresponding $(i+1)$.

Note that \emph{correct parity} puts $i$'s left so we do not need to consider it.
Moreover $i$'s below $(i+1)$'s are \emph{shifted} together.

A \emph{merge} situation in question occurs if $(i+1)$ is in the same column as $i$. It makes the column of $(i+1)$ shorter by two. $(i+1)$ could not be put one row above by \emph{correct parity} as the row above has the same length. \emph{Merge} puts $(i+1)$ one position upwards and puts $i$ together with another $j$ into the next column. If those columns have the same length after extracting $i$, there would be another $(i+1)$ belonging to another $i$ due to the requirements of the positions right of $j$. Inductively this gives a contradiction. As those columns have different length such that all empty cells in the column of $(i+1)$ have right neighbors (because they had them before merging with $i$). $(i+1)$ either changes column by \emph{shift} or by another \emph{merge} when $T_{i+1}$ is extracted.

Moreover we need to consider further applications of \emph{merge} or \emph{shift}. If $i$ is \emph{shifted} after \emph{merge}, $(i+1)$ can follow this path. When it comes to another \emph{merge} situation, the length of this column is not changed, thus $(i+1)$ can also introduce a \emph{merge} situation if no $(i+1)$ is in this column (compare with above).
\end{proof}

\begin{lemma}
\eqref{eq:HeightProperty} and \eqref{eq:Height_Property} hold between two consecutive tableaux.
\end{lemma}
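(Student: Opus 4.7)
My plan is to translate the height conditions into statements about the row lengths of $\tilde{L}$ at consecutive iterations of Algorithm~\ref{alg:u1}, and then to exploit the semistandard shape of $\tilde{L}$ together with the alternative Littlewood-Richardson conditions.

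First, I would observe that by construction of the algorithm, the right column $T_i^R$ is built from a specific row of $\tilde{L}^{(i)}$ (the state of $\tilde{L}$ at the start of iteration $i$), and the non-tail part of $T_i^L$ of length $b_i - a_i$ is built from the row directly above it. Thus $b_i$ and $b_i - a_i$ are read off as two successive row lengths of $\tilde{L}^{(i)}$. The final if-query of iteration $i$, which triggers exactly when both rows are of odd length (equivalently, by the uniform-parity property established in Lemma~\ref{theo:ualgo2WellDef}, when $S$ is odd), increments $b_i$ by one and is what produces the residuum-$1$ case.

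Next, I would compare the relevant row lengths of $\tilde{L}^{(i+1)}$, which determine $b_{i+1}$ and $b_{i+1} - a_{i+1}$, with those of $\tilde{L}^{(i)}$. The only modifications in passing from $\tilde{L}^{(i)}$ to $\tilde{L}^{(i+1)}$ are the removal of the two rows that form the columns of $T_i$, the removal of the tail cells of $T_i$ (each column containing an $i$ loses its $i$ together with its unique marked empty cell, i.e., two cells per affected column), and the upward shift that closes any resulting holes. Counting exactly how many columns contain $i$'s at heights above the extraction rows translates into a formula for the new row lengths in terms of the old ones. Combining this with the semistandard column condition --- which forces $i$'s to sit below a sufficient number of empty cells --- I would deduce $b_i \leq b_{i+1} - a_{i+1} + 2 r_i r_{i+1}$. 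The correction $2 r_i r_{i+1}$ arises precisely when the if-query triggers at both iterations $i$ and $i+1$, so that two tail roots are reassigned to their right columns and each reassignment accounts for a shift of one unit in the relevant row lengths.

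The main obstacle will be the careful accounting in the presence of \emph{correct parity} (which moves a single $i$ between rows $2i$ and $2i+1$, perturbing row lengths by $\pm 1$), \emph{shift} and \emph{merge} (which redistribute $i$-cells across columns and can change row lengths by two at a time), and the last if-query (which reassigns the tail root from the left column to the right). I would handle these case by case, showing that each operation preserves the inequality and that in the regime $r_i = r_{i+1} = 1$ the two if-queries compound exactly to give the $+2$ correction. The earlier lemmas in this section --- in particular the tail-root/fin analysis, the constancy of the row parity, and \eqref{eq:TailProperty} --- provide the structural facts needed to carry out this bookkeeping. For the boundary statement \eqref{eq:Height_Property}, the same analysis applies with $S$ playing the role of $T_{\ell(\mu)+1}$: $\mathrm{ht}(S^L)$ is determined, after the reflection at the end of Algorithm~\ref{alg:u1}, by the length of a specific row of $\tilde{L}$ that survives all extractions, while the parity of $S$ together with $r_{\ell(\mu)}$ dictates the $-1 + 2 r_{\ell(\mu)}$ correction in the odd case by the same mechanism as the $2 r_i r_{i+1}$ correction in \eqref{eq:HeightProperty}.
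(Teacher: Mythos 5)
Your overall strategy coincides with the paper's: both arguments rest on the fact that the empty cells of $\tilde{L}$ form the Young diagram of a partition, identify $\mathrm{ht}(T_i^R)=b_i$ and $\mathrm{ht}(T_{i+1}^L)-\mu_{i+1}=b_{i+1}-a_{i+1}$ with the lengths of adjacent rows of that diagram, and obtain the correction term $2r_ir_{i+1}$ from the last if-query of Algorithm~\ref{alg:u1} triggering in two consecutive iterations (the paper's proof is just a much terser version of this bookkeeping, disposing of the tail contribution with the observation that at most one cell of the relevant row can be claimed by the tail).

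There is, however, one concretely wrong claim in your plan: the parenthetical assertion that the last if-query triggers ``exactly when both rows are of odd length (equivalently, \dots, when $S$ is odd).'' The if-query tests the parities of rows $2i$ and $2i{+}1$ \emph{after} the $i$'s, their cells and the marked empty cells have been removed and the columns shifted up, and this parity is not the parity of $S$. The worked example in Figure~\ref{fig:Admissible} already refutes the equivalence: there $S$ is even, yet both $T_1$ and $T_2$ have residuum $1$, so the if-query fires in both iterations. If you carried this equivalence into the case analysis, you would conclude that all $T_i$ share a residuum dictated by the parity of $S$, and the regime $r_i=r_{i+1}=1$ versus $r_ir_{i+1}=0$ --- precisely the distinction that produces the $+2$ in \eqref{eq:HeightProperty} and the $-1+2r_{\ell(\mu)}$ in \eqref{eq:Height_Property} --- would be handled incorrectly. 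The fix is simply to drop the equivalence and keep the if-query condition as a local parity condition on the two rows at extraction time, tracked iteration by iteration; the rest of your outline then goes through as in the paper.
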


\begin{proof}
This follows as empty cells form a Young diagram of a partition. If row $2i+2$ was taken for $T^L_i$, row $2i+1$ is taken for $T^R_{i+1}$. The tail cannot consist of more than one position from this row, because to take something from this row an $i$ must change row in \emph{correct parity} or change column in the last if-query. The former cannot happen. The latter can happen only once. The only situation where $T_i^R$ is shorter than $T_{i+1}^L$ without tail is if in both tableaux there was something taken from their left column and put into the right column in the last if-query. In this case both have residuum $1$ and the column is longer by at most two, which is allowed.
\end{proof}

Due to construction also the following holds:
\begin{corollary}
For each gap there is a slot to the right. Thus \eqref{eq:GapProperty} holds. Moreover there are no gaps in $S$. Thus \eqref{eq:SProperty} holds. 
\end{corollary}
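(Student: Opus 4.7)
The plan is to handle the two assertions separately. For \eqref{eq:SProperty}, I would simply observe that the final step of Algorithm~\ref{alg:u1} constructs $S$ by reflecting the leftover $\tilde{L}$ and filling each column top to bottom with the consecutive integers $1, 2, \ldots, \mathrm{ht}$. In particular, every entry $j>1$ of $S$ sits directly below the entry $j-1$ in the same column, so no position of $S$ can be a gap.

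For \eqref{eq:GapProperty} my plan is to classify where gaps can appear in each $T_i$ and then, case by case, exhibit a matching slot strictly to the right. By the type classification already established in the preceding lemmas, a gap in $T_i$ is either the fin of a type 3 tableau, or it lies in the (lower) tail. The fin case I dispatch with the immediately preceding lemmas: if $i<\ell(\mu)$ then $T_{i+1}$ has residuum~$1$ with fin at least as large as the fin of $T_i$, and the position above the fin of $T_{i+1}$ furnishes a slot that is at most as small as the fin of $T_i$; if $i=\ell(\mu)$ then $S$ is odd, so the topmost position in $S^L$ provides such a slot.

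For a tail gap at some label $j$ I would trace back through Algorithm~\ref{alg:u1} and locate the moment it is created. A tail gap appears precisely when a \emph{merge} step (or the last if-query creating a type~3 fin) separates the cell labeled $j$ from the one labeled $j-1$ in $\tilde{L}$. The preconditions of \emph{merge} in Algorithm~\ref{alg:u1} explicitly require the column to the right of the merging column to be shorter by at least two and to contain no matching $i$ and no matching $j_2$, which is exactly the data that is later read off as a slot in some $T_{i'}$ with $i'>i$ or in $S$; the pairing assigns to each gap the slot produced by the merge that created it.

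I expect the main obstacle to be the bookkeeping that matches each tail gap with a still-available slot across several iterations of the outer for-loop, because the same column of $\tilde{L}$ may contribute cells to several subsequent $T_{i'}$. The cleanest way to keep track is via the second property of $\LRtabsa$ and the sequences $v_p$ of Definition~\ref{def:aoLRT}; by Proposition~\ref{prop:2ndPropYamanouchi} these count exactly the cells that get bumped when Robinson-Schensted is applied to the reversed reading word, so I would set up an induction on the iterations of the outer for-loop showing that each application of \emph{merge}, \emph{shift}, and \emph{correct parity} preserves the balance between gaps generated and slots still waiting to be allocated.
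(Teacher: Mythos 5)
The paper offers no argument here at all --- the corollary is introduced only by the phrase ``Due to construction also the following holds'' --- so there is nothing of the paper's to match your proof against; it has to stand on its own. Your half concerning \eqref{eq:SProperty} does: each column of $S$ is filled with $1,2,\dots$ by the last line of Algorithm~\ref{alg:u1}, so every entry $j>1$ has $j-1$ directly above it and no gap can occur. Your half concerning \eqref{eq:GapProperty}, however, has a genuine gap. The matching demanded by \eqref{eq:GapProperty} is value-exact: by Lemma~\ref{lem:TensorEpsilonZero} it encodes $\varepsilon_{j-1}(L)=0$, so a gap at the entry $j$ must be paired injectively with a slot whose entry is \emph{exactly} $j-1$ in a column to the right. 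In your fin case you only produce a slot whose value is bounded by the fin $f$ of $T_i$, which is not enough; worse, the cell you name --- the position above the fin of $T_{i+1}$ --- is a slot only when $T_{i+1}$ is itself of type 3. If $T_{i+1}$ is of type 2 that cell has the fin of $T_{i+1}$ directly below it and is not a slot: in the tableau of Figure~\ref{fig:Admissible} the gap $6$ (the fin of $T$) is matched by the slot $5$ at the tail root of $U$, while the entry above the fin of $U$ is a $5$ with $6$ directly beneath it. Likewise, for $i=\ell(\mu)$ the only slots of $S$ are the bottom entries of its columns, so you would need a column of $S$ of height exactly $f-1$; \eqref{eq:Height_Property} only gives $f-1\leq \mathrm{ht}(S^L)$ and does not produce one.

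The tail-gap case is a plan rather than a proof: the invariant ``balance between gaps generated and slots still waiting to be allocated'' is never formulated, and the identification of the \emph{merge} preconditions with a slot that survives to the final tableau is asserted, not checked. Tail gaps are also not created only by \emph{merge}: a gap at $l$ in the tail of $T_i$ arises whenever the set of columns of $\tilde{L}$ containing an $i$ skips $l-1$, which can already hold in the input $\tilde{L}$ and is moved around by \emph{shift} and \emph{correct parity} as well. What actually has to be proved is the counting statement that for every column $c$ of $L$ and every $j$ the number of columns weakly to the right of $c$ containing $j$ is at most the number containing $j-1$, tied to the fact that the empty cells of $\tilde{L}$ form a Young diagram and to how Algorithm~\ref{alg:u1} distributes the cells of column $j$ of $\tilde{L}$ among the $T_{i'}^L$, $T_{i'}^R$ and $S$. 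That statement is the substance of the corollary, and your sketch does not establish it.
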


Therefore it follows that:

\begin{theorem}
\label{theo:ualgo1LRtabs}
Algorithm~\ref{alg:u1} returns an orthogonal Littlewood-Richardson tableaux of shape $\mu$.
\end{theorem}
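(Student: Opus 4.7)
The plan is to invoke Theorem~\ref{theo:LRTabs}, which characterizes membership in $\LRtabs$ by the list of explicit conditions \eqref{eq:HeightProperty}, \eqref{eq:Height_Property}, \eqref{eq:SProperty}, \eqref{eq:TypeProperty}, \eqref{eq:TailProperty} and \eqref{eq:GapProperty}. Thus the theorem reduces to collecting the preceding lemmas and verifying that together they cover exactly this list, plus the weight constraint on the filling.

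First I would check that the output $L=(T_1,\dots,T_{\ell(\mu)},S)$ actually lies in the ambient graph $B^{\mathfrak{d}}(\mu)$. Lemma~\ref{theo:ualgo2WellDef} guarantees well-definedness of Algorithm~\ref{alg:u1} and constancy of row-parity, so the final block of rows forming $S$ is rectangular with columns of a common parity. The lemma asserting that each iteration produces a tableau of Type~$1$, $2$ or $3$ ensures that each $T_i$ has the required two-column skew shape with even $a_i,b_i$ and residuum at most one, while the lemma on evenness of the fin excludes the pathological residuum-two situation. The shape claim, namely that $L$ has shape $\mu$, follows from the structure of the outer for-loop: iteration $i$ extracts exactly one two-column tableau whose tail receives the $\mu_i$ copies of the label $i$ present in $\tilde L$ at that stage.

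Then I would walk through Kwon's conditions in order. Conditions \eqref{eq:SProperty} and \eqref{eq:GapProperty} are delivered verbatim by the corollary immediately preceding the theorem. Condition \eqref{eq:TypeProperty} is assembled from four ingredients: the lemma identifying the type of each $T_i$, the lemma on evenness of the fin, the lemma forcing a Type~$3$ tableau to be followed by a residuum-$1$ tableau or by an odd $S$, and the lemma imposing the tail-root condition for a Type~$1$ tableau facing an odd $S$. Condition \eqref{eq:TailProperty} is the content of the lemma of that name, and the height conditions \eqref{eq:HeightProperty} and \eqref{eq:Height_Property} are supplied by the final height lemma. Applying Theorem~\ref{theo:LRTabs} then yields $L\in\LRtabs$.

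Since all the hard combinatorial work has already been carried out in the preceding lemmas, I do not expect any genuine obstacle in the present theorem; it is essentially a bookkeeping step. The only subtlety worth a sentence is Kwon's weight constraint, that $i$ occurs $\lambda_i'$ times in $L$: this is visible from construction, as the empty cells and labels of $\tilde L$ are passed bijectively into cells of the $T_i$'s and $S$ through the \enquote{mark/delete} and \enquote{insert} steps, and hence the column-length vector $\lambda'$ is preserved from the input through the final output.
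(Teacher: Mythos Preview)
Your proposal is correct and matches the paper's approach exactly: the paper's own proof of this theorem is the single line ``Therefore it follows that:'' preceding the theorem statement, meaning it is simply the synthesis of the block of lemmas immediately above (well-definedness, type lemma, fin parity, the Type~3 and Type~1 constraints, the \eqref{eq:TailProperty} and \eqref{eq:HeightProperty}/\eqref{eq:Height_Property} lemmas, and the gap/slot corollary), fed into Theorem~\ref{theo:LRTabs}. Your write-up is in fact more explicit than the paper about which lemma covers which condition; the only slight inaccuracy is the remark that fin evenness ``excludes the pathological residuum-two situation''---the residuum bound is already forced by the type lemma, and fin evenness is an independent ingredient used elsewhere in verifying \eqref{eq:TypeProperty}.
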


\begin{theorem}
\label{theo:algo1ualgo1Inverse}
Algorithm~\ref{alg:1} and Algorithm~\ref{alg:u1} are inverse.
\end{theorem}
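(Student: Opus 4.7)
The plan is to prove both compositions equal the identity by induction on $\ell(\mu)$, leveraging the well-definedness results already established in Theorem~\ref{theo:algo1WellDef} and Lemma~\ref{theo:ualgo2WellDef}. Algorithm~\ref{alg:1} iterates $i$ from $\ell(\mu)$ down to $1$, while Algorithm~\ref{alg:u1} iterates in the reverse order, from $1$ up to $\ell(\mu)$. Therefore the last iteration of Algorithm~\ref{alg:1} should be undone by the first iteration of Algorithm~\ref{alg:u1}, and so on. The base case is $\mu=\emptyset$: the initial step of Algorithm~\ref{alg:1} reflects $S$ along $y=x$, and the final step of Algorithm~\ref{alg:u1} reflects back and fills columns, recovering $S$.

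For the inductive step I would fix an iteration $i$ and show that Algorithm~\ref{alg:u1} step-by-step inverts Algorithm~\ref{alg:1} on this iteration. Algorithm~\ref{alg:1}'s inner sequence is: insert empty cells for $T_i^L$ and $T_i^R$, add cells labeled $i$ according to the type of $T_i$, sort columns, process rows top-down with repeated merges and shifts, and finally correct parity from the bottom upward. Algorithm~\ref{alg:u1} performs the reverse sequence: correct parity first, then process rows bottom-up with shifts and merges, then mark the cells, detach the $i$'s together with their marked empty cells into $T_i$, and determine the type from the parity and gap structure of the extracted rows. Because each sub-operation is defined as the local inverse of its counterpart, and because every intermediate tableau after any prefix of operations is a legal object (by the already-proved lemmas), the composition collapses to the identity on this iteration, and the induction proceeds. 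The second direction (starting from $\tilde L$, applying Algorithm~\ref{alg:u1} then Algorithm~\ref{alg:1}) is symmetric.

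The main obstacle is that both merge operations carry non-trivial triggering conditions that must match precisely. In Algorithm~\ref{alg:1} a merge is triggered by a pair $j<l$ with a fresh $i$ below $l$, while in Algorithm~\ref{alg:u1} a merge is triggered by a pattern $i<j_1<j_2$ in a column whose right neighbor is shorter by at least two and with $j_3$ to the right of $j_2$ satisfying $j_2>j_1>j_3$. I would show by case analysis on the local four-cell configurations already used in the proof of the second Yamanouchi property for Algorithm~\ref{alg:1} that these two triggers fire at exactly matching positions when the directions are reversed, so each Algorithm~\ref{alg:1}-merge corresponds bijectively to an Algorithm~\ref{alg:u1}-merge. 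A parallel but simpler verification handles the shift operations, whose conditions are purely structural, and the correct-parity operations, where the bottom-up reformulation supplied by Corollary~\ref{cor:LongerColum} makes the correspondence with the top-down extraction in Algorithm~\ref{alg:u1} transparent. The remaining routine bookkeeping — that the type 1/2/3 classification recovered in Algorithm~\ref{alg:u1} agrees with that of the original $T_i$, and that the cell labeled $i$ is placed in the correct column in either the tail or the fin position — follows immediately from the gap/slot analysis carried out in the proofs of the preceding lemmas.
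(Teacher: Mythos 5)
Your top-level structure --- reduce to a single iteration of the outer for-loop and check that the sub-operations \emph{merge}, \emph{shift}, \emph{correct parity} and the handling of the non-tail parts are pairwise inverse --- is exactly the paper's, and your remarks about \emph{shift} and about recovering the type classification are fine. The gap lies in the two verifications you yourself flag as the main obstacles: the strategies you propose for them do not contain the ideas that actually close them.

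For the merge correspondence, the issue is not that the two merge operations act inversely when both fire (that is immediate from their definitions), but that neither fires in a situation where the other did not. The paper settles this by an exclusion argument: a merge in Algorithm~\ref{alg:u1} acts on an $i$ that can only have reached its position via a merge of Algorithm~\ref{alg:1} --- it cannot have been inserted there (that would contradict the rows being sorted and the empty cells forming a Young diagram) and it cannot have arrived by a \emph{shift} (this is precisely what the condition $j_2>j_1>j_3$ excludes); conversely, a merge in Algorithm~\ref{alg:1} is triggered exactly by an unsorted row, and the only operation of Algorithm~\ref{alg:u1} that leaves a row unsorted is its merge. The four-cell configurations you want to reuse were set up to track which entries get marked for the second Yamanouchi property; they say nothing about which prior operation placed the $i$, so they cannot rule out spurious merges. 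Similarly, for \emph{correct parity} the needed statement is an equivalence: a row contains an odd number of $i$'s after the insertion if and only if \emph{correct parity} fired during the insertion. The paper proves this by counting how rows lengthen (rows $2i-1$ and above lengthen exactly when they or the row below contain an $i$, so only an odd number of $i$'s changes the parity, and a separate argument handles rows $2i$ and $2i+1$). Corollary~\ref{cor:LongerColum} only supplies the bottom-up reformulation of \emph{correct parity}; it does not supply this equivalence, so your plan is missing the argument that Algorithm~\ref{alg:u1}'s parity test detects exactly the positions where Algorithm~\ref{alg:1} intervened.
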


\begin{proof}
It suffices to show that one iteration of the outer for-loop (one insertion/extraction of a $T_i$) is inverse. 

We insert an empty cell and one filled with $i$ below and extract the same. Therefore what we have to show is that \emph{merge}, \emph{shift}, \emph{correct parity} and dealing with the non tail parts are inverse. We can consider those separately as they do not interfere.

It is clear that the \emph{shift}-procedures are inverse.

Now we consider the \emph{merge} procedures. It is clear that they act inverse and that after \emph{merge} in one algorithm we also \emph{merge} in the other one. It remains to show that we do not \emph{merge} in any other situation.

Merge in Algorithm~\ref{alg:u1} deals with an $i$. If it was not \emph{merged} to get there, it was inserted there, or there was a \emph{shift}. The former is not possible, as this would mean, that rows were not sorted before or the empty cells did not form a tableaux. The latter is prevented by $j_2>j_1>j_3$.
\emph{Merge} in Algorithm~\ref{alg:1} happens if a row is not sorted. The only procedure that leaves a row unsorted in Algorithm~\ref{alg:u1} is \emph{merge}.

To show see that \emph{correct parity} is inverse we have to show that there cannot be an odd number of $i$'s when there was no \emph{correct parity} during inserting. (When we do correct parity in Algorithm~\ref{alg:u1}, this changes the parity.) In Algorithm~\ref{alg:1} rows $2i-1$ or above get longer if and only if they contain an $i$ or the row below contains an $i$.
Therefore only odd many $i$'s produce a different parity in rows $1,\dots,2i-1$. Row $2i$ without $i$'s has the same parity as row $2i+1$. If that is the wrong one, an $i$ changes column. If row $2i$ has now an even number of $i$'s in it, this is the wrong parity and another $i$ changes column.
Therefore only odd many $i$'s produce a different parity and change place in \emph{correct parity}.

For type 2 and type 3 tableaux we argue that inserting fin and lower tail is inverse to shift the fin to its place in the last if-query.

Columns (non tail parts) are placed to row $2i$ and $2i+1$ due to \eqref{eq:HeightProperty} and \eqref{eq:Height_Property}.
\end{proof}

\subsection{Results}

With Theorems~\ref{theo:algo1WellDef}, \ref{theo:ualgo2WellDef}, \ref{theo:ualgo1LRtabs} and~\ref{theo:algo1ualgo1Inverse} we have proven the following Theorem. This is one of our main results.

\begin{theorem}
Our alternative orthogonal Littlewood-Richardson tableaux $\LRtabsa$ are in bijection with Kwon's orthogonal Littlewood-Richardson tableaux $\LRtabs$. Therefore they also count the multiplicities $c_{\lambda}^{\mu}$ in \eqref{eq:BranchingRule}.
\end{theorem}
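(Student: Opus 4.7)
The plan is to collect the four preceding results about Algorithms~\ref{alg:1} and~\ref{alg:u1} into a single statement; no new combinatorial work is required at this stage, since all of the substantive content has already been discharged in Lemmas and Theorems~\ref{theo:algo1WellDef}--\ref{theo:algo1ualgo1Inverse}.

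First I would set $A\colon \LRtabs \to \LRtabsa$ to be the map computed by Algorithm~\ref{alg:1}. Theorem~\ref{theo:algo1WellDef} says that for every Kwon tableau $L$ the output $A(L)$ is indeed a well-defined alternative orthogonal Littlewood-Richardson tableau of the correct shape and type, so $A$ is a genuine function into $\LRtabsa$. Symmetrically, I would set $A'\colon \LRtabsa \to \LRtabs$ to be the map computed by Algorithm~\ref{alg:u1}; Theorem~\ref{theo:ualgo2WellDef} establishes that the algorithm terminates with a legitimate reverse skew semistandard tableau at each intermediate stage, and Theorem~\ref{theo:ualgo1LRtabs} guarantees that the final output $A'(\tilde L)$ lies in $\LRtabs$ for the correct $\mu$. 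Finally, Theorem~\ref{theo:algo1ualgo1Inverse} tells us that $A \circ A'$ and $A' \circ A$ are the respective identities, so $A$ is a bijection.

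For the multiplicity statement I would then invoke the theorem of Kwon cited in Section~\ref{sec:KwonsOLRT}, namely $\LRcoef = |\LRtabs|$. Since $A$ is a bijection we conclude
\begin{align*}
|\LRtabsa| \;=\; |\LRtabs| \;=\; \LRcoef,
\end{align*}
which is exactly the multiplicity appearing in the branching rule~\eqref{eq:BranchingRule}.

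The main obstacle is really bookkeeping rather than mathematics: one must check that the input/output interfaces of the two algorithms match, i.e.\ that $A$ never escapes $\LRtabsa$ and $A'$ never escapes $\LRtabs$, and that the shape data ($\lambda$, $\mu$, the parities of the bottom rows, the entries $\lambda'_i$, etc.) are preserved on both sides. All of this is already packaged into the four cited theorems, so at this stage the proof reduces to a single paragraph assembling them; I therefore expect the write-up to be essentially a citation chain rather than a fresh argument.
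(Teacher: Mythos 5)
Your proposal is correct and matches the paper's own argument exactly: the paper likewise derives this theorem as an immediate consequence of Theorems~\ref{theo:algo1WellDef}, \ref{theo:ualgo2WellDef}, \ref{theo:ualgo1LRtabs} and~\ref{theo:algo1ualgo1Inverse}, with the multiplicity claim following from Kwon's result that $\LRcoef=|\LRtabs|$. Nothing further is needed.
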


\section{The Bijection for $\SOodd$}
\subsection{Formulation of the Bijection}

\begin{definition}[The Bijection for $\SOodd$]
We start with a pair $(Q,L)$ consisting of a standard Young tableau $Q$ in $\SYT(\lambda)$ and an orthogonal Littlewood-Richardson tableau $L$ in $\LRtabs$.

First we use Bijection $A$ (see Section~\ref{sec:BijA}) to change $L$ into an alternative orthogonal Littlewood-Richardson tableau $\tilde{L}$. Now we use $\tilde{L}$ to obtain a larger standard Young tableau $\tilde{Q}$ with row lengths of the same parity as follows. If $e$ is the largest entry in $Q$ we add a cells labeled $e+(\mu_{j+1}+\dots+\mu_{\ell(\mu)})+1$, $e+(\mu_{j+1}+\dots+\mu_{\ell(\mu)})+2$,  $\dots$, $e+(\mu_{j+1}+\dots+\mu_{\ell(\mu)})+\mu_j$ to the spots where cells labeled $j$ are in $\tilde{L}$, such that the numbers in the horizontal strip belonging to $j$ are increasing from left to right. We obtain a new standard Young tableau $\tilde{Q}$ with the same shape as $L$. Moreover the $\mu$ largest entries form a $\mu$-horizontal strip (see Section~\ref{sec:BijB}).

Now we distinguish two cases: If our resulting tableau $\tilde{Q}$ consists of even length rows this is the tableau we will use in Bijection $B$ (see Section~\ref{sec:BijB}). Otherwise, thus when $\tilde{Q}$ consists of $n$ odd length rows, we concatenate the one column tableau filled with $1,2,\dots,n$ from left to $\tilde{Q}$. We obtain an all even rowed standard Young tableau, which we will use in in Bijection $B$.

We continue applying Bijection $B$ to $\tilde{Q}$ and obtain a vacillating tableau $\tilde{V}$ with shape $\emptyset$ and cut-away-shape $\mu$ (shape $\emptyset$ ending with $\mu_{\ell(\mu)}$ $(-\ell(\mu))$'s, $\dots$, $\mu_2$ $(-2)$'s and $\mu_1$ $(-1)$'s, see Section~\ref{sec:BijB}).

Once again we distinguish the two cases from before. If we did not concatenate with a column, we do not change $\tilde{V}$. If we concatenated a column to $\tilde{Q}$, we delete the first $n$ entries of $\tilde{V}$. In this case those always are $1,2,\dots,k,0,-k,\dots,-2,-1$. 
Therefore we obtain once again a vacillating tableau $\tilde{V}$ with shape $\emptyset$ and cut-away-shape $\mu$.

We finish our algorithm by deleting the last $|\mu|=\mu_1+\mu_2+\dots+\mu_k$ entries to obtain a vacillating tableau $V$ of shape $\mu$ and length $r=|\lambda|$.
\end{definition}

In Figures~\ref{fig:StrategyEven} and~\ref{fig:Strategyodd} we illustrate this using an even example for $r=15$, $k=2$, $n=2k+1=5$ and an odd example for $r=17$, $k=3$, $n=2k+1=7$. In Table~\ref{tab:ListOfExamples} in the appendix we provide a list of all tableaux with $r=3$ and $n=5$.

\begin{figure}
\begin{tikzpicture}[scale=0.35]
  \draw (0,5) -- (5,5);
  \draw (0,4) -- (5,4);
  \draw (0,3) -- (4,3);
  \draw (0,2) -- (4,2);
  \draw (0,1) -- (1,1);
  \draw (0,0) -- (1,0);
  \draw (0,5) -- (0,0);
  \draw (1,5) -- (1,0);
  \draw (2,5) -- (2,2);
  \draw (3,5) -- (3,2);
  \draw (4,5) -- (4,2);
  \draw (5,5) -- (5,4);
  \draw (0.5,4.5) node {1};
  \draw (1.5,4.5) node {2};
  \draw (2.5,4.5) node {3};
  \draw (3.5,4.5) node {4};
  \draw (0.5,3.5) node {5};
  \draw (1.5,3.5) node {6};
  \draw (2.5,3.5) node {7};
  \draw (3.5,3.5) node {8};
  \draw (0.5,2.5) node {9};
  \draw (1.5,2.5) node {10};
  \draw (2.5,2.5) node {11};
  \draw (3.5,2.5) node {12};
  \draw (0.5,1.5) node {13};
  \draw (0.5,0.5) node {14};
  \draw (4.5,4.5) node {15};
  \draw (7,3)--(8,3);
  \draw (7,2)--(8,2);
  \draw (6,1)--(8,1);
  \draw (6,0)--(7,0);
  \draw (6,-1)--(7,-1);
  \draw (6,1) -- (6,-1);
  \draw (7,3) -- (7,-1);
  \draw (8,3) -- (8,1);
  \draw (6.5,0.5) node {1};
  \draw (6.5,-0.5) node[blue] {5};
  \draw (7.5,2.5) node {1};
  \draw (7.5,1.5) node[blue] {4};
  \draw (9.5,5)--(10.5,5);
  \draw (9.5,4)--(10.5,4);   
  \draw (8.5,3)--(10.5,3);
  \draw (8.5,2)--(10.5,2);
  \draw (8.5,1)--(10.5,1);
  \draw (8.5,1)--(9.5,1);
  \draw (8.5,0)--(9.5,0);
  \draw (8.5,3) -- (8.5,0);
  \draw (9.5,5) -- (9.5,0);
  \draw (10.5,5) -- (10.5,1);
  \draw (9,2.5) node {1};
  \draw (9,1.5) node {2};
  \draw (9,0.5) node {3};
  \draw (10,4.5) node {1};
  \draw (10,3.5) node {2};
  \draw (10,2.5) node {3};
  \draw (10,1.5)[red] node {4};
  \draw (11,5)--(12,5);
  \draw (11,4)--(12,4);
  \draw (11,3)--(12,3);
  \draw (11,2)--(12,2);
  \draw (11,1)--(12,1);
  \draw (11,5) -- (11,1);
  \draw (12,5) -- (12,1);
  \draw (11.5,4.5) node {1};
  \draw (11.5,3.5) node {2};
  \draw (11.5,2.5) node {3};
  \draw (11.5,1.5) node {4};  
  \draw (0,-1.5) node {};
  \draw (6,6) node {(SYT, oLRT)};
\end{tikzpicture}
\hspace{0.3cm}
\begin{tikzpicture}[scale=0.35]
 \draw (0,0) node{};
 \draw [<->,decorate,
decoration={snake,amplitude=.4mm,segment length=2mm,post length=1mm, pre length=1mm}] (0,5) -- (2,5) node[midway, above] {Bij. $A$};
\end{tikzpicture}
\begin{tikzpicture}[scale=0.35]
  \draw (0,5) -- (5,5);
  \draw (0,4) -- (5,4);
  \draw (0,3) -- (4,3);
  \draw (0,2) -- (4,2);
  \draw (0,1) -- (1,1);
  \draw (0,0) -- (1,0);
  \draw (0,5) -- (0,0);
  \draw (1,5) -- (1,0);
  \draw (2,5) -- (2,2);
  \draw (3,5) -- (3,2);
  \draw (4,5) -- (4,2);
  \draw (5,5) -- (5,4);
  \draw (0.5,4.5) node {1};
  \draw (1.5,4.5) node {2};
  \draw (2.5,4.5) node {3};
  \draw (3.5,4.5) node {4};
  \draw (0.5,3.5) node {5};
  \draw (1.5,3.5) node {6};
  \draw (2.5,3.5) node {7};
  \draw (3.5,3.5) node {8};
  \draw (0.5,2.5) node {9};
  \draw (1.5,2.5) node {10};
  \draw (2.5,2.5) node {11};
  \draw (3.5,2.5) node {12};
  \draw (0.5,1.5) node {13};
  \draw (0.5,0.5) node {14};
  \draw (4.5,4.5) node {15};
  \draw (6,5) -- (12,5);
  \draw (6,4) -- (12,4);
  \draw (6,3) -- (10,3);
  \draw (6,2) -- (10,2);
  \draw (6,1) -- (8,1);
  \draw (6,0) -- (8,0);
  \draw (6,5) -- (6,0);
  \draw (7,5) -- (7,0);
  \draw (8,5) -- (8,0);
  \draw (9,5) -- (9,2);
  \draw (10,5) -- (10,2);
  \draw (11,5) -- (11,4);
  \draw (12,5) -- (12,4);
  \draw (7.5,1.5) node [color=red] {2};
  \draw (7.5,0.5) node [color=blue] {1};
  \draw (11.5,4.5) node [color=blue] {1}; 
  \draw (0,-1.5) node {};
  \draw (6,6) node{(SYT, aoLRT)};
\end{tikzpicture}
\begin{tikzpicture}[scale=0.35]
 \draw (0,0) node{};
 \draw (0,5) node{};
 \draw [<->,decorate,
decoration={snake,amplitude=.4mm,segment length=2mm,post length=1mm, pre length=1mm}] (0,5) -- (2,5);
\end{tikzpicture}
\begin{tikzpicture}[scale=0.35]
  \draw (0,5) -- (6,5);
  \draw (0,4) -- (6,4);
  \draw (0,3) -- (4,3);
  \draw (0,2) -- (4,2);
  \draw (0,1) -- (2,1);
  \draw (0,0) -- (2,0);
  \draw (0,5) -- (0,0);
  \draw (1,5) -- (1,0);
  \draw (2,5) -- (2,0);
  \draw (3,5) -- (3,2);
  \draw (4,5) -- (4,2);
  \draw (5,5) -- (5,4);
  \draw (6,5) -- (6,4);
  \draw (0.5,4.5) node {1};
  \draw (1.5,4.5) node {2};
  \draw (2.5,4.5) node {3};
  \draw (3.5,4.5) node {4};
  \draw (0.5,3.5) node {5};
  \draw (1.5,3.5) node {6};
  \draw (2.5,3.5) node {7};
  \draw (3.5,3.5) node {8};
  \draw (0.5,2.5) node {9};
  \draw (1.5,2.5) node {10};
  \draw (2.5,2.5) node {11};
  \draw (3.5,2.5) node {12};
  \draw (0.5,1.5) node {13};
  \draw (0.5,0.5) node {14};
  \draw (4.5,4.5) node {15};
  \draw (1.5,1.5) node [color=red] {16};
  \draw (1.5,0.5) node [color=blue] {17};
  \draw (5.5,4.5) node [color=blue] {18}; 
  \draw (5,1) node {$\mu=(2,1)$};
  \draw (0,-1.5) node {};
  \draw (3.8,6) node{(SYT even, part.)};
\end{tikzpicture}\\
\begin{tikzpicture}[scale=0.35]
 \draw (0,0) node{};
 \draw [<->,decorate,
decoration={snake,amplitude=.4mm,segment length=2mm,post length=1mm, pre length=1mm}] (0,5) -- (2,5) node[midway, above] {Bij. $B$};
\end{tikzpicture}
\begin{tikzpicture}[scale=0.28]
\draw (0,0)--(1,1)--(2,2)--(3,3)--(4,4)--(5,4)--(6,4)--(7,4)--(8,4)--(9,3)--(10,2)--(11,2)--(12,2)--(13,2)--(14,1)--(15,2);
\draw[red] (15,2)--(16,2);
\draw[blue] (16,2)--(17,1)--(18,0);
\draw (4,-1)--(5,0)--(6,-1)--(7,0)--(8,1);
\draw (10,1)--(11,1)--(12,1)--(13,0);
\draw[red] (15,0)--(16,-1);
\draw (13.5,3.5) node {$\mu=(2,1)$};
\draw (8,5.5) node {(vac. tab. shape $\emptyset$, even, part.)};
\end{tikzpicture}
\begin{tikzpicture}[scale=0.35]
 \draw (0,0) node{};
 \draw (0,5) node{};
 \draw [<->,decorate,
decoration={snake,amplitude=.4mm,segment length=2mm,post length=1mm, pre length=1mm}] (0,5) -- (2,5);
\end{tikzpicture}
\begin{tikzpicture}[scale=0.38]
\draw (0,0)--(1,1)--(2,2)--(3,3)--(4,4)--(5,4)--(6,4)--(7,4)--(8,4)--(9,3)--(10,2)--(11,2)--(12,2)--(13,2)--(14,1)--(15,2);
\draw (4,-1)--(5,0)--(6,-1)--(7,0)--(8,1);
\draw (10,1)--(11,1)--(12,1)--(13,0);
\draw (6,5.5) node {vacillating tableaux};
\end{tikzpicture}
\caption{The strategy of our bijection outlined in an even case (with $r=15$ and $k=2$)}
\label{fig:StrategyEven}
\end{figure}
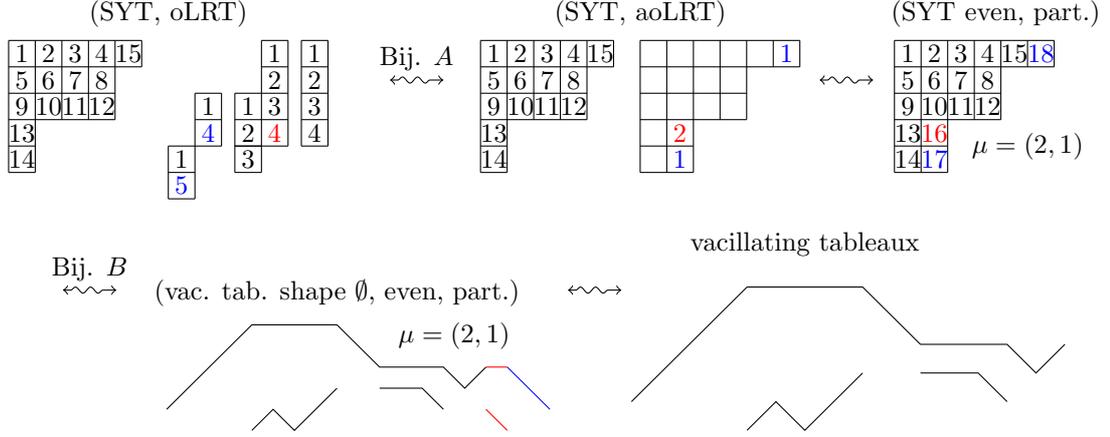

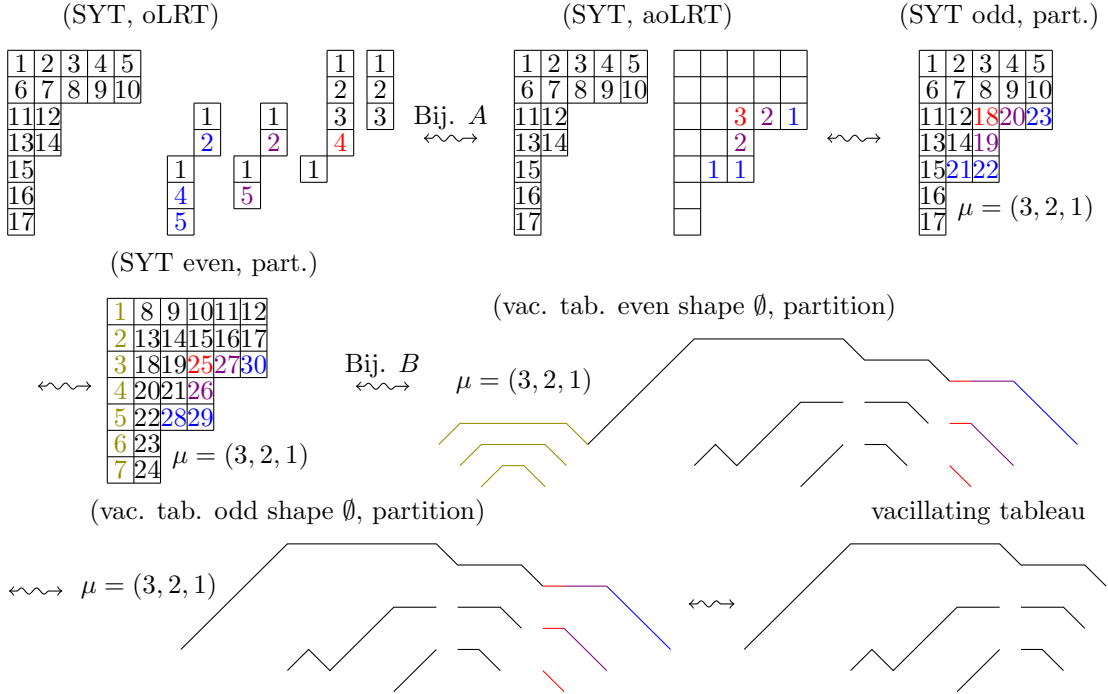
\begin{figure}
\begin{tikzpicture}[scale=0.35]
  \draw (0,7) -- (5,7);
  \draw (0,6) -- (5,6);
  \draw (0,5) -- (5,5);
  \draw (0,4) -- (2,4);
  \draw (0,3) -- (2,3);
  \draw (0,2) -- (1,2);
  \draw (0,1) -- (1,1);
  \draw (0,0) -- (1,0);
  \draw (0,7) -- (0,0);
  \draw (1,7) -- (1,0);
  \draw (2,7) -- (2,3);
  \draw (3,7) -- (3,5);
  \draw (4,7) -- (4,5);
  \draw (5,7) -- (5,5);
  \draw (0.5,6.5) node {1};
  \draw (1.5,6.5) node {2};
  \draw (2.5,6.5) node {3};
  \draw (3.5,6.5) node {4};
  \draw (4.5,6.5) node {5};
  \draw (0.5,5.5) node {6};
  \draw (1.5,5.5) node {7};
  \draw (2.5,5.5) node {8};
  \draw (3.5,5.5) node {9};
  \draw (4.5,5.5) node {10};
  \draw (0.5,4.5) node {11};
  \draw (1.5,4.5) node {12};
  \draw (0.5,3.5) node {13};
  \draw (1.5,3.5) node {14};
  \draw (0.5,2.5) node {15};
  \draw (0.5,1.5) node {16};
  \draw (0.5,0.5) node {17};
  \draw (7,5) -- (8,5);
  \draw (7,4) -- (8,4);
  \draw (6,3) -- (8,3);
  \draw (6,2) -- (7,2);
  \draw (6,1) -- (7,1);
  \draw (6,0) -- (7,0);
  \draw (6,3) -- (6,0);
  \draw (7,5) -- (7,0);
  \draw (8,5) -- (8,3);
  \draw (7.5,4.5) node {1};
  \draw (7.5,3.5)[blue] node {2};
  \draw (6.5,2.5) node {1};
  \draw (6.5,1.5)[blue] node {4};
  \draw (6.5,0.5)[blue] node {5};
  \draw (9.5,5) -- (10.5,5);
  \draw (9.5,4) -- (10.5,4);
  \draw (8.5,3) -- (10.5,3);
  \draw (8.5,2) -- (9.5,2);
  \draw (8.5,1) -- (9.5,1);
  \draw (8.5,3) -- (8.5,1);
  \draw (9.5,5) -- (9.5,1);
  \draw (10.5,5) -- (10.5,3);
  \draw (10,4.5) node {1};
  \draw (10,3.5)[violet] node {2};
  \draw (9,2.5) node {1};
  \draw (9,1.5)[violet] node {5};
  \draw (12,7) -- (13,7);
  \draw (12,6) -- (13,6);
  \draw (12,5) -- (13,5);
  \draw (12,4) -- (13,4);
  \draw (11,3) -- (13,3);
  \draw (11,2) -- (12,2);
  \draw (11,3) -- (11,2);
  \draw (12,7) -- (12,2);
  \draw (13,7) -- (13,3);
  \draw (12.5,6.5) node {1};
  \draw (12.5,5.5) node {2};
  \draw (12.5,4.5) node {3};
  \draw (12.5,3.5)[red] node {4};
  \draw (11.5,2.5) node {1};
  \draw (13.5,7) -- (14.5,7);
  \draw (13.5,6) -- (14.5,6);
  \draw (13.5,5) -- (14.5,5);
  \draw (13.5,4) -- (14.5,4);
  \draw (13.5,7) -- (13.5,4);
  \draw (14.5,7) -- (14.5,4);
  \draw (14,6.5) node {1};
  \draw (14,5.5) node {2};
  \draw (14,4.5) node {3};
  \draw (5,8.3) node {(SYT, oLRT)};
\end{tikzpicture}
\begin{tikzpicture}[scale=0.35]
 \draw (0,0) node{};
 \draw [<->,decorate,
decoration={snake,amplitude=.4mm,segment length=2mm,post length=1mm, pre length=1mm}] (0,3.5) -- (2,3.5) node[midway, above] {Bij. $A$};
\end{tikzpicture}
\begin{tikzpicture}[scale=0.35]
  \draw (0,7) -- (5,7);
  \draw (0,6) -- (5,6);
  \draw (0,5) -- (5,5);
  \draw (0,4) -- (2,4);
  \draw (0,3) -- (2,3);
  \draw (0,2) -- (1,2);
  \draw (0,1) -- (1,1);
  \draw (0,0) -- (1,0);
  \draw (0,7) -- (0,0);
  \draw (1,7) -- (1,0);
  \draw (2,7) -- (2,3);
  \draw (3,7) -- (3,5);
  \draw (4,7) -- (4,5);
  \draw (5,7) -- (5,5);
  \draw (0.5,6.5) node {1};
  \draw (1.5,6.5) node {2};
  \draw (2.5,6.5) node {3};
  \draw (3.5,6.5) node {4};
  \draw (4.5,6.5) node {5};
  \draw (0.5,5.5) node {6};
  \draw (1.5,5.5) node {7};
  \draw (2.5,5.5) node {8};
  \draw (3.5,5.5) node {9};
  \draw (4.5,5.5) node {10};
  \draw (0.5,4.5) node {11};
  \draw (1.5,4.5) node {12};
  \draw (0.5,3.5) node {13};
  \draw (1.5,3.5) node {14};
  \draw (0.5,2.5) node {15};
  \draw (0.5,1.5) node {16};
  \draw (0.5,0.5) node {17};
  \draw (6,7) -- (11,7);
  \draw (6,6) -- (11,6);
  \draw (6,5) -- (11,5);
  \draw (6,4) -- (11,4);
  \draw (6,3) -- (9,3);
  \draw (6,2) -- (9,2);
  \draw (6,1) -- (7,1);
  \draw (6,0) -- (7,0);
  \draw (6,7) -- (6,0);
  \draw (7,7) -- (7,0);
  \draw (8,7) -- (8,2);
  \draw (9,7) -- (9,2);
  \draw (10,7) -- (10,4);
  \draw (11,7) -- (11,4);
  \draw (7.5,2.5)[blue] node {1};
  \draw (8.5,2.5)[blue] node {1};
  \draw (10.5,4.5)[blue] node {1};
  \draw (8.5,3.5)[violet] node {2};
  \draw (9.5,4.5)[violet] node {2};
  \draw (8.5,4.5)[red] node {3};
  \draw (5,8.3) node {(SYT, aoLRT)};
\end{tikzpicture}
\begin{tikzpicture}[scale=0.35]
 \draw (0,0) node{};
 \draw [<->,decorate,
decoration={snake,amplitude=.4mm,segment length=2mm,post length=1mm, pre length=1mm}] (0,3.5) -- (2,3.5);
\end{tikzpicture}
\begin{tikzpicture}[scale=0.35]
  \draw (0,7) -- (5,7);
  \draw (0,6) -- (5,6);
  \draw (0,5) -- (5,5);
  \draw (0,4) -- (5,4);
  \draw (0,3) -- (3,3);
  \draw (0,2) -- (3,2);
  \draw (0,1) -- (1,1);
  \draw (0,0) -- (1,0);
  \draw (0,7) -- (0,0);
  \draw (1,7) -- (1,0);
  \draw (2,7) -- (2,2);
  \draw (3,7) -- (3,2);
  \draw (4,7) -- (4,4);
  \draw (5,7) -- (5,4);
  \draw (0.5,6.5) node {1};
  \draw (1.5,6.5) node {2};
  \draw (2.5,6.5) node {3};
  \draw (3.5,6.5) node {4};
  \draw (4.5,6.5) node {5};
  \draw (0.5,5.5) node {6};
  \draw (1.5,5.5) node {7};
  \draw (2.5,5.5) node {8};
  \draw (3.5,5.5) node {9};
  \draw (4.5,5.5) node {10};
  \draw (0.5,4.5) node {11};
  \draw (1.5,4.5) node {12};
  \draw (0.5,3.5) node {13};
  \draw (1.5,3.5) node {14};
  \draw (0.5,2.5) node {15};
  \draw (0.5,1.5) node {16};
  \draw (0.5,0.5) node {17};
  \draw (2.5,4.5) node[red] {18};
  \draw (2.5,3.5) node[violet] {19};
  \draw (3.5,4.5) node[violet] {20};
  \draw (1.5,2.5) node[blue] {21};
  \draw (2.5,2.5) node[blue] {22};
  \draw (4.5,4.5) node[blue] {23};
  \draw (3,8.3) node {(SYT odd, part.)};
    \draw (4,1) node {$\mu=(3,2,1)$};
\end{tikzpicture}

\begin{tikzpicture}[scale=0.35]
 \draw (0,-0.5) node{};
 \draw (0,5) node{};
 \draw [<->,decorate,
decoration={snake,amplitude=.4mm,segment length=2mm,post length=1mm, pre length=1mm}] (0,3) -- (2,3);
\end{tikzpicture}
\begin{tikzpicture}[scale=0.35]
  \draw (-1,7) -- (5,7);
  \draw (-1,6) -- (5,6);
  \draw (-1,5) -- (5,5);
  \draw (-1,4) -- (5,4);
  \draw (-1,3) -- (3,3);
  \draw (-1,2) -- (3,2);
  \draw (-1,1) -- (1,1);
  \draw (-1,0) -- (1,0);
  \draw (-1,7) -- (-1,0);
  \draw (0,7) -- (0,0);
  \draw (1,7) -- (1,0);
  \draw (2,7) -- (2,2);
  \draw (3,7) -- (3,2);
  \draw (4,7) -- (4,4);
  \draw (5,7) -- (5,4);
  \draw (-0.5,6.5) node[olive] {1};
  \draw (-0.5,5.5) node[olive] {2};
  \draw (-0.5,4.5) node[olive] {3};
  \draw (-0.5,3.5) node[olive] {4};
  \draw (-0.5,2.5) node[olive] {5};
  \draw (-0.5,1.5) node[olive] {6};
  \draw (-0.5,0.5) node[olive] {7};
  \draw (0.5,6.5) node {8};
  \draw (1.5,6.5) node {9};
  \draw (2.5,6.5) node {10};
  \draw (3.5,6.5) node {11};
  \draw (4.5,6.5) node {12};
  \draw (0.5,5.5) node {13};
  \draw (1.5,5.5) node {14};
  \draw (2.5,5.5) node {15};
  \draw (3.5,5.5) node {16};
  \draw (4.5,5.5) node {17};
  \draw (0.5,4.5) node {18};
  \draw (1.5,4.5) node {19};
  \draw (0.5,3.5) node {20};
  \draw (1.5,3.5) node {21};
  \draw (0.5,2.5) node {22};
  \draw (0.5,1.5) node {23};
  \draw (0.5,0.5) node {24};
  \draw (2.5,4.5) node[red] {25};
  \draw (2.5,3.5) node[violet] {26};
  \draw (3.5,4.5) node[violet] {27};
  \draw (1.5,2.5) node[blue] {28};
  \draw (2.5,2.5) node[blue] {29};
  \draw (4.5,4.5) node[blue] {30};
  \draw (3,8.3) node {(SYT even, part.)};
  \draw (4,1) node {$\mu=(3,2,1)$};
\end{tikzpicture}
\begin{tikzpicture}[scale=0.35]
 \draw (0,0) node{};
 \draw [<->,decorate,
decoration={snake,amplitude=.4mm,segment length=2mm,post length=1mm, pre length=1mm}] (0,3.5) -- (2,3.5) node[midway, above] {Bij. $B$};
\end{tikzpicture}
\begin{tikzpicture}[scale=0.28]
\draw[olive] (0,0) -- (1,1)--(2,1)--(3,1)--(4,1)--(5,1)--(6,1)--(7,0);
\draw (7,0)--(8,1)--(9,2)--(10,3)--(11,4)--(12,5)--(13,5)--(14,5)--(15,5)--(16,5)--(17,5)--(18,5)--(19,5)--(20,4)--(21,4)--(22,4)--(23,4)--(24,3);
\draw[red] (24,3)--(25,3);
\draw[violet] (25,3)--(26,3)--(27,3);
\draw[blue] (27,3)--(28,2)--(29,1)--(30,0);
\draw[olive] (1,-1)--(2,0)--(3,0)--(4,0)--(5,0)--(6,-1);
\draw (12,-1)--(13,0)--(14,-1)--(15,0)--(16,1)--(17,2)--(18,2)--(19,2);
\draw (20,2)--(21,2)--(22,2)--(23,1);
\draw [red](24,1)--(25,1);
\draw[violet] (25,1)--(26,0)--(27,-1);
\draw[olive] (2,-2)--(3,-1)--(4,-1)--(5,-2);
\draw (17,-2)--(18,-1)--(19,0);
\draw (20,0)--(21,0)--(22,-1);
\draw[red] (24,-1)--(25,-2);
\draw (12,6.5) node {(vac. tab. even shape $\emptyset$, partition)};
\draw (4,3) node{$\mu=(3,2,1)$};
\end{tikzpicture}
\begin{tikzpicture}[scale=0.35]
 \draw (0,-0.5) node{};
 \draw (0,5) node{};
 \draw [<->,decorate,
decoration={snake,amplitude=.4mm,segment length=2mm,post length=1mm, pre length=1mm}] (0,3) -- (2,3);
\end{tikzpicture}
\begin{tikzpicture}[scale=0.28]
\draw (7,0)--(8,1)--(9,2)--(10,3)--(11,4)--(12,5)--(13,5)--(14,5)--(15,5)--(16,5)--(17,5)--(18,5)--(19,5)--(20,4)--(21,4)--(22,4)--(23,4)--(24,3);
\draw[red] (24,3)--(25,3);
\draw[violet] (25,3)--(26,3)--(27,3);
\draw[blue] (27,3)--(28,2)--(29,1)--(30,0);
\draw (12,-1)--(13,0)--(14,-1)--(15,0)--(16,1)--(17,2)--(18,2)--(19,2);
\draw (20,2)--(21,2)--(22,2)--(23,1);
\draw[red] (24,1)--(25,1);
\draw[violet] (25,1)--(26,0)--(27,-1);
\draw (17,-2)--(18,-1)--(19,0);
\draw (20,0)--(21,0)--(22,-1);
\draw[red] (24,-1)--(25,-2);
\draw (12,6.5) node {(vac. tab. odd shape $\emptyset$, partition)};
\draw (5.5,3) node{$\mu=(3,2,1)$};
\end{tikzpicture}
\begin{tikzpicture}[scale=0.3]
 \draw (0,-0.5) node{};
 \draw (0,5) node{};
 \draw [<->,decorate,
decoration={snake,amplitude=.4mm,segment length=2mm,post length=1mm, pre length=1mm}] (0,3) -- (2,3);
\end{tikzpicture}
\begin{tikzpicture}[scale=0.28]
\draw (7,0)--(8,1)--(9,2)--(10,3)--(11,4)--(12,5)--(13,5)--(14,5)--(15,5)--(16,5)--(17,5)--(18,5)--(19,5)--(20,4)--(21,4)--(22,4)--(23,4)--(24,3);
\draw (12,-1)--(13,0)--(14,-1)--(15,0)--(16,1)--(17,2)--(18,2)--(19,2);
\draw (20,2)--(21,2)--(22,2)--(23,1);
\draw (17,-2)--(18,-1)--(19,0);
\draw (20,0)--(21,0)--(22,-1);
\draw (18,6.5) node {vacillating tableau};
\end{tikzpicture}
\caption{The strategy of our bijection outlined in an odd case (with $r=17$ and $k=3$)}
\label{fig:Strategyodd}
\end{figure}

As we know the inverse of Bijection $A$ and Bijection $B$, the inverse bijection is easily defined:

\begin{definition}[The other direction of our bijection] We start with a vacillating tableau $V$ of shape $\mu$ and length $r$, and add $\mu_k$ $(-k)$'s, $\mu_{k-1}$ $(-k+1)$'s, $\dots$ and $\mu_1$ $(-1)$'s to obtain a vacillating tableau $V$ of shape $\emptyset$ and cut-away-shape $\mu$. If this has odd length we furthermore add $1,2,\dots,k,0,-k,\dots,-2,-1$ in the front. Next we apply the inverse of Bijection $B$ to obtain a standard Young tableau $\tilde{Q}$.

If we added $1,2,\dots,k,0,-k,\dots,-2,-1$ to $\tilde{V}$, we cancel the smallest $n$ entries of $\tilde{Q}$ now. Those are in the first column in increasing order. If we did so, we furthermore reduce each entry of $\tilde{Q}$ by $n$ afterwards, to obtain a standard Young tableau again.

We obtain $Q$ by deleting the $|\mu|$ largest entries in $\tilde{Q}$. $Q$ is a standard Young tableau of shape $\lambda$. Moreover we define $\tilde{L}$ to be the reverse skew semistandard tableau of the same outer shape as $\tilde{Q}$ and inner shape $\lambda$. We fill cells, where entries of $\mu_j$ are in $\tilde{Q}$ with $j$. Due to the properties of $\mu$-horizontal strips, $\tilde{L}$ is an alternative orthogonal Littlewood-Richardson tableau.

Finally we apply the inverse of Bijection $A$ to obtain $L$, an orthogonal Littlewood-Richardson tableau in $\LRtabs$ where $\lambda$ is a partition such that $\lambda\vdash r$ and $\mu\leq \lambda$.
\end{definition}

The strategy we use is similar as in the case $n=3$ in \cite{3erAlgo}. There are two main differences. The first is, that in \cite{3erAlgo} we do not calculate the alternative Littlewood-Richardson tableau but go directly to the $\mu$-horizontal strip. The second is, that we attach numbers for odd tableaux to the left of $Q$ in order to obtain an all even rowed tableau. However, in case $n=3$ we know, that concatenating of standard Young tableaux where all row lengths have the same parity corresponds to concatenation of vacillating tableaux with shape $\emptyset$. Therefore, for $n=3$ both strategies are the same.

\begin{theorem}
Let $\lambda\vdash r$, $\ell(\lambda)\leq n(=2k+1)$, $\ell(\mu)\leq k$.
The map defined in this section maps a pair $(Q,L)$ consisting of a standard Young tableau $Q$ in $\SYT(\lambda)$ and an orthogonal Littlewood-Richardson tableau $L$ in $\LRtabs$ to a vacillating tableau of length $r$ and shape $\mu$. Moreover it is well-defined, bijective and descent preserving.
\end{theorem}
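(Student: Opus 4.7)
The plan is to factor the map into three composable bijections and verify each is well-defined, invertible, and descent preserving. The first step replaces $L$ by $\tilde L \in \LRtabsa$ via Bijection $A$; by Theorems~\ref{theo:algo1WellDef}, \ref{theo:ualgo2WellDef}, \ref{theo:ualgo1LRtabs} and~\ref{theo:algo1ualgo1Inverse} this is already known to be a bijection, and since it only touches the Littlewood-Richardson component, it leaves $Q$ (and hence $\Des(Q)$) untouched. The second step passes from $(Q,\tilde L)$ to a standard Young tableau $\tilde Q$ of the same outer shape as $\tilde L$ by labelling the cells marked $j$ with consecutive integers (in left-to-right order), in decreasing order of $j$. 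The third step (after optional concatenation with the column $1,2,\dots,n$ when $\tilde Q$ has odd-length rows) applies Bijection $B$ to produce a vacillating tableau of shape $\emptyset$ with cut-away shape $\mu$, which we then trim by deleting the fixed suffix $(-k)^{\mu_k},\dots,(-1)^{\mu_1}$.

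First I would verify that $\tilde Q$ is a genuine standard Young tableau. Property~1 of Definition~\ref{def:aoLRT} guarantees that for each $j$ the cells labelled $j$ in $\tilde L$ form a horizontal strip, and moreover that the union of the cells labelled $\geq j$ sits above the union of the cells labelled $\geq j+1$ as a horizontal strip. Filling these strips in left-to-right increasing order from largest to smallest $j$ therefore preserves the SYT property at every intermediate stage. Invertibility is immediate: the largest $\mu_1$ entries of $\tilde Q$ occupy a horizontal strip and their positions recover the cells labelled $1$ in $\tilde L$; peeling off one horizontal strip at a time recovers all labels. For the parity-correction step, note that the column $1,2,\dots,n$ corresponds under Bijection $B$ to the unique length-$n$ initial segment $1,2,\dots,k,0,-k,\dots,-1$ of a vacillating tableau returning to shape $\emptyset$, so prepending this column is reversed by stripping off this canonical prefix. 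Combining these with the assumed bijectivity of Bijection $B$ between SYT with rows of equal parity and vacillating tableaux of shape $\emptyset$ with cut-away shape $\mu$ gives well-definedness and bijectivity of the whole map.

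The main obstacle is descent preservation. Three points need to be checked. First, the $|\mu|$ new entries of $\tilde Q$ lie in a horizontal strip that is filled increasingly from left to right, and these entries correspond under Bijection $B$ to the trimmed $\mu$-tail $(-k)^{\mu_k},\dots,(-1)^{\mu_1}$; by the definition of descents for vacillating tableaux this tail contributes no descents (each $-j$ step lies on the bottom of its Riordan path), and correspondingly $\Des(\tilde Q)\cap\{1,\dots,r-1\}=\Des(Q)$. Second, when a column $1,2,\dots,n$ is prepended, its unique descent lies at position $k$; this matches the unique descent of the highest weight segment $1,2,\dots,k,0,-k,\dots,-1$ (the transitions $k\to 0$, $0\to -k$, and consecutive $-j\to -(j+1)$ on the bottom are excluded by the definition of descents for vacillating tableaux), so prepending the column shifts descents consistently on both sides. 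Third, Bijection $B$ is itself descent preserving as established in Section~\ref{sec:BijB}. Composing these three preservations yields the descent-preserving property of the full bijection, and hence the theorem.
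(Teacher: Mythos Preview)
Your decomposition into three steps and the citations for well-definedness and bijectivity match the paper's proof exactly; the paper is in fact terser than you are, disposing of descent preservation in a single line (``It is descent preserving as Bijection~$B$ is descent-preserving''). However, two of the supporting claims in your descent argument are false. First, the tail $(-\ell(\mu))^{\mu_{\ell(\mu)}},\dots,(-1)^{\mu_1}$ \emph{does} contain descents: each block boundary $-j\to-(j-1)$ is an edge of the crystal graph and is not of the excluded form $j(-j)$, so it is a descent. Correspondingly, the $|\mu|$ new entries of $\tilde Q$ form a $\mu$-horizontal strip (Definition~\ref{def:MuHorizontalStrip}), not a single horizontal strip, and the first entry of $\mu_{j-1}$ may well sit strictly below the last entry of $\mu_j$ (see Figure~\ref{fig:StrategyEven}, where $16$ is a descent of $\tilde Q$). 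Second, the column $1,2,\dots,n$ viewed as an SYT has descents at \emph{every} position $1,\dots,n-1$ (each $i$ has $i+1$ directly beneath it), and the prefix $1,2,\dots,k,0,-k,\dots,-1$ likewise has descents at all of $1,\dots,n-1$: the transitions $k\to 0$, $0\to -k$, $-j\to-(j-1)$ are all crystal edges and none is of the form $j(-j)$, which is the only pattern the bottom-level clause excludes.

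Fortunately neither error damages the argument, because the conclusions you need follow from simpler observations. For the first point you only need $\Des(\tilde Q)\cap\{1,\dots,r-1\}=\Des(Q)$, and this is immediate since entries $1,\dots,r$ occupy exactly the same cells in $Q$ and in $\tilde Q$; whatever descents occur among the appended entries lie at positions $\geq r$ and are discarded when passing from $\tilde V$ to $V$. For the second point you only need that the prefix $1,2,\dots,k,0,-k,\dots,-1$ returns to shape~$\emptyset$: this guarantees that the level condition in the descent definition at position $n+i$ of the extended tableau agrees with that at position $i$ of $\tilde V$, so descents shift by exactly $n$ on both the SYT side and the vacillating side. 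With these replacements your argument is complete and coincides with the paper's. You should also cite Theorem~\ref{theo:10-1} for the column/prefix correspondence and Theorem~\ref{theo:muhorizontal} for the $\mu$-horizontal strip / cut-away-shape correspondence rather than asserting them informally.
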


\begin{proof}
We prove that every algorithm we use defines a well-defined mapping in Theorems~\ref{theo:algo1WellDef}, \ref{theo:ualgo2WellDef},  \ref{theo:algo2welldefvactab} and~\ref{theo:ualgo2welldefSYT}. Those also show, together with Theorems~\ref{theo:muhorizontal} and~\ref{theo:ualgo1LRtabs}, that they produce the desired objects.

To see that it is bijective, we argue that the algorithms we use are inverse in Theorems~\ref{theo:algo1ualgo1Inverse} and~\ref{theo:algo2ualgo2Inverse}. Moreover the procedure we describe between alternative orthogonal Littlewood-Richardson tableaux and $\mu$-horizontal strips is inverse by definition. The procedure we describe by adding and deleting the first positions is inverse by Theorem~\ref{theo:10-1}.

It is descent preserving as Bijection $B$ is descent-preserving (see Theorem~\ref{theo:algo2DescPres}).
\end{proof}

\section{Bijection $B$}
\label{sec:BijB}

\subsection{Formulation of Bijection $B$}

Bijection $B$ is formulated by Algorithms~\ref{alg:2} and~\ref{alg:u2} which are inverse. It maps a standard Young tableau with $n=2k+1$ possibly empty rows, whose lengths are even, containing a $\mu$-horizontal strip, to a vacillating tableau of dimension $k$, shape $\emptyset$ and cut-away-shape $\mu$.

To formulate those algorithms we introduce some notation in Table~\ref{tab:Notation}. Note that at some points we have left to right opposites for those algorithms. When looking at weight $\emptyset$ words, which will not always be the case while executing Algorithms~\ref{alg:2} and~\ref{alg:u2}, the definitions are the same.

We refer to parts or operations in the algorithms by the comments placed next to them.

\begin{table}[h]
\centering
\caption{Notation for Algorithms~\ref{alg:2} and~\ref{alg:u2}}
\label{tab:Notation}
\begin{tabular}{|p{3.3cm}|p{10.7cm}|}
\hline
A \emph{labeled word} $w$ with letters in $\{\pm 1, \dots, \pm k,0\}$. & A word, where each letter is labeled by an integer $1 \leq i \leq r$ strictly increasing from left to right.
Each position consists of a label and an entry. We denote by $w(p)$ the entry of $w$ labeled with $p$.\\
\hline
A position $q$ is on $l$-level $m$ in Algorithm~\ref{alg:2} (respectively Algorithm~\ref{alg:u2}). & The minimum of the following to sums over entries with absolute value $l$ is $-l\cdot m$ (respectively $+l\cdot m$). For the first sum we consider entries strictly to the \emph{right} (respectively \emph{left}) of $q$. For the second one we consider entries to the \emph{right} (respectively \emph{left}) including $q$.

Illustration of positions on level $m$:
\begin{tikzpicture}[scale=0.35]
\draw[dotted] (1,0)--(6,0);
\draw (0.5,0) node{$m$};
\draw (1.5,1)--(2.5,0);
\draw (3,0)--(4,1);
\draw (4.5,0) -- (5.5,0);
\end{tikzpicture}\\
\hline
A position $q$ is a height violation in $l$. & The $l$-level of $q$ is smaller than the $(l+1)$-level of $q$. If $w(q)=\pm (l+1)$ we take  the $(l+1)$-level plus one instead.
\\
\hline
Insert $q$ with $l$. & We insert a new position with entry $l$ and label $q$ in that way, that the labels are still sorted.\\
\hline
Ignore $q$. & Act as if this position was not here, for example in level calculations.\\
\hline
A position $p$ is a 3-row-position in $j$. & $p$ is either the rightmost $0$ of an odd sequence of $0$'s on $j$-level one or a $0$ that is on $j$-level two or higher.
\\
\hline
A position $p$ is a 2-row-position in $j$. & $p$ is either a $j$ on $j$-level one or the leftmost $0$ of a sequence of $0$'s.
\\
\hline
A position $p$ is in a $j$-even (respectively odd) position & The number of positions $q$ strictly to the left with $w(q)\in\{0,\pm j\}$ is even (respectively odd).\\
\hline
\end{tabular}
\end{table}

\begin{algorithm}
\label{alg:2}
\SetKwInOut{Input}{input}\SetKwInOut{Output}{output}
\Input{$n=2k+1$, standard Young tableau $Q$ of at most $n$ rows, all rows of even length}
\Output{vacillating tableau $V$, dimension $k$, weight $\emptyset$, same number of entries as $Q$}

\SetKwIF{FOR}{}{}{for}{do}{}{}{}
\SetKwIF{WHILE}{}{}{while}{do}{}{}{}

let $w$ be word $(1,-1,\dots,1,-1)$ labeled by first row elements of $Q$\tcc*{insert row 1} 

\uFOR(\tcc*[f]{insert row $i$}){$i=2,3,\dots,n$}{
$j:=\lfloor i/2 \rfloor$; unmark everything\;
  \lIf(\tcc*[f]{initialize $j$}){$i$ even}
    {change $0$-entries of $w$ into $j,-j,\dots,j,-j$}
  \uFOR{pairs of elements $a,b$ in row $i$, start with the rightmost, go to left}
    {
    $a_1:=a$, $b_1:=b$, $a_l:=b_l:=0$ for $l=2,3,\dots,j+1$\; 
    \lIf(\tcc*[f]{$b$}){$b$ is largest position so far}{insert $b_1$ with $-1$}
    let $p$ be rightmost position so far, $\tilde{p}$ be next position left of $p$ with $w(\tilde{p})\in \{0,\pm j\}$\;   
    \uWHILE{$a_{j+1}<p$ {\normalfont or} $w(p)\notin\{0,\pm j\}$}
      {
      \If{$p<b_l$, $p\neq a_l$, $w(p)=-l$ for an $l<j$, $a_{l+1}=0$}
      {
        \lIf(\tcc*[f]{$b_{l+1}$}){$p$ not marked, $b_{l+1}=0$}
        {$w(p):=-l-1$, $b_{l+1}:=p$}
     \lElseIf(\tcc*[f]{$a_{l+1}$}){$p<a_l$, $p<b_{l+1}$}
        {$w(p):=-l-1$, $a_{l+1}:=p$} 
      }
      \If(\tcc*[f]{$i$ even}){$i$ is even, $w(p)\in\{0,\pm j\}$}
      {  
        \uIf(\tcc*[f]{adjust separation point}){$b_j<p$, $w(\tilde{p}),w(p)=j,-j$}
        {
        for $l<j$ change $\pm l$ on $l$-level $0$ between $p$ and $\tilde{p}$ into $\pm (l+1)$, if $p<b_l$, $b_{l+1}=0$ ignore $b_l$, if $p<a_l$, $a_{l+1}=0$ ignore $a_l$; mark changed positions; change $-j,j$ between $p$ and $\tilde{p}$ into $0,0$\;        
        }
        \uElseIf(\tcc*[f]{mark it + connect}){$a_j<p$, $w(\tilde{p}),w(p)=j,-j$}{$w(\tilde{p}),w(p):=0,0$; for $l<j$ mark $\pm l$ on $l$-level $0$ between $p$ and $\tilde{p}$, if $p<a_l$, $a_{l+1}=0$ ignore $a_l$\;}
        \uElseIf(\tcc*[f]{$a_{j+1}$ 1}){$p=a_j$, $w(\tilde{p})=0$ on $j$-level $1$}{$w(\tilde{p}),w(p):=j,0$, $a_{j+1}:=\tilde{p}$\;}       
        \ElseIf(\tcc*[f]{$a_{j+1}$ 2}){$p<a_j$, $w(p)=-j$, $a_{j+1}=0$}{$w(p):=j$, $a_{j+1}:=p$\;}
        \lIf(\tcc*[f]{$b_{j+1}$}){$p<b_j$, $b_{j+1}=0$}{$b_{j+1}:=p$}
      }
      \If(\tcc*[f]{$i$ odd}){$i$ is odd, $w(p)\in\{0,\pm j\}$}
      {
      \uIf(\tcc*[f]{adjust separation point}){$b_{j+1}<p$, $w(p),w(\tilde{p})=0,0$, $p$ $j$-even position on $j$-level $1$ if $b_j<p$ or $2$ if $p<b_j$}
      {for $l<j$ change $\pm l$ on $l$-level $0$ between $p$ and $\tilde{p}$ into $\pm (l+1)$, if $p<b_l$, $b_{l+1}=0$ ignore $b_l$, if $p<a_l$, $a_{l+1}=0$ ignore $a_l$; mark changed positions\;}
        \lElseIf(\tcc*[f]{connect}){$a_{j+1}<p<b_{j+1}$, $w(p)=j$ on $j$-level $1$ for $p<a_j$ or $0$ for $a_j<p$}
          {$w(\tilde{p}),w(p):=0,0$}
      \uElseIf(\tcc*[f]{mark it + separate}){$a_{j+1}<p<b_{j+1}$, $w(\tilde{p}),w(p)=0,0$, $p$ $j$-even position on $j$-level $2$ if $p<a_j$ or $1$ if $a_j<p$}{$w(\tilde{p}),w(p):=-j,j$; for $l<j$ mark $\pm l$ on $l$-level $0$ between $p$ and $\tilde{p}$, if $p<a_l$, $a_{l+1}=0$ ignore $a_l$\;}
       
      \ElseIf{$p<b_j$, $p\neq a_j$, $w(p)=-j$, $a_{j+1}=0$}
      {
        \lIf(\tcc*[f]{$b_{j+1}$}){$p$ not marked, $b_{j+1}=0$}
        {$w(p):=0$, $b_{j+1}:=p$}
     \lElseIf(\tcc*[f]{$a_{j+1}$}){$p<a_j$, $p<b_{j+1}$}
        {$w(p):=0$, $a_{j+1}:=p$} 
      }
    } 
        \If{$p=a_l$ on $l$-level $0$, for an $l<j$, the $l$ to the right is marked}{mark $a_l$\tcc*{mark $a_l$}}    
}
}
}
\caption{Standard Young Tableaux to Vacillating Tableaux}
\end{algorithm} 


\setlength{\interspacetitleruled}{0pt}%
\setlength{\algotitleheightrule}{0pt}%

\begin{algorithm*}[h]
\SetKwIF{FOR}{}{}{\vspace{-0.4cm}}{}{}{}{}
\SetKwIF{WHILE}{}{}{\vspace{-0.4cm}}{}{}{}{}      
      
\FOR{}{
\FOR{}{
\WHILE{}{

     \If(\tcc*[f]{height violation}){$p$ height violation in $l$ for an $l<j$, ($p<a_l$ or $p$ not marked), if $p<a_l$, $a_{l+1}=0$ ignore $a_l$}
       {
        $w(p):=l+1$\;
        \leIf{$a_{l+1}=0$}{$b_{l+1}:=0$}{$a_{l+1}:=0$}
        \lIf{$i$ is even, $a_{j+1}\neq 0$}{$w(a_{j+1}),w(p):=0,0$, $a_{j+1}:=0$}
        \lIf{$i$ is odd, $w(\tilde{p})=0$ on $j$-level $0$}{$w(\tilde{p}):=-j$, $b_{j+1}:=0$}
      }

    \lIf(\tcc*[f]{$b$}){$b$ is between $p$ and the position to the left}{insert $b_1$ with $-1$}    
    \lElseIf(\tcc*[f]{$a$}){$a$ is between those}{insert $a_1$ with $-1$}    
    let $p$ be one position to the left in $w$, change $\tilde{p}$ according to it\;
    }
  }
  do one additional iteration of the inner for-loop with $a=b=0$\;
  }
forget the labels of $w$, set $V=w$\;
\Return $V$\;
\end{algorithm*}

\setlength{\interspacetitleruled}{3pt}%
\setlength{\algotitleheightrule}{0.5pt}%
\begin{algorithm}
\caption{Vacillating Tableaux to Standard Young Tableaux}
\label{alg:u2}
\SetKwInOut{Input}{input}\SetKwInOut{Output}{output}\SetKw{Break}{break}
\Input{$n=2k+1$, vacillating tableau $V$ of dimension $k$, weight $\emptyset$, even length}
\Output{standard Young tableau $Q$, same number of entries as $V$, $n$ rows, all of even length}

\SetKwIF{FOR}{}{}{for}{do}{}{}{}
\SetKwIF{WHILE}{}{}{while}{do}{}{}{}

label $V$ with $1,2,\dots,r$ to obtain a labeled word $w$\;
\uFOR(\tcc*[f]{extract row $i$}){$i=n, n-1,\dots,2$}
  {
  $j:=\lfloor i/2 \rfloor$, unmark everything\;
  \uWHILE{in word $j$ are $\lceil i/j \rceil$-row-positions}
    {
    let $p$ be the second position from left with $w(p)\in\{0,\pm j\}$, let $\tilde{p}$ be the next position left of $p$ with $w(\tilde{p})\in\{0,\pm j\}$\; $a=a_l=b=b_l=r$ for $l=1,2,\dots,j+1$\;
    \uWHILE{$p<b$}
      {
          \If(\tcc*[f]{height violation}){$p$ height violation in $l$, for an $l<j$, ($p<a_l$ or not marked), if $b_{l+1}<p$ and $b_l=r$ ignore next $l$ left of $p$}
        {
        $w(p):=l$\; \leIf{$b_{l+1}=r$}{$a_{l+1}=r$}{$b_{l+1}=r$}
        }
      \If(\tcc*[f]{$i$ odd}){$i$ is odd, $w(p)\in\{\pm j,0\}$}
      { 
        \If(\tcc*[f]{height violation special case}){$w(p)=0$ on $j$-level $0$}
        {$w(p):=j$, $b_{j+1}=r$\;}          
        \uIf{$p$ is 3-row-position, $p<b_{j+1}$}
        {
          \If(\tcc*[f]{mark separation point}){$p$ is on level $1$}
            {mark $p$ and next $0$ right, for $l\leq j$ mark $\pm l$ on $l$-level $0$ between them\;}
          \uIf(\tcc*[f]{adjust it}){$p<a_{j+1}$, right of $p$ for no $l$ there is an unmarked $\pm(l-1)$ between a marked $-l$ and a marked $l$ and no $-1$ is marked}
            {change marked $\pm l$ into $\pm (l-1)$\;}
          \lElseIf(\tcc*[f]{$a_{j+1}$}){$a_{j+1}=r$}{$w(p):=-j$, $a_{j+1}:=p$}     
          \lElseIf(\tcc*[f]{$b_{j+1}$}){$p$ not marked, $b_{j+1}=r$}{$w(p):=-j$, $b_{j+1}:=p$}
        }
        \lElseIf(\tcc*[f]{connect}){$a_{j+1}<p$, $p<b_{j+1}$, $w(p),w(\tilde{p})=0,0$, $p$ $j$-even position, on $j$-level 1 for $a_j<p$ or 2 for $p<a_j$}{$w(p),w(\tilde{p}):=-j,j$}
        \ElseIf{$a_{j+1}<p$, $p<b_{j+1}$, $w(p),w(\tilde{p})=-j,j$ on $j$-level 0}{$w(p),w(\tilde{p})=0,0$\tcc*{separate}}
      }   
            \If(\tcc*[f]{$i$ even}){$i$ is even, $w(p)\in\{\pm j,0\}$}
      {
        \If(\tcc*[f]{height violation}){$w(p)=0$ on $(j-1)$-level $0$}
        {
        $w(p):=j-1$, $w(a_{j+1}):=j$, $a_{j+1}:=a_j:=r$\tcc*{special case}
        }
      \lIf(\tcc*[f]{$b_{j+1}$}){$a_j\leq p$, $b_{j+1}=r$}{$b_{j+1}:=p$} 
        \uIf{$p$ is 2-row-position, $p<a_{j+1}$}
        {
          \If(\tcc*[f]{mark separation point}){$w(p)=0$}
            {mark $p$ and $0$'s directly right, for $l<j$ mark $\pm l$ on $l$-level $0$ between them\;}
          \uIf(\tcc*[f]{adjust it}){$p<a_{j+1}$, right of $p$ for no $l$ there is an unmarked $\pm(l-1)$ between a marked $-l$ and a marked $l$, there is not an unmarked $\pm(j-1)$ between two marked $0$'s and no $-1$ is marked}
            {change marked $\pm l$ into $\pm (l-1)$, change marked $0,0$ into $-j,j$\;}
          \ElseIf{$a_{j+1}=r$}
            {\lIf(\tcc*[f]{$a_{j+1}$ 1}){$w(p)=0$}{$a_{j+1}:=\tilde{p}$, $w(\tilde{p}),w(p):=0,-j$}
            \lElse(\tcc*[f]{$a_{j+1}$ 2}){$a_{j+1}:=p$, $w(p):=-j$}}     
        }    
        \ElseIf(\tcc*[f]{connect}){$w(\tilde{p}),w(p)=0,0$ on $j$-level $0$}
        {$w(\tilde{p}),w(p):=j,-j$\;}
      } 
            
}
}
}
\end{algorithm}

\setlength{\interspacetitleruled}{0pt}%
\setlength{\algotitleheightrule}{0pt}%

\begin{algorithm*}[h]
\SetKwIF{FOR}{}{}{\vspace{-0.4cm}}{}{}{}{}
\SetKwIF{WHILE}{}{}{\vspace{-0.4cm}}{}{}{}{}      
      
\FOR{}{
\FOR{}{
\WHILE{}{

\If{$w(p)=-l$, $a_{l+1}<p$ for an $l$ with $j\geq l>1$, $b_{l}=r$}
      {
       \lIf(\tcc*[f]{$a_l$}){$a_{l}=r$}{$w(p)=-l+1$, $a_l:=p$}
       \lElseIf(\tcc*[f]{$b_l$}){$b_{l+1}<p$, $a_l<p$, not marked}
        {$w(p):=-l+1$, $b_l:=p$} 
       }
      
     \lIf(\tcc*[f]{mark $a_l$}){$p=a_l$ on $l$-level $0$, marked}{mark the next $l$ to the right} 
      \lIf(\tcc*[f]{$a$}){$a_2<p$, $w(p)=-1$, $a_1=r$}{delete $p$, $a_1=a:=p$}
      \lElseIf(\tcc*[f]{$b$}){$a_1<p$, $b_2<p$, $w(p)=-1$}{delete $p$, $b_1=b:=p$}
      let $p$ be one position to the right, change $\tilde{p}$ according to it\;
      }
    \lIf{$a\neq r$}{put $a,b$ in row $j$ of $Q$}
    }
  \lIf(\tcc*[f]{initialize $j$}){$i$ is even}{change entries $\pm j$ of $w$ into $0$}  
  }
put the labels still in $V$ in the first row of $Q$ \tcc*{extract row 1}
\Return $Q$\;
\end{algorithm*}

\subsection{Examples explaining Bijection $B$}

We can draw labeled words like we draw vacillating tableaux as tuple of paths, compare with Example~\ref{ex:VacTab}.

\begin{example}[An easy example to motivate the Algorithm]
We consider the following tableau for $n=2k+1=7$, thus we are going to create $k=3$ paths:

\hspace{5cm}\begin{tikzpicture}[scale=0.35]
  \draw (0,7) -- (6,7);
  \draw (0,6) -- (6,6);
  \draw (0,5) -- (6,5);
  \draw (0,4) -- (4,4);
  \draw (0,3) -- (2,3);
  \draw (0,2) -- (2,2);
  \draw (0,1) -- (2,1);
  \draw (0,7) -- (0,1);
  \draw (1,7) -- (1,1);
  \draw (2,7) -- (2,1);
  \draw (3,7) -- (3,4);
  \draw (4,7) -- (4,4);
  \draw (5,7) -- (5,5);
  \draw (6,7) -- (6,5);
  \draw (0.5,6.5) node {1};
  \draw (1.5,6.5) node {2};
  \draw (2.5,6.5) node {3};
  \draw (0.5,5.5) node {4};
  \draw (3.5,6.5) node {5};
  \draw (1.5,5.5) node {6};
  \draw (0.5,4.5) node {7};
  \draw (2.5,5.5) node {8};
  \draw (1.5,4.5) node {9};
  \draw (2.5,4.5) node {10};
  \draw (0.5,3.5) node {11};
  \draw (1.5,3.5) node {12};
  \draw (0.5,2.5) node {13};
  \draw (1.5,2.5) node {14};
  \draw (0.5,1.5) node {15};
  \draw (3.5,5.5) node {16};
  \draw (3.5,4.5) node {17};
  \draw (1.5,1.5) node {18};
  \draw (4.5,6.5) node {19}; 
  \draw (5.5,6.5) node {20}; 
  \draw (4.5,5.5) node {21}; 
  \draw (5.5,5.5) node {22};     
\end{tikzpicture}
\begin{itemize}
\item We initialize the first path with up, down, up, down, $\dots$, up, down-steps, labeled with the elements of the first row.

\item We insert rows $2$ up to $2k+1$ from top to bottom. For each row we insert pairs of two elements, starting with the rightmost pair, into the topmost path.

\item When we insert a pair $a,b$ into a path, we insert the new positions $a$ and $b$ with down-steps and 
\begin{itemize}
\item if we have not inserted into this path during the insertion process of the previous row, we change the down-step left of a pair $(a,b)$, into an up-step.
\item otherwise we change the next down-step to the left of each, $a$ and $b$ into a horizontal step.
\end{itemize}

If there is a path beneath, we insert these new horizontal-steps as a pair $a,b$ into this path according to this rule.

\item Whenever we finish inserting an odd row, we initialize a new path below (with up- and down steps) and label it with the horizontal steps of the bottommost path so far.
\end{itemize}

So the core of Algorithm~\ref{alg:2} is an an insertion algorithm from standard Young tableaux into vacillating tableaux. Some insertions create horizontal steps and to preserve descents, these are bumped into lower paths.\\
\begin{tikzpicture}[scale=0.35]
\draw (0,0)  --
(1,1) node[midway, above] {1}--
(2,0) node[midway, above] {2}--
(3,1) node[midway, above] {3}--
(4,0) node[midway, above] {5}--
(5,1) node[midway, above] {19}--
(6,0) node[midway, above] {20};
\draw (0,-1.1) node {};
\end{tikzpicture}
\begin{tikzpicture}[scale=0.35]
\draw (0,0)  --
(1,1) node[midway, above] {1}--
(2,0) node[midway, above] {2}--
(3,1) node[midway, above] {3}--
(4,0) node[midway, above] {5}--
(5,1) node[midway, above] {19}--
(6,2) node[midway, above] {20}--
(7,1) node[midway, above] {21}--
(8,0) node[midway, above] {22};
\draw (0,-1.1) node {};
\end{tikzpicture}
\begin{tikzpicture}[scale=0.35]
\draw (0,0)  --
(1,1) node[midway, above] {1}--
(2,0) node[midway, above] {2}--
(3,1) node[midway, above] {3}--
(4,2) node[midway, above] {5}--
(5,1) node[midway, above] {8}--
(6,0) node[midway, above] {16}--
(7,1) node[midway, above] {19}--
(8,2) node[midway, above] {20}--
(9,1) node[midway, above] {21}--
(10,0) node[midway, above] {22};
\draw (0,-1.1) node {};
\end{tikzpicture}
\begin{tikzpicture}[scale=0.35]
\draw (0,0)  --
(1,1) node[midway, above] {1}--
(2,2) node[midway, above] {2}--
(3,3) node[midway, above] {3}--
(4,2) node[midway, above] {4}--
(5,3) node[midway, above] {5}--
(6,2) node[midway, above] {6}--
(7,1) node[midway, above] {8}--
(8,0) node[midway, above] {16}--
(9,1) node[midway, above] {19}--
(10,2) node[midway, above] {20}--
(11,1) node[midway, above] {21}--
(12,0) node[midway, above] {22};
\draw (0,-1.1) node {};
\end{tikzpicture}\vspace{-0.5cm}\\
\begin{tikzpicture}[scale=0.35]
\draw (0,0)  --
(1,1) node[midway, above] {1}--
(2,2) node[midway, above] {2}--
(3,3) node[midway, above] {3}--
(4,2) node[midway, above] {4}--
(5,3) node[midway, above] {5}--
(6,2) node[midway, above] {6}--
(7,2) node[midway, above] {8}--
(8,1) node[midway, above] {10}--
(9,1) node[midway, above] {16}--
(10,0) node[midway, above] {17}--
(11,1) node[midway, above] {19}--
(12,2) node[midway, above] {20}--
(13,1) node[midway, above] {21}--
(14,0) node[midway, above] {22};
\draw (0,-1.1) node {};
\end{tikzpicture}
\begin{tikzpicture}[scale=0.35]
\draw (0,0)  --
(1,1) node[midway, above] {1}--
(2,2) node[midway, above] {2}--
(3,3) node[midway, above] {3}--
(4,3) node[midway, above] {4}--
(5,4) node[midway, above] {5}--
(6,4) node[midway, above] {6}--
(7,3) node[midway, above] {7}--
(8,3) node[midway, above] {8}--
(9,2) node[midway, above] {9}--
(10,1) node[midway, above] {10}--
(11,1) node[midway, above] {16}--
(12,0) node[midway, above] {17}--
(13,1) node[midway, above] {19}--
(14,2) node[midway, above] {20}--
(15,1) node[midway, above] {21}--
(16,0) node[midway, above] {22};
\draw (4,-1)  --
(5,0) node[midway, above] {4}--
(6,-1) node[midway, above] {6}--
(7,0) node[midway, above] {8}--
(8,-1) node[midway, above] {16};
\draw (0,-1.1) node {};
\end{tikzpicture}\vspace{-0.5cm}\\
\begin{tikzpicture}[scale=0.35]
\draw (0,0)  --
(1,1) node[midway, above] {1}--
(2,2) node[midway, above] {2}--
(3,3) node[midway, above] {3}--
(4,3) node[midway, above] {4}--
(5,4) node[midway, above] {5}--
(6,4) node[midway, above] {6}--
(7,3) node[midway, above] {7}--
(8,3) node[midway, above] {8}--
(9,3) node[midway, above] {9}--
(10,3) node[midway, above] {10}--
(11,2) node[midway, above] {11}--
(12,1) node[midway, above] {12}--
(13,1) node[midway, above] {16}--
(14,0) node[midway, above] {17}--
(15,1) node[midway, above] {19}--
(16,2) node[midway, above] {20}--
(17,1) node[midway, above] {21}--
(18,0) node[midway, above] {22};
\draw (4,-1)  --
(5,0) node[midway, above] {4}--
(6,1) node[midway, above] {6}--
(7,2) node[midway, above] {8}--
(8,1) node[midway, above] {9}--
(9,0) node[midway, above] {10}--
(10,-1) node[midway, above] {16};
\draw (0,-2.1) node {};
\end{tikzpicture}
\begin{tikzpicture}[scale=0.35]
\draw (0,0)  --
(1,1) node[midway, above] {1}--
(2,2) node[midway, above] {2}--
(3,3) node[midway, above] {3}--
(4,3) node[midway, above] {4}--
(5,4) node[midway, above] {5}--
(6,4) node[midway, above] {6}--
(7,3) node[midway, above] {7}--
(8,3) node[midway, above] {8}--
(9,3) node[midway, above] {9}--
(10,3) node[midway, above] {10}--
(11,3) node[midway, above] {11}--
(12,3) node[midway, above] {12}--
(13,2) node[midway, above] {13}--
(14,1) node[midway, above] {14}--
(15,1) node[midway, above] {16}--
(16,0) node[midway, above] {17}--
(17,1) node[midway, above] {19}--
(18,2) node[midway, above] {20}--
(19,1) node[midway, above] {21}--
(20,0) node[midway, above] {22};
\draw (4,-1)  --
(5,0) node[midway, above] {4}--
(6,1) node[midway, above] {6}--
(7,2) node[midway, above] {8}--
(8,2) node[midway, below] {9}--
(9,2) node[midway, below] {10}--
(10,1) node[midway, above] {11}--
(11,0) node[midway, above] {12}--
(12,-1) node[midway, above] {16};
\draw (7,-2)  --
(8,-1) node[midway, above] {9}--
(9,-2) node[midway, above] {10};
\draw (0,-2.1) node {};
\end{tikzpicture}\vspace{-0.5cm}\\
\begin{tikzpicture}[scale=0.35]
\draw (0,0)  --
(1,1) node[midway, above] {1}--
(2,2) node[midway, above] {2}--
(3,3) node[midway, above] {3}--
(4,3) node[midway, above] {4}--
(5,4) node[midway, above] {5}--
(6,4) node[midway, above] {6}--
(7,3) node[midway, above] {7}--
(8,3) node[midway, above] {8}--
(9,3) node[midway, above] {9}--
(10,3) node[midway, above] {10}--
(11,3) node[midway, above] {11}--
(12,3) node[midway, above] {12}--
(13,2) node[midway, above] {13}--
(14,2) node[midway, above] {14}--
(15,1) node[midway, above] {15}--
(16,1) node[midway, above] {16}--
(17,1) node[midway, above] {17}--
(18,0) node[midway, above] {18}--
(19,1) node[midway, above] {19}--
(20,2) node[midway, above] {20}--
(21,1) node[midway, above] {21}--
(22,0) node[midway, above] {22};
\draw (4,-1)  --
(5,0) node[midway, above] {4}--
(6,1) node[midway, above] {6}--
(7,2) node[midway, above] {8}--
(8,2) node[midway, above] {9}--
(9,2) node[midway, below] {10}--
(10,1) node[midway, below] {11}--
(11,1) node[midway, above] {12}--
(12,0) node[midway, above] {14}--
(13,0) node[midway, above] {16}--
(14,-1) node[midway, above] {17};
\draw (7,-2)  --
(8,-1) node[midway, below] {9}--
(9,0) node[midway, below] {10}--
(10,-1) node[midway, below] {12}--
(11,-2) node[midway, below] {16};
\draw (0,-3.1) node {};
\end{tikzpicture}

This description considers only the main cases of our Algorithm (in the algorithmic description these are those commented by: \emph{$a_l$, $b_l$, $i$ even $a_{j+1}$ 2, $i$ odd $b_{j+1}$, $i$ odd $a_{j+1}$}). The other cases are explained in the examples below.
\end{example}

\begin{example}[A more complicated example to understand most common special cases]
We consider the following tableau:

\hspace{5cm}
\begin{tikzpicture}[scale=0.35]
  \draw (0,7) -- (8,7);
  \draw (0,6) -- (8,6);
  \draw (0,5) -- (6,5);
  \draw (0,4) -- (4,4);
  \draw (0,3) -- (2,3);
  \draw (0,2) -- (2,2);
  \draw (0,7) -- (0,2);
  \draw (1,7) -- (1,2);
  \draw (2,7) -- (2,2);
  \draw (3,7) -- (3,4);
  \draw (4,7) -- (4,4);
  \draw (5,7) -- (5,5);
  \draw (6,7) -- (6,5);
  \draw (7,7) -- (7,6);
  \draw (8,7) -- (8,6);
  \draw (0.5,6.5) node {1};
  \draw (0.5,5.5) node {2};
  \draw (0.5,4.5) node {3};
  \draw (0.5,3.5) node {4};
  \draw (0.5,2.5) node {5};
  \draw (1.5,6.5) node {6};
  \draw (1.5,5.5) node {7};
  \draw (2.5,6.5) node {8};
  \draw (3.5,6.5) node {9};
  \draw (2.5,5.5) node {10};
  \draw (4.5,6.5) node {11};
  \draw (1.5,4.5) node {12};
  \draw (5.5,6.5) node {13};
  \draw (2.5,4.5) node {14};
  \draw (3.5,5.5) node {15};
  \draw (6.5,6.5) node {16};
  \draw (4.5,5.5) node {17};
  \draw (7.5,6.5) node {18};
  \draw (1.5,3.5) node {19};
  \draw (5.5,5.5) node {20};
  \draw (3.5,4.5) node {21};
  \draw (1.5,2.5) node {22};  
\end{tikzpicture}

When we insert the first row, we see that our Algorithm is not descent-preserving and gives no sensible output if we follow our rules form the previous example strictly. Therefore we create another case for inserting something the first time into a path and change pairs of up-down-steps between them into horizontal steps. This refers to \emph{ $i$ even $a_{j+1}$ 1} (at $2,6$ and $17,18$) and \emph{$i$ even connect} (at $11,12$):

\begin{tikzpicture}[scale=0.35]
\draw (0,0)  --
(1,1) node[midway, above] {1}--
(2,0) node[midway, above] {6}--
(3,1) node[midway, above] {8}--
(4,0) node[midway, above] {9}--
(5,1) node[midway, above] {11}--
(6,0) node[midway, above] {13}--
(7,1) node[midway, above] {16}--
(8,0) node[midway, above] {18};
\end{tikzpicture}
\begin{tikzpicture}[scale=0.35]
\draw (0,0)  --
(1,1) node[midway, above] {1}--
(2,1) node[midway, above] {2}--
(3,1) node[midway, above] {6}--
(4,0) node[midway, above] {7}--
(5,1) node[midway, above] {8}--
(6,2) node[midway, above] {9}--
(7,1) node[midway, above] {10}--
(8,1) node[midway, above] {11}--
(9,1) node[midway, above] {13}--
(10,0) node[midway, above] {15}--
(11,1) node[midway, above] {16}--
(12,1) node[midway, above] {17}--
(13,1) node[midway, above] {18}--
(14,0) node[midway, above] {20};
\end{tikzpicture}

When inserting the third row we have to adjust this rules once again in order to preserve descents and that concatenated tableaux are mapped to concatenated paths. (A property that is only proven for $n=3$, but conjectured otherwise, see Conjecture~\ref{con:ConCat}.) Therefore we have to introduce \emph{$i$ odd connect} and \emph{$i$ odd separate}. Between $a$ and $b$ two horizontal steps on level $1$ are changed into a down-step and an up-step and a down-step and an up-step on level $0$ are changed into two horizontal steps. (In our example we do \emph{$i$ odd separate} at $2,6$, when inserting $3$ and $12$ and at $11,13$ and $17,18$ when inserting $14$ and $21$. We do \emph{$i$ odd connect} at $7,8$ when inserting $3$ and $12$ and at $15,16$ when inserting $14$ and $21$.) The corresponding positions are cycled.

Until this point our algorithm works exactly as in~\cite{3erAlgo}.

Now we initialize the second path. We see that where we did \emph{separate} on our first path, there are some steps that do not observe the rules for vacillating tableau. We will deal with those in the next insertions.

\begin{tikzpicture}[scale=0.35]
\draw (0,0)  --
(1,1) node[midway, above] {1}--
(2,1) node[midway, above] {2}--
(3,0) node[midway, above] {3} node[draw=black,circle,inner sep=0cm,minimum size=0.2cm] {}--
(4,1) node[midway, above] {6}--
(5,1) node[midway, above] {7} node[draw=black,circle,inner sep=0cm,minimum size=0.2cm] {}--
(6,1) node[midway, above] {8}--
(7,2) node[midway, above] {9}--
(8,2) node[midway, above] {10}--
(9,2) node[midway, above] {11}--
(10,1) node[midway, above] {12} node[draw=black,circle,inner sep=0cm,minimum size=0.2cm] {}--
(11,2) node[midway, above] {13}--
(12,1) node[midway, above] {14}--
(13,1) node[midway, above] {15} node[draw=black,circle,inner sep=0cm,minimum size=0.2cm] {}--
(14,1) node[midway, above] {16}--
(15,0) node[midway, above] {17} node[draw=black,circle,inner sep=0cm,minimum size=0.2cm] {}--
(16,1) node[midway, above] {18}--
(17,1) node[midway, above] {20}--
(18,0) node[midway, above] {21};
\draw (4,-1)--
(5,0) node[midway, above] {2}--
(6,-1) node[midway, above] {7}--
(7,0) node[midway, above] {8}--
(8,-1) node[midway, above] {10}--
(9,0) node[midway, above] {11}--
(10,-1) node[midway, above] {15}--
(11,0) node[midway, above] {16}--
(12,-1) node[midway, above] {20};
\end{tikzpicture}

Now several things happen at once:

As mentioned above, we have to deal with this rule violations that we have noticed before.
However we will see, that if $a$ is inserted completely left of such a violation and $b$ completely to the right, the rules we have introduced so far deal with that and two separate paths will be formed. We just mark them as \enquote{allowed hight violations} in \emph{mark it + connect}. We do so between $3$ and $6$.

However between $17$ and $18$ we have to intervene and use \emph{adjust separation point} (we see this at the right dashed circles). When doing so we ignore $b_1$ that is $19$ and act as if $17$ and $18$ are still on level zero. 

Moreover when inserting according to the simple rules we come to another point where the paths do not observe the rules of a vacillating tableau. Again we deal with this and use \emph{height violation}, as this is not marked (we see this at the left dashed circles).

\begin{tikzpicture}[scale=0.35]
\draw (0,0)  --
(1,1) node[midway, above] {1}--
(2,1) node[midway, above] {2}--
(3,1) node[midway, above] {3}--
(4,0) node[midway, above] {4}--
(5,1) node[midway, above] {6}--
(6,1) node[midway, above] {7}--
(7,1) node[midway, above] {8}--
(8,2) node[midway, above] {9}--
(9,2) node[midway, above] {10}--
(10,2) node[midway, above] {11}--
(11,2) node[midway, above] {12}--
(12,2) node[midway, above] {13}--
(13,2) node[midway, above] {14}--
(14,2) node[midway, above] {15}--
(15,2) node[midway, above] {16}--
(16,2) node[midway, above] {17}--
(17,2) node[midway, above] {18}--
(18,1) node[midway, above] {19}--
(19,1) node[midway, above] {20}--
(20,0) node[midway, above] {21};
\draw (5,-1)--
(6,0) node[midway, above] {2}--
(7,0) node[midway, below] {3}--
(8,0) node[midway, above] {7}--
(9,0) node[midway, above] {8}--
(10,0) node[midway, above] {10}--
(11,1) node[midway, above] {11}--
(12,0) node[midway,above] {12}--
(13,1) node[midway,above] {13}--
(14,0) node[midway,above] {14}--
(15,-1) node[midway, above] {15}--
(16,0) node[midway, above] {16}--
(17,0) node[midway, above] {17}--
(18,0) node[midway, above] {18}--
(19,-1) node[midway, above] {20};
  \draw[dashed] (12,0.7) circle (30pt) node{};
  \draw[dashed] (11,2.7) circle (30pt) node{};
  \draw[dashed] (17,-0.3) circle (30pt) node{};
  \draw[dashed] (16,2.7) circle (30pt) node{};
\end{tikzpicture}

In the last insertion no new rule is introduced. However we can see how the two separate, concatenated paths have developed according to the two separate, concatenated tableaux our tableau consists of.

\begin{tikzpicture}[scale=0.35]
\draw (0,0)  --
(1,1) node[midway, above] {1}--
(2,1) node[midway, above] {2}--
(3,1) node[midway, above] {3}--
(4,1) node[midway, above] {4}--
(5,0) node[midway, above] {5}--
(6,1) node[midway, above] {6}--
(7,1) node[midway, above] {7}--
(8,1) node[midway, above] {8}--
(9,2) node[midway, above] {9}--
(10,2) node[midway, above] {10}--
(11,2) node[midway, above] {11}--
(12,2) node[midway, above] {12}--
(13,2) node[midway, above] {13}--
(14,2) node[midway, above] {14}--
(15,2) node[midway, above] {15}--
(16,2) node[midway, above] {16}--
(17,2) node[midway, above] {17}--
(18,2) node[midway, above] {18}--
(19,1) node[midway, above] {19}--
(20,1) node[midway, above] {20}--
(21,1) node[midway, above] {21}--
(22,0) node[midway, above] {22};
\draw (1,-1)--
(2,0) node[midway, above] {2}--
(3,0) node[midway, below] {3}--
(4,-1) node[midway, above] {4};
\draw (6,-1)--
(7,0) node[midway, above] {7}--
(8,-1) node[midway, above] {8}--
(9,0) node[midway, above] {10}--
(10,1) node[midway, above] {11}--
(11,0) node[midway,above] {12}--
(12,1) node[midway,above] {13}--
(13,0) node[midway,above] {14}--
(14,0) node[midway, above] {15}--
(15,0) node[midway, above] {16}--
(16,-1) node[midway, above] {17}--
(17,0) node[midway, above] {18}--
(18,0) node[midway, above] {20}--
(19,-1) node[midway, above] {21};
\end{tikzpicture}

Note that this example is also an example for Algorithm~\ref{alg:u2}, if one reads it from bottom to top.
\end{example}

\begin{example}[Illustrating additional special cases of Algorithm~\ref{alg:2} and~\ref{alg:u2}]
\label{ex:SpecialCases}
We consider several different tableaux to illustrate \emph{ignore inserted $a_l$} in \emph{height violation} (respectively \emph{ignore next $l$}) and the two special cases of \emph{height violation} \emph{$i$ even, $a_{j+1}\neq 0$} and \emph{$i$ odd, $w(p_j)=0$ on $j$-level $0$}.

We start with the following tableau:\\
\begin{tikzpicture}[scale=0.35]
  \draw (0,7) -- (4,7);
  \draw (0,6) -- (4,6);
  \draw (0,5) -- (4,5);
  \draw (0,4) -- (2,4);
  \draw (0,3) -- (2,3);
  \draw (0,7) -- (0,3);
  \draw (1,7) -- (1,3);
  \draw (2,7) -- (2,3);
  \draw (3,7) -- (3,5);
  \draw (4,7) -- (4,5);
  \draw (0.5,6.5) node {1};
  \draw (1.5,6.5) node {2};
  \draw (0.5,5.5) node {3};
  \draw (1.5,5.5) node {4};
  \draw (0.5,4.5) node {5};
  \draw (1.5,4.5) node {6};
  \draw (2.5,6.5) node {7};
  \draw (3.5,6.5) node {8};
  \draw (2.5,5.5) node {9};
  \draw (3.5,5.5) node {10};
  \draw (0.5,3.5) node {11};
  \draw (1.5,3.5) node {12}; 
\end{tikzpicture}
\begin{tikzpicture}[scale=0.35]
\draw (0,0)  --
(1,1) node[midway, above] {1}--
(2,0) node[midway, above] {2}--
(3,1) node[midway, above] {7}--
(4,0) node[midway, above] {8};
\draw(1,-1);
\end{tikzpicture}
\begin{tikzpicture}[scale=0.35]
\draw (0,0)  --
(1,1) node[midway, above] {1}--
(2,2) node[midway, above] {2}--
(3,1) node[midway, above] {3}--
(4,0) node[midway, above] {4}--
(5,1) node[midway, above] {7}--
(6,2) node[midway, above] {8}--
(7,1) node[midway, above] {9}--
(8,0) node[midway, above] {10};
\draw(1,-1);
\end{tikzpicture}
\begin{tikzpicture}[scale=0.35]
\draw (0,0)  --
(1,1) node[midway, above] {1}--
(2,2) node[midway, above] {2}--
(3,2) node[midway, above] {3}--
(4,2) node[midway, above] {4}--
(5,1) node[midway, above] {5}--
(6,0) node[midway, above] {6}--
(7,1) node[midway, above] {7}--
(8,2) node[midway, above] {8}--
(9,1) node[midway, above] {9}--
(10,0) node[midway, above] {10};
\draw (2,-1)  --
(3,0) node[midway, above] {3}--
(4,-1) node[midway, above] {4};
\draw(1,-1);
\end{tikzpicture}
\begin{tikzpicture}[scale=0.35]
\draw (0,0)  --
(1,1) node[midway, above] {1}--
(2,2) node[midway, above] {2}--
(3,2) node[midway, above] {3}--
(4,2) node[midway, above] {4}--
(5,2) node[midway, above] {5}--
(6,2) node[midway, above] {6}--
(7,2) node[midway, above] {7}--
(8,2) node[midway, above] {8}--
(9,2) node[midway, above] {9}--
(10,2) node[midway, above] {10}--
(11,1) node[midway, above] {11}--
(12,0) node[midway, above] {12};
\draw (2,-1)  --
(3,0) node[midway, above] {3}--
(4,1) node[midway, above] {4}--
(5,0) node[midway, above] {5}--
(6,-1) node[midway, above] {6}--
(7,0) node[midway, above] {7}--
(8,1) node[midway, above] {8}--
(9,0) node[midway, above] {9}--
(10,-1) node[midway, above] {10};
\end{tikzpicture}

Inserting the first three rows works as before. However when inserting the fourth row we see in which cases we need \emph{ignore inserted $a_l$ if $a_{l+1}=0$} in \emph{height violation}. When we insert $11,12$, we have a \emph{height violation} at $p=8$. At $p=7$ we have again a \emph{height violation} as we ignore the inserted $11$.

\begin{tikzpicture}[scale=0.35]
\draw (7,1)  --
(8,2) node[midway, above] {8}--
(9,2) node[midway, above] {9}--
(10,2) node[midway, above] {10}--
(11,1) node[midway, above] {11}--
(12,0) node[midway, above] {12};
\draw (8,1)  --
(9,0) node[midway, above] {9}--
(10,-1) node[midway, above] {10};
\end{tikzpicture}
\hspace{0.2cm}
\raisebox{0.7cm}{$\rightarrow$}
\hspace{0.2cm}
\begin{tikzpicture}[scale=0.35]
\draw (6,1)  --
(7,2) node[midway, above] {7}--
(8,2) node[midway, above] {8}--
(9,2) node[midway, above] {9}--
(10,2) node[midway, above] {10}--
(11,1) node[midway, above] {11}--
(12,0) node[midway, above] {12};
\draw (7,0)  --
(8,1) node[midway, above] {8}--
(9,0) node[midway, above] {9}--
(10,-1) node[midway, above] {10};
\end{tikzpicture}
\hspace{0.2cm}
\raisebox{0.7cm}{$\rightarrow$}
\hspace{0.2cm}
\begin{tikzpicture}[scale=0.35]
\draw (6,2)  --
(7,2) node[midway, above] {7}--
(8,2) node[midway, above] {8}--
(9,2) node[midway, above] {9}--
(10,2) node[midway, above] {10}--
(11,1) node[midway, above] {11}--
(12,0) node[midway, above] {12};
\draw (6,-1)  --
(7,0) node[midway, above] {7}--
(8,1) node[midway, above] {8}--
(9,0) node[midway, above] {9}--
(10,-1) node[midway, above] {10};
\end{tikzpicture}

Now we consider the resulting vacillating tableau apply Algorithm~\ref{alg:u2}. We get the labeled words in the opposite direction and obtain elements of our standard Young tableau two by two. We have a closer look at the first extraction as here again happens a special case. We extract $5,6$ as $a_2,b_2$ and get a \emph{height violation} at $7$. We correct it and continue. At $8$ we have again a \emph{height violation}, as we ignore $7$. Again we correct it and continue.

\begin{tikzpicture}[scale=0.35]
\draw (0,0)  --
(1,1) node[midway, above] {1}--
(2,2) node[midway, above] {2}--
(3,2) node[midway, above] {3}--
(4,2) node[midway, above] {4}--
(5,1) node[midway, above] {5}--
(6,0) node[midway, above] {6}--
(7,0) node[midway, above] {7};
\draw (2,-1)  --
(3,0) node[midway, above] {3}--
(4,-1) node[midway, above] {4}--
(5,0) node[midway, above] {7};
\end{tikzpicture}
\hspace{0.2cm}
\raisebox{0.7cm}{$\rightarrow$}
\hspace{0.2cm}
\begin{tikzpicture}[scale=0.35]
\draw (0,0)  --
(1,1) node[midway, above] {1}--
(2,2) node[midway, above] {2}--
(3,2) node[midway, above] {3}--
(4,2) node[midway, above] {4}--
(5,1) node[midway, above] {5}--
(6,0) node[midway, above] {6}--
(7,1) node[midway, above] {7}--
(8,1) node[midway, above] {8};
\draw (2,-1)  --
(3,0) node[midway, above] {3}--
(4,-1) node[midway, above] {4}--
(5,0) node[midway, above] {8};
\end{tikzpicture}
\hspace{0.2cm}
\raisebox{0.7cm}{$\rightarrow$}
\hspace{0.2cm}
\begin{tikzpicture}[scale=0.35]
\draw (0,0)  --
(1,1) node[midway, above] {1}--
(2,2) node[midway, above] {2}--
(3,2) node[midway, above] {3}--
(4,2) node[midway, above] {4}--
(5,1) node[midway, above] {5}--
(6,0) node[midway, above] {6}--
(7,1) node[midway, above] {7}--
(8,2) node[midway, above] {8};
\draw (2,-1)  --
(3,0) node[midway, above] {3}--
(4,-1) node[midway, above] {4};
\end{tikzpicture}

We point out, that this is also an example where a \emph{height violation} of $a$ overlaps with one of $b$. If this is not the case, then those special cases can also occur just in one of the two algorithms.

In the following Example, during insert row 4, we have to ignore 9 at $p=7$ and get a \emph{height violation} there.

\noindent \begin{tikzpicture}[scale=0.35]
  \draw (0,7) -- (4,7);
  \draw (0,6) -- (4,6);
  \draw (0,5) -- (2,5);
  \draw (0,4) -- (2,4);
  \draw (0,3) -- (2,3);
  \draw (0,7) -- (0,3);
  \draw (1,7) -- (1,3);
  \draw (2,7) -- (2,3);
  \draw (3,7) -- (3,6);
  \draw (4,7) -- (4,6);
  \draw (0.5,6.5) node {1};
  \draw (1.5,6.5) node {2};
  \draw (0.5,5.5) node {3};
  \draw (1.5,5.5) node {4};
  \draw (0.5,4.5) node {5};
  \draw (1.5,4.5) node {6};
  \draw (2.5,6.5) node {7};
  \draw (3.5,6.5) node {8};
  \draw (0.5,3.5) node {9};
  \draw (1.5,3.5) node {10};
\end{tikzpicture}
\begin{tikzpicture}[scale=0.31]
\draw (0,0)  --
(1,1) node[midway, above] {1}--
(2,0) node[midway, above] {2}--
(3,1) node[midway, above] {7}--
(4,0) node[midway, above] {8};
\draw(1,-1.4);
\end{tikzpicture}
\begin{tikzpicture}[scale=0.31]
\draw (0,0)  --
(1,1) node[midway, above] {1}--
(2,2) node[midway, above] {2}--
(3,1) node[midway, above] {3}--
(4,0) node[midway, above] {4}--
(5,1) node[midway, above] {7}--
(6,0) node[midway, above] {8};
\draw(1,-1.4);
\end{tikzpicture}
\begin{tikzpicture}[scale=0.31]
\draw (0,0)  --
(1,1) node[midway, above] {1}--
(2,2) node[midway, above] {2}--
(3,2) node[midway, above] {3}--
(4,2) node[midway, above] {4}--
(5,1) node[midway, above] {5}--
(6,0) node[midway, above] {6}--
(7,1) node[midway, above] {7}--
(8,0) node[midway, above] {8};
\draw (2,-1)  --
(3,0) node[midway, above] {3}--
(4,-1) node[midway, above] {4};
\draw(1,-1.4);
\end{tikzpicture}
\begin{tikzpicture}[scale=0.31]
\draw (0,0)  --
(1,1) node[midway, above] {1}--
(2,2) node[midway, above] {2}--
(3,2) node[midway, above] {3}--
(4,2) node[midway, above] {4}--
(5,2) node[midway, above] {5}--
(6,2) node[midway, above] {6}--
(7,2) node[midway, above] {7}--
(8,2) node[midway, above] {8}--
(9,1) node[midway, above] {9}--
(10,0) node[midway, above] {10};
\draw (2,-1)  --
(3,0) node[midway, above] {3}--
(4,1) node[midway, above] {4}--
(5,0) node[midway, above] {5}--
(6,-1) node[midway, above] {6}--
(7,0) node[midway, above] {7}--
(8,-1) node[midway, above] {8};
\draw(1,-1.4);
\end{tikzpicture}

In the following Example, during extract row $5$, we have to ignore $2$ at $p=8$ and get a hight violation there.\\
\begin{tikzpicture}[scale=0.35]
  \draw (0,7) -- (4,7);
  \draw (0,6) -- (4,6);
  \draw (0,5) -- (2,5);
  \draw (0,4) -- (2,4);
  \draw (0,3) -- (2,3);
  \draw (0,2) -- (2,2);
  \draw (0,7) -- (0,2);
  \draw (1,7) -- (1,2);
  \draw (2,7) -- (2,2);
  \draw (3,7) -- (3,6);
  \draw (4,7) -- (4,6);
  \draw (0.5,6.5) node {1};
  \draw (1.5,6.5) node {2};
  \draw (0.5,5.5) node {3};
  \draw (1.5,5.5) node {4};
  \draw (0.5,4.5) node {5};
  \draw (0.5,3.5) node {6};
  \draw (1.5,4.5) node {7};
  \draw (2.5,6.5) node {8};
  \draw (1.5,3.5) node {9};
  \draw (3.5,6.5) node {10};
  \draw (0.5,2.5) node {11};
  \draw (1.5,2.5) node {12};
\end{tikzpicture}
\hspace{-0.9cm}
\begin{tikzpicture}[scale=0.31]
\draw (0,0)  --
(1,1) node[midway, above] {1}--
(2,0) node[midway, above] {2}--
(3,1) node[midway, above] {8}--
(4,0) node[midway, above] {10};
\draw(1,-1.4);
\end{tikzpicture}
\begin{tikzpicture}[scale=0.31]
\draw (0,0)  --
(1,1) node[midway, above] {1}--
(2,2) node[midway, above] {2}--
(3,1) node[midway, above] {3}--
(4,0) node[midway, above] {4}--
(5,1) node[midway, above] {8}--
(6,0) node[midway, above] {10};
\draw(1,-1.4);
\end{tikzpicture}
\begin{tikzpicture}[scale=0.31]
\draw (0,0)  --
(1,1) node[midway, above] {1}--
(2,2) node[midway, above] {2}--
(3,2) node[midway, above] {3}--
(4,2) node[midway, above] {4}--
(5,1) node[midway, above] {5}--
(6,0) node[midway, above] {7}--
(7,1) node[midway, above] {8}--
(8,0) node[midway, above] {10};
\draw (2,-1)  --
(3,0) node[midway, above] {3}--
(4,-1) node[midway, above] {4};
\draw(1,-1.4);
\end{tikzpicture}
\begin{tikzpicture}[scale=0.31]
\draw (0,0)  --
(1,1) node[midway, above] {1}--
(2,2) node[midway, above] {2}--
(3,2) node[midway, above] {3}--
(4,2) node[midway, above] {4}--
(5,2) node[midway, above] {5}--
(6,1) node[midway, above] {6}--
(7,1) node[midway, above] {7}--
(8,2) node[midway, above] {8}--
(9,1) node[midway, above] {9}--
(10,0) node[midway, above] {10};
\draw (2,-1)  --
(3,0) node[midway, above] {3}--
(4,1) node[midway, above] {4}--
(5,0) node[midway, above] {5}--
(6,-1) node[midway, above] {7};
\draw(1,-1.4);
\end{tikzpicture}
\begin{tikzpicture}[scale=0.31]
\draw (0,0)  --
(1,1) node[midway, above] {1}--
(2,2) node[midway, above] {2}--
(3,2) node[midway, above] {3}--
(4,2) node[midway, above] {4}--
(5,2) node[midway, above] {5}--
(6,2) node[midway, above] {6}--
(7,2) node[midway, above] {7}--
(8,2) node[midway, above] {8}--
(9,2) node[midway, above] {9}--
(10,2) node[midway, above] {10}--
(11,1) node[midway, above] {11}--
(12,0) node[midway, above] {12};
\draw (2,-1)  --
(3,0) node[midway, above] {3}--
(4,1) node[midway, above] {4}--
(5,1) node[midway, below] {5}--
(6,0) node[midway, above] {6}--
(7,0) node[midway, below] {7}--
(8,1) node[midway, above] {8}--
(9,0) node[midway, above] {9}--
(10,-1) node[midway, above] {10};
\draw(1,-1.4);
\end{tikzpicture}

Next we consider the following tableau:\\
\begin{tikzpicture}[scale=0.35]
  \draw (0,7) -- (4,7);
  \draw (0,6) -- (4,6);
  \draw (0,5) -- (2,5);
  \draw (0,4) -- (2,4);
  \draw (0,3) -- (2,3);
  \draw (0,2) -- (2,2);
  \draw (0,7) -- (0,2);
  \draw (1,7) -- (1,2);
  \draw (2,7) -- (2,2);
  \draw (3,7) -- (3,6);
  \draw (4,7) -- (4,6);
  \draw (0.5,6.5) node {1};
  \draw (0.5,5.5) node {2};
  \draw (0.5,4.5) node {3};
  \draw (1.5,6.5) node {4};
  \draw (2.5,6.5) node {5};
  \draw (0.5,3.5) node {6};
  \draw (0.5,2.5) node {7};
  \draw (3.5,6.5) node {8};
  \draw (1.5,5.5) node {9};
  \draw (1.5,4.5) node {10};
  \draw (1.5,3.5) node {11};
  \draw (1.5,2.5) node {12};
\end{tikzpicture}
\hspace{-0.7cm}
\begin{tikzpicture}[scale=0.31]
\draw (0,0)  --
(1,1) node[midway, above] {1}--
(2,0) node[midway, above] {4}--
(3,1) node[midway, above] {5}--
(4,0) node[midway, above] {8};
\draw(1,-1.7);
\end{tikzpicture}
\begin{tikzpicture}[scale=0.31]
\draw (0,0)  --
(1,1) node[midway, above] {1}--
(2,1) node[midway, above] {2}--
(3,1) node[midway, above] {4}--
(4,1) node[midway, above] {5}--
(5,1) node[midway, above] {8}--
(6,0) node[midway, above] {9};
\draw(1,-1.7);
\end{tikzpicture}
\begin{tikzpicture}[scale=0.31]
\draw (0,0)  --
(1,1) node[midway, above] {1}--
(2,1) node[midway, above] {2}--
(3,0) node[midway, above] {3}--
(4,1) node[midway, above] {4}--
(5,0) node[midway, above] {5}--
(6,1) node[midway, above] {8}--
(7,1) node[midway, above] {9}--
(8,0) node[midway, above] {10};
\draw (3,-1)  --
(4,0) node[midway, below] {2}--
(5,-1) node[midway, below] {9};
\draw(1,-1.7);
\end{tikzpicture}
\begin{tikzpicture}[scale=0.31]
\draw (0,0)  --
(1,1) node[midway, above] {1}--
(2,1) node[midway, above] {2}--
(3,1) node[midway, above] {3}--
(4,1) node[midway, above] {4}--
(5,1) node[midway, above] {5}--
(6,0) node[midway, above] {6}--
(7,1) node[midway, above] {8}--
(8,1) node[midway, above] {9}--
(9,1) node[midway, above] {10}--
(10,0) node[midway, above] {11};
\draw (2,-1)  --
(3,0) node[midway, below] {2}--
(4,0) node[midway, below] {3}--
(5,0) node[midway, below] {4}--
(6,0) node[midway, below] {5}--
(7,0) node[midway, below] {9}--
(8,-1) node[midway, below] {10};
\draw(1,-1.7);
\end{tikzpicture}
\begin{tikzpicture}[scale=0.31]
\draw (0,0)  --
(1,1) node[midway, above] {1}--
(2,1) node[midway, above] {2}--
(3,1) node[midway, above] {3}--
(4,1) node[midway, above] {4}--
(5,1) node[midway, above] {5}--
(6,1) node[midway, above] {6}--
(7,0) node[midway, above] {7}--
(8,1) node[midway, above] {8}--
(9,1) node[midway, above] {9}--
(10,1) node[midway, above] {10}--
(11,1) node[midway, above] {11}--
(12,0) node[midway, above] {12};
\draw (1,-1)  --
(2,0) node[midway, above] {2}--
(3,0) node[midway, below] {3}--
(4,0) node[midway, below] {4}--
(5,0) node[midway, below] {5}--
(6,-1) node[midway, above] {6};
\draw (8,-1)--
(9,0) node[midway, above] {9}--
(10,0) node[midway, below] {10}--
(11,-1) node[midway, above] {11};
\draw(1,-1.7);
\end{tikzpicture}

Again inserting the first three rows works as before. When inserting the fourth row, thus $6,11$, we come into the special case of \emph{height violation} \emph{$i$ even, $a_{j+1}\neq 0$}. The way we deal with this ensures, that we can continue normally after adjusting the \emph{height violation}. At $p=4$ we have a \emph{height violation} that is marked but $p<a_2$. At this point $a_{3}$ is already defined to be $2$. We set $a_2$ and $a_3$ back to $0$ and search for them anew. Later we define them to be $3$ and $2$.

\noindent \begin{tikzpicture}[scale=0.35]
\draw (3,0)--
(4,1) node[midway, above] {4}--
(5,0) node[midway, above] {5}--
(6,1) node[midway, above] {8}--
(7,1) node[midway, above] {9}--
(8,0) node[midway, above] {10};
\draw (6,0)--
(7,-1) node[midway, below] {9};
\draw(3,-1.7);
\end{tikzpicture}
\hspace{0.2cm}
\raisebox{0.7cm}{$\rightarrow$}
\hspace{0.2cm}
\begin{tikzpicture}[scale=0.35]
\draw (3,0)  --
(4,1) node[midway, above] {4}--
(5,1) node[midway, above] {5}--
(6,0) node[midway, above] {6}--
(7,1) node[midway, above] {8}--
(8,1) node[midway, above] {9}--
(9,1) node[midway, above] {10}--
(10,0) node[midway, above] {11};
\draw (4,-1)  --
(5,0) node[midway, below] {2}--
(6,0) node[midway, below] {5}--
(7,0) node[midway, below] {9}--
(8,-1) node[midway, below] {10};
\end{tikzpicture}
\hspace{0.2cm}
\raisebox{0.7cm}{$\rightarrow$}
\hspace{0.2cm}
\begin{tikzpicture}[scale=0.35]
\draw (2,2)  --
(3,1) node[midway, above] {3}--
(4,1) node[midway, above] {4}--
(5,1) node[midway, above] {5}--
(6,0) node[midway, above] {6}--
(7,1) node[midway, above] {8}--
(8,1) node[midway, above] {9}--
(9,1) node[midway, above] {10}--
(10,0) node[midway, above] {11};
\draw (3,0)  --
(4,0) node[midway, below] {2}--
(5,0) node[midway, below] {4}--
(6,0) node[midway, below] {5}--
(7,0) node[midway, below] {9}--
(8,-1) node[midway, below] {10};
\end{tikzpicture}
\hspace{0.2cm}
\raisebox{0.7cm}{$\rightarrow$}
\hspace{0.2cm}
\begin{tikzpicture}[scale=0.35]
\draw (0,0)  --
(1,1) node[midway, above] {1}--
(2,1) node[midway, above] {2}--
(3,1) node[midway, above] {3}--
(4,1) node[midway, above] {4}--
(5,1) node[midway, above] {5}--
(6,0) node[midway, above] {6}--
(7,1) node[midway, above] {8}--
(8,1) node[midway, above] {9}--
(9,1) node[midway, above] {10}--
(10,0) node[midway, above] {11};
\draw (2,-1)  --
(3,0) node[midway, below] {2}--
(4,0) node[midway, below] {3}--
(5,0) node[midway, below] {4}--
(6,0) node[midway, below] {5}--
(7,0) node[midway, below] {9}--
(8,-1) node[midway, below] {10};
\end{tikzpicture}

Again we consider the resulting vacillating tableau and apply Algorithm~\ref{alg:u2}. Again we obtain the same sequence of labeled words but the other way around and extract elements of our standard Young tableau in pairs. We obtain our special case when extracting row $4$, thus an even row. $4$ is a $0$ on $1$-level $0$. We set $a_2$ back to $r$ and change $4$ into a $-1$. Later we extract $5$ as a new $a_2$.

\begin{tikzpicture}[scale=0.35]
\draw (0,0)  --
(1,1) node[midway, above] {1}--
(2,1) node[midway, above] {2}--
(3,1) node[midway, above] {3};
\draw (2,-1)  --
(3,0) node[midway, below] {2}--
(4,0) node[midway, below] {3};
\draw(1,-1.7);
\end{tikzpicture}
\hspace{0.2cm}
\raisebox{0.7cm}{$\rightarrow$}
\hspace{0.2cm}
\begin{tikzpicture}[scale=0.35]
\draw (0,0)  --
(1,1) node[midway, above] {1}--
(2,1) node[midway, above] {2}--
(3,0) node[midway, above] {3}--
(4,0) node[midway, above] {4};
\draw (1,0)  --
(2,0) node[midway, below] {2};
\draw(1,-1.7);
\end{tikzpicture}
\hspace{0.2cm}
\raisebox{0.7cm}{$\rightarrow$}
\hspace{0.2cm}
\begin{tikzpicture}[scale=0.35]
\draw (0,0)  --
(1,1) node[midway, above] {1}--
(2,1) node[midway, above] {2}--
(3,0) node[midway, above] {3}--
(4,1) node[midway, above] {4}--
(5,1) node[midway, above] {5};
\draw (1,-1)  --
(2,0) node[midway, above] {2}--
(3,0) node[midway, below] {5};
\draw(1,-1.7);
\end{tikzpicture}
\hspace{0.2cm}
\raisebox{0.7cm}{$\rightarrow$}
\hspace{0.2cm}
\begin{tikzpicture}[scale=0.35]
\draw (0,0)  --
(1,1) node[midway, above] {1}--
(2,1) node[midway, above] {2}--
(3,0) node[midway, above] {3}--
(4,1) node[midway, above] {4}--
(5,0) node[midway, above] {5};
\draw (1,-1)  --
(2,0) node[midway, above] {2};
\draw(1,-1.7);
\end{tikzpicture}

Finally we consider the following tableau:\\
\begin{tikzpicture}[scale=0.35]
  \draw (0,7) -- (4,7);
  \draw (0,6) -- (4,6);
  \draw (0,5) -- (4,5);
  \draw (0,4) -- (2,4);
  \draw (0,3) -- (2,3);
  \draw (0,2) -- (2,2);
  \draw (0,7) -- (0,2);
  \draw (1,7) -- (1,2);
  \draw (2,7) -- (2,2);
  \draw (3,7) -- (3,5);
  \draw (4,7) -- (4,5);
  \draw (0.5,6.5) node {1};
  \draw (1.5,6.5) node {2};
  \draw (0.5,5.5) node {3};
  \draw (1.5,5.5) node {4};
  \draw (0.5,4.5) node {5};
  \draw (1.5,4.5) node {6};
  \draw (0.5,3.5) node {7};
  \draw (0.5,2.5) node {8};
  \draw (1.5,3.5) node {9};
  \draw (2.5,6.5) node {10};
  \draw (2.5,5.5) node {11};
  \draw (3.5,6.5) node {12};
  \draw (3.5,5.5) node {13};
  \draw (1.5,2.5) node {14}; 
\end{tikzpicture}
\begin{tikzpicture}[scale=0.31]
\draw (0,0)  --
(1,1) node[midway, above] {1}--
(2,0) node[midway, above] {2}--
(3,1) node[midway, above] {10}--
(4,0) node[midway, above] {12};
\draw(1,-2.4);
\end{tikzpicture}
\begin{tikzpicture}[scale=0.31]
\draw (0,0)  --
(1,1) node[midway, above] {1}--
(2,2) node[midway, above] {2}--
(3,1) node[midway, above] {3}--
(4,0) node[midway, above] {4}--
(5,1) node[midway, above] {10}--
(6,1) node[midway, above] {11}--
(7,1) node[midway, above] {12}--
(8,0) node[midway, above] {13};
\draw(1,-2.4);
\end{tikzpicture}
\begin{tikzpicture}[scale=0.31]
\draw (0,0)  --
(1,1) node[midway, above] {1}--
(2,2) node[midway, above] {2}--
(3,2) node[midway, above] {3}--
(4,2) node[midway, above] {4}--
(5,1) node[midway, above] {5}--
(6,0) node[midway, above] {6}--
(7,1) node[midway, above] {10}--
(8,1) node[midway, above] {11}--
(9,1) node[midway, above] {12}--
(10,0) node[midway, above] {13};
\draw (1,-1)  --
(2,0) node[midway, above] {3}--
(3,-1) node[midway, above] {4}--
(4,0) node[midway, above] {11}--
(5,-1) node[midway, above] {12};
\draw(1,-2.4);
\end{tikzpicture}
\begin{tikzpicture}[scale=0.31]
\draw (0,0)  --
(1,1) node[midway, above] {1}--
(2,2) node[midway, above] {2}--
(3,2) node[midway, above] {3}--
(4,2) node[midway, above] {4}--
(5,2) node[midway, above] {5}--
(6,2) node[midway, above] {6}--
(7,1) node[midway, above] {7}--
(8,0) node[midway, above] {9}--
(9,1) node[midway, above] {10}--
(10,1) node[midway, above] {11}--
(11,1) node[midway, above] {12}--
(12,0) node[midway, above] {13};
\draw (1,-1)  --
(2,0) node[midway, above] {3}--
(3,1) node[midway, above] {4}--
(4,0) node[midway, above] {5}--
(5,-1) node[midway, above] {6}--
(6,0) node[midway, above] {11}--
(7,-1) node[midway, above] {12};
\draw(1,-2.4);
\end{tikzpicture}\vspace{-0.7cm}

\hspace{1cm}\begin{tikzpicture}[scale=0.31]
\draw (0,0)  --
(1,1) node[midway, above] {1}--
(2,2) node[midway, above] {2}--
(3,2) node[midway, above] {3}--
(4,2) node[midway, above] {4}--
(5,2) node[midway, above] {5}--
(6,2) node[midway, above] {6}--
(7,2) node[midway, above] {7}--
(8,1) node[midway, above] {8}--
(9,1) node[midway, above] {9}--
(10,1) node[midway, above] {10}--
(11,1) node[midway, above] {11}--
(12,1) node[midway, above] {12}--
(13,1) node[midway, above] {13}--
(14,0) node[midway, above] {14};
\draw (1,-1)  --
(2,0) node[midway, above] {3}--
(3,1) node[midway, above] {4}--
(4,1) node[midway, below] {5}--
(5,1) node[midway, below] {6}--
(6,0) node[midway, above] {7}--
(7,-1) node[midway, above] {9}--
(8,0) node[midway, above] {10}--
(9,0) node[midway, below] {11}--
(10,0) node[midway, below] {12}--
(11,-1) node[midway, above] {13};
\draw(1,-1.4);
\end{tikzpicture}

This time inserting the first four rows works as before, however when inserting the fifth row we come the special case of \emph{height violation} \emph{$i$ odd, $w(p_j)=0$ on $j$-level $0$}. Again the way we deal with this ensures, that we can continue normally after \emph{adjusting the height violation}. This happens while inserting $8,14$. At $p=10$ we have a \emph{height violation}, where we had \emph{$i$ odd connect} at $6,11$. We insert $b_2$ again with $9$.

\begin{tikzpicture}[scale=0.35]
\draw (8,0) --
(9,1) node[midway, above] {10}--
(10,1) node[midway, above] {11}--
(11,1) node[midway, above] {12}--
(12,0) node[midway, above] {13};
\draw (9,-1)--
(10,0) node[midway, below] {11}--
(11,-1) node[midway, below] {12};
\end{tikzpicture}
\hspace{0.2cm}
\raisebox{0.7cm}{$\rightarrow$}
\hspace{0.2cm}
\begin{tikzpicture}[scale=0.35]
\draw (9,0)  --
(10,1) node[midway, above] {10}--
(11,1) node[midway, above] {11}--
(12,1) node[midway, above] {12}--
(13,1) node[midway, above] {13}--
(14,0) node[midway, above] {14};
\draw (9.5,0)  --
(10.5,0) node[midway, below] {6}--
(11.5,0) node[midway, below] {11}--
(12.5,0) node[midway, below] {12}--
(13.5,-1) node[midway, above] {13};
\end{tikzpicture}
\hspace{0.2cm}
\raisebox{0.7cm}{$\rightarrow$}
\hspace{0.2cm}
\begin{tikzpicture}[scale=0.35]
\draw (9,1)--
(10,1) node[midway, above] {10}--
(11,1) node[midway, above] {11}--
(12,1) node[midway, above] {12}--
(13,1) node[midway, above] {13}--
(14,0) node[midway, above] {14};
\draw (8,0)  --
(9,-1) node[midway, above] {6}--
(10,0) node[midway, above] {10}--
(11,0) node[midway, below] {11}--
(12,0) node[midway, below] {12}--
(13,-1) node[midway, above] {13};
\end{tikzpicture}

Again we consider the resulting vacillating tableau and apply Algorithm~\ref{alg:u2}. Again we obtain the same sequence of labeled words but the other way around and extract elements to our standard Young tableau in pairs. We extract $7,8$ for $a_2,b_2$ and $8$ for $a_1$ and obtain a \emph{height violation} at $10$. We set $b_2=r$ and correct it. Then we obtain a \emph{height violation special case} at $11$. This happens during extracting row $5$, thus during \emph{$i$ odd}. We set $b_3=r$ and $w(11)=2$. We continue extracting $12$ as $b_3$, $13$ as $b_2$ and $14$ as $b_1=b$. Thus we extract $8,14$ as $a,b$.

\begin{tikzpicture}[scale=0.35]
\draw (0,0)  --
(1,1) node[midway, above] {1}--
(2,2) node[midway, above] {2}--
(3,2) node[midway, above] {3}--
(4,2) node[midway, above] {4}--
(5,2) node[midway, above] {5}--
(6,2) node[midway, above] {6}--
(7,1) node[midway, above] {7}--
(8,0) node[midway, above] {9}--
(9,0) node[midway, above] {10};
\draw (1,-1)  --
(2,0) node[midway, above] {3}--
(3,1) node[midway, above] {4}--
(4,0) node[midway, above] {5}--
(5,-1) node[midway, above] {6}--
(6,0) node[midway, above] {10};
\end{tikzpicture}
\hspace{0.2cm}
\raisebox{0.7cm}{$\rightarrow$}
\hspace{0.2cm}
\begin{tikzpicture}[scale=0.35]
\draw (0,0)  --
(1,1) node[midway, above] {1}--
(2,2) node[midway, above] {2}--
(3,2) node[midway, above] {3}--
(4,2) node[midway, above] {4}--
(5,2) node[midway, above] {5}--
(6,2) node[midway, above] {6}--
(7,1) node[midway, above] {7}--
(8,0) node[midway, above] {9}--
(9,1) node[midway, above] {10}--
(10,1) node[midway, above] {11};
\draw (1,-1)  --
(2,0) node[midway, above] {3}--
(3,1) node[midway, above] {4}--
(4,0) node[midway, above] {5}--
(5,-1) node[midway, above] {6}--
(6,-1) node[midway, above] {11};
\end{tikzpicture}
\hspace{0.2cm}
\raisebox{0.7cm}{$\rightarrow$}
\hspace{0.2cm}
\begin{tikzpicture}[scale=0.35]
\draw (0,0)  --
(1,1) node[midway, above] {1}--
(2,2) node[midway, above] {2}--
(3,2) node[midway, above] {3}--
(4,2) node[midway, above] {4}--
(5,2) node[midway, above] {5}--
(6,2) node[midway, above] {6}--
(7,1) node[midway, above] {7}--
(8,0) node[midway, above] {9}--
(9,1) node[midway, above] {10}--
(10,1) node[midway, above] {11};
\draw (1,-1)  --
(2,0) node[midway, above] {3}--
(3,1) node[midway, above] {4}--
(4,0) node[midway, above] {5}--
(5,-1) node[midway, above] {6}--
(6,0) node[midway, above] {11};
\end{tikzpicture}
\end{example}

\begin{definition}
\emph{Separation points} are positions that are marked.
\end{definition}

\begin{example}[One tableau, different $n$] In this example we consider a standard Young tableau $Q$ with $7$ rows and $2$ columns in different dimension $n$. The first column of $Q$ is filled with $1,2,\dots,7$, the second column is filled with $8,9,\dots,14.$ As the rows have even length, empty rows are allowed. We see that \emph{separation points} (positions that get marked) make a difference doing so, as those parts of the algorithms are the only ones executed for $a=b=0$. 

We see an illustration of this example in Figure~\ref{fig:Embedding}.

When considering $n=2k+1=9$, we have $k=3$ paths. When we consider $n=9$ or $n=11$ we see how \emph{adjust separation points} alter the paths step by step and creates more paths. Finally we consider $n=2k+1=13$ and $n>13$. We see that when going from $13$ to $14$ we add path $7$, which is an up-step and a down-step. When considering larger $n$, the paths do not change anymore. Path $7$ is dashed.

The reason for this phenomena is that $0$'s in a vacillating tableau are only allowed when the $k$-level is at least $1$. Thus horizontal steps, that are truly horizontal steps, and not some other steps in paths below, are only allowed in the bottommost path.

This are the only differences when considering a tableau in different dimensions $n=2k+1$.
\end{example}

\begin{figure}
\caption{One tableau, different dimensions $n$}
\label{fig:Embedding}
\begin{tikzpicture}[scale=0.35]
  \draw (0,7) -- (2,7);
  \draw (0,6) -- (2,6);
  \draw (0,5) -- (2,5);
  \draw (0,4) -- (2,4);
  \draw (0,3) -- (2,3);
  \draw (0,2) -- (2,2);
  \draw (0,1) -- (2,1);
  \draw (0,0) -- (2,0);
  \draw (0,7) -- (0,0);
  \draw (1,7) -- (1,0);
  \draw (2,7) -- (2,0);
  \draw (0.5,6.5) node {1};
  \draw (0.5,5.5) node {2};
  \draw (0.5,4.5) node {3};
  \draw (0.5,3.5) node {4};
  \draw (0.5,2.5) node {5};
  \draw (0.5,1.5) node {6};
  \draw (0.5,0.5) node {7};
  \draw (1.5,6.5) node {8};
  \draw (1.5,5.5) node {9};
  \draw (1.5,4.5) node {10};
  \draw (1.5,3.5) node {11};
  \draw (1.5,2.5) node {12};
  \draw (1.5,1.5) node {13};
  \draw (1.5,0.5) node {14}; 
\end{tikzpicture}
\hspace{1cm}
\begin{tikzpicture}[scale=0.35]
\draw (0,0)  --
(1,1) node[midway, above] {1}--
(2,1) node[midway, above] {2}--
(3,1) node[midway, above] {3}--
(4,1) node[midway, above] {4}--
(5,1) node[midway, above] {5}--
(6,1) node[midway, above] {6}--
(7,0) node[midway, above] {7}--
(8,1) node[midway, above] {8}--
(9,1) node[midway, above] {9}--
(10,1) node[midway, above] {10}--
(11,1) node[midway, above] {11}--
(12,1) node[midway, above] {12}--
(13,1) node[midway, above] {13}--
(14,0) node[midway, above] {14};
\draw (1,-1.5)  --
(2,-0.5) node[midway, above] {2}--
(3,-0.5) node[midway, above] {3}--
(4,-0.5) node[midway, above] {4}--
(5,-0.5) node[midway, above] {5}--
(6,-1.5) node[midway, above] {6};
\draw(8,-1.5)--
(9,-0.5) node[midway, above] {9}--
(10,-0.5) node[midway, above] {10}--
(11,-0.5) node[midway, above] {11}--
(12,-0.5) node[midway, above] {12}--
(13,-1.5) node[midway, above] {13};
\draw (2,-3)  --
(3,-2) node[midway, above] {3}--
(4,-2) node[midway, above] {4}--
(5,-3) node[midway, above] {5};
\draw(9,-3)--
(10,-2) node[midway, above] {10}--
(11,-2) node[midway, above] {11}--
(12,-3) node[midway, above] {12};
\draw (10,-4.5);
\draw (7,3) node{$n=2k+1=7$};
\end{tikzpicture}
\hspace{1cm}
\begin{tikzpicture}[scale=0.35]
\draw (0,0)  --
(1,1) node[midway, above] {1}--
(2,1) node[midway, above] {2}--
(3,1) node[midway, above] {3}--
(4,1) node[midway, above] {4}--
(5,1) node[midway, above] {5}--
(6,1) node[midway, above] {6}--
(7,1) node[midway, above] {7}--
(8,1) node[midway, above] {8}--
(9,1) node[midway, above] {9}--
(10,1) node[midway, above] {10}--
(11,1) node[midway, above] {11}--
(12,1) node[midway, above] {12}--
(13,1) node[midway, above] {13}--
(14,0) node[midway, above] {14};
\draw (1,-1.5)  --
(2,-0.5) node[midway, above] {2}--
(3,-0.5) node[midway, above] {3}--
(4,-0.5) node[midway, above] {4}--
(5,-0.5) node[midway, above] {5}--
(6,-0.5) node[midway, above] {6}--
(7,-0.5) node[midway, above] {7}--
(8,-0.5) node[midway, above] {8}--
(9,-0.5) node[midway, above] {9}--
(10,-0.5) node[midway, above] {10}--
(11,-0.5) node[midway, above] {11}--
(12,-0.5) node[midway, above] {12}--
(13,-1.5) node[midway, above] {13};
\draw (2,-3)  --
(3,-2) node[midway, above] {3}--
(4,-2) node[midway, above] {4}--
(5,-2) node[midway, above] {5}--
(6,-2) node[midway, above] {6}--
(7,-3) node[midway, above] {7}--
(8,-2) node[midway, above] {8}--
(9,-2) node[midway, above] {9}--
(10,-2) node[midway, above] {10}--
(11,-2) node[midway, above] {11}--
(12,-3) node[midway, above] {12};
\draw (3,-4.5)  --
(4,-3.5) node[midway, above] {4}--
(5,-3.5) node[midway, above] {5}--
(6,-4.5) node[midway, above] {6};
\draw (8,-4.5)  --
(9,-3.5) node[midway, above] {9}--
(10,-3.5) node[midway, above] {10}--
(11,-4.5) node[midway, above] {11};
\draw (7,3) node{$n=2k+1=9$};
\end{tikzpicture}

\hspace{1cm}\begin{tikzpicture}[scale=0.35]
\draw (0,0)  --
(1,1) node[midway, above] {1}--
(2,1) node[midway, above] {2}--
(3,1) node[midway, above] {3}--
(4,1) node[midway, above] {4}--
(5,1) node[midway, above] {5}--
(6,1) node[midway, above] {6}--
(7,1) node[midway, above] {7}--
(8,1) node[midway, above] {8}--
(9,1) node[midway, above] {9}--
(10,1) node[midway, above] {10}--
(11,1) node[midway, above] {11}--
(12,1) node[midway, above] {12}--
(13,1) node[midway, above] {13}--
(14,0) node[midway, above] {14};
\draw (1,-1.5)  --
(2,-0.5) node[midway, above] {2}--
(3,-0.5) node[midway, above] {3}--
(4,-0.5) node[midway, above] {4}--
(5,-0.5) node[midway, above] {5}--
(6,-0.5) node[midway, above] {6}--
(7,-0.5) node[midway, above] {7}--
(8,-0.5) node[midway, above] {8}--
(9,-0.5) node[midway, above] {9}--
(10,-0.5) node[midway, above] {10}--
(11,-0.5) node[midway, above] {11}--
(12,-0.5) node[midway, above] {12}--
(13,-1.5) node[midway, above] {13};
\draw (2,-3)  --
(3,-2) node[midway, above] {3}--
(4,-2) node[midway, above] {4}--
(5,-2) node[midway, above] {5}--
(6,-2) node[midway, above] {6}--
(7,-2) node[midway, above] {7}--
(8,-2) node[midway, above] {8}--
(9,-2) node[midway, above] {9}--
(10,-2) node[midway, above] {10}--
(11,-2) node[midway, above] {11}--
(12,-3) node[midway, above] {12};
\draw (3,-4.5)  --
(4,-3.5) node[midway, above] {4}--
(5,-3.5) node[midway, above] {5}--
(6,-3.5) node[midway, above] {6}--
(7,-3.5) node[midway, above] {7}--
(8,-3.5) node[midway, above] {8}--
(9,-3.5) node[midway, above] {9}--
(10,-3.5) node[midway, above] {10}--
(11,-4.5) node[midway, above] {11};
\draw (4,-6)  --
(5,-5) node[midway, above] {5}--
(6,-5) node[midway, above] {6}--
(7,-6) node[midway, above] {7}--
(8,-5) node[midway, above] {8}--
(9,-5) node[midway, above] {9}--
(10,-6) node[midway, above] {10};
\draw (7,3) node{$n=2k+1=11$};
\draw (10,-9);
\end{tikzpicture}
\hspace{0.5cm}
\begin{tikzpicture}[scale=0.35]
\draw (0,0)  --
(1,1) node[midway, above] {1}--
(2,1) node[midway, above] {2}--
(3,1) node[midway, above] {3}--
(4,1) node[midway, above] {4}--
(5,1) node[midway, above] {5}--
(6,1) node[midway, above] {6}--
(7,1) node[midway, above] {7}--
(8,1) node[midway, above] {8}--
(9,1) node[midway, above] {9}--
(10,1) node[midway, above] {10}--
(11,1) node[midway, above] {11}--
(12,1) node[midway, above] {12}--
(13,1) node[midway, above] {13}--
(14,0) node[midway, above] {14};
\draw (1,-1.5)  --
(2,-0.5) node[midway, above] {2}--
(3,-0.5) node[midway, above] {3}--
(4,-0.5) node[midway, above] {4}--
(5,-0.5) node[midway, above] {5}--
(6,-0.5) node[midway, above] {6}--
(7,-0.5) node[midway, above] {7}--
(8,-0.5) node[midway, above] {8}--
(9,-0.5) node[midway, above] {9}--
(10,-0.5) node[midway, above] {10}--
(11,-0.5) node[midway, above] {11}--
(12,-0.5) node[midway, above] {12}--
(13,-1.5) node[midway, above] {13};
\draw (2,-3)  --
(3,-2) node[midway, above] {3}--
(4,-2) node[midway, above] {4}--
(5,-2) node[midway, above] {5}--
(6,-2) node[midway, above] {6}--
(7,-2) node[midway, above] {7}--
(8,-2) node[midway, above] {8}--
(9,-2) node[midway, above] {9}--
(10,-2) node[midway, above] {10}--
(11,-2) node[midway, above] {11}--
(12,-3) node[midway, above] {12};
\draw (3,-4.5)  --
(4,-3.5) node[midway, above] {4}--
(5,-3.5) node[midway, above] {5}--
(6,-3.5) node[midway, above] {6}--
(7,-3.5) node[midway, above] {7}--
(8,-3.5) node[midway, above] {8}--
(9,-3.5) node[midway, above] {9}--
(10,-3.5) node[midway, above] {10}--
(11,-4.5) node[midway, above] {11};
\draw (4,-6)  --
(5,-5) node[midway, above] {5}--
(6,-5) node[midway, above] {6}--
(7,-5) node[midway, above] {7}--
(8,-5) node[midway, above] {8}--
(9,-5) node[midway, above] {9}--
(10,-6) node[midway, above] {10};
\draw (5,-7.5)  --
(6,-6.5) node[midway, above] {6}--
(7,-6.5) node[midway, above] {7}--
(8,-6.5) node[midway, above] {8}--
(9,-7.5) node[midway, above] {9};
\draw[dashed] (6,-9)--
(7,-8) node[midway,above] {7}--
(8,-9) node[midway,above] {8};
\draw (6,3) node{$n=2k+1=13$, respectively $n>13$};
\end{tikzpicture}
\end{figure}
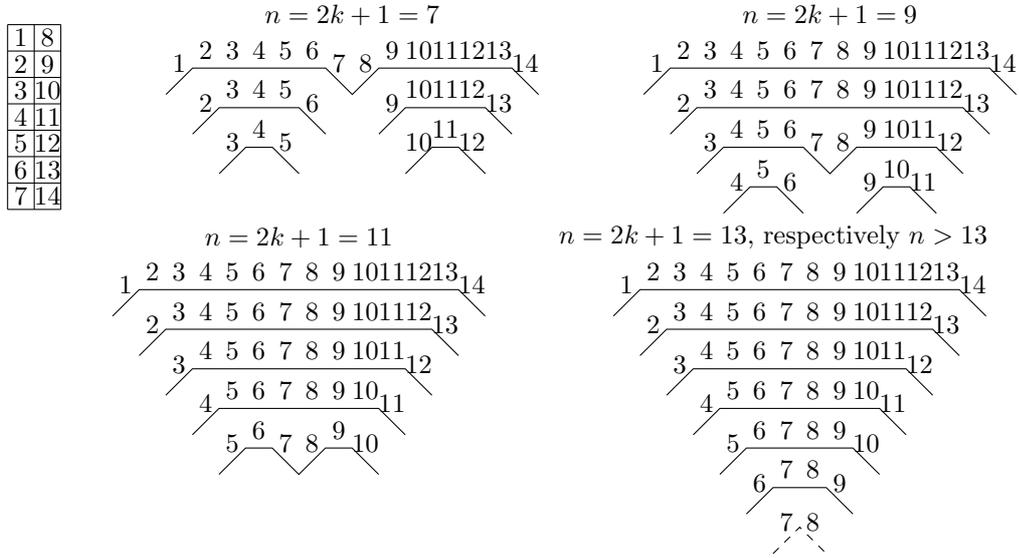

If we ignore everything not concerning $j$ in Algorithm~\ref{alg:2} and point out, that the combination of \emph{separating} left of $a_j$ and \enquote{change $a_{j+1}$ into $0$}, corresponds to \enquote{insert $a$ case 2} while insert the third row in~\cite{3erAlgo}, we get the following:
\begin{theorem}
\label{theo:3erAlgoTheSame}
For tableaux in dimension three Algorithm~\ref{alg:2} and Algorithm~\ref{alg:u2} generate the same output as Algorithm 3 and Algorithm 4 in~\cite{3erAlgo}.
\end{theorem}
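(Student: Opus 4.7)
The plan is to verify, by a direct case-by-case comparison, that when Algorithm~\ref{alg:2} and Algorithm~\ref{alg:u2} are specialized to $n=3$ they execute the same steps on the same inputs as Algorithm 3 and Algorithm 4 of \cite{3erAlgo}. Since both pairs of algorithms are deterministic and their clauses are mutually exclusive, matching clause by clause gives equality of outputs. The argument has an arithmetic setup step and then a clause-matching step.

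First I would set $n=3$, so $k=1$, and observe that the outer for-loop of Algorithm~\ref{alg:2} then runs only for $i=2$ (even branch) and $i=3$ (odd branch), with $j=\lfloor i/2\rfloor=1$ in both iterations. This collapses the algorithm dramatically: the only admissible letters are $\pm 1$ and $0$, there is at most one path of index $j$, and \emph{every} clause that refers to an index $l$ with $l<j$ becomes vacuous. Consequently the ``$a_{l+1}$'', ``$b_{l+1}$'', ``mark $a_l$'', ``adjust separation point'' (the parts that rewrite $\pm l$ on $l$-level $0$ into $\pm(l+1)$), and the two ``height violation'' blocks all disappear, and so does the subsequent ``if $i$ is even, $a_{j+1}\ne 0$'' / ``if $i$ is odd, $w(\tilde p)=0$ on $j$-level $0$'' correction, since it is triggered only inside a height-violation clause. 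What remains is precisely the machinery that handles the single letter $\pm j=\pm 1$ together with the separation-point bookkeeping and the insertion of $a$ and $b$.

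Next I would set up a clause-to-clause dictionary. For $i=2$ (even branch) the surviving clauses ``$i$ even $a_{j+1}$ 1'', ``$i$ even $a_{j+1}$ 2'', ``$i$ even $b_{j+1}$'' and ``$i$ even connect'' correspond respectively to the four sub-cases of ``insert a pair into the first path'' in \cite{3erAlgo}: a first insertion between two existing up-down pairs (giving two horizontal steps), the rightmost insertion (giving a new up-down), the ``insert $a$'' case, and the ``connect'' case. For $i=3$ (odd branch) the clauses ``$i$ odd $a_{j+1}$'', ``$i$ odd $b_{j+1}$'', ``$i$ odd connect'' and ``$i$ odd mark it + separate'' match the four sub-cases of inserting the third row in \cite{3erAlgo} that create and modify the second Riordan-like path. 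As the paper points out explicitly, the only non-obvious entry of this dictionary is the one for ``insert $a$ case 2'' of \cite{3erAlgo}: it is realized here by the combination of ``mark it + separate'' to the left of $a_j$ with the subsequent assignment of $a_{j+1}$ on the same iteration, which together change the step-pattern exactly as ``insert $a$ case 2'' prescribes. Verifying this single compound equivalence is the technical heart of the proof.

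Finally, I would carry out the same reduction and the same dictionary for the inverse algorithms. In Algorithm~\ref{alg:u2} all ``height violation'' clauses, all ``mark separation point'' adjustments concerning indices $l<j$, and the ``height violation special case'' branches disappear for $n=3$, leaving only the ``$i$ odd'' block (extracting row~$3$) and the ``$i$ even'' block (extracting row~$2$). These remaining blocks mirror the extraction rules of Algorithm 4 in \cite{3erAlgo} under exactly the transposed version of the dictionary assembled above. Once this dictionary is tabulated, the equality of outputs is immediate. The main obstacle is not conceptual but organizational: writing out the dictionary carefully and checking that, in each direction, no clause of the general algorithm fires under hypotheses that are incompatible with $n=3$, and that the single ``split'' case (``separate + change $a_{j+1}$ into $0$'' vs.\ ``insert $a$ case 2'') really does produce the same pair of paths after both fragments have executed. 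With that verification in place the theorem follows.
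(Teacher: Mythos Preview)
Your proposal is correct and follows the same approach as the paper, which in fact gives no formal proof at all: the entire argument in the paper is the single sentence preceding the theorem, namely that for $n=3$ one ``ignores everything not concerning $j$'' and checks that the combination of \emph{separating} left of $a_j$ together with ``change $a_{j+1}$ into $0$'' corresponds to ``insert $a$ case 2'' of \cite{3erAlgo}. Your write-up is a faithful and considerably more explicit elaboration of exactly this sketch, with the same key observation (the vacuity of all $l<j$ clauses) and the same single non-obvious dictionary entry.
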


\subsection{Properties and Proofs for Bijection $B$}

The first goal of this subsection is to prove the following Theorem:

\begin{theorem}
\label{theo:algo2welldefvactab}
Algorithm~\ref{alg:2} is well-defined and produces a vacillating tableau $V$ of length $r$ and dimension $k$ given a standard Young tableau $Q$ with $2k+1$ rows of even length and $r$ entries.
\end{theorem}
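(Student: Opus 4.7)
The plan is to proceed by induction on the outer for-loop index $i$, with the inductive hypothesis stating that after the $i$-th iteration the labeled word $w$ is a vacillating tableau of dimension $k$ and of weight $\emptyset$, with length equal to the number of entries of $Q$ in rows $1,\dots,i$. The base case $i=1$ is immediate: the initialization produces the alternating word $(1,-1,1,-1,\dots,1,-1)$ of even length $\lambda_1$, which is a vacillating tableau using only the letters $\pm 1$. For the inductive step I split the argument into three parts: (A) each operation invoked by the algorithm is well-defined (every referenced position, label, and value exists at the moment it is used); (B) the inner while-loops terminate (the position $p$ strictly moves leftward in each iteration, and the outer loops are finite); (C) the invariant of being a vacillating tableau is restored after each complete insertion of a row.

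For part (C) I would maintain, as a strengthened invariant throughout the sweep of a single pair $(a,b)$, the following local conditions on the intermediate word: for every initial segment $s$ one has $\#l-\#(-l)\geq\#(l+1)-\#(-l-1)\geq 0$ for all $l<j$, while the $\pm j$'s and $0$'s satisfy controlled deviations that are exactly tracked by the marker positions $a_1,\dots,a_{j+1},b_1,\dots,b_{j+1}$. More precisely, the deviations introduced by the pending insertions of $a$ and $b$ are concentrated between the currently considered position $p$ and the already-processed portion to the right, and each case of the algorithm is designed to reduce this deviation by exactly one level when moving left. To verify this, one checks case-by-case that each labelled operation (\emph{$b$}, \emph{$a_{l+1}$}, \emph{$b_{l+1}$}, \emph{adjust separation point}, \emph{mark it + connect}, \emph{connect}, \emph{$a_{j+1}$ 1}, \emph{$a_{j+1}$ 2}, \emph{mark it + separate}, \emph{height violation}, \emph{mark $a_l$}) preserves the invariant. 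The initialization step at even $i$ (replacing $0$'s by $j,-j,\dots,j,-j$) also respects the invariant, because after the $(i-1)$-th iteration the $0$'s in $w$ sit on $(j-1)$-level at least one, which by the tensor-product rules guarantees room for the newly inserted $\pm j$'s.

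For (A) and (B), the key observation is that the marker entries $a_l$ and $b_l$ are introduced only once per pair and only via the structural operations, so whenever the algorithm inspects $a_l$ or $b_l$, its defining operation has already taken place; the position $\tilde{p}$ is always defined because the leftmost positions of type $\{0,\pm j\}$ are created during initialization and never erased below the current sweep. Termination follows because the main while-loop decreases $p$ by one each round, the inner \emph{adjust separation point} and \emph{mark it + connect} operations only rewrite entries without moving $p$, and the \emph{height violation} cleanup terminates because at most $j$ levels require correction per position. After all pairs of row $i$ have been processed (including the extra iteration with $a=b=0$ that catches any leftover height violation near the left end), the markers are all cleared and the net excess of letters of each absolute value returns to zero.

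The main obstacle, as the author's own examples already show, is the combinatorial interaction of \emph{adjust separation point}, \emph{connect}/\emph{separate}, and \emph{height violation} — in particular the subtle rules about which positions are marked, ignored, or reset when an $a_{l+1}$ is redefined mid-sweep. To tame this I would first prove a structural lemma giving a precise description of the local shape of $w$ between two consecutive separation points (and between the leftmost separation point and the left end), analogous to the canonical form used for $n=3$ in~\cite{3erAlgo} and compatible with Theorem~\ref{theo:3erAlgoTheSame}. Once this structural lemma is in hand, each of the algorithm's branches can be matched to a transformation between two canonical local forms, and the inductive invariant is preserved mechanically. The final output after iteration $i=n$ has length $r=|\lambda|$, weight $\emptyset$, and satisfies the $0$-on-$k$-level-at-least-one requirement because $0$'s are only introduced during the initialization of path $k$ (at $i=2k$) and every subsequent operation that creates a $0$ does so between matched $\pm k$'s in that initialized layer, which establishes the theorem.
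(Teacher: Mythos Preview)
Your inductive hypothesis is false. You claim that ``the invariant of being a vacillating tableau is restored after each complete insertion of a row,'' but this is precisely what does \emph{not} happen: after inserting an even row $i=2j$, the separation points carry a height violation in level $j-1$, so the intermediate word is not a vacillating tableau. The paper makes this explicit (Lemma~\ref{lem:SeparationHeight} and the exception clause in Lemma~\ref{lem:HeightInsert}(2)): height violations persist at separation points across even-row insertions and are only dissolved after the subsequent odd row. The paper's proof therefore does \emph{not} induct on rows with a vacillating-tableau invariant; it tracks height violations and sum-zero conditions through the whole process via Lemmas~\ref{lem:SeparationHeight}--\ref{lem:Sumis0} and only verifies the vacillating-tableau axioms at the very end, after row $n=2k+1$. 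Your strengthened invariant with ``controlled deviations'' at level $j$ is a step in the right direction, but you also need to allow deviations at level $j-1$ at separation points, and you would need to show these are exactly cleaned up by the odd-row pass.

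Your termination argument also has a gap. Saying ``$p$ strictly moves leftward'' does not suffice: the while loop runs while $a_{j+1}<p$ or $w(p)\notin\{0,\pm j\}$, so you must show that $a_{j+1}$ is eventually assigned a nonzero value before $p$ runs off the left end. The paper handles this via the \emph{predecessor} concept (Lemma~\ref{lem:Predecessor} and Lemma~\ref{lem:HeightInsert}(3)): the predecessor $d_l$ of the current $c_{l+1}$ is always available as a fallback, and reaching it causes no further height violation, which guarantees the search for $a_{j+1}$ succeeds. Finally, your closing remark that ``$0$'s are only introduced during the initialization of path $k$'' is backwards: the initialization step at even $i$ \emph{removes} $0$'s by rewriting them as $\pm j$; $0$'s are created by \emph{connect} and by the $c_{j+1}$ steps in odd rows, and the paper checks directly that each of these creates a $0$ on $k$-level at least one.
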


We prepare the proof by stating and proving several lemmas concerning Algorithm~\ref{alg:2}. We use notation form Algorithm~\ref{alg:2}. Variables, etc. also refer to it. Moreover we call marked positions \emph{separation points}.

\begin{corollary}\label{cor:Newjplus1}
For $j$ even, we redefine $a_{j+1}$ to be the next unchanged $j$ so far to the left of $a_j$ and $b_{j+1}$ of be the next changed position to the left during the insertion process of $a$ and $b$ so far. This is just a renaming. However it follows that the $l$-level grows between $c_l$ and $c_{l+1}$, but not somewhere else.
\end{corollary}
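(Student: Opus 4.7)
The plan is to check the renaming directly from the algorithm and then read off the level-growth statement from this inspection. Throughout I write $c_l$ for either $a_l$ or $b_l$, matching the corollary's ambiguous notation.

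First I would unpack the redefinition. In the ``$i$ even'' branch the initialization step replaces every $0$-entry by the pattern $j,-j,\dots,j,-j$, so positions carrying $\pm j$ are either original ones or come from this initialization. During the processing of the pair $(a,b)$ such positions are touched only by the five lines labelled \emph{adjust separation point}, \emph{mark it + connect} (which turns an adjacent pair $j,-j$ into $0,0$), ``\emph{$a_{j+1}$ 1}'' (turning $0,j$ into $j,0$), ``\emph{$a_{j+1}$ 2}'' (turning a $-j$ into $j$) and the assignment ``\emph{$b_{j+1}$}''. I would go through these cases in order and check that the position assigned to $a_{j+1}$ is in each case the first $\pm j$-position left of $a_j$ whose value is still $\pm j$ after the modifications, while the position assigned to $b_{j+1}$ is the first position left of $b_j$ whose value was changed from $\pm j$ to $0$ during the current insertion. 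That immediately matches the description in the corollary.

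The main obstacle is purely bookkeeping: several of the operations above can occur in succession during the insertion of the same pair, so one has to ensure that ``next unchanged $j$'' and ``next changed position'' are computed relative to the word after all earlier modifications. For this I would induct on the steps of the inner while-loop, using that once a position has been altered by \emph{mark it + connect} it keeps value $0$ for the remainder of the current pair, and that positions strictly to the left of the current $p$ have not been touched yet. The \emph{adjust separation point} case is the delicate one because it can reassign the roles of earlier $a_l,b_l$; here I would observe that the prescription ``if $p<c_l$, $c_{l+1}=0$ ignore $c_l$'' is exactly what is needed so that the redefined $c_{j+1}$ still describes the correct left neighbour.

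For the level-growth statement the proof is then immediate. For $l\leq j$ the $l$-level depends only on positions carrying $\pm l$. Outside the interval $(c_{l+1},c_l)$ no such position has been modified during the current insertion (otherwise by the previous paragraph it would have been assigned to some $c_l$ or $c_{l+1}$), so the $l$-level there agrees with its value before the insertion. Inside $(c_{l+1},c_l)$ the operations \emph{adjust separation point} and \emph{mark it + connect} alter exactly those $\pm l$-labels that make the $l$-level grow, and nowhere else. Combining the two observations yields the claim.
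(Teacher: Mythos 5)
The paper offers no proof of this corollary at all: it is stated as a direct consequence of the formulation of Algorithm~\ref{alg:2}, so your attempt has to be judged on its own merits rather than against an argument in the paper. Judged that way, I think you have misread what the statement is asserting. You set out to verify that the positions the algorithm assigns to $a_{j+1}$ and $b_{j+1}$ \emph{already coincide} with the descriptions ``next unchanged $j$ left of $a_j$'' and ``next changed position to the left'', and you call this ``the renaming''. But the two do not coincide: in the case \emph{$a_{j+1}$ 2} the algorithm sets $a_{j+1}$ to a position whose entry has just been changed from $-j$ to $j$, and in \emph{$a_{j+1}$ 1} to a position changed from $0$ to $j$; neither can be ``the next \emph{unchanged} $j$''. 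The corollary is genuinely moving the labels $a_{j+1},b_{j+1}$ to neighbouring positions of the $j$-word for the purposes of the subsequent analysis — ``just a renaming'' asserts that this shift is harmless, not that it is vacuous — and the shift is exactly what is needed so that the $j$-level growth in the $i$-even case is delimited the same way as for $l<j$ (the algorithm's $a_{j+1}$ is where a $-j$ becomes a $+j$, which perturbs the $j$-level by two units on one side of that point, so the old endpoint does not bound the growth region correctly). A case check of the kind you propose would therefore fail at the first case it examines.

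The level-growth paragraph has a second problem even independently of this. The reason the $l$-level rises on $(c_{l+1},c_l)$ and nowhere else is that choosing $c_l$ creates a new $-l$ at the right end of that interval while choosing $c_{l+1}$ removes a $-l$ at its left end; it is not caused by \emph{adjust separation point} or \emph{mark it + connect}. Moreover your claim that no $\pm l$-position outside the interval is modified is false: \emph{connect}, \emph{separate}, \emph{mark it + connect}, the separation-point adjustments and the \emph{height violation} corrections all modify such positions (and height violations in $l-1$ even create new $\pm j$-entries your list of five operations omits). What one actually needs — and what Lemmas~\ref{lem:Sumis0} and~\ref{lem:HeightInsert} supply — is that these modifications are level-neutral outside the interval because they always alter an $l$ and a $-l$ together, or restore a level that the current insertion had just raised.
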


\begin{corollary}\label{cor:SeparationInsert}
\emph{Adjust separation point} changes the marked positions as if an $a$ was inserted in between and a $b$ was inserted to the left. 
\end{corollary}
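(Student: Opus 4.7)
The plan is to establish the equivalence by comparing, side by side, the state of the labelled word before and after (i) one application of ``adjust separation point'' and (ii) two successive insertions (first a dummy ``$a$'' placed in a position strictly between $\tilde{p}$ and $p$, then a dummy ``$b$'' placed to the left of $\tilde{p}$), and checking that the resulting sets of marked positions agree.

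First I would record precisely what ``adjust separation point'' does: for every $l<j$, each position on $l$-level $0$ strictly between $\tilde{p}$ and $p$ has its entry $\pm l$ replaced by $\pm(l+1)$ and is marked (modulo the two ``ignore'' clauses when $b_{l+1}=0$ or $a_{l+1}=0$), and finally the pair $w(\tilde{p}),w(p)=j,-j$ is turned into $0,0$. Then I would describe, step by step, what the two hypothetical insertions accomplish. The $a$-insertion enters the while-loop from the right and immediately hits the pattern $w(\tilde{p}),w(p)=j,-j$ with $a_j<p$, triggering ``mark it $+$ connect'' at level $j$: the pair becomes $0,0$ and, inductively on $l$ descending from $j-1$ to $1$, each $\pm l$ on $l$-level $0$ strictly between $\tilde{p}$ and $p$ is marked (with the same ``ignore'' behaviour as above). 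The subsequent $b$-insertion, coming later and from further left, traverses a word in which those marked positions already sit on level $0$ of the next sublevel, so it passes them without modification.

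Second I would verify that after the double insertion the \emph{entries} in that window match those produced by ``adjust separation point''. This needs Corollary~\ref{cor:Newjplus1}: once the $j,-j$ pair is dissolved and the region between $\tilde{p}$ and $p$ has been absorbed into the larger $(l+1)$-level for each $l<j$, the effective $l$-level inside the window rises by one, which is exactly what upgrading the marked $\pm l$ to $\pm(l+1)$ accomplishes in ``adjust separation point''. The two ``ignore'' side-conditions coincide because in both scenarios they are triggered by the same relative position of $a_l$ and $b_l$ with respect to $p$.

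The main obstacle is to rule out spurious side effects in the double-insertion simulation: one must check that no height-violation branch, no ``mark $a_l$'' clause, and no further ``mark it + connect'' fires at a position \emph{outside} the window $(\tilde{p},p)$ that would leave a mark not produced by ``adjust separation point''. The key observation that closes this gap is that outside the window, levels are unchanged (the $j,-j$ demotion is strictly local), so the while-loop progresses exactly as in the real algorithm from $p$ onward; hence any extra operations that would occur during the hypothetical insertions occur equally on both sides of the equivalence and cancel out. Once this is checked, the two procedures produce the same set of separation points, proving the corollary.
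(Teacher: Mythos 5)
The paper does not actually prove this statement: it is listed, like Corollary~\ref{cor:Newjplus1}, among the properties that are supposed to follow directly from the formulation of Algorithm~\ref{alg:2}, and the only gloss it receives is the remark in the proof of Lemma~\ref{lem:SeparationHeight} that in \emph{adjust separation point} ``we do the same as some inserted $a,b$ would do'', together with a picture. Your instinct to verify the claim by running the two procedures side by side is therefore reasonable, but the execution has two concrete problems.

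First, the entry-matching step of your outline fails. \emph{Adjust separation point} relabels \emph{every} $\pm l$ on $l$-level $0$ strictly between $\tilde p$ and $p$ into $\pm(l+1)$, positive entries included, and marks them. A genuine insertion of a pair relabels, per level, only the two selected positions $a_{l+1}$ and $b_{l+1}$ (each a single $-l$ turned into a $-(l+1)$) and never touches a $+l$ outside of \emph{connect}/\emph{separate}; and the branch you route the simulation through, \emph{mark it + connect}, only marks and relabels nothing at all. Corollary~\ref{cor:Newjplus1} is a statement about where the $l$-level grows during an insertion; it cannot convert ``marked but unchanged'' into ``moved to the next path''. Second, the simulation itself is not well posed. \emph{Mark it + connect} sits in the else-branch below \emph{adjust separation point} and fires only when $b_j$ has already been assigned to the right of the separation point (so $a_j<p\leq b_j$), whereas a pair whose $b$ lands to the left of $\tilde p$ still has $b_j<p$ when the loop reaches $p$ and would therefore trigger \emph{adjust separation point} itself, making the simulation circular; moreover a literal pair of Algorithm~\ref{alg:2} has $a$ to the \emph{left} of $b$, so ``$a$ in between and $b$ to the left'' cannot be realised by an actual pair at all. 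The corollary has to be read as a statement purely about the marked positions: after the adjustment, the separation point in each path $l$ has exactly the configuration that the marking clauses would leave behind for a pair whose chains straddle it, which is the form in which it is used to import Lemmas~34 and~36 of \cite{3erAlgo}. Your argument needs to be reformulated at that level rather than as an entry-by-entry equality with a hypothetical run of the insertion loop.
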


\begin{lemma}
\label{lem:SeparationHeight}
\emph{Separation points} contain \emph{height violations} exactly after initializing a new $j$ and after inserting an even row. This \emph{height violation} is always in $j-1$. This implies that they cause no \emph{height violation} in the end of our insertion process.
\end{lemma}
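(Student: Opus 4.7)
The plan is to induct on the row index $i$ in Algorithm~\ref{alg:2}, maintaining the invariant that after processing row $i$ the word $w$ is a (labeled) vacillating tableau of dimension $\lfloor i/2 \rfloor$, except that when $i$ is even each separation point carries exactly one height violation, and that violation is in level $j-1$ with $j=\lfloor i/2 \rfloor$. The base case $i=1$ is vacuous since the initial word $(1,-1,\dots,1,-1)$ contains no separation points. The final claim then follows because $n=2k+1$ is odd, so Algorithm~\ref{alg:2} terminates after the processing of an odd row.

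For the inductive step at even $i=2j$, I would analyse the two moments in the statement. At the \emph{initialize $j$} step, every $0,0$ pair becomes a $j,-j$ pair. At any pair that becomes (or has already been promoted to) a separation point, the interior $j$-level drops by one relative to the status quo, while the $(j-1)$-level is untouched; comparing the two we get precisely one height violation, and it lies in level $j-1$. During the subsequent pair insertions, the operations \emph{adjust separation point}, \emph{mark it + connect} and \emph{mark it + separate} modify separation points, but by Corollary~\ref{cor:SeparationInsert} each of them acts on the interior of a separation point as if an $a$ were inserted inside and a $b$ to the left. This shifts all interior $\pm l$-marks uniformly, so the invariant ``$j$-level is one below $(j-1)$-level at separation points'' is preserved.

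For an odd step $i=2j+1$, the insertion process visits every separation point via the \emph{connect} branch (and its height-violation variant) and reverts each $j,-j$ back to $0,0$; simultaneously the marked $\pm l$ entries inside the separation, for $l<j$, are pushed back to their previous levels by the same shift-by-$\pm 1$ mechanism. The net effect raises the local $j$-level by one, which is exactly what is needed to cancel the violation in level $j-1$. For any separation point newly created by \emph{mark it + separate} during an odd row I would argue case-by-case that it is either immediately reabsorbed by a matching \emph{connect} inside the same row, or its would-be $(j-1)$-violation is pre-empted by the accompanying \emph{adjust separation point} performed on it at the next row. Consequently, after an odd row no separation point carries a height violation.

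The main obstacle is the bookkeeping of the interior $\pm l$ markings for $l<j$: one must show that the various \emph{ignore} clauses (for the $a_l$, $b_l$ auxiliaries) and the special cases of \emph{height violation} never introduce a violation in a level other than $j-1$, and that nesting of separation points associated to different $l$'s does not create spurious overlaps. This is essentially a local verification, but it is where the care is needed; once it is handled, the invariant carries through and the last clause of the lemma is immediate, since the algorithm halts after row $n=2k+1$ (odd).
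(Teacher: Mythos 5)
Your inductive framing (violation present in $j-1$ after \emph{initialize $j$} and after the even row, absent after the odd row, finished because $n=2k+1$ is odd) is the right skeleton and broadly matches the paper's lifecycle argument for separation points. However, the mechanism you give for the even step is inverted, and this matters. At \emph{initialize $j$} a pair of $0$'s becomes $j,-j$ read left to right, i.e.\ an up-step followed by a down-step: the $j$-level \emph{between} the two walls rises by one; it does not drop. A drop could never produce a height violation in $j-1$, since such a violation means the $(j-1)$-level is \emph{smaller} than the $j$-level; the violation actually arises because the new peak in path $j$ overtops the marked $\pm(j-1)$ entries sitting on $(j-1)$-level $0$ inside the separation point. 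As written, your comparison ``drop versus untouched $(j-1)$-level'' yields no violation at all, so the even step of your induction does not go through as stated.

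The odd step is also asserted rather than derived. You claim every separation point is visited via the \emph{connect} branch and its $j,-j$ reverted to $0,0$; the paper instead resolves the violation by observing that during each odd row there is another \emph{separate odd} between the newly inserted $a$ and $b_{j+1}$ (which re-lowers path $j$ across the separation point), and that \emph{adjust separation point} acts like an inserted pair and peels the topmost path off the separation point one row at a time (Corollary~\ref{cor:SeparationInsert}). Whether \emph{connect} alone reaches every separation point, and whether separation points newly created inside an odd row are ``reabsorbed'' as you suggest, is exactly the local bookkeeping you defer in your final paragraph --- but that bookkeeping is the entire content of the lemma, so the proposal is a plan with the key verification missing rather than a proof.
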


\begin{proof}
Separation points always start at \emph{separate odd} between $a$ and $b_{j+1}$. As long as they are still between such newly inserted  elements, they expand on $j$. After inserting an odd row there is always another \emph{separate odd} and thus at the last inserted odd row there is no \emph{height violation} anymore.

Once we come into the case \emph{adjust separation point} we do the same as some inserted $a,b$ would do. However one path after the other, beginning with the topmost, is not part of the \emph{separation point} anymore.

\hspace{1cm}
\begin{tikzpicture}[scale=0.35]
\draw (0,1)  --
(1,1) node[midway, above] {1}--
(2,1) node[midway, above] {2}--
(3,0) node[midway, above] {3}--
(4,1) node[midway, above] {4}--
(5,1) node[midway, above] {5}--
(6,1) node[midway, above] {6};
\draw (0,-0.5)--
(1,-0.5) node[midway, above] {1}--
(2,-1.5) node[midway, above] {2};
\draw (4,-1.5)  --
(5,-0.5) node[midway, above] {5}--
(6,-0.5) node[midway, above] {6};
\draw (0,-2)--(1,-2) node[midway, above] {1};
\draw (5,-2)--(6,-2) node[midway, above] {6};
\draw (0,-3);
\end{tikzpicture}
\begin{tikzpicture}[scale=0.3]
 \draw (0,0.5) node{};
 \draw (0,5) node{};
 \draw [->,decorate,
decoration={snake,amplitude=.4mm,segment length=2mm,post length=1mm, pre length=1mm}] (0,3) -- (2,3);
\end{tikzpicture}
\begin{tikzpicture}[scale=0.35]
\draw (0,1)  --
(1,1) node[midway, above] {1}--
(2,1) node[midway, above] {2}--
(3,1) node[midway, above] {3}--
(4,1) node[midway, above] {4}--
(5,1) node[midway, above] {5}--
(6,1) node[midway, above] {6};
\draw (0,-0.5)--
(1,-0.5) node[midway, above] {1}--
(2,-0.5) node[midway, above] {2}--
(3,-1.5) node[midway, above] {3}--
(4,-0.5) node[midway, above] {4}--
(5,-0.5) node[midway, above] {5}--
(6,-0.5) node[midway, above] {6};
\draw (0,-2)--
(1,-2) node[midway, above] {1}--
(2,-3) node[midway, above] {2};
\draw (4,-3)-- 
(5,-2) node[midway, above] {5}--
(6,-2) node[midway, above] {6};
\end{tikzpicture},
\hspace{1cm}
\begin{tikzpicture}[scale=0.35]
\draw (0,1)  --
(1,1) node[midway, above] {1}--
(2,1) node[midway, above] {2}--
(3,0) node[midway, above] {3}--
(4,1) node[midway, above] {4}--
(5,1) node[midway, above] {5}--
(6,1) node[midway, above] {6};
\draw (0,-0.5)--
(1,-0.5) node[midway, above] {1}--
(2,-1.5) node[midway, above] {2};
\draw (4,-1.5)  --
(5,-0.5) node[midway, above] {5}--
(6,-0.5) node[midway, above] {6};
\draw (0,-3)--(1,-2) node[midway, above] {1};
\draw (5,-2)--(6,-3) node[midway, above] {6};
\end{tikzpicture}
\begin{tikzpicture}[scale=0.3]
 \draw (0,0.5) node{};
 \draw (0,5) node{};
 \draw [->,decorate,
decoration={snake,amplitude=.4mm,segment length=2mm,post length=1mm, pre length=1mm}] (0,3) -- (2,3);
\end{tikzpicture}
\begin{tikzpicture}[scale=0.35]
\draw (0,1)  --
(1,1) node[midway, above] {1}--
(2,1) node[midway, above] {2}--
(3,1) node[midway, above] {3}--
(4,1) node[midway, above] {4}--
(5,1) node[midway, above] {5}--
(6,1) node[midway, above] {6};
\draw (0,-0.5)--
(1,-0.5) node[midway, above] {1}--
(2,-0.5) node[midway, above] {2}--
(3,-1.5) node[midway, above] {3}--
(4,-0.5) node[midway, above] {4}--
(5,-0.5) node[midway, above] {5}--
(6,-0.5) node[midway, above] {6};
\draw (0,-3)--
(1,-2) node[midway, above] {1}--
(2,-2) node[midway, above] {2};
\draw (4,-2)-- 
(5,-2) node[midway, above] {5}--
(6,-3) node[midway, above] {6};
\end{tikzpicture}
\end{proof}

\begin{definition}
The predecessor of some number $a$ or $b$, namely $c$, is the following number $d$. Search for the number $c=c_1$ that is inserted first during the insertion process. This number $c$ has some other number directly below in $Q$, namely $d$. We refer to its insertions with $d_l$ like we do for $a$ or $b$ Algorithm~\ref{alg:2}.
\end{definition}

\begin{lemma}
\label{lem:Predecessor}
If an inserted number $c_{l+1}$ equals its predecessor $d_{l}$, no new \emph{height violation} arises. 
\end{lemma}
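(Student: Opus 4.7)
The plan is to analyze the local neighborhood of the shared position $p := c_{l+1} = d_l$ immediately before and after the step in which $d_l$ is assigned. Since $d$ sits directly below $c$ in $Q$ and rows are processed top-down, $c$ is inserted before $d$. I would first record the state of $w$ right after $c$'s insertion: by the main insertion rule (either the ``$a_{l+2}$'' or the ``$b_{l+2}$'' branch of Algorithm~\ref{alg:2}) the entry $w(p)$ was set to $-(l+1)$, and I would note how this contributes to the $(l+1)$-level in a neighborhood of $p$.

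Next I would trace what must hold for the condition $d_l = p$ to be triggered. The algorithm's scan during $d$'s insertion reaches $p$ with $w(p)$ equal to an entry appropriate to either the ``$a_{l+1}$'' or the ``$b_{l+1}$'' branch (now with $l$ as the running level variable). Tracking the operations between $c$'s assignment and $d$'s arrival at $p$, I would show that the entry at $p$ has been successively rewritten by predecessor-compatible overrides, and that each such override preserves the relevant level inequalities at $p$.

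Finally I would verify that the assignment $w(p) := -l$ does not create a height violation at $p$: the $l$-level at $p$ decreases by one extra unit while the $(l+1)$-level at $p$ stops decreasing, and these two effects cancel in the inequality that defines a height violation (see the notation table). Combined with Lemma~\ref{lem:SeparationHeight}, which already handles the separation-point contributions, this yields the claim at $p$ itself; propagation to neighbouring positions then follows from the fact that the levels of $w$ outside a small window around $p$ are unchanged.

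The main obstacle will be the case analysis, since the assignment $d_l = p$ can arise from several distinct branches of Algorithm~\ref{alg:2}, and for each branch the sequence of intermediate overrides that rewrite $w(p)$ from $-(l+1)$ to the value required by that branch must be enumerated. I plan to organize the proof by splitting on the branch that sets $d_l$ (the $a$-case versus the $b$-case), on the parity of $i$, and on whether the corresponding setting of $c_{l+1}$ came from the even or odd branch of $c$'s insertion, reducing each configuration to a direct level-counting check in a neighborhood of $p$.
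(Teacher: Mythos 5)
There is a genuine gap, and it begins with the temporal order of $c$ and $d$. For the hypothesis ``$c_{l+1}$ equals its predecessor $d_l$'' to be meaningful, the position $d_l$ must already exist in the labeled word at the moment $c$ is being inserted; the predecessor $d$ is the entry of $Q$ adjacent to $c$ in its column that was handled in the \emph{previous} iteration of the outer for-loop, so $d$ is inserted \emph{before} $c$, and $c_{l+1}$ is then chosen to be the position that $d$'s insertion left with entry $-l$. (The phrase ``directly below'' in the definition is misleading, but the usage elsewhere --- ``once we reach the predecessor'', ``the predecessor $d_l$ can be taken indeed as new $c_{l+1}$'' in the proof of Lemma~\ref{lem:HeightInsert} --- pins down the order.) Your proposal inverts this: you have $c$ inserted first, setting $w(p):=-(l+1)$, and $d$ ``arriving'' at $p$ later, with an unspecified sequence of ``predecessor-compatible overrides'' rewriting $w(p)$ in between. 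No such mechanism exists in Algorithm~\ref{alg:2}, and under your ordering $d_l$ is undefined when $c_{l+1}$ is assigned, so the configuration you analyze is not the one the lemma is about.

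The second, independent problem is that your argument is localized at $p$, whereas a \emph{height violation} in $l$ is an inequality between the $l$-level and the $(l+1)$-level that must be checked at every position: reassigning $w(p)$ from $-l$ to $-(l+1)$ shifts both levels on the whole interval stretching left from $p$ until the next chain positions $c_{l+2}$ and $c_l$ are reached, which can be arbitrarily long --- it is not a ``small window around $p$ outside of which the levels are unchanged''. The paper's proof is an interval comparison: inserting $d$ in the previous row raised the $l$-level by one on $(d_{l+1},d_l]$ without touching the $(l+1)$-level there; choosing $c_{l+1}=d_l$ raises the $(l+1)$-level by one only on $(c_{l+2},c_{l+1}]\subseteq(d_{l+1},d_l]$, so the surplus created by $d$ exactly absorbs the increase created by $c$ and the defining inequality is preserved throughout that interval. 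This global slack argument is the missing idea; a cancellation check at the single position $p$ cannot substitute for it. (The appeal to Lemma~\ref{lem:SeparationHeight} is also beside the point here; separation points are treated as a separate exception in Lemma~\ref{lem:HeightInsert}, not via the predecessor.)
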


\begin{proof}
Inserting $d_l$ made the $l$-level higher between $d_l$ and $d_{l+1}$ and did not change the $(l+1)$-level in this area. When choosing $d_l$ as $c_{l+1}$ this makes the $(l+1)$-level higher in the same area or a smaller one, (and might changes $\pm l$ into $0$'s with a level grow of $1$) which can cause no \emph{height violation}. The following sketch illustrates this for $l+1\neq j$.

\hspace{2cm}
\begin{tikzpicture}[scale=0.35]
\draw (0,1)--(1,0);
\draw [dotted,decorate,
decoration={snake,amplitude=.6mm,segment length=4mm,post length=0mm, pre length=0mm}] (1,0) -- (3,2);
\draw [dotted,decorate,
decoration={snake,amplitude=.6mm,segment length=4mm,post length=0mm, pre length=0mm}] (3,2) -- (5,4);
\draw [dotted,decorate,
decoration={snake,amplitude=.6mm,segment length=4mm,post length=0mm, pre length=0mm}] (1,-2) -- (3,0);
\draw [dotted,decorate,
decoration={snake,amplitude=.6mm,segment length=4mm,post length=0mm, pre length=0mm}] (3,0) -- (5,2);
\end{tikzpicture}
\begin{tikzpicture}[scale=0.3]
 \draw (0,-0.5) node{};
 \draw (0,5) node{};
 \draw [->,decorate,
decoration={snake,amplitude=.4mm,segment length=2mm,post length=1mm, pre length=1mm}] (0,3) -- (2,3);
\end{tikzpicture}
\begin{tikzpicture}[scale=0.35]
\draw (0,1)--(1,1) node[midway, above]{$d_{l+1}$};
\draw [dotted,decorate,
decoration={snake,amplitude=.6mm,segment length=4mm,post length=0mm, pre length=0mm}] (1,1) -- (3,3);
\draw (3,3)--(4,2) node[midway, above]{$d_l$};
\draw [dotted,decorate,
decoration={snake,amplitude=.6mm,segment length=4mm,post length=0mm, pre length=0mm}] (4,2) -- (6,4);
\draw (0,-1)--(1,-2) node[midway,above]{$d_{l+1}$};
\draw [dotted,decorate,
decoration={snake,amplitude=.6mm,segment length=4mm,post length=0mm, pre length=0mm}] (1,-2) -- (3,0);
\draw [dotted,decorate,
decoration={snake,amplitude=.6mm,segment length=4mm,post length=0mm, pre length=0mm}] (4,0) -- (6,2);
\end{tikzpicture}
\begin{tikzpicture}[scale=0.3]
 \draw (0,-0.5) node{};
 \draw (0,5) node{};
 \draw [->,decorate,
decoration={snake,amplitude=.4mm,segment length=2mm,post length=1mm, pre length=1mm}] (0,3) -- (2,3);
\end{tikzpicture}
\begin{tikzpicture}[scale=0.35]
\draw (0,1)--(1,1) node[midway, above]{$d_{l+1}$};
\draw [dotted,decorate,
decoration={snake,amplitude=.6mm,segment length=4mm,post length=0mm, pre length=0mm}] (1,1) -- (3,3);
\draw (3,3)--(4,3) node[midway, above]{$d_l=c_{l+1}$};
\draw [dotted,decorate,
decoration={snake,amplitude=.6mm,segment length=4mm,post length=0mm, pre length=0mm}] (4,3) -- (6,5);
\draw (6,5)--(7,4) node[midway, above]{$c_l$};
\draw (0,-1)--(1,-1) node[midway,below]{$d_{l+1}=c_{l+2}$};
\draw [dotted,decorate,
decoration={snake,amplitude=.6mm,segment length=4mm,post length=0mm, pre length=0mm}] (1,-1) -- (3,1);
\draw (3,1)--(4,0) node[midway,above]{$c_{l+1}$};
\draw [dotted,decorate,
decoration={snake,amplitude=.6mm,segment length=4mm,post length=0mm, pre length=0mm}] (4,0) -- (6,2);
\end{tikzpicture}
\end{proof}

\begin{lemma}
\label{lem:HeightInsert}
\begin{enumerate}
\item If $p$ is a \emph{height violation} in $(l-1)$, $w(p)$ is always $(l-1)$.
\item There are no \emph{height violations} after inserting a pair of $a,b$ if there where no before. The only exceptions are \emph{separation points} where there is a \emph{height violation} in $(j-1)$ before and after inserting an even row.
\item We can always find $a_{j+1}$ and $b_{j+1}$.
\end{enumerate}
\end{lemma}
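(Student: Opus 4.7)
The plan is to prove the three parts by a simultaneous induction on the number of pairs $(a,b)$ already processed (counting pairs from all rows up to the current one). For the base case, immediately after the initial construction $w=(1,-1,\ldots,1,-1)$ there are no height violations at all, so (1) and (2) hold vacuously and (3) is not yet invoked. At each step, I would use (1) for the current $w$ to analyze what the algorithm does next, and then re-establish (1) and (2) after the step while also verifying (3).

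For (1), I would argue structurally from the definition of $l$-level. A violation at $p$ in $l-1$ means that the contributions of entries with absolute value $l-1$ (weakly to the right of $p$) fall below those of absolute value $l$, accounting for the $+1$ correction when $w(p)=\pm l$. The only entries whose presence at position $p$ can by themselves flip this inequality are $w(p)\in\{\pm(l-1),\pm l\}$; a short case check shows that $w(p)=-(l-1)$ makes the $(l-1)$-level strictly drop at $p$ while the $l$-level does not, and similarly $w(p)=\pm l$ either does not reverse the inequality or is ruled out by the correction term. This forces $w(p)=l-1$.

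For (2), I would enumerate the branches of the inner while-loop in Algorithm~\ref{alg:2}. Each $a_{l+1}$ or $b_{l+1}$ assignment changes a single entry $\pm l$ into $\pm(l+1)$, which raises the $l$-level and $(l+1)$-level in a controlled way; using Lemma~\ref{lem:Predecessor}, no new violation is introduced provided the predecessor relation is respected, and the algorithm's rules (the separate ``if $p<a_l$, $p<b_{l+1}$'' tests) enforce precisely this. The \emph{mark it + connect}, \emph{mark it + separate}, and \emph{adjust separation point} branches are all covered by Lemma~\ref{lem:SeparationHeight} together with Corollary~\ref{cor:SeparationInsert}: they behave exactly as an inserted $(a,b)$ pair would, so they transport violations only between separation points. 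Finally, the explicit \emph{height violation} branch corrects a violation in $l$ by turning the offending $-l$ into $l+1$; by part (1) applied before the correction, the entry is guaranteed to be of the right sign, and one checks that the auxiliary updates to $a_{l+1}$ and $b_{l+1}$ (or the $\pm j$ on the left) restore the invariants without cascading.

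For (3), I would show that when we arrive at the index where $a_{j+1}$ or $b_{j+1}$ must be set, there is always a suitable position to the left. The $j$-level is bounded below by $1$ whenever $a_j$ has already been placed (using (2) so that the $j$-level is not spuriously diminished), which forces the existence of an entry in $\{0,\pm j\}$ to the left satisfying the required inequalities; even row lengths in $Q$ plus the re-initialization of $\pm j$ at the start of each even row provide the correct parity of such entries. The main obstacle will be the case analysis in (2): Algorithm~\ref{alg:2} has many branches, and controlling their interaction through a separation point, particularly when a \emph{height violation} correction happens adjacent to an \emph{adjust separation point} operation in the opposite parity of $i$, requires bookkeeping that I would organize by tracking each level separately and appealing to (1) at every intermediate configuration.
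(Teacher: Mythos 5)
Your overall architecture (simultaneous induction over inserted pairs, branch-by-branch analysis for part (2) via Lemmas~\ref{lem:Predecessor} and~\ref{lem:SeparationHeight} and Corollary~\ref{cor:SeparationInsert}) matches the paper, and your treatment of (2) is essentially the intended one. However, there are two genuine gaps. First, your argument for (1) is presented as a static case check on the definition of levels, and that check fails for $w(p)=-l$: a $-l$ at $p$ raises the $l$-level at $p$ relative to the position on its right (and the correction term for $w(p)=\pm l$ makes the violation condition \emph{easier} to satisfy, not harder), so an entry $-l$ absolutely can create a fresh violation in $l-1$. The reason this never happens is dynamic, not definitional: the paper first catalogues the only places where the $l$-level can grow (between $c_l$ and the first $c_{l+1}$, and between a violation in $l$ and the new $c_{l+1}$ or a new violation in $l-1$) and then observes that in such a region there can be no available $-l$, since any such $-l$ would already have been chosen as $c_{l+1}$ (the exceptions being marked positions and $a_l$ when $c=b$, which are exactly the positions the algorithm ignores). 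Without that step, the exclusion of $-l$, $0$ and $-(l-1)$ as possible values of $w(p)$ does not go through.

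Second, your part (3) replaces the paper's predecessor mechanism with a level/parity argument that does not actually produce a valid candidate. Knowing that some entry in $\{0,\pm j\}$ lies to the left does not show that an entry satisfying all the side conditions of the $a_{j+1}$/$b_{j+1}$ branches (unmarked, not already consumed, correct $j$-parity of position, correct level) exists. The paper's argument is that the predecessor $d_l$ --- the image of the entry of $Q$ directly below $c_1$ --- is always a legal choice for $c_{l+1}$: it cannot have been consumed by an earlier pair, by Lemma~\ref{lem:Predecessor} choosing it creates no new violation, and for $l+1=j$ one checks separately (by parity of $i$, using that $a$'s produce $0$'s in $j$-even positions and $b$'s in $j$-odd ones, and that \emph{separate} converts a $0$ back into a $-j$ when needed) that it has an admissible value. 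You need this concrete witness to guarantee that the while-loop terminates.
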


\begin{proof}
We show these three statements inductively. The base case is clear (empty case). For the inductive step we show one statement after the other.
\begin{enumerate}
\item If there was no \emph{height violation} before, to find a new one, we consider in which situations the level in some paths can grow:
\begin{itemize}
\item between $c_l$ and the first $c_{l+1}$;

\item between a \emph{height violation} in $l$ and the new $c_{l+1}$;

\item between a \emph{height violation} in $l$ and a new \emph{height violation} in $(l-1)$.
\end{itemize}
In this area cannot be a $-l$ except a marked one or $a_l$ if $c$ is a $b$, as this would have been taken for $c_{l+1}$ otherwise.
We illustrated these three cases for $l+1\neq j$.

\begin{tikzpicture}[scale=0.35]
\draw (0,1)--(1,0);
\draw [dotted,decorate,
decoration={snake,amplitude=.6mm,segment length=4mm,post length=0mm, pre length=0mm}] (1,0) -- (3,2);
\end{tikzpicture}
\begin{tikzpicture}[scale=0.3]
 \draw (0,2) node{};
 \draw (0,3) node{};
 \draw [->,decorate,
decoration={snake,amplitude=.4mm,segment length=2mm,post length=1mm, pre length=1mm}] (0,3) -- (2,3);
\end{tikzpicture}
\begin{tikzpicture}[scale=0.35]
\draw (0,1)--(1,1) node[midway, below]{$c_{l+1}$};
\draw [dotted,decorate,
decoration={snake,amplitude=.6mm,segment length=4mm,post length=0mm, pre length=0mm}] (1,1) -- (3,3);
\draw (3,3)--(4,2) node[midway, above]{$c_l$};
\end{tikzpicture},
\begin{tikzpicture}[scale=0.35]
\draw (0,1)--(1,0);
\draw [dotted,decorate,
decoration={snake,amplitude=.6mm,segment length=4mm,post length=0mm, pre length=0mm}] (1,0) -- (3,2);
\draw (3,2)--(4,3) node[midway, above]{h.v.};
\end{tikzpicture}
\begin{tikzpicture}[scale=0.3]
 \draw (0,2) node{};
 \draw (0,3) node{};
 \draw [->,decorate,
decoration={snake,amplitude=.4mm,segment length=2mm,post length=1mm, pre length=1mm}] (0,3) -- (2,3);
\end{tikzpicture}
\begin{tikzpicture}[scale=0.35]
\draw (0,1)--(1,1) node[midway, below]{$c_{l+1}$};
\draw [dotted,decorate,
decoration={snake,amplitude=.6mm,segment length=4mm,post length=0mm, pre length=0mm}] (1,1) -- (3,3);
\draw (3,3)--(4,3);
\end{tikzpicture},
\begin{tikzpicture}[scale=0.35]
\draw [dotted,decorate,
decoration={snake,amplitude=.6mm,segment length=4mm,post length=0mm, pre length=0mm}] (1,0) -- (3,2);
\draw (3,2)--(4,3) node[midway, above]{h.v. 1};
\end{tikzpicture}
\begin{tikzpicture}[scale=0.3]
 \draw (0,2) node{};
 \draw (0,3) node{};
 \draw [->,decorate,
decoration={snake,amplitude=.4mm,segment length=2mm,post length=1mm, pre length=1mm}] (0,3) -- (2,3);
\end{tikzpicture}
\begin{tikzpicture}[scale=0.35]
\draw (0,0)--(1,1) node[midway, above]{h.v. 2};
\draw [dotted,decorate,
decoration={snake,amplitude=.6mm,segment length=4mm,post length=0mm, pre length=0mm}] (1,1) -- (3,3);
\draw (3,3)--(4,3);
\end{tikzpicture}
\begin{itemize}
\item \emph{$i$ odd connect} or \emph{$i$ even connect};

This happens between $a$ and $b$ and can only create a \emph{height violation} at a position with $j$-level $0$, thus a $j-1$. ($-j+1$ could also create a \emph{height violation} but needs to be left of such a $j-1$). At \emph{$i$ even connect} such positions are marked.
\end{itemize}

Height violations in $(l-1)$ only happen in those situations. $p$ cannot be a $0$ or else the previous $p$ would be a \emph{height violation} as well and we see inductively that right of $p$ there are no \emph{height violations}. The same argument holds for $l$ and $-(l-1)$. We have seen that in the area where word $l$ gets higher there is either no $-l$ or it is ignored ($a_l$) or it is to the left of a $l-1$ (which is marked). Therefore the only possible value is $l-1$.

\item Therefore at a \emph{height violation}, the $(l-1)$-level increases and the $l$-level decreases. However as this happened somewhere where the $l$-level was increased before (by inserting $c_l$), it sets the $l$-level to its original height. Thus there is no \emph{height violation} until another level growth.

\hspace{0.2cm}
\begin{tikzpicture}[scale=0.35]
\draw [dotted,decorate,
decoration={snake,amplitude=.6mm,segment length=4mm,post length=0mm, pre length=0mm}] (-2,-1) -- (0,-1);
\draw (0,-1)--(1,0);
\draw [dotted,decorate,
decoration={snake,amplitude=.6mm,segment length=4mm,post length=0mm, pre length=0mm}] (1,0) -- (3,2);
\draw (3,2)--(4,1);
\draw [dotted,decorate,
decoration={snake,amplitude=.6mm,segment length=4mm,post length=0mm, pre length=0mm}] (4,1) -- (6,3);
\draw [dotted,decorate,
decoration={snake,amplitude=.6mm,segment length=4mm,post length=0mm, pre length=0mm}] (-2,-3) -- (0,-3);
\draw [dotted,decorate,
decoration={snake,amplitude=.6mm,segment length=4mm,post length=0mm, pre length=0mm}] (1,-3) -- (3,-1);
\draw [dotted,decorate,
decoration={snake,amplitude=.6mm,segment length=4mm,post length=0mm, pre length=0mm}] (4,-1) -- (6,1);
\end{tikzpicture}
\begin{tikzpicture}[scale=0.3]
 \draw (0,-0.5) node{};
 \draw (0,5) node{};
 \draw [->,decorate,
decoration={snake,amplitude=.4mm,segment length=2mm,post length=1mm, pre length=1mm}] (0,3) -- (2,3);
\end{tikzpicture}
\hspace{0.1cm}
\begin{tikzpicture}[scale=0.35]
\draw [dotted,decorate,
decoration={snake,amplitude=.6mm,segment length=4mm,post length=0mm, pre length=0mm}] (-2,-1) -- (0,-1);
\draw (0,-1)--(1,0) node[midway, above]{h.v.};
\draw [dotted,decorate,
decoration={snake,amplitude=.6mm,segment length=4mm,post length=0mm, pre length=0mm}] (1,0) -- (3,2);
\draw (3,2)--(4,2) node[midway, above]{$c_{l}$};
\draw [dotted,decorate,
decoration={snake,amplitude=.6mm,segment length=4mm,post length=0mm, pre length=0mm}] (4,2) -- (6,4);
\draw (6,4)--(7,3) node[midway, above]{$c_{l+1}$};
\draw [dotted,decorate,
decoration={snake,amplitude=.6mm,segment length=4mm,post length=0mm, pre length=0mm}] (-2,-2) -- (0,-2);
\draw [dotted,decorate,
decoration={snake,amplitude=.6mm,segment length=4mm,post length=0mm, pre length=0mm}] (1,-2) -- (3,0);
\draw (3,0)--(4,-1) node[midway, above]{$c_{l}$};
\draw [dotted,decorate,
decoration={snake,amplitude=.6mm,segment length=4mm,post length=0mm, pre length=0mm}] (4,-1) -- (6,1);
\draw (0,-3) node{};
\end{tikzpicture}
\hspace{-0.1cm}
\begin{tikzpicture}[scale=0.3]
 \draw (0,-0.5) node{};
 \draw (0,5) node{};
 \draw [->,decorate,
decoration={snake,amplitude=.4mm,segment length=2mm,post length=1mm, pre length=1mm}] (0,3) -- (2,3);
\end{tikzpicture}
\hspace{0.2cm}
\begin{tikzpicture}[scale=0.35]
\draw [dotted,decorate,
decoration={snake,amplitude=.6mm,segment length=4mm,post length=0mm, pre length=0mm}] (-2,0) -- (0,0);
\draw (0,0)--(1,0) node[midway, above]{h.v.};
\draw [dotted,decorate,
decoration={snake,amplitude=.6mm,segment length=4mm,post length=0mm, pre length=0mm}] (1,0) -- (3,2);
\draw (3,2)--(4,2) node[midway, above]{$c_{l}$};
\draw [dotted,decorate,
decoration={snake,amplitude=.6mm,segment length=4mm,post length=0mm, pre length=0mm}] (4,2) -- (6,4);
\draw (6,4)--(7,3) node[midway, above]{$c_{l+1}$};
\draw [dotted,decorate,
decoration={snake,amplitude=.6mm,segment length=4mm,post length=0mm, pre length=0mm}] (-2,-3) -- (0,-3);
\draw (0,-3)--(1,-2) node[midway, above]{h.v.};
\draw [dotted,decorate,
decoration={snake,amplitude=.6mm,segment length=4mm,post length=0mm, pre length=0mm}] (1,-2) -- (3,0);
\draw (3,0)--(4,-1) node[midway, above]{$c_{l}$};
\draw [dotted,decorate,
decoration={snake,amplitude=.6mm,segment length=4mm,post length=0mm, pre length=0mm}] (4,-1) -- (6,1);
\draw (0,-3) node{};
\end{tikzpicture}

\emph{Height violations} can only add up in pairs of two (\emph{connect} only happens left of $b_{j+1}$). If that happens, also the levels of that \emph{height violations} add up, so it is sufficient to look at each situation separately. The \emph{ignore $a_l$} ensures that everything is considered separately. (Compare with the first tableau in Example~\ref{ex:SpecialCases}.)

\item To conclude we note that once we reach the predecessor no new \emph{height violation} arises (compare with Lemma~\ref{lem:Predecessor}). This also implies inductively that the predecessor cannot be taken from a pair before, as their predecessors are the leftmost positions they can take.

It remains to show that the predecessor $d_l$ can be taken indeed as new $c_{l+1}$. This holds for $l+1\neq j$ as  in this case $d_{l}=-l$. For $l+1=j$ we distinguish between $i$ even and $i$ odd.

If $i$ is odd we can argue the same except for the case that $d_l=0$. In this case, however, \emph{$i$ odd separate} changes this $d_l$ into a $-l$.

If $i$ is even we can argue that in \emph{$i$ odd} $a$'s produce $0$'s that are in $0$-even positions and $b$'s produce $0$'s that are in $0$-odd positions. Now $a_{j+1}$'s are always $j$-even positions and $b_{j+1}$'s are always $j$-odd positions and every such position can be chosen as such.\qedhere
\end{enumerate}
\end{proof}

\begin{remark}
Left sides (down-steps) of \emph{separation points} can never be inserted as $b$ thus are never predecessors of $b$. 
\end{remark}

\begin{lemma}
\label{lem:Sumis0}
The sum over the labeled word $w$ is $0$ after every insertion of a pair $a,b$. In particular the sum over $\pm l$ in $w$ is $0$ for every $l$.
\end{lemma}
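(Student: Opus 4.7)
The plan is to proceed by induction on the number of pair $(a,b)$ insertions executed by Algorithm~\ref{alg:2}. For each $l \geq 1$, write $S_l(w)$ for the number of entries of $w$ equal to $l$ minus the number equal to $-l$. The claim is equivalent to $S_l(w) = 0$ for every $l \geq 1$ after each pair insertion, since the total sum of the word equals $\sum_l l \cdot S_l(w)$. The base case is immediate: the initial word consists of alternating $1$'s and $-1$'s, so $S_1 = 0$ and $S_l = 0$ for $l \geq 2$. The reinitialization at the start of each even row replaces $0$'s by alternating $j,-j,\ldots,j,-j$, contributing equal numbers of $j$'s and $-j$'s and thus preserving every $S_l$.

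For the inductive step, I would fix one pair insertion with $j = \lfloor i/2 \rfloor$ and follow the two chains of operations defining the positions $a_1,\ldots,a_{j+1}$ and $b_1,\ldots,b_{j+1}$. The initial step inserts a new $-1$ at $a_1$ (respectively $b_1$); each intermediate step $a_{l+1}$ (respectively $b_{l+1}$) with $l < j$ converts an existing $-l$ into a $-(l+1)$. Collecting contributions to $S_l$ yields a telescoping sum: the insertion at $a_1$ contributes $-1$ to $S_1$, each pair (creation of a $-l$ at $a_l$, destruction at $a_{l+1}$) contributes $0$ to $S_l$ for $1 < l < j$, and the final step $a_{j+1}$ completes the chain by converting a $-j$ into either $0$ (for $i$ odd) or $j$ (for $i$ even). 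For $i$ odd, the two final steps $a_{j+1}$ and $b_{j+1}$ each cancel the $-j$ created at $a_j$ and $b_j$, so $S_j$ is preserved. For $i$ even, $a_{j+1}$ contributes $+2$ to $S_j$ while $b_{j+1}$ is a pure pointer assignment, and together with the $-j$ creations at $a_j$ and $b_j$ this again cancels. The $-2$ contribution to $S_1$ from the two initial insertions is absorbed analogously by $a_2$ and $b_2$.

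Next I would verify that the auxiliary operations inside the inner while loop preserve every $S_l$ individually. The ``connect'' operations replace $\{j,-j\}$ by $\{0,0\}$ (and ``mark it $+$ separate'' does the reverse), preserving $S_j$. The ``adjust separation point'' operation simultaneously promotes paired occurrences $(l,-l)$ on level zero to $(l+1,-(l+1))$, and for $i$ even additionally converts a flanking $(j,-j)$ into $(0,0)$, so every $S_l$ is preserved. The operations ``mark $a_l$'', ``ignore'', and all pointer-only assignments leave every entry of $w$ unchanged.

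The hard part will be the ``height violation'' fix, which at first glance changes $S_l$ and $S_{l+1}$ asymmetrically: by Lemma~\ref{lem:HeightInsert}, the position $p$ carries $w(p) = l$ at the moment of the fix, and the assignment $w(p) := l+1$ shifts one unit from $S_l$ to $S_{l+1}$. However, the fix also resets $a_{l+1}$ or $b_{l+1}$ to zero, and the inner loop continues leftward; a new assignment of $a_{l+1}$ (respectively $b_{l+1}$) is then found further to the left, converting another $-l$ into $-(l+1)$, so that the combined effect of the fix together with the previously set and now ``extra'' $-(l+1)$ position and the newly found chain position leaves $S_l$ and $S_{l+1}$ invariant. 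The two special sub-cases ``$i$ even, $a_{j+1} \neq 0$'' and ``$i$ odd, $w(\tilde p) = 0$ on $j$-level $0$'' introduce additional modifications of $w(a_{j+1})$ or $w(\tilde p)$; a careful case-by-case inspection, which I expect to constitute the bulk of the work, will show that these extra changes exactly compensate the disturbance introduced by the height violation fix, so $S_l = 0$ holds for every $l$ at the end of each pair insertion.
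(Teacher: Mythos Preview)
Your proposal is correct and follows the same case analysis as the paper: each atomic operation either preserves every $S_l$ outright (connect, separate, adjust separation point), or its effect telescopes along the chain $c_1,\dots,c_{j+1}$, with the height-violation fix being exactly the ``inverse'' of one $c_{l+1}$ step so that the re-found $c_{l+1}$ restores the count. One remark: the two special sub-cases you flag as ``the bulk of the work'' are in fact dispatched in a single line each in the paper---the odd case changes a $0$ back to $-j$ (undoing the earlier $b_{j+1}$ step so it can be redone), and the even case resets both $w(a_{j+1})$ and $w(p)$ to $0$ (undoing the prior $a_{j+1}$ assignment together with the just-applied $w(p):=l+1$)---so they are considerably lighter than you anticipate.
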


\begin{proof}
The sum over all $j$'s is $0$ after initializing $j$ as there is always an even number of $0$'s. Thus we have to show that nothing we do during the insertion process, changes the sum:
\begin{itemize}
\item If $i$ is even and we insert $a_j$, $a_{j+1}$ and $b_j$, $b_{j+1}$ we insert either two $-j$'s and change one $-j$ into a $j$ or we insert one $-j$ and one $0$ and change one $-j$ into a $0$. Otherwise we insert something with $-l$ and change another $-l$ into a $0$ or a $(-l+1)$.
\item When \emph{$i$ even connect}, \emph{$i$ odd connect} or \emph{adjust separating point} we always change an $l$ and a $-l$ into $0$'s or into $l+1$ and $-l-1$. At \emph{$i$ odd separate} we change two $0$'s into a $-l$ and $l$.
\item At \emph{height violation} we do the inverse of finding a $c_{l}$, namely changing a $l-1$ into an $l$ instead of changing an $-l-1$ into an $-l$. As we insert $c_l$ anew later on, this does not change the sum. However, there are two situations where it could be that $c_{l+1}$ is already found but there is still a \emph{height violation}. Therefore we have to adjust the path to ensure sum $0$ in this situations. Those are the two special cases. At \emph{$i$ odd connect} we deal with this by changing a $0$ into a $-j=-l-1$ again. At \emph{$i$ even $a_{j+1}$ 1} we defined and adjusted $a_{j+1}$ before and deal with this by changing it back again. \qedhere
\end{itemize}
\end{proof}

\begin{proof}[Proof of Theorem~\ref{theo:algo2welldefvactab}]
For well-definedness, we have to show that the while loop always terminates, thus that we find an $a_{j+1}$. We have seen this in Lemma~\ref{lem:HeightInsert}.

Moreover we have to show that the vacillating tableau properties hold for our resulting word:
\begin{enumerate}
\item In every initial segment the following holds:
\begin{enumerate}
\item \label{p:WellDefBottom} $\#i - \#(-i) \geq 0$,
\item \label{p:WellDefHeight} $\#i-\#(-i)\geq \#(i+1)-\#(-i-1)$,
\item \label{p:WellDefZero}if the last position is $0$ then $\#k-\#(-k)>0$.
\end{enumerate}
\item \label{p:WellDefSum} The sum over all positions is $0$.
\end{enumerate}

To show that Property~\ref{p:WellDefBottom} is satisfied after any insertion of a pair $a,b$, we have to show that there are no steps with negative $l$-level. There are two steps in the algorithm where we decrease the level of some position. At the first one, \emph{separate odd}, we generate $-j,j$ on $j$-level one. At the second one, \emph{height violation}, we have seen in Lemma~\ref{lem:HeightInsert} that we decrease positions that have been increased before.

To show that Property~\ref{p:WellDefHeight} is satisfied, we have to show that there is no \emph{height violation}. This is shown in Lemma~\ref{lem:HeightInsert} and Lemma~\ref{lem:SeparationHeight}.

To show that Property~\ref{p:WellDefZero} is satisfied we show that $0$'s are always at least on $k$-level one. When initializing a new $j$, $0$'s get changed into $\pm j$. New $0$'s come either from \emph{connect}, where they are on level one or at \emph{$c_{j+1}$, $i$ odd}, where we change a $-j$ on level at least $0$ to a $0$ with level at least one or more. 

Property~\ref{p:WellDefSum} is shown in Lemma~\ref{lem:Sumis0}.

Finally, the number of steps is $r$ as every entry of $Q$ inserts exactly one step.
\end{proof}

Due to what we have seen about the predecessor, the following lemma holds for even length paths and standard Young tableau with all rows of even length.

\begin{lemma}
\label{theo:Concat}
Considering Algorithm~\ref{alg:2}, concatenation of vacillating tableaux of empty shape and even length corresponds to concatenation of standard Young tableaux whose rows have even length.

In particular, the following holds:
\begin{itemize}
\item If a vacillating tableau is composed of two concatenated paths of empty shape and even length, its corresponding standard Young tableau can be written as concatenation of two standard Young tableaux all whose rows have even length.
\item On the other hand if a standard Young tableau can be written as concatenation of two standard Young tableaux whose rows have even length, its corresponding vacillating tableau is also composed of two concatenated paths of empty shape and even length.
\end{itemize}
\end{lemma}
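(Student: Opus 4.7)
The plan is to prove the forward direction (SYT concatenation implies vacillating tableau concatenation) by induction on the row index, and then obtain the reverse direction for free via the inverse Algorithm~\ref{alg:u2} and Theorem~\ref{theo:algo2ualgo2Inverse}. Let $Q = Q_1 \cdot Q_2$ be a concatenation where both $Q_1$ and $Q_2$ have all rows of even length, say with $|Q_1| = m$, so that the entries of $Q_1$ are exactly $\{1,\dots,m\}$ and occupy the left part of each row. The induction hypothesis will be: after inserting the first $i$ rows via Algorithm~\ref{alg:2}, the labeled word $w$ decomposes as $w = w^{(1)} \cdot w^{(2)}$, where $w^{(1)}$ is the labeled word produced by running Algorithm~\ref{alg:2} on the first $i$ rows of $Q_1$, and $w^{(2)}$ is the corresponding word for $Q_2$ (with labels shifted by $m$), and moreover $w^{(1)}$ ends in shape $\emptyset$ with even length.

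The base case $i=1$ is immediate: row $1$ of $Q$ is $(1,2,\dots)$ with an even initial segment coming from $Q_1$, so the alternating $(1,-1,1,-1,\dots)$ word splits exactly at the boundary. For the inductive step, when row $i+1$ is processed, its pairs are inserted from right to left, so all pairs $(a,b)$ from $Q_2$ are handled first. The critical claim is that inserting a pair $(a,b)$ with $a,b>m$ never alters positions in $w^{(1)}$. This is where Lemma~\ref{lem:Predecessor} and Lemma~\ref{lem:HeightInsert} do the heavy lifting: the search loop for successive $a_{l+1}$ or $b_{l+1}$ continues only as long as height violations or unresolved insertions remain, and once the predecessor of $a$ (respectively $b$) is reached, no new height violations arise. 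Since the predecessor is the entry of $Q$ directly below $a$ (resp.\ $b$), it lies in $Q_2$, hence its label exceeds $m$, so the search terminates strictly inside $w^{(2)}$. The run of the algorithm on $(a,b)$ inside $w^{(1)}\cdot w^{(2)}$ therefore agrees step for step with its run on $w^{(2)}$ alone. After all $Q_2$-pairs are inserted, the $Q_1$-pairs are processed, and by the same argument (now there is nothing to the right that could have been disturbed) we obtain $w^{(1)}$ from the updated $Q_1$-portion.

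The main obstacle is to verify that the two delicate operations \emph{adjust separation point} and \emph{$i$ odd connect}/\emph{separate} respect the boundary. A separation point only ever lives between $a$ and $b_{j+1}$ (by Lemma~\ref{lem:SeparationHeight}); since $w^{(1)}$ ends with shape $\emptyset$ and even length, the $k$-level at the boundary is zero, so no existing separation point can cross it. When a new separation point is created inside $w^{(2)}$, its leftward expansion is bounded by $a_{j+1}$, which, as explained above, lies strictly to the right of the boundary by the predecessor argument. Analogously, \emph{height violation} cases can only propagate to the predecessor of the currently inserted element, and this predecessor stays in $Q_2$. Hence the boundary is never crossed and the decomposition $w = w^{(1)}\cdot w^{(2)}$ is preserved.

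Finally, since the resulting $w^{(1)}$ is an even-length vacillating tableau of shape $\emptyset$, the full word $w$ is a concatenation of two such vacillating tableaux, proving the first bullet. For the converse, suppose the output vacillating tableau $V$ is a concatenation $V_1\cdot V_2$ of two even-length shape-$\emptyset$ parts. Apply Algorithm~\ref{alg:u2} separately to $V_1$ and $V_2$ to produce standard Young tableaux $Q_1$ and $Q_2$ with all rows of even length; their concatenation $Q_1\cdot Q_2$ is then sent by Algorithm~\ref{alg:2} back to $V_1\cdot V_2 = V$ by the forward direction just proved. By Theorem~\ref{theo:algo2ualgo2Inverse}, this concatenation must equal the unique preimage of $V$, yielding the required decomposition.
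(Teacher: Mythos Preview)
Your forward direction is exactly the paper's argument: the paper's entire proof is the single remark ``Due to what we have seen about the predecessor, the following lemma holds,'' and your induction on rows together with the observation that predecessors of $Q_2$-entries lie in $Q_2$ is precisely the intended unpacking of that sentence. One point you gloss over is the extra $a=b=0$ iteration at the end of each row: when you process the first $Q_1$-pair, the right-to-left sweep through $w^{(2)}$ is not literally a no-op but rather performs exactly the \emph{adjust separation point} operations that the $a=b=0$ pass on $Q_2$ alone would perform (since for $p>m$ the comparisons $a_l<p$, $b_l<p$ behave identically whether $a,b\le m$ or $a=b=0$); after that first sweep $w^{(2)}$ is complete and later sweeps are genuinely idle. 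This should be stated, but it does not affect correctness.

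For the reverse direction you take a different route than the paper. The paper's one-line justification is meant to cover both bullets symmetrically via the predecessor property (the analogous statement for Algorithm~\ref{alg:u2} is established later in Lemma~\ref{lem:UalgoSYT}). You instead deduce the first bullet from the second via Theorem~\ref{theo:algo2ualgo2Inverse}. This is perfectly valid and arguably cleaner, but note that Theorem~\ref{theo:algo2ualgo2Inverse} appears \emph{after} the present lemma in the paper; since its proof does not invoke Lemma~\ref{theo:Concat} there is no circularity, but you are making a forward reference that the paper's own ordering avoids.
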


Now we want to show the same for Algorithm~\ref{alg:u2} which we will prove later to be the reversed algorithm of Algorithm~\ref{alg:2}. To see this we will again provide and prove several lemmas first.

\begin{theorem}
\label{theo:ualgo2welldefSYT}
Algorithm~\ref{alg:u2} is well-defined and produces a standard Young tableau, with rows of even length and $r$ entries, given a vacillating tableau of even length $r$ and empty shape.
\end{theorem}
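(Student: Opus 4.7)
The plan is to mirror the structure used to establish Theorem~\ref{theo:algo2welldefvactab}, exploiting the symmetry of Algorithms~\ref{alg:2} and~\ref{alg:u2}. I will proceed by induction on $i$ running from $n$ down to $2$, showing that after extracting row $i$ the residual labeled word is still a vacillating tableau (after reinitialising $j$) of empty shape and even length, and that the pairs $a<b$ we pull out form a row $i$ of a standard Young tableau satisfying column strictness with the rows already extracted. The base case is immediate, since the rightmost while-loop on $i=n$ begins with a genuine vacillating tableau, and the termination case $i=1$ produces row $1$ directly from the labels remaining in $w$, which will be forced to be strictly increasing because the labels were assigned strictly increasingly at the start.

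First I would prove the analogues of Lemmas~\ref{lem:SeparationHeight}, \ref{lem:Predecessor}, \ref{lem:HeightInsert} and~\ref{lem:Sumis0} for Algorithm~\ref{alg:u2}. Concretely: (i) the sum over $w$ stays zero after each extraction of a pair $(a,b)$, because every branch of the inner while-loop either adds/removes a matched $\pm l$ or replaces symmetric segments $j,-j \leftrightarrow 0,0$; (ii) after each extraction no height violation remains, with the two transient exceptions at separation points and right after initialising $j$, which are resolved by \emph{mark separation point}, \emph{adjust it}, and the two \emph{height violation special case} branches; (iii) predecessors behave as in Lemma~\ref{lem:Predecessor}, so the position $a_{l+1}$ being sought can always be located to the left of~$p$. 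Termination of the outer while-loop follows because each iteration strictly decreases the number of $\lceil i/j\rceil$-row-positions in $w$, and termination of the inner while-loop follows because $p$ moves one position left each iteration.

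Next I would verify the combinatorial output conditions. Evenness of row $i$ follows from the invariant that extractions happen in pairs $(a,b)$: every time an $a$-label is deleted, a matching $b$-label is deleted in the same iteration, and the special cases (\emph{$i$ even $a_{j+1}$ 1/2}, \emph{$i$ odd $a_{j+1}/b_{j+1}$}) still extract exactly two labels per iteration. Column strictness between row $i$ and row $i+1$ is forced by the order in which positions are processed (bottom to top for rows, rightmost pair first within a row), and because a position only becomes available for extraction into row $i$ after the corresponding positions in row $i+1$ have been removed; one then uses that the labels attached to $w$ strictly increase from left to right, and that the $a$'s and $b$'s extracted into row $i$ come from labels strictly to the left of (hence smaller than) those already placed in row $i+1$ above them. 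Row strictness inside row $i$ itself follows because consecutive pairs extracted satisfy $a<b$ (by construction $p<b$ throughout the inner loop, and the finally-accepted $a$-position is strictly to the left of the $b$-position), and successive pairs are inserted into the row in the order they are extracted, which corresponds to moving leftward through~$w$.

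The main obstacle will be the bookkeeping around the separation points and the two \emph{height violation special case} branches, mirroring the difficulty in Lemma~\ref{lem:HeightInsert}. I expect to argue by a side induction, showing that whenever \emph{mark separation point} is triggered, the subsequently processed positions satisfy the preconditions of \emph{adjust it}, \emph{$a_{j+1}$} or \emph{$b_{j+1}$}, so that no illegal configuration ever survives to the end of an iteration; this uses that the input $V$ is an honest vacillating tableau of \emph{even} length and empty shape, which rules out exactly the orphan $0$'s or unmatched $\pm j$'s that would otherwise leave a separation point dangling. Once these invariants are in place, the theorem follows: the extracted tableau $Q$ has $n$ possibly empty rows of even length, the total number of cells equals the length of $V$ (since each step of the labelled word is removed exactly once, either into some row $i\ge 2$ or, at the very end, into row $1$), and the resulting filling is a standard Young tableau.
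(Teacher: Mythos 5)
Your proposal follows essentially the same route as the paper: it establishes the sum-zero invariant (the paper's Lemma~\ref{lem:Ualgo2SumIsZero}), the no-surviving-height-violation invariant with the separation-point exceptions (Lemma~\ref{lem:Ualgo2HeightVio}), the existence of each $a_l$ and $b_l$ (Lemma~\ref{lem:Ualgo2AB}), termination of the outer loop by shrinkage of the word, and the column-strictness/even-row-length conclusion via the fact that extracting row $i+1$ creates the $2$- or $3$-row-positions that trigger the row-$i$ extractions (the content of Lemma~\ref{lem:UalgoSYT}). The packaging as an induction maintaining that the residual word is again a vacillating tableau of empty shape and even length is a mild reformulation of the same invariants, not a different argument.
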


\begin{lemma}
\label{lem:Ualgo2SumIsZero}
The sum over positions in the labeled word $w$ is $0$ after any extraction of a pair $a,b$. In particular the sum over $\pm l$ in $w$ stays $0$.
\end{lemma}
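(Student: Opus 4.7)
The plan is to mirror the proof of Lemma~\ref{lem:Sumis0} by tracking the effect of each local operation in Algorithm~\ref{alg:u2} on the sum $\sum_p w(p)$. By the inductive hypothesis (over the outer for-loop index $i$ and the pairs extracted so far), the sum is $0$ at the start of each pair-extraction, so it suffices to show that every atomic modification made during one extraction cancels in its total contribution, and that the two final deletions of $a$ and $b$ (each a $-1$) are compensated.

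First I would dispatch the initialization step: at the end of processing row $i$ with $i$ even the algorithm replaces every $\pm j$ by $0$. By the inductive hypothesis on the previous iteration, the partial sum of $\pm j$-entries vanished, so this wholesale relabeling preserves $\sum w(p)$. Next I would go through the main cases of the inner while-loop in parallel with the corresponding cases of Algorithm~\ref{alg:2}:
\emph{connect} ($i$ odd or $i$ even) swaps $(0,0) \leftrightarrow (\pm j,\mp j)$, which is sum-preserving;
\emph{separate} swaps the same pair in the opposite direction;
\emph{adjust separation point} applies symmetric rewrites $\pm l \leftrightarrow \pm(l-1)$ on matched left/right columns of marked positions and changes a marked pair $(0,0)$ into $(-j,j)$, each of which cancels in pairs;
the \emph{mark separation point} step only attaches marks without altering entries, so contributes nothing;
and \emph{height violation special case} (for $i$ odd, $w(p)=0$ on $j$-level $0$) replaces the $0$ by $j$ while simultaneously resetting $b_{j+1}=r$, so the $-j$ previously interpreted as an instance of $b_{j+1}$ will now be taken anew later in the loop as a fresh $-j$ elsewhere, and the net contribution is checked just as in the forward case in Lemma~\ref{lem:Sumis0}.

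The core accounting is for the height-violation and ``find $a_{l+1}$/$b_{l+1}$'' cases. Each application of ``$w(p):=l$'' at a height violation changes the sum by $-1$, but the accompanying resetting of $a_{l+1}$ or $b_{l+1}$ together with the ``ignore next $l$'' convention forces some later position currently holding $-l+1$ (which had played the role of that marker) to be treated as $-l$, adding $+1$; symmetrically, the discovery of a fresh $a_{l+1}$ or $b_{l+1}$ turns a $-l$ into $-l+1$ (change $+1$) and is paired with the eventual deletion of the corresponding $-1$ at the innermost level. Summed telescopically along the chain $a=a_1, a_2,\dots,a_{j+1}$ (resp.\ for $b$), the $+1$ contributions from ``raising'' $-l$ to $-l+1$ cancel against the $-1$ contribution of removing the final $-1$ when $a$ (resp.\ $b$) is deleted. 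This is exactly the inverse of the chain argument used for the insertion of $a_l,b_l$ in Lemma~\ref{lem:Sumis0}.

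I expect the main obstacle to be bookkeeping in the two special cases (the $i$-even case \emph{$a_{j+1}\neq 0$} analogue and the $i$-odd case where $w(\tilde p)=0$ on $j$-level $0$), precisely as in the forward direction, since these are the only places where the local rewriting is not a clean bijection with a single operation of Algorithm~\ref{alg:2}. A cleaner alternative, which I would use as a sanity check, is to invoke Theorem~\ref{theo:algo2ualgo2Inverse}: once the two algorithms are established as mutually inverse, Lemma~\ref{lem:Sumis0} applied to the forward algorithm yields the present lemma for free, because each state of $w$ reached during an extraction in Algorithm~\ref{alg:u2} is also reached (in reverse order) during the corresponding insertion in Algorithm~\ref{alg:2}, where the sum is already known to be $0$.
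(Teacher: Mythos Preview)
Your main argument---tracking each local rewrite in Algorithm~\ref{alg:u2} and verifying that it preserves the partial sums $\sum_{w(p)=\pm l} w(p)$, telescoping the chain $c_{j+1}\to c_j\to\cdots\to c_1$ exactly as in Lemma~\ref{lem:Sumis0}---is correct and is essentially what the paper does, only spelled out in more detail than the paper's two-paragraph sketch.

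One caution about your proposed ``sanity check'' via Theorem~\ref{theo:algo2ualgo2Inverse}: in the paper's logical order this would be circular. Lemma~\ref{lem:Ualgo2SumIsZero} is invoked in the proof of Lemma~\ref{lem:Ualgo2AB} (to guarantee that the path ends on $l$-level~$0$ so that a terminal $c_l$ without new height violation exists), and Lemma~\ref{lem:Ualgo2AB} in turn is used to establish well-definedness of Algorithm~\ref{alg:u2} (Theorem~\ref{theo:ualgo2welldefSYT}). The inverse statement Theorem~\ref{theo:algo2ualgo2Inverse} presupposes that Algorithm~\ref{alg:u2} terminates and is well-defined, so it is not available at this point. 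Your direct operation-by-operation check is therefore not just cleaner but necessary.
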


\begin{proof}
For every $c_l$ that is changed from $-l$ into $-(l-1)$ we change a $-(l-1)$ into a $-(l-2)$. If we consider $c_j$, $i$ odd, we loose a $0$ in this process. If we consider $c_j$, $i$ even, we conclude that we either extract two $-j$'s and change a $j$ into a $-j$ or we extract one $0$ and one $-j$ and change a $0$ into a $-j$. If we consider $1$ we simply delete a $-1$ after we inserted a new one to the left.

\emph{Connect}, \emph{separate}, \emph{height violations} and \emph{separation points} also just change $l$ in pairs of two - always an $l$ and a $-l$. For details compare this with the proof of Lemma~\ref{lem:Sumis0}.
\end{proof}

\begin{lemma}
\label{lem:Ualgo2HeightVio}
If $p$ is a \emph{height violation} in $l$, $p$ is always an $l+1$. After extracting a pair of $a,b$ there is no \emph{height violation} if there was no before and the extraction process stops. Again the only exceptions are \emph{separation points}.
\end{lemma}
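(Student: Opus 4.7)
The plan is to mirror the argument of Lemma~\ref{lem:HeightInsert}, running the induction in the reverse direction. Fix an iteration of the outer for-loop and run through the extraction of a pair $(a,b)$; we prove inductively on the position $p$ (moving from left to right as the algorithm does) both (i) that whenever $p$ is a height violation in $l$ we have $w(p)=l+1$, and (ii) that immediately after finishing the extraction no height violation remains, unless $p$ is a \emph{separation point} or $i$ is even (so that a height violation in $j-1$ is expected, in accordance with Lemma~\ref{lem:SeparationHeight}).

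First I would enumerate the only operations in Algorithm~\ref{alg:u2} that can change relative levels and hence create a height violation in $l$: the \emph{height violation} correction (which turns a $-l$ into a $-(l-1)$, raising the $l$-level between two fixed markers while leaving the $(l-1)$-level the same there), the $c_{l+1}$-type steps (\emph{$a_{l+1}$}, \emph{$b_{l+1}$}), and, in the $j$-row, \emph{connect}, \emph{separate}, \emph{mark/adjust separation point}, and the two special cases of \emph{height violation} for $i$ even respectively $i$ odd. In each of these I would check that, as long as no prior violation is present strictly to the left, the local patch obtained is identical (up to ignoring marked positions via \emph{ignore $a_l$} and the $b_{l+1}<p$-exception) to the patch produced by the inverse step of Algorithm~\ref{alg:2}. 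Because that algorithm was shown never to create a violation in those patches, the only surviving possibility for a new violation $p$ in $l$ is that $p$ was already an $l+1$ before the patch, confirming assertion (i).

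Next I would show that once a height violation in $l$ is corrected (a $-l$ becoming a $-(l-1)$), the $l$-level to the left of $p$ gets pulled back down by one, exactly cancelling an earlier $-l \to -(l-1)$ change made by some $c_l$-step; this is the mirror of the ``level reset'' argument used in Lemma~\ref{lem:HeightInsert}. Using this and induction I would verify that corrections cascade only to the left and therefore terminate, and that any residual violation after all $c_l$'s have been found is concentrated on a \emph{separation point} whose $(j-1)$-violation is prescribed by Lemma~\ref{lem:SeparationHeight}. Termination of the while-loop then follows because the $p$-pointer strictly moves leftward in each iteration and there are only finitely many positions; moreover Lemma~\ref{lem:Ualgo2SumIsZero} prevents us from producing an impossible imbalance that would force the loop to restart.

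The main obstacle I expect is the bookkeeping in the $j$-row, where the cases \emph{mark separation point}, \emph{adjust it}, \emph{connect/separate}, and the two \emph{height violation special cases} interact. Concretely, the difficulty is showing that when an \emph{adjust separation point} triggers in Algorithm~\ref{alg:u2}, the resulting word coincides with what one would obtain by deleting, in the reversed order of Algorithm~\ref{alg:1}, the appropriate $a,b$ inserted between the markers (Corollary~\ref{cor:SeparationInsert}). Once this matching is in place, the argument reduces to the inductive step already established for the generic $l<j$ case, and assertion (ii) and the termination claim follow. As before, whenever the chosen $c_{l+1}$ coincides with the predecessor, the argument from Lemma~\ref{lem:Predecessor}, read in reverse, guarantees that no further height violation can be created, which finishes the induction.
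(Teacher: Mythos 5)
Your overall strategy --- mirror Lemma~\ref{lem:HeightInsert}, enumerate the operations of Algorithm~\ref{alg:u2} that change relative levels, and conclude by a locality argument that a height violation in $l$ must carry the entry $l+1$ --- is exactly the route the paper takes: it likewise treats $a$ and $b$ separately (via \emph{ignore}), lists the three situations in which the $l$-level decreases (between $c_{l+1}$ and $c_l$, between $c_{l+1}$ and a height violation in $l+1$, and while adjusting a height violation in $l-1$), observes that no unmarked $-l$ can occur in that region, handles the only possible level \emph{increase} (the $j$-level, caused by \emph{separate}) through the special cases, and then identifies $w(p)=l+1$ by eliminating the values $l$, $-(l+1)$ and $-l$.

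There is, however, one point where your formulation risks circularity. You justify the local patches by saying they are ``identical to the patch produced by the inverse step of Algorithm~\ref{alg:2}'' and then import the no-violation conclusion from Lemma~\ref{lem:HeightInsert}. But the assertion that the steps of Algorithm~\ref{alg:u2} invert those of Algorithm~\ref{alg:2} is Theorem~\ref{theo:algo2ualgo2Inverse}, whose proof explicitly relies on the present lemma (``due to the fact that height violations in $l$ are always $l$ respectively $l+1$, they act inverse''). So you may not assume inverseness here; the level bookkeeping must be re-derived directly for Algorithm~\ref{alg:u2}, with the pictures from Lemma~\ref{lem:HeightInsert} serving only as mirrored illustrations --- which is what the paper does. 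A second, smaller slip: in Algorithm~\ref{alg:u2} the pointer $p$ moves to the \emph{right} and levels are computed from the left, so your termination argument ``because the $p$-pointer strictly moves leftward'' has the direction reversed; moreover, the termination of the extraction process is settled by Lemma~\ref{lem:Ualgo2AB} (finding $a_l$, $b_l$ whose extraction causes no new violation) rather than inside this proof.
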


\begin{proof}
Once again we consider $a$ and $b$ separately as combined \emph{height violations} just add up. \emph{Ignore positions that are corrected height violations of $a$} ensures that everything is considered separately. (Again, compare with the first tableau in Example~\ref{ex:SpecialCases}.)

For a \emph{height violation} we have to consider where the $l$-level for $l>j$ is decreased:
\begin{itemize}
\item between $c_{l+1}$ and $c_l$;
\item between $c_{l+1}$ and a \emph{height violation} in $l+1$; 
\item when \emph{adjusting a height violation} in $l-1$ until finding a new $c_{l}$ or a new \emph{height violation};
\end{itemize}
In this area cannot be a $-l$ except for a marked one, if $c$ is a $b$, as this would have been taken for $c_l$.

In the proof of Lemma~\ref{lem:HeightInsert} those situations are illustrated for Algorithm 1. Here the situation is similar but the other way around.

Moreover we have consider level increasings. Those are only possible for the $j$-level by \emph{separation odd}. However this happens at $j$-even positions, so if there is something on $j$-level zero we change it into $j-1$ and set $b_{j+1}$ and $b_j$ to undefined in \emph{height violations special cases}. Thus there is no \emph{height violation} afterwards.

An $l$ is not a \emph{height violation}, as there would have been be one before. The same holds for a $-(l+1)$. Moreover it cannot be a $-l$ as we have seen in the list above, thus it is an $l+1$. At \emph{adjust height violations} this $l+1$ is changed into an $l$, thus we increase the $l$-level and decrease the $(l+1)$-level. However this happens somewhere, where the $(l+1)$-level has been increased before by choosing $c_{l+1}$.

Again the illustrations in the proof of Lemma~\ref{lem:HeightInsert} show the same situations in Algorithm~\ref{alg:2}. 
\end{proof}

\begin{lemma}
\label{lem:Ualgo2AB}
We always find an $a_l$ and a $b_l$ if we found an $a_{l+1}$ and a $b_{l+1}$. After several steps we find an $a_l$ and a $b_l$ whose extraction does not cause a new \emph{height violation}.
\end{lemma}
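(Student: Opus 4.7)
The plan is to prove both assertions by downward induction on $l$, starting from the base case $l = j+1$ where $a_{j+1}, b_{j+1}$ are located via the special $i$ odd / $i$ even clauses that handle entries in $\{\pm j, 0\}$. For the inductive step, I assume $a_{l+1} < b_{l+1}$ have been chosen and that each was reached either directly as a $-(l+1)$-entry or via a preceding height-violation correction; in either case the algorithm has set $w(a_{l+1}):=-l$ and $w(b_{l+1}):=-l$ before the scan continues.

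First I would establish existence of new candidates. By Lemma~\ref{lem:Ualgo2SumIsZero} the sum over $\pm l$ entries of $w$ stays at zero after each pair extraction, so $\#l = \#(-l)$ globally; combined with the vacillating-tableau inequality $\#l - \#(-l) \geq \#(l+1) - \#(-l-1) \geq 0$ in every initial segment, the suffix of $w$ strictly to the right of $a_{l+1}$ must contain at least one entry equal to $-l$. Scanning the pointer $p$ rightward, the first such entry satisfying the side-conditions of the $a_l$-clause becomes $a_l$. The argument for $b_l$ is identical, starting to the right of $b_{l+1}$ (and using that $-l$ entries not consumed by $a_l$ remain available).

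Next I would argue termination after finitely many corrections. If provisionally setting $a_l := p$ uncovers a height violation at some later position $p'$, then by Lemma~\ref{lem:Ualgo2HeightVio} necessarily $w(p') = l$, and the height-violation clause rewrites $w(p') := l-1$, resets the appropriate one of $a_{l+1}, b_{l+1}$ to $r$, and continues. Each such correction strictly decreases the multiset $(|w(q)|)_{q}$ in a well-founded order, so only finitely many corrections may occur; once the scan completes without further violation, the resulting $a_l, b_l$ can be extracted without destroying the vacillating-tableau property.

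The main obstacle I anticipate is ruling out that corrections triggered during the search for $b_l$ retroactively invalidate $a_l$. This is controlled by the \emph{ignore next $l$ left of $p$} clause in the height-violation rule, together with the backward analogue of Lemma~\ref{lem:HeightInsert}: every new $-(l-1)$ created by a correction lies strictly to the right of $a_l$, so the $a$- and $b$-subprocesses interact only through already validated positions. Combined with the barrier behaviour of separation points recorded in Corollary~\ref{cor:SeparationInsert} and Lemma~\ref{lem:SeparationHeight}, no retroactive conflict can arise, which closes the induction.
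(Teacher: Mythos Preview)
Your overall architecture (downward induction on $l$, existence via the level/sum properties, then termination) matches the paper's, and your existence argument for a single $c_l$ is essentially the same as the paper's ``$c_{l+1}$ used to be a $0$ not on level zero, so there is a $-l$ to the right'': both amount to saying that the $l$-level at $c_{l+1}$ is positive and must drop to $0$ by the end.

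However, you have a genuine gap at the point you pass over in parentheses: ``(and using that $-l$ entries not consumed by $a_l$ remain available)''. This is precisely the nontrivial part of the lemma. It is not automatic that, after $a_l$ has been chosen, a \emph{second} $-l$ exists to the right of $b_{l+1}$ for $b_l$; the global counting argument you give only guarantees one $-l$ to the right of $a_{l+1}$, not two. The paper handles this by a case analysis: if only one $-l$ were available for both, then $a_{l+1}$ and $b_{l+1}$ would both sit on $l$-level $0$ with no $-l$ between them, forcing either a height violation at $a_{l+1}$ or the configuration that triggers \emph{adjust separation point} at $a_j$ --- each a contradiction. Your final paragraph about separation points addresses a different issue (whether corrections for $b$ retroactively spoil $a$), not the availability of a distinct $-l$ for $b_l$.

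A smaller point: your termination argument via a well-founded multiset order is more machinery than needed and its correctness is not obvious (you would have to track all rewrites, including those that re-find $c_{l+1}$). The paper's argument is more direct: a height violation can recur only while the next $-l$ to the right is immediately followed by an $l$; since the word ends on $l$-level $0$ (Lemma~\ref{lem:Ualgo2SumIsZero}) and has no prior height violations (Lemma~\ref{lem:Ualgo2HeightVio}), one eventually reaches a $-l$ not followed by an $l$, and that choice causes no further violation.
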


\begin{proof}
We start with $a_{j+1}$ and $b_{j+1}$  and distinguish the parity of $i$.

If $i$ is odd, $a_{j+1}$ changes a $0$ in 3-row-position. (If none exists anymore due to \emph{adjust separation point}, nothing happens.) This is the only time when a $0$ is changed except for $b_{j+1}$ and \emph{connect/separate odd}, where $0$'s are changed in pairs of two. Thus after finding $a_{j+1}$ there will remain an odd number of $0$'s, therefore we will find a $b_{j+1}$.

If $i$ is even, we only need to find $a_{j+1}$ as this gives us both a first $a_j$ and a first $b_j$ ($a_j$ is a $0$ or a $-j$ on $j$-level at least one, thus we find $b_j$ as a $-j$ on $j$-level at least zero). We find it left of a 2-row-position. Again if none exists anymore, nothing happens.

For $l<j+1$ we always find a first $c_l$ as $c_{l+1}$ used to be a $0$ not on level zero, so there is a $-l$ to the right. After a \emph{height violation}, we insert an $l$ and get the old $l$-level back so we can find another $-l$. If there would be only one $-l$ for both $a$ and $b$, that would mean, that both $a_{l+1}$ and $b_{l+1}$ were $-(l+1)$ on $l$-level zero with no $-l$ in between. However this would either cause a \emph{height violation} in $a_{l+1}$, which is a contradiction or there is an $(l+1)$ on $(l+1)$-level $0$ next to $a_{l+1}$ and $b_{l+1},\dots,b_j$ are right of that. Moreover $a_{l+1},\dots,a_j$ are on level $0$ each. As there is no $-l$ between $a_{l+1}$ and this $(l+1)$ $a_j$ satisfies the conditions for \emph{adjust separation point} as there is no $l$ in between either ($b_{l+1}$ is also on level $0$), which is also a contradiction.

We find a $c_l$ whose extraction does not cause a new \emph{height violation} as this is the case once we reach a $-l$ not followed by an $l$.  This is the case at some point in a path without \emph{height violations}, that ends on $l$-level zero for all $l$ due to Lemma~\ref{lem:Ualgo2SumIsZero} and Lemma~\ref{lem:Ualgo2HeightVio}.
\end{proof}

\begin{lemma}
\label{lem:UalgoSYT}
For each extracted number in row $i+1$ we extract at least one smaller number in row $i$.
\end{lemma}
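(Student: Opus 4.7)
The plan is to argue label-wise: for each pair $(a,b)$ extracted into row $i+1$, one exhibits a pair $(a',b')$ extracted into row $i$ with $a'<a$ and $b'<b$. Together with the fact that extractions into row $i$ come in pairs, this both provides enough entries to build a column-strict filling and certifies the column-strict ordering between rows $i$ and $i+1$.

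The main device is the \emph{predecessor} notion from Lemma~\ref{lem:Predecessor}, read in reverse. When the word is built up by Algorithm~\ref{alg:2}, inserting the pair $(a,b)$ of row $i+1$ turns predecessor positions coming from row $i$ (the entries of $Q$ directly above $a$ and $b$) into the new $c_{l+1}$'s at each intermediate level, ending with $a_{j+1},b_{j+1}$ among the $j$-entries, where $j=\lfloor (i+1)/2\rfloor$. Running Algorithm~\ref{alg:u2} I would track the same chain backwards: extracting $(a,b)$ from row $i+1$ changes exactly the $c_l$'s along this chain, reversing each $-l\mapsto -(l-1)$ step (and, depending on parity and whether a separation point is involved, the corresponding $0\mapsto\pm j$ or $j\mapsto -j$ moves at the topmost step). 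This exposes positions with labels strictly smaller than $a$ and $b$, and these are precisely the candidates for the extraction of row $i$.

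Concretely, I would carry out the following steps. First, when $i+1$ is odd (so $j'=\lfloor i/2\rfloor=j$), show that after extracting $(a,b)$ the positions $a'<a$ and $b'<b$ that used to be the tops of the $\pm j$-chains of $a$ and $b$ now sit as $j$-even (respectively $j$-odd) $0$'s on the correct level, i.e.\ as $2$-row-positions for the subsequent even-row extraction (after the initialize-$j$ step converts $\pm j$ back to $0$'s). Second, when $i+1$ is even (so $j'=j-1$), show that extracting $(a,b)$ for row $i+1$ leaves behind, one level up, two entries $a',b'$ of the right parity and level to be picked up as $3$-row-positions by the odd-row extraction for row $i$; here the \emph{mark separation point}/\emph{adjust it} branch exactly undoes the insertion-time \emph{separate odd}/\emph{adjust separation point} that introduced the $\pm j$-pair. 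Third, check the two special cases that complicate the picture: \emph{height violation special case} ($i$ odd) and the $i$-even redefinition of $a_{j+1}$ of Corollary~\ref{cor:Newjplus1}. In each of these, the extra moves are chosen precisely so that a position with an even smaller label (the predecessor of the predecessor) is available, and the strict inequality $a'<a$, $b'<b$ is preserved because predecessors in $Q$ necessarily sit in a row strictly above in the column, hence carry a strictly smaller label.

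The hard part will be the bookkeeping for separation points, since there the ``predecessor chain'' is shared between several pairs over the course of several inserted even/odd rows, and one must verify that the chain is consumed in the correct order so that every row-$i$ extraction still recovers a genuine, not-yet-used smaller label. Lemmas~\ref{lem:SeparationHeight} and~\ref{lem:HeightInsert} (and their Algorithm~\ref{alg:u2} analogs, Lemmas~\ref{lem:Ualgo2HeightVio} and~\ref{lem:Ualgo2AB}) provide the invariants needed: separation points never leave unresolved height violations, and at each stage the first available 2- or 3-row-position corresponds to the unused top of a predecessor chain. Combining these with the parity-preservation of Lemma~\ref{lem:Ualgo2SumIsZero}, the desired inequality $(a',b')<(a,b)$ follows for every pair, completing the proof.
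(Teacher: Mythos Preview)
Your plan has a circularity at its core. You propose to use the predecessor notion ``read in reverse'' and ultimately justify the strict inequality by saying that ``predecessors in $Q$ necessarily sit in a row strictly above in the column, hence carry a strictly smaller label.'' But $Q$ is the \emph{output} of Algorithm~\ref{alg:u2}; at this point in the paper we do not yet know that the extracted labels assemble into a standard Young tableau, and that is precisely what Lemma~\ref{lem:UalgoSYT} is meant to establish. The predecessor of Lemma~\ref{lem:Predecessor} is defined in terms of the column structure of $Q$, so invoking it here assumes the conclusion. One could try to salvage this by first proving Theorem~\ref{theo:algo2ualgo2Inverse} and then reading the SYT property back from the insertion side, but the paper's logical order has well-definedness (Theorem~\ref{theo:ualgo2welldefSYT}, which uses the present lemma) preceding the inverse theorem; your plan silently reorders that dependency without saying how.

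The paper's proof avoids this by arguing entirely inside the labeled word, with no reference to $Q$. It shows that when a chain $d_1>\dots>d_{j+1}$ is extracted into row $i+1$, the modification at each position $d_{l+1}$ (lowering the $l$-level between $d_{l+1}$ and $d_l$ without touching the $(l-1)$-level) leaves that position eligible to be picked up as a $c_l$ in the subsequent row-$i$ extraction, so one gets $c_l\le d_{l+1}<d_l$ and hence $c_1<d_1$. The genuinely delicate step is the top of the chain: one must show that each extracted $d_j$ produces a $2$- or $3$-row-position (depending on the parity of $i$) for the next row, and that \emph{adjust separation point} does not destroy it; the paper handles the even case by reducing, via Corollary~\ref{cor:SeparationInsert}, to Lemma~34 of \cite{3erAlgo}. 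Your plan gestures at this (``the hard part will be the bookkeeping for separation points'') but does not supply the argument; in particular, you would still need a word-level reason why the newly created row-$i$ starting positions survive, and that reason cannot come from $Q$.
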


\begin{proof}
We show first that that every last $d_{l+1}$ can be a last $c_{l}$ for $l\neq j$: Extracting $d_{l+1}$ decreased the $l$-level by $1$ for $l<j$ between $d_{l+1}$ and $d_l$ without changing the $(l-1)$-level there. Thus we can decrease the $(l-1)$-level in this area when extracting the next row, thus this could be a $c_l$ that causes no further \emph{height violations}. (Compare with the illustrations in the proof of Lemma~\ref{lem:Predecessor}.)

Now we show that each extracted $d_j$ causes a $c_{j+1}$. Again we do so by distinguishing the parity of $i$ when extracting $c_j$.

If $i$ is odd, we extracted $-(j+1)$'s before in an even process. We need to show that those produced $0$'s in 3-row-positions and that we can take those as $a_{j+1}$ and $b_{j+1}$. We distinguish two cases.
If those are separated by $a_j$, they produce automatically two odd sequences of $0$'s, one of which to take. It could be the case that one of them gets even due to some former or later $c_j$ in this round, however this is the same situation as in the next case. If those are not separated by $a_j$, there cannot be a $-j$ between those. Thus at least the right one is on $j$-level two or higher and the other one is either on the same level, or if separated by a $j$, it is in an odd sequence.

If $i$ is even, we extracted $j$'s in an $i$-odd-process before. Thus, due to Corollary~\ref{cor:SeparationInsert} we can use Lemma~34 and its proof in \cite{3erAlgo}.

It remains to show that those new 2- or 3-row-positions will not be changed in \emph{adjust separating points}. This follows as the extracting process of $a$ will leave some negative step in between or will extract a $-1$.
\end{proof}

\begin{proof}[Proof of Theorem~\ref{theo:ualgo2welldefSYT}]
For well-definedness we have to show that the two while loops terminate. That the inner one terminates ensure Lemma~\ref{lem:Ualgo2HeightVio} and Lemma~\ref{lem:Ualgo2AB}. The outer one has to terminate, as with each extraction the word gets smaller by two, so in the end there is nothing left to build a 2-or 3-row position.

Using Lemma~\ref{lem:UalgoSYT}, we see that it produces a standard Young tableau with even row lengths.
\end{proof}

\begin{theorem}
\label{theo:algo2ualgo2Inverse}
Algorithms~\ref{alg:2} and~\ref{alg:u2} are inverse.
\end{theorem}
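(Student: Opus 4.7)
The strategy is to reduce to showing that one iteration of the outer for-loop is inverse, just as was done for the bijection between Kwon's and the alternative Littlewood-Richardson tableaux (Theorem~\ref{theo:algo1ualgo1Inverse}). Since Algorithm~\ref{alg:2} processes rows $i = 2, 3, \dots, n$ in increasing order and Algorithm~\ref{alg:u2} processes them in decreasing order, it suffices to verify that the insertion of row $i$ (possibly preceded by changing $0$'s into $\pm j$ when $i$ is even) is inverse to the extraction of row $i$ (possibly followed by changing $\pm j$ back into $0$'s). The initialization/termination steps (insert row $1$ as $(1,-1,\dots,1,-1)$ versus extract row $1$ by reading off labels) are clearly inverse by inspection. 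Within a single iteration, the inner for-loops traverse pairs $(a,b)$ of the row in opposite directions, so we must show that the insertion of one pair is inverse to the extraction of one pair.

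Next, one matches each labeled sub-case of Algorithm~\ref{alg:2} with its counterpart in Algorithm~\ref{alg:u2}. The main bijective correspondences I would spell out are: the steps labeled $b$, $a$, $b_{l+1}$, $a_{l+1}$ (for $l < j$) in Algorithm~\ref{alg:2} are undone by the correspondingly labeled steps $b$, $a$, $b_l$, $a_l$ in Algorithm~\ref{alg:u2}; the cases \emph{$a_{j+1}$ 1}, \emph{$a_{j+1}$ 2}, \emph{$b_{j+1}$} for $i$ even correspond to \emph{$a_{j+1}$ 1}, \emph{$a_{j+1}$ 2}, \emph{$b_{j+1}$}; the \emph{connect/separate} situations (for both parities of $i$) are each other's inverses since each changes a $j,-j$ pair into $0,0$ or vice versa; \emph{mark separation point} in Algorithm~\ref{alg:u2} reconstructs the markings that \emph{adjust separation point} and \emph{mark it + connect} created; and the \emph{height violation} corrections in both algorithms undo each other by reversing the level shift of exactly one position. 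The tracking developed in Lemmas~\ref{lem:HeightInsert} and~\ref{lem:Ualgo2HeightVio} guarantees that the value changed is always an $l-1 \leftrightarrow l$ swap at the same location.

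The key consistency check is that whenever a case triggers in one algorithm, the symmetric case triggers in the other when it arrives at the mirror-image position. Here one uses Lemma~\ref{lem:Sumis0} and Lemma~\ref{lem:Ualgo2SumIsZero} to see that all $l$-levels agree at the start and end of each iteration, together with Corollary~\ref{cor:Newjplus1}, so that the indices $a_l, b_l$ and the notions of \emph{$j$-even/odd position} and \emph{$l$-level $m$} have consistent values. In particular, one must show: that \emph{merge}-type reshuffling of separation points (the \emph{adjust separation point} block) is inverse to the re-marking procedure in Algorithm~\ref{alg:u2}, by arguing that the changed positions in both algorithms have the same left-right extent (Lemma~\ref{lem:SeparationHeight}); that the \emph{ignore $a_l$} clauses introduce the same sequence of local changes in both directions (compare Example~\ref{ex:SpecialCases}); and that the predecessor structure (Lemma~\ref{lem:Predecessor}) is preserved, so that the last $c_{l+1}$ found during insertion is exactly the first $c_{l+1}$ released during extraction.

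The hard part, as in the proof of Theorem~\ref{theo:algo1ualgo1Inverse}, will be verifying the ``non-spurious'' direction: that Algorithm~\ref{alg:u2} does \emph{not} trigger an operation in a situation that was not produced by Algorithm~\ref{alg:2}. For the $b$, $a$, $c_{l+1}$ operations this follows because their trigger condition (a $-l$ on the relevant level with a matching $-l-1$ below) can only have come from an insertion of $c_l$. For \emph{connect}/\emph{separate}, the trigger is a two-position pattern $(0,0)$ or $(-j,j)$ on the prescribed level and parity, which occurs only as the output of the corresponding insertion step. The genuinely delicate case is the \emph{height violation special case} ($i$ odd, $w(p)=0$ on $j$-level $0$) and the $i$-even analogue, which must be matched with the insertion-side special cases (\emph{$i$ even, $a_{j+1}\neq 0$} and \emph{$i$ odd, $w(\tilde p)=0$ on $j$-level $0$}). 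These are precisely the exceptional branches of the while-loop and require a separate local verification using the sum-preserving updates of Lemmas~\ref{lem:Sumis0} and~\ref{lem:Ualgo2SumIsZero}, together with the observation from Lemma~\ref{lem:SeparationHeight} that separation-point height violations in $j-1$ are tolerated in exactly the same places on both sides.
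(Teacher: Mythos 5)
Your proposal follows essentially the same route as the paper: a case-by-case matching of the identically named steps of the two algorithms (with the index shift between $c_{l+1}$ and $c_l$), special attention to \emph{height violations}, \emph{separation points} and the exceptional branches, and the crucial ``non-spurious'' direction that no operation triggers in one algorithm without having been produced by the other. The outline and the identification of the delicate cases are correct; the only difference is that the paper actually carries out the local verifications (the level-shift sketches for \emph{height violations}, the three special cases, and the appeal to the $2$- and $3$-row-position analysis from the $\mathrm{SO}(3)$ paper) that your plan defers.
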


\begin{proof}

We show this by showing that every step has its inverse in the other algorithm. Those steps are named (commented) the same. We consider them separately. 

For the following steps it follows directly from the definition that they are inverse:
\begin{itemize}
\item Initialize $j$
\item Insert / extract row 1
\item $a$ / $b$ getting inserted or extracted as $a_1$ / $b_1$
\end{itemize}

For the following steps we have to argue a little more:

\begin{itemize}
\item \emph{Height violations:}

We show that if a \emph{height violation} $h$ with $w(h)=l$ in Algorithm~\ref{alg:2} occurs after inserting $c^1_{l+1}$ and we correct it, and insert $c^2_{l+1}$ later, we get the same $h$ with $w(h)=l+1$ as \emph{height violation} when extracting this $c^2_{l+1}$, and the other way around. This is sufficient as due to the fact that \emph{height violations} in $l$ are always $l$ respectively $l+1$, they act inverse.

When we \emph{adjust} a \emph{height violation} $h$ in Algorithm~\ref{alg:2}, we get a $w(h)=l+1$ whose $(l+1)$-level is one less than its $l$-level. When extracting $c^2_{l+1}$ in Algorithm~\ref{alg:u2}, this decreases the $l$-level to the right, and $h$ is the first position with an $l$-level that is too large, as the other positions were no \emph{height violation} before starting \emph{height violation} in Algorithm~\ref{alg:2}. The other way around is similar. If we find a \emph{height violation} $w(h)=l+1$ caused by extracting $c^2_{l+1}$ in Alorithm~\ref{alg:u2} we change this into an $l$ and extract a $c^1_{l+1}$ to the right. When inserting $c^1_{l+1}$ in Algorithm~\ref{alg:2}, we increase the $(l+1)$-level such that exactly at $h$ there is a new \emph{height violation}. Again nothing earlier could have caused it as those positions were no \emph{height violations} in Algorithm~\ref{alg:u2}. We illustrated this for $l+1\neq j$ in the following sketch.

\noindent \begin{tikzpicture}[scale=0.35]
\draw (2.5,4.5) node{path $l$};
\draw (0,1)--(1,0) node[midway, above]{$c^2_{l+1}$};
\draw [dotted,decorate, decoration={snake,amplitude=.6mm,segment length=4mm,post length=0mm, pre length=0mm}] (1,0) -- (3,2);
\draw (3,2)--(4,3)node[midway, above]{$h$};
\draw [dotted,decorate, decoration={snake,amplitude=.6mm,segment length=4mm,post length=0mm, pre length=0mm}] (4,3) -- (6,5);
\draw (6,5)--(7,4) node[midway, above]{$c^1_{l+1}$};
\draw [dotted,decorate, decoration={snake,amplitude=.6mm,segment length=4mm,post length=0mm, pre length=0mm}] (1,-2) -- (3,0);
\draw [dotted,decorate, decoration={snake,amplitude=.6mm,segment length=4mm,post length=0mm, pre length=0mm}] (4,0) -- (6,2);
\draw (4.5,-1.5) node{path $l+1$};
\end{tikzpicture}
\begin{tikzpicture}[scale=0.3]
 \draw (0,-0.5);
 \draw (0,5);
 \draw [<->,decorate, decoration={snake,amplitude=.4mm,segment length=2mm,post length=1mm, pre length=1mm}] (0,3) -- (1.3,3);
\end{tikzpicture}
\begin{tikzpicture}[scale=0.35]
\draw (0,1)--(1,0) node[midway, above]{$c^2_{l+1}$};
\draw [dotted,decorate, decoration={snake,amplitude=.6mm,segment length=4mm,post length=0mm, pre length=0mm}] (1,0) -- (3,2);
\draw (3,2)--(4,3)node[midway, above]{$h$};
\draw [dotted,decorate, decoration={snake,amplitude=.6mm,segment length=4mm,post length=0mm, pre length=0mm}] (4,3) -- (6,5);
\draw (6,5)--(7,5) node[midway, above]{$c^1_{l+1}$};
\draw (0,-2);
\draw [dotted,decorate, decoration={snake,amplitude=.6mm,segment length=4mm,post length=0mm, pre length=0mm}] (1,-1) -- (3,1);
\draw [dotted,decorate, decoration={snake,amplitude=.6mm,segment length=4mm,post length=0mm, pre length=0mm}] (4,1) -- (6,3);
\draw (6,3)--(7,2) node[midway, above]{$c^1_{l+1}$};
\end{tikzpicture}
\begin{tikzpicture}[scale=0.3]
 \draw (0,-0.5);
 \draw (0,5);
 \draw [<->,decorate, decoration={snake,amplitude=.4mm,segment length=2mm,post length=1mm, pre length=1mm}] (0,3) -- (1.3,3);
\end{tikzpicture}
\begin{tikzpicture}[scale=0.35]
\draw (0,2)--(1,1) node[midway, above]{$c^2_{l+1}$};
\draw [dotted,decorate, decoration={snake,amplitude=.6mm,segment length=4mm,post length=0mm, pre length=0mm}] (1,1) -- (3,3);
\draw (3,3)--(4,3)node[midway, above]{$h$};
\draw [dotted,decorate, decoration={snake,amplitude=.6mm,segment length=4mm,post length=0mm, pre length=0mm}] (4,3) -- (6,5);
\draw (6,5)--(7,5) node[midway, above]{$c^1_{l+1}$};
\draw [dotted,decorate, decoration={snake,amplitude=.6mm,segment length=4mm,post length=0mm, pre length=0mm}] (1,-2) -- (3,0);
\draw (3,0)--(4,1)node[midway, above]{$h$};
\draw [dotted,decorate, decoration={snake,amplitude=.6mm,segment length=4mm,post length=0mm, pre length=0mm}] (4,1) -- (6,3);
\draw (6,3)--(7,2) node[midway, above]{$c^1_{l+1}$};
\end{tikzpicture}
\begin{tikzpicture}[scale=0.3]
 \draw (0,-0.5);
 \draw (0,5);
 \draw [<->,decorate, decoration={snake,amplitude=.4mm,segment length=2mm,post length=1mm, pre length=1mm}] (0,3) -- (1.3,3);
\end{tikzpicture}
\begin{tikzpicture}[scale=0.35]
\draw (0,1)--(1,1) node[midway, above]{$c^2_{l+1}$};
\draw [dotted,decorate, decoration={snake,amplitude=.6mm,segment length=4mm,post length=0mm, pre length=0mm}] (1,1) -- (3,3);
\draw (3,3)--(4,3)node[midway, above]{$h$};
\draw [dotted,decorate, decoration={snake,amplitude=.6mm,segment length=4mm,post length=0mm, pre length=0mm}] (4,3) -- (6,5);
\draw (6,5)--(7,5) node[midway, above]{$c^1_{l+1}$};
\draw (0,-1)--(1,-2) node[midway, above]{$c^2_{l+1}$};
\draw [dotted,decorate, decoration={snake,amplitude=.6mm,segment length=4mm,post length=0mm, pre length=0mm}] (1,-2) -- (3,0);
\draw (3,0)--(4,1)node[midway, above]{$h$};
\draw [dotted,decorate, decoration={snake,amplitude=.6mm,segment length=4mm,post length=0mm, pre length=0mm}] (4,1) -- (6,3);
\draw (6,3)--(7,2) node[midway, above]{$c^1_{l+1}$};
\end{tikzpicture}

The three special cases are left to consider.

\emph{Ignore $a_l$} at a \emph{height violation} happens when $a_l$ is already inserted but $a_{l+1}$ is not, thus the level of path $l$ is changed but the level of path $l+1$ is not. When we do the inverse this is not relevant as in this case $a$ is always extracted further than $b$. Thus we have to do the same if $b_{l+1}$ is extracted but $b_{l}$ is not. Therefore this special case changes nothing from the argumentation above.

The special case \emph{$i$ even}, happens if there is a \emph{height violation} between $a_{j+1}$ and $a_j$. Thus, together with finding a new $a_j$, it sets $-(j-1),(j-1)$ between those to $0,0$. Therefore when we extract we get the left $0$ as $a_j$ and change it into $-(j-1)$ again. The special case changes the right $0$ into $(j-1)$ and sets $a_j$ and $a_{j+1}$ to undefined. For an illustration of an example see the fourth tableau of Example~\ref{ex:SpecialCases}. The other direction works the same way.

The special case \emph{$i$ odd} happens after \emph{connect}. Thus it changes a $-j$ changed into a $0$ back into a $-j$. Then $b_j$ is inserted anew as this $-j$. When it gets extracted, it detects \emph{height violations}. Correcting them leaves the $0$ we produced on $j$-level $0$, which we change back into a $j$. For an illustration of an example see the fifth tableau of Example~\ref{ex:SpecialCases}. The other direction works the same way.
We point out that Algorithm~\ref{alg:1} \emph{connects} between $a_{j+1}$ and $b_{j+1}$ at $j$-level $0$ whenever $i$ is odd.

\item \emph{Separation points:}

In both algorithms we \emph{mark} and \emph{adjust separation points} while searching for $a$'s and $b$'s. This way we \emph{adjust separation points} before reaching the next $a$ and $b$. Thus in Algorithm~\ref{alg:2} this happens right of $a$ and $b$ and in Algorithm~\ref{alg:u2} this happens to the left. This makes no difference as we consider everything still in the same order, and make an extra iteration for \emph{separating points} at the ends not considered so far.

We mark positions $\pm l$ at certain points, to make certain exceptions for them. We mark $0$'s also (in a slightly different way), but those are not relevant as those are never such exceptions.

The \emph{separation points} we just mark are for an $l$ between $a_l$ and $b_{j+1}$ and mark positions $\pm l$ up to $\pm j$. Due to \emph{$i$ odd separate} and \emph{$i$ even connect}, we mark in each algorithm positions that way that they form the same pattern after the iteration as the other algorithms marks in the beginning of an iteration. The marking $a_l$ ensures that even though $a$ might be inserted on the left part of the \emph{separation point}, all $\pm l$ that should be marked are marked.

The \emph{separation points} we \emph{mark} and \emph{adjust} in Algorithm~\ref{alg:2} are for no $l$ between $a_l$ and $b_{j+1}$. When we \emph{adjust a separation point} in Algorithm~\ref{alg:2}, it is automatically right of the current $b_l$. Thus, what we have to show is that exactly \emph{separation points} we \emph{adjust} form the patterns we demand in Algorithm~\ref{alg:u2} \emph{adjust separation points}.

When an $a$ is in between there are three different ways it can be so. When $a$ is inserted as such, we have a marked $-1$. When $a$ starts to be in between in the first path marked, lets call it $l$, then $a_{l-1}$ is either between the marked positions $\pm l$, therefore a $-(l-1)$ is between some marked positions or it is not, thus it changed a marked $-(l-1)$ into a $-l$ but not the according $(l-1)$. When $a$ causes a \emph{height violation}  even though it is marked and therefore $p<a_l$, we can argue as above. This explains why we look for $\pm l$ between the $0$'s directly to the right. We do so because in Algorithm~\ref{alg:2} we mark all $\pm l$ between a $-j$ until the next $j$, which become the leftmost and rightmost $0$ of their sequence of $0$'s.

When we \emph{adjust a separation point}, we shift the $\pm l$ upwards, as $-(l-1)$ was not between $\pm(l-1)$ it is now not between $\pm l$ any more.
\end{itemize}

With the knowledge of those, the following gets easier:
\begin{itemize}
\item \emph{$i$ even, $i$ odd}:

It remains to show that everything that does not involve marking is inverse. Due to Corollary~\ref{cor:SeparationInsert} we can use Lemma~36 in~\cite{3erAlgo} to show this. We point out that the main arguments include analyzing 2- and 3-row-positions.

\item $a_l$ / $b_l$:

We point out, that we have an index shift of one at $l$ between the formulations. Once we consider this, we see that they operate clearly in the opposite way.
It remains to show, that they act on the same positions. As they always take the next $-l$, and change it, there is no $-l$ that could be taken before from the other algorithm. \qedhere 
\end{itemize}
\end{proof}

\begin{theorem}
\label{theo:algo2DescPres}
Algorithm~\ref{alg:2} is descent preserving.
\end{theorem}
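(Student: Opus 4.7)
The plan is to proceed by induction on the number of rows of $Q$ that have been inserted into the labeled word $w$, maintaining the following invariant after the insertion of rows $1, 2, \ldots, i$: for every pair of consecutive integers $j, j+1$ both currently present as labels of $w$ (equivalently, both lying in rows $\leq i$ of $Q$), the position of $w$ with label $j$ is a descent of $w$ in the sense of Section~\ref{sec:Descents} if and only if $j$ is a descent of $Q$. Since after the final iteration of the outer \emph{for}-loop the labels of $w$ are exactly $1, 2, \ldots, r$, the invariant at $i = n$ is precisely the theorem.

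For the base case $i = 1$, the entries of row $1$ written in increasing order as $e_1 < e_2 < \cdots < e_{2m}$ initialize $w$ by $w(e_s) = 1$ for $s$ odd and $w(e_s) = -1$ for $s$ even. Any pair of consecutive integers $(j, j+1)$ both in row $1$ must be $(e_s, e_{s+1})$ with $e_{s+1} = e_s + 1$; for $s$ odd the initial segment of $w$ before position $s$ contains equally many $1$'s and $(-1)$'s, so the bottom-level exception applies and $e_s$ is not a descent of $w$, while for $s$ even the transition from $-1$ to $1$ admits no directed path in the crystal, so again $e_s$ is not a descent of $w$. Both cases match the fact that $e_s$ is not a descent of $Q$ (both entries lie in row $1$).

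For the inductive step I plan to insert one pair $(a, b)$ of row $i$ at a time, from right to left, and verify that each elementary operation of Algorithm~\ref{alg:2} preserves the invariant. The decisive observation is that a descent at position $p$ is a local condition depending only on $w_p$, $w_{p+1}$ and the quantities $\#j - \#(-j)$ of the initial segment ending just before $p$; hence the invariant can be maintained by checking a finite list of local transition patterns. The consecutive-integer pairs whose descent status can change during a single insertion of $(a, b)$ are $(a-1, a)$, $(a, a+1)$, $(b-1, b)$ and $(b, b+1)$ (when the other integer is already a label of $w$), together with any pair $(j, j+1)$ whose $w$-adjacency is affected by a \emph{merge}, \emph{connect}, \emph{separate}, \emph{shift}, \emph{correct parity}, \emph{height violation}, or \emph{adjust separation point}. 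Lemmas~\ref{lem:HeightInsert} and~\ref{lem:SeparationHeight} confine remaining \emph{height violations} to \emph{separation points}, and Lemma~\ref{lem:Predecessor} guarantees that the insertion chain terminates at the predecessor without producing fresh violations, which together reduce the verification to a finite case check.

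The main obstacle will be the cross-path interaction: a single insertion may trigger corrections in several paths simultaneously, and a \emph{separation point} created by \emph{$i$ odd separate} can be revisited by \emph{adjust separation point} in a later round, which at first glance might alter descents at positions that were already fixed by a previous row. My plan to overcome this is to invoke Corollary~\ref{cor:SeparationInsert}, which asserts that \emph{adjust separation point} modifies $w$ exactly as if an extra $a$ had been inserted inside the separation point and an extra $b$ to the left of it; thus every such later interaction is reduced to an ordinary insertion already treated in the case list. For $n = 3$ the resulting case check specializes, via Theorem~\ref{theo:3erAlgoTheSame}, to the descent-preservation argument of \cite{3erAlgo}; for larger $n$ the same local-to-global reasoning is carried out path by path, using that letters $\pm l$ with $l < j$ act as horizontal steps in path $j$ and therefore do not disturb descents determined in that path.
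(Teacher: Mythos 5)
Your skeleton coincides with the paper's proof: induct over the insertion of pairs $(a,b)$, observe that a descent of the word is a local condition, verify that the new positions create descents exactly at the right places, and use Lemmas~\ref{lem:HeightInsert}, \ref{lem:SeparationHeight} and Corollary~\ref{cor:SeparationInsert} to dispose of \emph{height violations} and \emph{adjust separation point}. However, as written the proposal stops exactly where the proof begins. The entire content of the argument is the ``finite case check'' you defer: that a freshly inserted $-1$ turns its left neighbour into a descent unless $a$ and $b$ are order-adjacent (which requires ruling out a $-1$ or a $1$ on level zero immediately to the left); that relabelling a $-l$ as $-(l+1)$ when it is chosen as $c_{l+1}$ neither creates nor destroys a descent at the position to its left; that \emph{connect} and \emph{separate} exchange patterns such as $(j,0)$ and $(0,-j)$ without changing descent status; and that correcting a \emph{height violation} (replacing an $l$ by an $l+1$) cannot lose a descent because an $l+1$ to the right would have triggered the violation earlier. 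None of these verifications appears in the proposal, so it is an outline of the paper's proof rather than a proof.

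Two further points. First, \emph{merge}, \emph{shift} and \emph{correct parity} are operations of Algorithm~\ref{alg:1} (Bijection $A$), not of Algorithm~\ref{alg:2}; listing them among the transitions to be checked suggests the case list has not been matched against the algorithm the theorem is actually about. Second, your invariant is phrased only for pairs of consecutive integers both already present, whereas the induction needs control over order-adjacent pairs of the partial tableau (the paper's ``new total order''): a descent recorded between $j$ and its current successor must migrate correctly when a new element is inserted between them, and the letters $w(j)$, $w(j{+}1)$ of an already-settled consecutive pair are still modified in later rounds (as $c_l$'s, or by \emph{connect}/\emph{separate}/\emph{adjust separation point}). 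You gesture at this, but it is precisely the second and third steps of the paper's proof and is the part that cannot be waved away by locality alone.
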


\begin{proof}
This proof is very similar to the proof of Lemma~37 in~\cite{3erAlgo}.

We show that the algorithm preserves descents after every insertion of a pair $a,b$ in the sense that we consider the inserted numbers as a new total order.

In the first step we show that when we insert a pair $(a,b)$, $a$ and $b$ cause a descent except for the case that $a$ and $b$ are neighbors in the order of already inserted numbers. To see why we want this to hold, we consider the partial standard Young tableau consisting only of already inserted numbers. The number smaller than $a$ needs to be in a row below $a$ as numbers in the same row to the right are larger. The same holds for $b$ except if $a$ and $b$ are neighbors in the current order.

In the case that $a$ and $b$ are neighbors, they are both inserted as $-1$'s and we have to show that only $a$ is a descent. Thus everything else is analogously to the general case.

As $a$ or $b$ are inserted with $-1$ the only way that this causes no descent is that the position to the left is a $-1$ or a $1$ on level zero. The latter is not possible as this $1$ would have been on level $-1$ before. A $-1$ directly to the left of $a$ or $b$ would change either into a $0$, a $1$ or a $-2$, depending on $i$.  All cases cause a descent.

In the second step we show that we do not lose descents when inserting a pair $(a,b)$. If an entry was a descent in the partial tableau before inserting $(a,b)$ it is still one in the new partial tableau, either with the same number above, or with $a$ or $b$. In the former case  neither $a$ nor $b$ are inserted between those. In the latter case either $a$ or $b$ is inserted in between. This creates a descent in the vacillating tableau and removes the other descent as $(-1,x)$ can never be a descent.

Inserting a $-l$, always creates a new descent, when ignoring positions with smaller absolute values. The only such value that is not a descent left of a $-l$ is $-l$. However a $-l$ left of an inserted $-l$ is changed into a $-l-1$ while inserting. (Separation points are ignored if $c=b$, however they are adjusted before, if $b$ would be inserted between those, thus changed into a $-l-1$ too.)

It follows, that the position left of our new $-l$ is a descent if and only if it was a descent before changing it into $-l$.

In the third step we consider \emph{connect} and \emph{separate} as well as \emph{height violation} and \emph{adjust separation points}:

We show that \emph{separate} and \emph{connect} neither produce nor cancel a descent. For \emph{$i$ even connect} this is clear as $(j,-j)$ on $l$-level zero is not a descent. For \emph{$i$ odd separate} we consider a $0$ left or right of a position that was changed in \emph{separate}. Those need to be either $\tilde{a}$ or $\tilde{b}$ or they were changed in \emph{connect}, because otherwise they would have been \emph{separated} also. In the former case we want a descent, in the latter too, as $(j,0)$ has changed into $(0,-j)$ or the other way around. The same holds for $j$'s or $-j$'s of \emph{connect}.

At \emph{height violation} we change an $l$ that was a \emph{height violation} to an $(l+1)$. If $l$ was a descent, and $(l+1)$ is none, there needs to be a $(l+1)$ to the right, however this cannot happen, as then the \emph{height violation} would have started earlier to the right. If $l$ was no descent, then $(l+1)$ is none too. In our special cases we undo some change we have just done before, thus we do not change any descents.

For \emph{separation points} we point out that a descent left of a $\pm l$ needs to be a $\pm (l+1)$, thus when \emph{adjusting} them they get either $\pm (l+1)$ and $\pm (l+2)$ or $\pm (j-1)$ and $0$, both preserves the descent. 
\end{proof}

\begin{theorem}
\label{theo:10-1}
Let $Q$ be a standard Young tableau with rows of even length and $V$ be its corresponding vacillating tableau determined by Algorithms~\ref{alg:2} and~\ref{alg:u2}. If and only if for all rows $i=1,2,\dots,2k+1$ of $Q$ the first position in row $i$ is $i$, the first $k$ steps of $V$ are $1,2,\dots,k,0,-k,-k+1,\dots,-1$.
\end{theorem}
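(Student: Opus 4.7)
My plan is to leverage the mutual inverse relationship of Algorithms~\ref{alg:2} and~\ref{alg:u2} (Theorem~\ref{theo:algo2ualgo2Inverse}): it suffices to prove one implication, and the other follows by inversion. (The displayed sequence $1,2,\dots,k,0,-k,\dots,-1$ has $2k+1 = n$ entries, so I read ``first $k$ steps'' as ``first $n$ steps.'') I will prove the reverse direction: if the first $n$ steps of $V$ are $1,2,\dots,k,0,-k,\dots,-1$, then applying Algorithm~\ref{alg:u2} yields a $Q$ whose first column is $(1,2,\dots,n)$.

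My approach is induction on the outer loop of Algorithm~\ref{alg:u2}, which iterates through rows $i = n, n-1, \dots, 2$ (with row $1$ handled separately at the end). I plan to maintain the following invariant: \emph{before the extraction of row $i$, the labels $1,2,\dots,i$ occupy the leftmost $i$ positions of the current word $w$, and their values form a specific ``evolving header prefix'' determined recursively by the rows already extracted.} In the base case $i = n$, this is precisely the hypothesis. For the inductive step, the inner while loop of Algorithm~\ref{alg:u2} picks $p$ as the second position from left in word $j = \lfloor i/2 \rfloor$; under the invariant, this $p$ is precisely the position labeled $i$. I then trace the inner loop to show the extracted pair $(a,b)$ satisfies $a = i$, so label $i$ is placed in the first cell of row $i$ of $Q$. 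Finally, after the outer loop terminates, label $1$ remains as the leftmost entry of $w$ and, by the last step of the algorithm, is placed in the first cell of row $1$.

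The key technical step is showing that the inner while loop's selection of $p$ always lands on the header position $i$, and that the operations performed as $p$ scans rightward in the (current) header region modify the values in a controlled way that preserves the invariant for the next iteration. This amounts to a careful case analysis by parity of $i$: for $i$ even ($j = i/2$), the relevant branches are ``connect,'' ``$a_{j+1}$ 1/2,'' and possibly ``mark separation point''; for $i$ odd, we instead see ``separate'' and ``$b_{j+1}$.'' In each case one checks that the specific form of the evolving header prefix matches exactly the conditions guarding these branches.

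The main obstacle will be pinning down the precise form of the evolving header prefix. The intermediate values at positions $1,\dots,i$ are \emph{not} the final header values truncated; they depend on the entire history of extractions of rows $n, n-1, \dots, i+1$, and in particular on how much of the ``negative tail'' $-k,-k{+}1,\dots,-1$ has already been absorbed. I expect that the right description is that, at stage $i$, the prefix decomposes into an ``upper'' part (from $1,2,\dots,j$) and a ``middle/lower'' part whose exact shape is controlled by the residual $j = \lfloor i/2 \rfloor$ and the parity of $i$. Once this evolving prefix is pinned down precisely and shown to be invariant under each extraction, the rest of the proof is routine verification against the case table of Algorithm~\ref{alg:u2}.
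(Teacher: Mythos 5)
Your proposal has two genuine gaps. The first is logical: the claim that ``it suffices to prove one implication, and the other follows by inversion'' does not hold. Knowing that Algorithms~\ref{alg:2} and~\ref{alg:u2} are mutually inverse (Theorem~\ref{theo:algo2ualgo2Inverse}) makes the correspondence $Q\leftrightarrow V$ a bijection, but if you only establish that every $V$ with header $1,2,\dots,k,0,-k,\dots,-1$ maps to a $Q$ whose first column is $1,2,\dots,n$, bijectivity alone does not exclude the possibility that some $Q$ with that first column arises from a $V$ \emph{without} the header; in general $f(A_0)\subseteq B_0$ does not imply $f^{-1}(B_0)\subseteq A_0$. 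You would need to prove the forward implication separately (by a parallel induction on Algorithm~\ref{alg:2}) or supply a cardinality argument; you do neither. The second gap is that the heart of your induction is left open: you explicitly defer ``pinning down the precise form of the evolving header prefix,'' yet that invariant \emph{is} the proof. Without stating it exactly and checking it against every branch of the inner loop of Algorithm~\ref{alg:u2} --- including \emph{height violation}, \emph{adjust separation point}, and the parity-dependent special cases --- what you have is a plan, and it is not evident that the invariant you hope for admits a uniform description in $i$.

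For comparison, the paper's proof is one line and takes a different route entirely: since the bijection is descent preserving (Theorem~\ref{theo:algo2DescPres}), $\Des(Q)=\Des(V)$. The condition that the first entry of row $i$ is $i$ for all $i=1,\dots,n$ is equivalent to $\{1,\dots,n-1\}\subseteq\Des(Q)$ (downward induction on the row containing $i$), and the condition that the first $n$ letters of $V$ are $1,2,\dots,k,0,-k,\dots,-1$ is equivalent to $\{1,\dots,n-1\}\subseteq\Des(V)$: a highest weight word must begin with $1$, and $n-1$ consecutive descents, combined with the highest-weight inequalities and the $j(-j)$ exception in the descent definition, force the letters to traverse the full crystal chain, which has exactly $n$ elements. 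This yields both directions simultaneously and requires no tracing of the algorithms; I would recommend replacing the header-prefix induction with this observation.
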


\begin{proof}
This holds as Algorithm~\ref{alg:2} is descent preserving and Algorithms~\ref{alg:2} and~\ref{alg:u2} are inverse.
\end{proof}

\subsection{Cut-away-shapes and $\mu$-horizontal strips}

In this subsection we will define a pattern on vacillating tableau, namely \enquote{cut-away-shapes}, and an equivalent pattern on standard Young tableaux, namely \enquote{$\mu$-horizontal strip}. We will see that these are mapped to each other in Algorithms~\ref{alg:2} and~\ref{alg:u2}. The definition of the latter is strongly related to alternative Littlewood-Richardson tableaux.

\begin{definition}
A vacillating tableau of shape $\emptyset$ has \emph{cut-away-shape} $\mu=(\mu_1,\mu_2, \dots, \mu_l)$ if it ends with 

\begin{tabular}{c cc cc c cc c}
 $(\underbrace{-l,-l,\dots, -l,}$ &$\underbrace{-l+1,-l+1,\dots, -l+1,}$ &$\dots $ &  $\underbrace{-2,-2,\dots, -2,}$ &$\underbrace{-1,-1,\dots, -1})$ \\
 $\mu_l$ & $\mu_{l-1}$ & $\dots$ & $\mu_2$ & $\mu_1$ 
\end{tabular}.
\end{definition}

Therefore, if we delete \enquote{cut away} the last $|\mu|$ positions the vacillating tableau has shape $\mu$.

\begin{example} The following vacillating tableau has cut-away-shape $\mu=(\color{blue} 3 \color{black}, \color{violet} 2 \color{black}, \color{red} 1\color{black})$:
 
\begin{tikzpicture}[scale=0.35]
\draw (0,0)  --
(1,1) node[midway, above] {1}--
(2,2) node[midway, above] {2}--
(3,3) node[midway, above] {3}--
(4,4) node[midway, above] {4}--
(5,4) node[midway, above] {5}--
(6,4) node[midway, above] {6}--
(7,4) node[midway, above] {7}--
(8,4) node[midway, above] {8}--
(9,4) node[midway, above] {9}--
(10,3) node[midway, above] {10};
\draw(10,3)[red]--
(11,3) node[midway, above] {11};
\draw(11,3)[violet]--
(12,3) node[midway, above] {12}--
(13,3) node[midway, above] {13};
\draw(13,3)[blue]--
(14,2) node[midway, above] {14}--
(15,1) node[midway, above] {15}--
(16,0) node[midway, above] {16};
\draw (4,-1)  --
(5,0) node[midway, above] {5}--
(6,-1) node[midway, above] {6}--
(7,0) node[midway, above] {7}--
(8,1) node[midway, above] {8}--
(9,1) node[midway, above] {9};
\draw(9,1)[red]--
(10,1) node[midway, above] {11};
\draw(10,1)[violet]--
(11,0) node[midway, above] {12}--
(12,-1) node[midway, above] {13};
\draw (8,-2)--
(9,-1) node[midway, above] {9};
\draw(9,-1)[red]--
(10,-2) node[midway, above] {11};
\end{tikzpicture}
\end{example}

\begin{remark}
If a tableau has cut-away-shape $\mu=(\mu_1,\mu_2,\dots,\mu_l)$ it has also cut-away-shape $\tilde{\mu}$ where $\tilde{\mu}\subseteq\mu$ are subpartitions of the form $\tilde{\mu}=(\mu_1,\mu_2,\dots,\mu_{m},\mu_{m+1}-u)$ for every $0\leq m<l$ and $0\leq u <\mu_{m+1}$. 
\end{remark}

\begin{definition}
\label{def:MuHorizontalStrip}
Let $\mu$ be a partition with $\ell(\mu)\leq k$. Let $Q$ be a standard Young tableau with $2k+1$, possibly empty, rows, whose lengths have all the same parity, and $r$ entries. A \emph{$\mu$-horizontal strip} is a pattern of the last $|\mu|$ numbers in the following way:
\begin{enumerate}
\item \label{p:MuHoriHorizontalStrip} For each $j$, the numbers $r-(\mu_1+\mu_2+\dots+\mu_{j-1})-\mu_j+1$ up to $r-(\mu_1+\mu_2+\dots+\mu_{j-1})$ form a horizontal strip filled increasingly from left to right.

By abuse of notation we say that those numbers are in $\mu_j$.

\item \label{p:MuHoriUpperNeighbour} The $i$th number in $\mu_j$ is in a row below the $i$th number of $\mu_{j+1}$ if the latter exists.

\item \label{p:MuHoriMinRow} Go through the elements of $Q$ belonging to the $|\mu|$ last numbers from top to bottom, from right to left. Let $e$ be the current element of the $\mu$-horizontal strip. We define a sequence $v_e$ of elements of the $\mu$-horizontal strip. Let $e$ be the first entry of $v_e$. If $m-1$ entries of $v_e$ are defined, let $f$ be entry number $m-1$. We search now for entry number $m$. For that we consider entries whose that are smaller than $f$ and which are in exactly $m-1$ sequences defined before $v_e$. If this set is nonempty, take the largest entry as entry $m$. If it is empty, $v_e$ has no more entries.

Let $r_e$ be the row $p$ is in.
Now we define the value $o_e$ to be the number of entries in $v_e$ with the following properties. It is the rightmost occurrence in their $\mu_j$ and if number $m$ in $v_e$, all $v_{\tilde{e}}$, where $\tilde{e}\neq e$ is in the same row as $e$, have at most $m-1$ entries.

We require $r_e\geq 2 |v_e| - o_e$.

\end{enumerate}
\end{definition}

\begin{proposition}
\label{prop:MuAndAeLRSame}
If and only if the $|\mu|$ largest elements in a standard Young tableau $Q$ form a $\mu$-horizontal strip, the reverse skew semistandard tableau we obtain by deleting smaller elements and replacing elements in $\mu_j$ by $j$ is an alternative orthogonal Littlewood-Richardson tableau.  
\end{proposition}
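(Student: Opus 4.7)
The strategy is a direct translation: the substitution ``delete the $r-|\mu|$ smallest entries and replace every entry of $\mu_j$ by $j$'' is a bijection between fillings of the $\mu$-horizontal strip inside $Q$ and fillings of the skew shape (outer shape of $Q$) $\setminus\lambda$ by the type $\mu$. Under this substitution I will verify that the three defining conditions of a $\mu$-horizontal strip correspond one-to-one to ``reverse skew semistandard + type $\mu$'' together with the two conditions in Definition~\ref{def:aoLRT}. Since $Q$ has $2k+1$ rows of lengths of the same parity and $\lambda$ is the shape after deletion of the last $|\mu|$ entries, the outer/inner shape data and the type required of an alternative orthogonal Littlewood-Richardson tableau are automatic, so the whole statement reduces to this matching of conditions.

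First I would handle Property~\eqref{p:MuHoriHorizontalStrip}. The horizontal-strip condition on each $\mu_j$, together with the fact that the numbers in $\mu_j$ are larger than those in $\mu_{j+1}$, forces no two $j$'s in the same column and forces columns of the substituted tableau to be strictly decreasing top-to-bottom (the entries read $\ldots,j+1,j,\ldots$ thanks to Property~\eqref{p:MuHoriUpperNeighbour}), and rows to be weakly decreasing since every $j$ lies strictly to the right of every $j+1$ in the same row by the increasing-left-to-right filling inside each $\mu_j$ and the alignment Property~\eqref{p:MuHoriUpperNeighbour}. Conversely, any reverse skew semistandard filling of the required shape with type $\mu$ reverses to a $\mu$-horizontal strip with Property~\eqref{p:MuHoriHorizontalStrip}.

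Next, Property~\eqref{p:MuHoriUpperNeighbour} of the $\mu$-horizontal strip is literally the first condition of Definition~\ref{def:aoLRT} after the substitution: ``the $i$th entry of $\mu_j$ is in a row below the $i$th entry of $\mu_{j+1}$'' becomes ``the $i$th cell from the left labeled $j+1$ is above the $i$th cell from the left labeled $j$'', which is exactly the Yamanouchi criterion stated in Definition~\ref{def:aoLRT}(1). This equivalence is immediate once one observes that the substitution preserves positions and only relabels entries.

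The main bookkeeping, and the only real obstacle, is to match Property~\eqref{p:MuHoriMinRow} with the second condition of Definition~\ref{def:aoLRT}. The plan is to show that the iteration orders agree and that the associated sequences $v_e$ and $v_p$ are the same set of cells with corresponding counters $r_e=r_p$, $|v_e|=|v_p|$, $o_e=o_p$. For the iteration order: in $Q$ we traverse the $|\mu|$ largest entries from top to bottom and, within a row, from right to left; under the substitution top-to-bottom in $Q$ corresponds (by Property~\eqref{p:MuHoriUpperNeighbour} and the arrangement of $\mu_{\ell(\mu)}$ on top down to $\mu_1$ at the bottom) to traversing the reading word of the substituted tableau from right to left. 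The inversion of the order on numerical values under the substitution (larger original number $\leftrightarrow$ smaller label $j$) converts ``smaller than $f$'' in Property~\eqref{p:MuHoriMinRow} into ``larger than the letter of $e$'' in Definition~\ref{def:aoLRT}(2), and the tie-breaking rules (``largest entry'' versus ``smallest letter, rightmost position'') likewise correspond. Checking that the definition of $o$ matches is then straightforward since ``rightmost occurrence in its $\mu_j$'' translates to ``rightmost occurrence of its letter'', and the ``only''-clause is unchanged. Once this dictionary is in place the common inequality $r\geq 2|v|-o$ is literally the same, giving the equivalence in both directions.
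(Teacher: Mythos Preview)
Your proposal is correct and follows essentially the same approach as the paper: a direct dictionary between Definition~\ref{def:aoLRT} and Definition~\ref{def:MuHorizontalStrip}. The paper's proof is a two-sentence sketch saying that the definitions match up, singling out only the tie-breaking rule in the construction of $v$ (``largest entry'' versus ``smallest letter, rightmost position'') as the one point needing comment, and noting it is harmless because entries in each $\mu_j$ increase left to right; you arrive at exactly the same observation in your third step, with the added benefit of spelling out explicitly why the iteration orders and the remaining clauses (the $o$-counter, the row inequality, the Yamanouchi/row-position condition, and the reverse-semistandard shape data) correspond.
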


\begin{proof}
This follows directly from the definitions (Definition~\ref{def:aoLRT} and Definition~\ref{def:MuHorizontalStrip}). The main difference in the definitions is that in the $\mu$-horizontal strip we only require in the third point of defining $v$ that it is the largest one, and not that it is the rightmost occurrence. Since entries in $\mu_j$ are increasing, this is still equivalent.
\end{proof}

\begin{example}
We consider the following tableaux (the first and the last one are corresponding tableaux to those in Example~\ref{ex:aoLRT}):

\hspace{0.4cm}\begin{tikzpicture}[scale=0.35]
  \draw (0,7) -- (4,7);
  \draw (0,6) -- (4,6);
  \draw (0,5) -- (4,5);
  \draw (0,4) -- (4,4);
  \draw (0,3) -- (2,3);
  \draw (0,2) -- (2,2);
  \draw (0,7) -- (0,2);
  \draw (1,7) -- (1,2);
  \draw (2,7) -- (2,2);
  \draw (3,7) -- (3,4);
  \draw (4,7) -- (4,4);
  \draw (0.5,6.5) node {1};
  \draw (1.5,6.5) node {2};
  \draw (2.5,6.5) node {3};
  \draw (3.5,6.5) node {4};
  \draw (0.5,5.5) node {5};
  \draw (1.5,5.5) node {6};
  \draw (2.5,5.5) node {7};
  \draw (3.5,5.5) node {8};
  \draw (0.5,4.5) node {9};
  \draw (0.5,3.5) node {10};
  \draw[red] (1.5,4.5) node {11};
  \draw[violet] (1.5,3.5) node {12};
  \draw[violet] (2.5,4.5) node {13};
  \draw[blue] (0.5,2.5) node {14};
  \draw[blue] (1.5,2.5) node {15};
  \draw[blue] (3.5,4.5) node {16}; 
      \draw (0,1);
\end{tikzpicture}
\hspace{0.8cm}
\begin{tikzpicture}[scale=0.35]
  \draw (0,7) -- (2,7);
  \draw (0,6) -- (2,6);
  \draw (0,5) -- (2,5);
  \draw (0,4) -- (2,4);
  \draw (0,7) -- (0,4);
  \draw (1,7) -- (1,4);
  \draw (2,7) -- (2,4);
  \draw (0.5,6.5) node {1};
  \draw (1.5,6.5) node {2};
  \draw (0.5,5.5) node {3};
  \draw[violet] (1.5,5.5) node {4};
  \draw[blue] (0.5,4.5) node {5};
  \draw[blue] (1.5,4.5) node {6};
    \draw (0,1);
\end{tikzpicture}
\hspace{0.8cm}
\begin{tikzpicture}[scale=0.35]
  \draw (0,7) -- (4,7);
  \draw (0,6) -- (4,6);
  \draw (0,5) -- (4,5);
  \draw (0,4) -- (2,4);
  \draw (0,7) -- (0,4);
  \draw (1,7) -- (1,4);
  \draw (2,7) -- (2,4);
  \draw (3,7) -- (3,5);
  \draw (4,7) -- (4,5);
  \draw (0.5,6.5) node {1};
  \draw (1.5,6.5) node {2};
  \draw (0.5,5.5) node {3};
  \draw (1.5,5.5) node {4};
  \draw (2.5,6.5) node {5};
  \draw (3.5,6.5) node {6};
  \draw[violet] (2.5,5.5) node {7};
  \draw[blue] (0.5,4.5) node {8};
  \draw[blue] (1.5,4.5) node {9};
  \draw[blue] (3.5,5.5) node {10};
    \draw (0,1);
\end{tikzpicture}
\hspace{0.8cm}
\begin{tikzpicture}[scale=0.35]
  \draw (0,7) -- (4,7);
  \draw (0,6) -- (4,6);
  \draw (0,5) -- (4,5);
  \draw (0,4) -- (4,4);
  \draw (0,3) -- (2,3);
  \draw (0,2) -- (2,2);
  \draw (0,1) -- (2,1);
  \draw (0,7) -- (0,1);
  \draw (1,7) -- (1,1);
  \draw (2,7) -- (2,1);
  \draw (3,7) -- (3,4);
  \draw (4,7) -- (4,4);
  \draw (0.5,6.5) node {1};
  \draw (1.5,6.5) node {2};
  \draw (2.5,6.5) node {3};
  \draw (3.5,6.5) node {4};
  \draw (0.5,5.5) node {5};
  \draw (1.5,5.5) node {6};
  \draw (0.5,4.5) node {7};
  \draw (1.5,4.5) node {8};
  \draw (0.5,3.5) node {9};
  \draw (1.5,3.5) node {10};
  \draw (2.5,5.5) node {11};
  \draw[red] (3.5,5.5) node {12};
  \draw[violet] (0.5,2.5) node {13};
  \draw[violet] (1.5,2.5) node {14};
  \draw[blue] (0.5,1.5) node {15};
  \draw[blue] (1.5,1.5) node {16};
  \draw[blue] (2.5,4.5) node {17};
  \draw[blue] (3.5,4.5) node {18};
\end{tikzpicture}
\hspace{0.8cm}
\begin{tikzpicture}[scale=0.35]
  \draw (0,7) -- (8,7);
  \draw (0,6) -- (8,6);
  \draw (0,5) -- (8,5);
  \draw (0,4) -- (8,4);
  \draw (0,3) -- (8,3);
  \draw (0,2) -- (6,2);
  \draw (0,1) -- (4,1);
  \draw (0,7) -- (0,1);
  \draw (1,7) -- (1,1);
  \draw (2,7) -- (2,1);
  \draw (3,7) -- (3,1);
  \draw (4,7) -- (4,1);
  \draw (5,7) -- (5,2);
  \draw (6,7) -- (6,2);
  \draw (7,7) -- (7,3);
  \draw (8,7) -- (8,3);
  \draw (0.5,6.5) node {1};
  \draw (1.5,6.5) node {2};
  \draw (2.5,6.5) node {3};
  \draw (3.5,6.5) node {4};
  \draw (0.5,5.5) node {5};
  \draw (1.5,5.5) node {6};
  \draw (2.5,5.5) node {7};
  \draw (3.5,5.5) node {8};
  \draw (0.5,4.5) node {9};
  \draw (1.5,4.5) node {10};
  \draw (2.5,4.5) node {11};
  \draw (3.5,4.5) node {12};
  \draw (0.5,3.5) node {13};
  \draw (1.5,3.5) node {14};
  \draw (2.5,3.5) node {15};
  \draw (3.5,3.5) node {16};
  \draw (0.5,2.5) node {17};
  \draw (1.5,2.5) node {18};
  \draw (2.5,2.5) node {19};
  \draw (3.5,2.5) node {20};
  \draw (4.5,6.5) node {21};
  \draw (5.5,6.5) node {22};
  \draw (6.5,6.5) node {23};
  \draw (7.5,6.5) node {24};
  \draw (4.5,5.5) node {25};
  \draw (5.5,5.5) node {26};
  \draw (6.5,5.5) node {27};
  \draw (7.5,5.5) node {28};
  \draw (4.5,4.5) node {29};
  \draw (5.5,4.5) node {30};
  \draw[red] (4.5,3.5) node {31};
  \draw[red] (6.5,4.5) node {32};
  \draw[red] (7.5,4.5) node {33};
  \draw[violet] (4.5,2.5) node {34};
  \draw[violet] (5.5,3.5) node {35};
  \draw[violet] (6.5,3.5) node {36};
  \draw[violet] (7.5,3.5) node {37};
  \draw[blue] (0.5,1.5) node {38};
  \draw[blue] (1.5,1.5) node {39};
  \draw[blue] (2.5,1.5) node {40};
  \draw[blue] (3.5,1.5) node {41};
  \draw[blue] (5.5,2.5) node {42};
      \draw (0,1);
\end{tikzpicture}

The first tableau contains a $\mu=(3,2,1)$-horizontal strip (as well as a $(3,2)$-, $(3,1)$-, $(3)$-, $(2)$-, $(1)$-, and $\emptyset$-horizontal strip). It is the corresponding standard Young tableau to the vacillating tableau in the previous example. The $v$'s are: $(16)$, $(13)$, $(11)$, $(12,11)$, $(15,13,11)$, $(14,12)$. (Compare with Example~\ref{ex:aoLRT}.)

The second tableau contains a $(2,1)$-horizontal strip but not a $(2,2)$, $(2,2,1)$ or $(2,2,2)$ one due to the third condition. The $v$'s are: $(4)$, $(6,4)$, $(5)$.

The third tableau contains a $(3,1)$-horizontal strip. The $v$'s are: $(10)$, $(7)$, $(9,7)$, $(8)$.

The fourth tableau contains a $(4,2,1)$-horizontal strip but not a $(4,2,2)$-horizontal strip due to the third condition. The $v$'s are: $(12)$, $(18,12)$, $(17)$, $(14)$, $(13)$, $(16,14,12)$, $(15,13)$.

The last (fifth) tableau contains a $\mu=(5,4,3)$-horizontal strip. The $v$'s are: $(33)$, $(32)$; $(37,33)$, $(36,32)$, $(35)$, $(31)$; $(42,37,33)$, $(34,31)$; $(41,36,32)$, $(40,35,31)$, $(39,34)$, $(38)$.\end{example}

Before we prove that $\mu$-horizontal strips are equivalent to cut-away-shapes, we state some facts about Algorithm~\ref{alg:2} we will need later on. These follow directly of the formulation of the algorithm. 
We see that everything happens right of the rightmost up-step that is not part of the right part of a \emph{separation point}. Therefore \emph{height violations} do not play a role here.

\begin{proposition}
For the $|\mu|$ largest positions in $Q$ it holds that:
\begin{itemize}
\item A $-l$ gets a $-(l+1)$ if and only if it is chosen as some $c_{l+1}$ for $l<j$.
\item $-j$'s chosen get $(j+1)$ or $-(j+1)$ when chosen as $c_{j+1}$ in insert row $2j+1$. (They get $0$'s first, and are initialized later.)
\item $-j$'s chosen get either $0$ or $j$ when chosen as $c_{j+1}$ in insert row $2j$. Only if there is only one position right of them they become a $0$ still in our considered part of the path.
\item An $l$ can only get a negative entry if it is part of a \emph{separation point}.
\end{itemize}
\end{proposition}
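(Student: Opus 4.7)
My plan relies entirely on the two observations the authors record immediately before the proposition: that when Algorithm~\ref{alg:2} inserts one of the $|\mu|$ largest positions of $Q$, all work happens strictly to the right of the rightmost up-step which is not already the right half of a separation point, and that no height violation can arise in that region. Together these two facts drastically restrict which case-branches of the inner while loop can fire on such a position, so each of the four bullets will follow from a branch-by-branch verification guided by the case-comments in Algorithm~\ref{alg:2}.

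For bullet~(1), I would observe that for $l<j$ the only lines of Algorithm~\ref{alg:2} whose effect on $w$ includes the assignment $w(p):=-(l+1)$ are those commented \emph{$a_{l+1}$} and \emph{$b_{l+1}$}; both require $w(p)=-l$ and simultaneously record $c_{l+1}:=p$. Conversely any other branch that could rewrite an entry $-l$ — namely \emph{adjust separation point} and the \emph{height violation} rule — is excluded in the tail by the two observations above. The \enquote{if} direction is immediate from the formulation of the algorithm.

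Bullets~(2) and~(3) I would split by the parity of $i$. For bullet~(2), $i=2j+1$ is odd with $\lfloor i/2\rfloor=j$, and the only \emph{$i$ odd} branches that can process a $-j$ chosen as $c_{j+1}$ are the \emph{$b_{j+1}$} and \emph{$a_{j+1}$} lines, each of which sets $w(p):=0$ and records $p$ as $c_{j+1}$. This $0$ survives until the next iteration, when $j$ is incremented to $j+1$ and the \emph{initialize $j$} step replaces every $0$ in $w$ by the alternating pattern $(j+1),-(j+1),(j+1),-(j+1),\dots$; thus $p$ ends up as $+(j+1)$ or $-(j+1)$ according to its parity within that $0$-sequence. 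For bullet~(3), $i=2j$ is even; the branches that can act on a $-j$ in the tail are \emph{$a_{j+1}$ 2} (which sets $w(p):=j$) and \emph{mark it + connect} (which rewrites a consecutive $(j,-j)$ pair to $(0,0)$). A short case analysis on whether a $+j$ lies immediately to the right of $p$ determines which fires: if so, the pair is rewritten to $(0,0)$ and the chosen $-j$ becomes a $0$; otherwise \emph{$a_{j+1}$ 2} applies and the $-j$ becomes a $+j$. Making precise the clause \enquote{only if there is only one position right of them} amounts to showing that in the tail the paired $+j$ is the unique entry separating $p$ from the right boundary of the processed region.

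Bullet~(4) is then a matter of inspecting every line of Algorithm~\ref{alg:2} that can convert a positive entry $l$ into a negative value: the $-l\to-(l+1)$ rules act only on negatives, the $c_{j+1}$ rules affect only negatives or $0$'s, and the height-violation rules are excluded in the tail. The only remaining mechanism is the combination of \emph{$i$ odd separate} and \emph{adjust separation point}, both of which by definition act exclusively on positions that already belong to a separation point. The main obstacle I expect is in bullet~(3): verifying that the candidate branches \emph{$a_{j+1}$ 1}, \emph{$a_{j+1}$ 2}, and \emph{mark it + connect} are mutually exclusive on the $|\mu|$ largest positions, and that exactly one of them fires on each chosen $-j$, will require careful bookkeeping of the $j$-level structure of $w$ immediately after \emph{initialize $j$}, together with the observation that the configuration $(j,-j)$ inside the tail only arises at the very end of the insertion of some row.
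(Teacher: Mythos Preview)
Your proposal is correct and follows essentially the same approach as the paper. The paper does not give a separate proof of this proposition; it merely records the two observations you cite (that for the $|\mu|$ largest positions everything happens to the right of the rightmost up-step not belonging to the right half of a separation point, and hence no \emph{height violation} can occur there) and then declares that the four bullets ``follow directly of the formulation of the algorithm.'' Your branch-by-branch inspection of the case-comments in Algorithm~\ref{alg:2} is exactly the unpacking of that sentence, so you are simply making explicit what the paper leaves to the reader.
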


\begin{lemma}
\label{lem:MuhoriCutaway}
We consider an element $e$ in $\mu_i$ in row $r_e$ which gets inserted.
\begin{enumerate}
\item For $l\leq\lfloor r_e/2 \rfloor$ element number $l$ in $v_e$ is $e_l$.
\item If $|v_e|<\lfloor r_e/2 \rfloor$ $e_l$ with $l>|v_e|$ are left of the part of the labeled word we consider.
\item If $|v_e|>\lfloor r_e/2 \rfloor$ element number $l$ with $l>\lfloor r_e/2 \rfloor$ is part of a \emph{separation point} directly left of our down-steps. Each time nothing is changed to the right of it, the rightmost one of those gets a $-j$.
\end{enumerate}
\end{lemma}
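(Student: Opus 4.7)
My plan is to prove the three claims simultaneously by induction on the order in which elements of the $\mu$-horizontal strip are inserted by Algorithm~\ref{alg:2}. This order is top-to-bottom and, within each row, right-to-left in $Q$, which is exactly the order used in Definition~\ref{def:MuHorizontalStrip} to compute the sequences $v_e$. The base case is immediate: the first horizontal-strip element $e$ to be inserted is placed as $e_1 = -1$, nothing else gets bumped, and $v_e = (e)$.

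For the inductive step, I would use the Proposition stated immediately before the lemma to track precisely what happens during the insertion of $e$. The key observation is that for $l \leq \lfloor r_e/2 \rfloor = j$, the algorithm selects $e_l$ as the rightmost $-(l-1)$ strictly to the left of $e_{l-1}$. By the inductive hypothesis applied to previously-inserted elements, any such $-(l-1)$ was set as $e'_{l-1}$ for some earlier-inserted $e'$ with $|v_{e'}| \geq l-1$, that is, $e'$ already belongs to exactly $l-1$ previously-defined sequences. Since ``rightmost in the word'' translates to ``latest insertion so far'' which, within the horizontal strip, is the same as ``largest value'', the algorithm's choice matches $v_e(l)$. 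At $l = j+1$ a small adjustment is needed because the relevant letters become $0$ or $\pm j$ instead of just $-l+1$, but the case distinctions \emph{$a_{j+1}$ 1}, \emph{$a_{j+1}$ 2} and the even/odd branches in the algorithm still pick the rightmost admissible position, so the identification carries over. This establishes claim (1).

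Claims (2) and (3) then follow by analysing what happens at the boundary $l = \lfloor r_e/2 \rfloor + 1$. If $v_e$ runs out first, there is no longer any $e'$ with enough previously-defined sequences, so the algorithm must reach past the region affected by horizontal-strip insertions to find further $e_l$'s; those positions sit strictly to the left of the part of the word we consider, which is claim (2). If $v_e$ extends beyond the row depth, the only mechanism by which the algorithm can ``dig deeper'' is through the mark/connect/separate machinery — that is, through separation points lying directly to the left of our down-steps. Lemma~\ref{lem:SeparationHeight} together with Corollary~\ref{cor:SeparationInsert} describe exactly how these separation points behave, and in particular how the rightmost one is the next to receive a $-j$ whenever nothing further right is modified; this is claim (3). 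The main obstacle will be the bookkeeping around $l = j+1$ and the verification of (3) across the algorithm's various special cases (\emph{adjust separation point}, \emph{mark it + connect}, \emph{$i$ odd separate}), where the naive matching between $v_e(l)$ and $e_l$ bends in value (for instance a $-j$ appears as a $0$) and one has to check that the correction counted by $o_e$ exactly absorbs the discrepancy. All of this, however, is localized to the separation-point structure for which Lemmas~\ref{lem:HeightInsert}, \ref{lem:SeparationHeight}, and~\ref{lem:Predecessor} have already been developed.
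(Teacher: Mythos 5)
Your overall strategy is the one the paper uses: a double induction (over the insertion order of the horizontal-strip elements, and within that over the entries of $v_e$), matching the algorithm's choice of $e_l$ as the next available $-(l-1)$ against the definition of entry number $l$ of $v_e$, with claim (2) falling out when the supply of candidates inside the considered part of the word is exhausted. Parts (1) and (2) of your sketch are essentially the paper's argument, modulo the small point that you should justify why the \emph{rightmost} $-(l-1)$ left of $e_{l-1}$ in the word coincides with the \emph{largest} admissible entry in the sense of Definition~\ref{def:MuHorizontalStrip}; the paper does this by observing that any $-(l-1)$ lying strictly between the two candidates would already sit in another $v$ of the same row and would otherwise have been taken instead.

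For claim (3), however, you have named the obstacle without supplying the idea that removes it, and that idea is the actual content of the paper's proof. The missing step is the identification of the entries of $v_e$ with index $l>\lfloor r_e/2\rfloor$ as exactly those counted by $o_e$: being counted by $o_e$ forces them to be the rightmost occurrence of their $\mu_m$, and via the Yamanouchi property (Propositions~\ref{prop:2ndPropYamanouchi} and~\ref{prop:MuAndAeLRSame}) this means the paths they land in contain no other position in the region under consideration, which is what makes them part of a \emph{separation point} directly left of the down-steps rather than ordinary bumped entries. One then has to track their values explicitly: entry number $j+1$ is a $-j$ that becomes the rightmost $0$ (on $j$-level $1$ in the even case), the rightmost $0$ becomes a $-(j+1)$ just before the next row when $i$ is odd, entry number $j+2$ goes from $0$ to $j$ under \emph{$i$ odd separate}, and in general entry number $j+m$ carries the value $j-m+1$ or $j-m+2$ depending on the parity of $i$; the clause ``each time nothing is changed to the right of it, the rightmost one gets a $-j$'' is then verified by the case distinction on whether the next $c_l$ is inserted to the right of the corresponding position (in which case $v_c$ absorbs all of its entries and they remain counted by $o$, so the separation point is left alone) or to the left (in which case \emph{adjust separation point} fires and shifts each value up by one). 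Citing Lemma~\ref{lem:SeparationHeight} and Corollary~\ref{cor:SeparationInsert} does not substitute for this bookkeeping, since neither of them says anything about which letters the over-depth entries of $v_e$ carry at each stage; as written, your proposal would not establish claim (3).
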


\begin{proof}
We prove this inductively on the row $r_e$ an element is in.

For the base case we consider an element of the first row. This is the only one belonging to the $\mu$-horizontal strip and the last one of the first row. Thus it gets inserted as a $-1$. One could say that it was inserted as a $0$, thus part of a \emph{separation point}, but changes into a $-j=-1$ when \emph{initializing row $j=1$}. 

We show the induction step by another induction on the elements in $v_e$.
The base case is clear as $e$ gets inserted as $-1$.

Now we consider element $l$ in $v_e$. This is a $-(l-1)$ and was in $l-1$ $v$'s before. Moreover it is left of $e_{l-1}$. Every $-(l-1)$ that is between those, was in some other $v$ in the same row, or else it would have been taken instead. Thus this $-(l-1)$ is $e_l$ and gets changed into an $-l$.
Therefore the first property in question holds.

The second property holds, as once there is no element number $l$ in $v_e$ left, we know that there is no untouched $-(l-1)$ in our part of the path in question left, thus $e_l$ is more to the left.

The third property is more complicated. We point out, that elements, that are number $l$ in $v_e$ with $l>\lfloor r_e/2 \rfloor$, are counted by $o$. Thus they are the rightmost ones of their $\mu_m$. Due to the Yamanouchi property and Propositions~\ref{prop:2ndPropYamanouchi} and~\ref{prop:MuAndAeLRSame} we can argue that in those paths in which they are, there is no other position so far.

Another crucial point for the third property is, that once elements counted by $o$ occur, they also occur in the next row, if there is an element that is larger. The only way how they get less,  is when we correct our separation point, thus if a smaller element is considered or there is an empty row.

We now consider elements number $j+1$ up to $|v_e|$ during the insertion process of $e$.
\begin{itemize}
\item An element that is number $j+1$ in $v_e$ is a $-j$ and gets a $0$ that is the rightmost $0$. This is clear if $i$ is odd. If $i$ is even this follows as then element $j$ needs to be counted as $o$ as well and therefore it is the only element inserted to path $j$ in our area of question. In this case it becomes a $0$ on $j$-level $1$.

\item  The rightmost $0$ gets a $-(j+1)$ on level $0$ if $i$ is odd just before inserting the next row.

\item An element, that is number $j+2$ in $v_e$, is a $0$ before and a $j$ afterwards, if $i$ is odd due to \emph{separate odd}. If $i$ is even, it was and is a $(j-1)$.

\item An element that is number $j+m$ in $v_e$ is a $j-m+1$ (respectively $j-m+2$) if $i$ is even (respectively odd). 
\end{itemize}

Now if we insert a $c_l$ into the first path that contains such a $j-m+1$ (respectively $j-m+2$), there are two possible cases. In the first case, $c_l$ is inserted right of the corresponding $-j+m-1$ (respectively $j-m+2$). In this case $c_l$ is larger, and $v_c$ contains all elements our $-j+m-1$ (respectively $j-m+2$) had in its $v$ as well. Thus those are all counted by $o$. We do not adjust the separation point and the procedure goes on. In the second case, $c_l$ is inserted to the left. Therefore we \emph{adjust the separation point} and the $j-m+1$ (respectively $j-m+2$) becomes a $j+1-m+1$ (respectively $j+1-m+2$). In the same step either a $j-1$ becomes a $j$ or a $j-1$  becomes a $0$ (and thus later on a $-(j+1)$) depending on the parity of $i$. The same happens if for a row there is nothing inserted in the area of question.
\end{proof}

\begin{lemma}
\label{lem:muhorizontal1}
A standard Young tableau $Q$ containing a $\mu$-horizontal step is mapped to a vacillating tableau of cut-away-shape $\mu$ by Algorithm~\ref{alg:2}.
\end{lemma}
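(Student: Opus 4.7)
The plan is to prove that each element $e$ of the $\mu$-horizontal strip, which lies in some $\mu_j$, ends up with value $-j$ in the final labeled word $w$. Since $w$ is sorted by labels and the $|\mu|$ strip elements occupy the rightmost $|\mu|$ positions, with $\mu_1$ taking the last $\mu_1$ labels, $\mu_2$ the next $\mu_2$ to the left, and so on, this yields exactly the pattern $(\underbrace{-\ell(\mu),\dots,-\ell(\mu)}_{\mu_{\ell(\mu)}},\dots,\underbrace{-1,\dots,-1}_{\mu_1})$, i.e.\ cut-away-shape $\mu$.

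First I would invoke the Yamanouchi machinery. By Proposition~\ref{prop:MuAndAeLRSame} the strip corresponds to an alternative orthogonal Littlewood--Richardson tableau, and Proposition~\ref{prop:2ndPropYamanouchi} (via Theorem~\ref{theo:YamanouchiWordSSYT}) then forces $|v_e|=j$ for every $e\in\mu_j$ and tells us that $e$ appears in exactly $j$ of the sequences $v_{\tilde e}$. Because both the processing order of the sequences and the insertion order of the strip elements are top-to-bottom and right-to-left, these two orders coincide, and when the $l$-th sequence containing $e$ gets defined, $e$ is placed in it at position~$l$ (it has been in $l-1$ previously defined sequences).

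Next I would track the value of $e$ through insertions. Lemma~\ref{lem:MuhoriCutaway}(1) identifies position~$l$ of $v_{\tilde e}$ with the algorithmic label $\tilde e_l$, so the insertion of $\tilde e$ changes $w(e)$ from $-(l-1)$ to $-l$. Starting from $w(e)=-1$ immediately after $e$'s own insertion (the case $l=1$ of $v_e$), the value drops by one at each of the $j-1$ later insertions that contain $e$ in their chain, reaching $w(e)=-j$ after the $j$-th such event. No further sequence contains $e$, and by Lemmas~\ref{lem:HeightInsert} and~\ref{lem:Sumis0} no later operation touches this position (it is past the rightmost up-step and not part of a separation point, so the only operations that could change it are bumps indexed by $v$-sequences we have just accounted for). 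Hence $w(e)=-j$ persists to the final word.

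The main obstacle is the separation-point regime $|v_e|>\lfloor r_e/2\rfloor$, handled by Lemma~\ref{lem:MuhoriCutaway}(3). In that case the tail $e_{\lfloor r_e/2\rfloor+1},\dots,e_j$ of $v_e$ is not realised directly as bumps but is absorbed into a marked separation point, with the tentative values distributed among $\pm l$'s and $0$'s as described in the proof of that lemma. The argument here is to verify that each subsequent insertion of a later $\tilde e$ whose $v_{\tilde e}$ contains $e$ triggers either a bump or an \emph{adjust separation point} step, which (by Corollary~\ref{cor:SeparationInsert}) acts on the marked positions as a further pair of insertions and shifts the marked $\pm l$ upward to $\pm(l+1)$; so the count of ``layers'' above $e$ still increases by one per bumping event. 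Condition~(3) of Definition~\ref{def:MuHorizontalStrip}, $r_e\geq 2|v_e|-o_e$, is exactly what is needed to guarantee enough vertical room for all these layers once the $o_e$ rightmost-in-their-$\mu_m$ positions are accounted for, and the upper-neighbour property~(2) synchronises the progression of $\tilde e$ with that of $e$ row by row. Once this is verified, $e$ again ends up with value $-j$, completing the proof.
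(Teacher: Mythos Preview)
Your proposal is correct and follows essentially the same approach as the paper: both arguments invoke Lemma~\ref{lem:MuhoriCutaway} to conclude that an element appearing in $j$ different $v_e$'s ends up with value $-j$, and then use Propositions~\ref{prop:2ndPropYamanouchi} and~\ref{prop:MuAndAeLRSame} to see that elements in $\mu_j$ appear in exactly $j$ such sequences. The paper's proof is a single paragraph that simply cites these ingredients, whereas you have spelled out the bookkeeping (the tracking of $w(e)$ through successive bumps and the separation-point regime) in more detail; but the logical content is the same.
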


\begin{proof}
Lemma~\ref{lem:MuhoriCutaway} tells us that if an element is in $j$ different $v_e$'s, it ends up as a $-j$. As elements in $\mu_j$ are in exactly $j$ different $v_e$'s (compare with Propositions~\ref{prop:2ndPropYamanouchi} and~\ref{prop:MuAndAeLRSame}), we get cut away-shape $\mu$.
\end{proof}

\begin{lemma}
\label{lem:muhorizontal2}
If a vacillating tableau has cut-away-shape $\mu$, it is mapped by Algorithm~\ref{alg:u2} to a standard Young tableau containing a $\mu$-horizontal strip.
\end{lemma}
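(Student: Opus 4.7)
Since Algorithms~\ref{alg:2} and~\ref{alg:u2} are inverse (Theorem~\ref{theo:algo2ualgo2Inverse}), the output $Q$ of Algorithm~\ref{alg:u2} applied to $V$ is a well-defined standard Young tableau. The plan is to trace the last $|\mu|$ steps of that algorithm --- the ones that consume the cut-away tail of $V$ --- and read off from that trace the data of a $\mu$-horizontal strip in $Q$. Concretely, the labels of the $|\mu|$ tail positions of $V$ will be exhibited as the $|\mu|$ largest entries of $Q$, grouped into blocks $\mu_\ell, \mu_{\ell-1}, \dots, \mu_1$ coming from the $\mu_j$ consecutive copies of $-j$ at the end of $V$.

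First I would reverse the analysis of Lemma~\ref{lem:MuhoriCutaway}. Read in the insertion direction, that lemma identifies, for each element $e$ of $\mu_j$ sitting in row $r_e$, a sequence $v_e$ of positions touched during the insertion together with the separation-point members counted by $o_e$. Because Algorithms~\ref{alg:2} and~\ref{alg:u2} are step-for-step inverses, the intermediate labeled words visited during the extraction of the tail of $V$ coincide with the intermediate words produced during the insertion of the last $|\mu|$ entries of $Q$. This identifies, for each extracted tail position, the row $r_e$ it lands in, the reconstructed sequence $v_e$, and the reconstructed value $o_e$.

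Next I would verify the three conditions of Definition~\ref{def:MuHorizontalStrip}. Condition (\ref{p:MuHoriHorizontalStrip}), that each block forms a left-to-right horizontal strip, follows from the contiguous equal-valued structure of the tail together with how extract-row pairs elements. Condition (\ref{p:MuHoriUpperNeighbour}), that the $i$th entry of $\mu_j$ lies below the $i$th entry of $\mu_{j+1}$, follows from the fact that the $\mu_j$ block is processed one outer-for-loop round later than $\mu_{j+1}$, placing its entries exactly one row lower. For condition (\ref{p:MuHoriMinRow}), the combinatorially defined $v_e$ and $o_e$ on $Q$ agree with those read off from the algorithm trace via Proposition~\ref{prop:MuAndAeLRSame}, and the inequality $r_e \geq 2|v_e| - o_e$ becomes the reverse reading of the case split $l \leq \lfloor r_e/2 \rfloor$ versus $l > \lfloor r_e/2 \rfloor$ in Lemma~\ref{lem:MuhoriCutaway}.

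The main obstacle will be the bookkeeping of separation points. In the insertion-direction analysis an element with $|v_e|$ exceeding $\lfloor r_e/2 \rfloor$ lives inside a separation point whose upkeep is provided by \emph{adjust separation point}, and the excess is exactly what $o_e$ counts. I will need to check that the corresponding operations in Algorithm~\ref{alg:u2} --- which are step-by-step inverse to those in Algorithm~\ref{alg:2} by the matching-up done in the proof of Theorem~\ref{theo:algo2ualgo2Inverse} --- restore the layered structure faithfully, so that the $o_e$ computed on $Q$ agrees with the count of separation-point-guarded members seen in the trace. Once this match is established, the three conditions of Definition~\ref{def:MuHorizontalStrip} follow in turn, and the lemma is proved.
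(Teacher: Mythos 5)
Your overall strategy---running Algorithm~\ref{alg:u2} on the cut-away tail of $V$ and verifying the three conditions of Definition~\ref{def:MuHorizontalStrip} directly from the algorithmic trace---is genuinely different from what the paper does. The paper argues indirectly: it lets $\tilde{\mu}$ be the \emph{largest} partition such that $Q$ contains a $\tilde{\mu}$-horizontal strip, applies the already-proven forward direction (Lemma~\ref{lem:muhorizontal1}) to that strip, and then, supposing $\tilde{\mu}\subsetneq\mu$, adjoins the next-largest entry $p$ and derives a contradiction from whichever of the three conditions fails for the enlarged strip: descent preservation rules out a failure of condition~(\ref{p:MuHoriHorizontalStrip}), the Yamanouchi analysis (Propositions~\ref{prop:2ndPropYamanouchi} and~\ref{prop:MuAndAeLRSame} together with Lemma~\ref{lem:MuhoriCutaway}) rules out a failure of condition~(\ref{p:MuHoriUpperNeighbour}), and the separation-point analysis rules out a failure of condition~(\ref{p:MuHoriMinRow}). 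This indirection is not cosmetic: it lets the paper invoke Lemma~\ref{lem:MuhoriCutaway} only in the insertion direction and only for configurations that genuinely are horizontal strips.

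There are two concrete gaps in your plan. First, your justification of condition~(\ref{p:MuHoriUpperNeighbour}) is incorrect: the entries of $\mu_j$ are not ``exactly one row lower'' than those of $\mu_{j+1}$, and the blocks are not consumed in separate rounds of the outer for-loop---in the worked example of Figure~\ref{fig:StrategyEven} the block $\mu_1=\{17,18\}$ has $17$ in row $6$ and $18$ in row $1$. Condition~(\ref{p:MuHoriUpperNeighbour}) is really the Yamanouchi condition on the associated alternative tableau and requires the bumping analysis, not a row-offset argument. Second, and more seriously, your verification of condition~(\ref{p:MuHoriMinRow}) by ``reversing'' Lemma~\ref{lem:MuhoriCutaway} is circular: that lemma identifies the algorithmic trace with the combinatorial data $v_e$, $o_e$ only under the hypothesis that the last $|\mu|$ entries of $Q$ already form a $\mu$-horizontal strip (its proof explicitly invokes the Yamanouchi property via Propositions~\ref{prop:2ndPropYamanouchi} and~\ref{prop:MuAndAeLRSame}), so you cannot use that identification to establish the very conditions it presupposes. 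You flag the separation-point bookkeeping as ``the main obstacle'' and leave it open---but that obstacle is exactly where the proof has to happen, and the paper's maximality trick is what breaks the circle.
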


\begin{proof}
Let $V$ be a vacillating tableau with cut-away-shape $\mu$. Let $Q$ be its corresponding standard Young tableau and let $\tilde{\mu}$ be the largest partition such that $Q$ contains a $\tilde{\mu}$-horizontal strip. Now by Lemma~\ref{lem:muhorizontal1}, $V$ also contains a $\tilde{\mu}$-horizontal strip. If $\tilde{\mu} \supseteq \mu$ we are done. If $\tilde{\mu}\subsetneq \mu$ we show that we get a contradiction.

In this case let $p$ be the largest position in $Q$ that is not in the $\tilde{\mu}$-horizontal strip. We add it to the $\tilde{\mu}$-horizontal strip such that $\tilde{\mu}\subseteq\mu$. Now we know that this does not satisfy one of the three conditions. Therefore we distinguish cases.

\begin{enumerate}
\item If the last $\tilde{\mu}_j$ is not a horizontal strip, then $p$ is a descent, which gives a contradiction as Algorithms~\ref{alg:2} and~\ref{alg:u2} are descent preserving and $p$ is not a descent in $V$.
\item If the word does not satisfy the second condition, the reversed reading word of the according alternative orthogonal Littlewood-Richarson tableau is not Yamanouchi. This gives a contradiction to Propositions~\ref{prop:2ndPropYamanouchi} and~\ref{prop:MuAndAeLRSame} and Lemma~\ref{lem:MuhoriCutaway}.
\item If the inequality of the third property is not satisfied there are two possible cases. 
\begin{itemize}
\item It could be that a $v$ got longer (this happens exactly if $p$ is in it). For it to be to long, $p$ needs to be at least number $j+1$. However we know, that $p+1$ was inserted at least as often. Therefore $p$ is inserted on level $2$. This is a contradiction to being part of a \emph{separation point} due to being number $(j+1)$, compare with Lemma~\ref{lem:MuhoriCutaway}.
\item Or it could be that a $\tilde{v}$ in the same row got longer (then $p$ is in this $\tilde{v}$). In this case again there needs to be at least one number $j+1$. The first path with a \emph{separation point} belonging to a position counted by $o$ gets also level $2$ positions, which is also a contradiction.
\qedhere 
\end{itemize}
\end{enumerate}
\end{proof}

Thus we have proven (by Lemma~\ref{lem:muhorizontal1} and~\ref{lem:muhorizontal2}) the following theorem:

\begin{theorem}
\label{theo:muhorizontal}
If and only if a standard Young tableau $Q$ contains a $\mu$-horizontal strip, the corresponding vacillating tableau has cut-away-shape $\mu$.
\end{theorem}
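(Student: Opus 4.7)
My plan is to prove the biconditional in two directions, with the forward implication (Lemma~\ref{lem:muhorizontal1}) essentially a direct calculation tracking insertion behavior, and the backward implication (Lemma~\ref{lem:muhorizontal2}) a contradiction argument that leverages the forward direction together with descent preservation.

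For the forward direction, I would fix $Q$ containing a $\mu$-horizontal strip and focus on what Algorithm~\ref{alg:2} does with the last $|\mu|$ entries. The key technical step is Lemma~\ref{lem:MuhoriCutaway}, which I would prove by a double induction: the outer induction is on the row $r_e$ of an element $e$, and the inner induction is on the position of an entry within the sequence $v_e$. The base cases (first row, and the first entry $e_1 = e$ itself) are immediate because elements in the $\mu$-horizontal strip are inserted as $-1$'s. For the inductive step, when I insert $e_l$ as the $l$th element of $v_e$, it must be a $-(l-1)$ that lies to the left of $e_{l-1}$; the Yamanouchi-like property combined with the definition of the $v_e$-sequence forces this to be exactly the $-(l-1)$ that gets changed to $-l$. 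Once $l$ exceeds $\lfloor r_e/2\rfloor$, the position is no longer in the main part of the insertion area, and I would then trace carefully how the \emph{separation point} mechanism described in the algorithm takes over, using the fact that elements counted by $o_e$ are precisely the rightmost occurrences in their $\mu_m$ and therefore sit in an otherwise empty stretch of a higher-indexed path. With Lemma~\ref{lem:MuhoriCutaway} in hand, an element appearing in exactly $j$ different $v_e$-sequences ends up labeled $-j$ in the final vacillating tableau; since by Proposition~\ref{prop:MuAndAeLRSame} (via the Yamanouchi property of alternative LR-tableaux) every element of $\mu_j$ occurs in exactly $j$ sequences, the last $|\mu|$ steps of $V$ are exactly the cut-away-shape $\mu$.

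For the backward direction, I would argue by contradiction. Suppose $V$ has cut-away-shape $\mu$ but the largest $\tilde{\mu}$ for which $Q$ contains a $\tilde{\mu}$-horizontal strip satisfies $\tilde{\mu}\subsetneq\mu$. Let $p$ be the largest entry of $Q$ not yet in this strip; adding $p$ to the strip violates exactly one of the three defining properties of a $\mu$-horizontal strip, and I would consider each case separately. If the horizontal-strip property~(\ref{p:MuHoriHorizontalStrip}) fails, then $p$ is a descent in $Q$, but by construction the corresponding step of $V$ is not a descent (it is one of the final $-j$-steps of cut-away-shape $\mu$), contradicting Theorem~\ref{theo:algo2DescPres}. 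If property~(\ref{p:MuHoriUpperNeighbour}) fails, the reading word of the associated alternative orthogonal LR-tableau is not Yamanouchi, contradicting Propositions~\ref{prop:2ndPropYamanouchi} and~\ref{prop:MuAndAeLRSame} combined with Lemma~\ref{lem:MuhoriCutaway}. If property~(\ref{p:MuHoriMinRow}) fails, then either $|v_p|$ itself is too large or some other $|v_{\tilde p}|$ in the same row grows too large; in either case there must appear in $v$ some element at position $\geq j+1$, forcing it to be inserted on level $\geq 2$, which is incompatible with the separation-point behavior documented in Lemma~\ref{lem:MuhoriCutaway}.

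The main obstacle will be the third case of the backward argument, since the level-counting must be carried out precisely: one needs to verify that the quantity $o_e$ really does capture every instance in which a separation point allows $|v_e|$ to exceed $\lfloor r_e/2 \rfloor$, and nothing else. This is where the interplay between \emph{adjust separation point} and the rightmost-occurrence condition becomes delicate. Concretely, I expect to have to show that whenever an element in a $\mu$-horizontal strip is not the rightmost of its $\mu_m$, inserting it does not trigger an adjustment of a separation point above it, so that its $v_e$-sequence stops at length $\lfloor r_e/2\rfloor$; conversely, when it is the rightmost occurrence and sits alone in its row of the alternative LR-tableau, the separation-point mechanism is exactly what allows one extra level. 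Once this bookkeeping is spelled out, the three-case analysis closes cleanly and the biconditional follows from the two lemmas.
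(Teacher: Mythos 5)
Your proposal matches the paper's proof essentially step for step: the same split into the two lemmas, the same double induction for Lemma~\ref{lem:MuhoriCutaway} tracking how elements of $v_e$ become $-l$'s (with the separation-point mechanism handling indices beyond $\lfloor r_e/2\rfloor$ via the entries counted by $o_e$), and the same three-case contradiction for the converse using descent preservation, the Yamanouchi property, and the level bound. No substantive differences to report.
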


\subsection{Conjectures for Bijection B}

\begin{conjecture}
\label{con:ConCat}
Concatenation of standard Young tableaux, whose row lengths have all the same parity corresponds to concatenation of vacillating tableaux of shape $\emptyset$ in general.
\end{conjecture}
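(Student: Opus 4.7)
The conjecture extends Lemma~\ref{theo:Concat} (the all-even case) to the general same-parity case. By associativity of both concatenations it suffices to treat two components $Q=Q_1*Q_2$, each having rows of a single parity. The plan is to reduce to Lemma~\ref{theo:Concat} by systematically exploiting Theorem~\ref{theo:10-1}, which identifies prepending the one-column SYT $\mathrm{col}$ with entries $1,2,\ldots,n$ to an SYT with prepending the segment $(1,2,\ldots,k,0,-k,\ldots,-1)$ to its vacillating tableau. The all-even/all-even case is exactly Lemma~\ref{theo:Concat}, and the case where $Q_1$ is all-odd and $Q_2$ is all-even follows cleanly: one observes $\mathrm{col}*Q=\hat{Q}_1*Q_2$ with $\hat{Q}_1:=\mathrm{col}*Q_1$ and $Q_2$ both all-even, applies Lemma~\ref{theo:Concat} to $\hat{Q}_1*Q_2$, and strips the initial $(1,\ldots,k,0,\ldots,-1)$ via Theorem~\ref{theo:10-1} to obtain $V(Q)=V(Q_1)*V(Q_2)$.

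The harder cases---$Q_2$ alone all-odd, or both $Q_1,Q_2$ all-odd---are obstructed by the fact that the column prepended to $Q$ by the main bijection sits at the front of the combined tableau, whereas the \emph{natural} column to associate with $Q_2$ would sit in the middle. The plan for these cases is to form the all-even SYT $\hat{Q}_1*\hat{Q}_2$ (with $\hat{Q}_i=Q_i$ when $Q_i$ is already all-even and $\hat{Q}_i=\mathrm{col}*Q_i$ otherwise); applying Lemma~\ref{theo:Concat} and Theorem~\ref{theo:10-1} then gives an expression for $V_B(\hat{Q}_1*\hat{Q}_2)$ featuring an internal $(1,\ldots,k,0,\ldots,-1)$ segment between $V_B(\hat{Q}_1)$ and the vacillating contribution of $Q_2$. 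One must then relate this to $V_B(Q)$ or $V_B(\mathrm{col}*Q)$ (whichever is relevant under the main bijection) by identifying and relocating the internal segment. This reduces the remaining cases of the conjecture to the following internal version of Theorem~\ref{theo:10-1}: if an all-even SYT has the form $A*\mathrm{col}*B$ with $A$ itself all-even, then $V_B(A*\mathrm{col}*B)$ contains the segment $(1,2,\ldots,k,0,-k,\ldots,-1)$ at precisely the position corresponding to the internal $\mathrm{col}$, separating $V_B(A)$ from the vacillating contribution of $B$.

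The main obstacle is proving this internal analog of Theorem~\ref{theo:10-1}. I would attack it by tracing Algorithm~\ref{alg:2} on $A*\mathrm{col}*B$: since processing $A$ returns the vacillating tableau to shape $\emptyset$ at its endpoint (because $A$ is all-even), the entries of $\mathrm{col}$---each being the leftmost entry of its row after $A$---should be processed starting from this \enquote{clean state} essentially in isolation, producing the expected pattern $(1,\ldots,k,0,\ldots,-1)$ just as they do when $\mathrm{col}$ sits at the front, and the algorithm's leftward searches for predecessors and corrections of height violations should not cross the $A/\mathrm{col}$ boundary. The key tools would be Theorem~\ref{theo:algo2DescPres} (descent preservation) together with the clean-state property at the boundary; however, formalizing the non-propagation of insertions into the $A$-portion in the presence of separation points and the several height-violation special cases of Algorithm~\ref{alg:2} is where the bulk of the technical work lies, and is the reason the statement has remained a conjecture.
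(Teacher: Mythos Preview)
The paper does not prove this statement; it is explicitly stated as a conjecture, with only two special cases established: the all-even case (Lemma~\ref{theo:Concat}) and the case $k=1$ (proven in~\cite{3erAlgo}). There is therefore no proof in the paper to compare your proposal against.

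Your proposal is not a proof either, and to your credit you say so in the final paragraph. What you have written is a reduction strategy: handle the two-component case, dispose of even/even by Lemma~\ref{theo:Concat}, handle odd/even by left-prepending the column and invoking Theorem~\ref{theo:10-1}, and reduce the remaining cases to an ``internal'' version of Theorem~\ref{theo:10-1} asserting that an internal $\mathrm{col}$ block in $A*\mathrm{col}*B$ (with $A$ all-even) produces an internal $(1,\dots,k,0,-k,\dots,-1)$ segment under Bijection~$B$. This reduction is reasonable and the odd/even case does go through as you describe. But the internal statement you isolate is not a corollary of anything in the paper: Theorem~\ref{theo:10-1} is proved only for a column at the \emph{front}, and its proof (via descent preservation) does not localize. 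The obstacle you name---that Algorithm~\ref{alg:2}'s leftward searches, separation points, and height-violation corrections could in principle cross the $A/\mathrm{col}$ boundary---is exactly the unresolved difficulty. Your ``clean state at the boundary'' heuristic is plausible, but Lemma~\ref{theo:Concat} already uses that idea for the even/even case; extending it when $B$ is all-odd would require tracking how the parity interacts with the separation-point machinery, which the paper does not do. So your write-up is an accurate diagnosis of where the conjecture stands, not a proof of it.
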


This is proven for $k=1$ in \cite{3erAlgo}, and for standard Young tableaux with even row length in Theorem~\ref{theo:Concat}.

\begin{conjecture}
Evacuation (Sch\"utzenberger involution) in a standard Young tableau corresponds to the reversal of the corresponding vacillating tableau.
\end{conjecture}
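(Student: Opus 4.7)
My plan is to combine the descent-preservation of Bijection~$B$ (Theorem~\ref{theo:algo2DescPres}) with an inductive argument exploiting the embedding structure of Figure~\ref{fig:Embedding}. For clarity I first treat the case $\mu=\emptyset$; the general case should reduce to it by tracking how the $\mu$-horizontal strip at the top of $\tilde Q$ is carried by evacuation to a corresponding pattern at the bottom, which matches the reversal of the cut-away portion of~$V$.

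The first step is a complementary-descent property for reversal of vacillating tableaux. Writing $V^{\mathrm{rev}}$ for the word $(-w_r)(-w_{r-1})\cdots(-w_1)$ obtained by reading $V=w_1\cdots w_r$ backwards and negating, the key observation is that the crystal graph
\[
1\to 2\to\cdots\to k\to 0\to -k\to\cdots\to -1
\]
is order-reversed by negation, so a directed edge $w_{r-i}\to w_{r-i+1}$ corresponds to the edge $-w_{r-i+1}\to -w_{r-i}$. For the auxiliary clause $w_iw_{i+1}\neq j(-j)$, balance in the initial segment of~$V$ corresponds to balance in the final segment because $V$ has weight~$\emptyset$ and the pair $(j,-j)$ contributes zero net. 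Hence $\Des(V^{\mathrm{rev}})=\{r-i:i\in\Des(V)\}$. Combined with the classical identity $\Des(\mathrm{evac}(Q))=\{r-i:i\in\Des(Q)\}$ and Theorem~\ref{theo:algo2DescPres}, this shows that $\mathrm{evac}(Q)$ and the SYT associated to $V^{\mathrm{rev}}$ share the same descent set.

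Descent agreement alone does not determine the tableau, so the second step is to upgrade it to equality. I would induct on $n=2k+1$, using the embedding of Figure~\ref{fig:Embedding}: enlarging $k$ by one appends a trivial up-down path to~$V$ and a pair of rows to~$Q$, and both operations manifestly commute with reversal and evacuation. The base case $n=3$ is proved in~\cite{3erAlgo}. The inductive step should exploit Lemma~\ref{theo:Concat} (concatenation compatibility in the even case), together with Conjecture~\ref{con:ConCat} in the odd case, to reduce to blocks in which the behaviour of the algorithm is already understood.

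The principal obstacle is the non-local character of both evacuation and of the separation-point machinery in Algorithm~\ref{alg:2} (Lemma~\ref{lem:SeparationHeight}). Separation points track global interactions between odd- and even-length rows, while evacuation reorganises~$Q$ through jeu~de~taquin, whose effect is equally global. The cleanest resolution would be a growth-diagram reformulation of Bijection~$B$ admitting a $180^\circ$ rotational symmetry that simultaneously realises evacuation of~$Q$ and reversal of~$V$, in analogy with Fomin's framework for classical RSK. The introduction warns that a naive Fomin-style diagram fails even for $n=3$, so such a diagram would have to carry the additional data (separation points and the parity of row lengths) peculiar to the odd-orthogonal setting; constructing it---rather than performing a case-by-case verification for each branch of Algorithm~\ref{alg:2}---is where the real work of the proof is likely to lie.
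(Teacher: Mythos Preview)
The statement you are addressing is listed in the paper as a \emph{conjecture}, not a theorem; the paper offers no proof, so there is no argument to compare your proposal against.  Your task therefore amounts to an attempt to \emph{prove} an open conjecture, and the proposal as written does not do so.

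Several points in your outline are genuine gaps rather than routine details.  First, your inductive step explicitly invokes Conjecture~\ref{con:ConCat} for the odd case; you cannot prove one conjecture by assuming another that the paper leaves open.  Second, the induction on $n=2k+1$ via the embedding in Figure~\ref{fig:Embedding} does not work the way you describe: increasing $k$ by one does not simply ``append a trivial up-down path to $V$ and a pair of rows to $Q$'' in general.  Figure~\ref{fig:Embedding} shows that the change is governed by the separation-point mechanism, which can alter steps throughout the word, not only at the end; there is no simple restriction map from the $(k{+}1)$-bijection to the $k$-bijection that would let an inductive hypothesis apply directly.  Third, even your base case is not secured: the paper does not claim that evacuation versus reversal is proved in~\cite{3erAlgo}, and you would need to verify this independently.

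Your first step---the descent-complement argument---is correct and worth recording, but as you yourself note, equality of descent sets is far from equality of tableaux.  Your closing paragraph is an honest assessment: you have identified where the difficulty lies (a Fomin-style local rule compatible with the separation-point data) but have not supplied it.  In short, what you have is a plausible strategy sketch with correct peripheral observations, not a proof; the paper's authors presumably saw the same obstacles, which is why the statement remains a conjecture.
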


\section*{Acknowledgements}
The author would like to thank Martin Rubey and Stephan Pfannerer for valuable discussions and helpful comments.




\appendix
\section*{Appendix}

\begin{table}[!h]
\centering
\caption{List of all tableaux with $n=5$, $r=3$.
Note that there is not necessarily an orthogonal Littlewood-Richardson tableau for every combination of $\mu$ and $\lambda$ with $\mu\subseteq\lambda$.
}
\label{tab:ListOfExamples}
\begin{tabular}{|p{1.4cm}|p{1.4cm}|p{2.2cm}|p{1.8cm}|p{1.4cm}|p{1.8cm}|}
\hline
\\[-1em]
$\lambda$ & $\mu$ & $L$ & $\tilde{L}$ & $Q$ & $V$\\
\hline
\hline
\\[-1em]
$(1,1,1)$ & $(1,1)$ & 
\begin{tikzpicture}[scale=0.35]
  \draw (0,0)--(0,1);
  \draw (1,0)--(1,1);
  \draw (0,0)--(1,0);
  \draw (0,1)--(1,1);
  \draw (0.5,0.5) node {1};
  \draw (2.5,0)--(2.5,1);
  \draw (3.5,0)--(3.5,1);
  \draw (2.5,0)--(3.5,0);
  \draw (2.5,1)--(3.5,1);
  \draw (3,0.5) node {1};
  \draw (5,0)--(5,1);
  \draw (6,0)--(6,1);
  \draw (5,0)--(6,0);
  \draw (5,1)--(6,1);
  \draw (5.5,0.5) node {1};
\end{tikzpicture}
&
\begin{tikzpicture}[scale=0.35]
  \draw (0,0)--(0,5);
  \draw (1,0)--(1,5);
  \draw (0,0)--(1,0);
  \draw (0,1)--(1,1);
  \draw (0,2)--(1,2);
  \draw (0,3)--(1,3);
  \draw (0,4)--(1,4);
  \draw (0,5)--(1,5);
  \draw (0.5,1.5) node {2};
  \draw (0.5,0.5) node {1};
\end{tikzpicture}
&
\begin{tikzpicture}[scale=0.35]
  \draw (0,0)--(0,3);
  \draw (1,0)--(1,3);
  \draw (0,0)--(1,0);
  \draw (0,1)--(1,1);
  \draw (0,2)--(1,2);
  \draw (0,3)--(1,3);
  \draw (0.5,2.5) node {1};
  \draw (0.5,1.5) node {2};
  \draw (0.5,0.5) node {3};
\end{tikzpicture}
&
\begin{tikzpicture}[scale=0.35]
  \draw (0,0)--(1,1)--(2,1)--(3,1);
  \draw (1,-1)--(2,0)--(3,0);
\end{tikzpicture}
\\
\hline
\multirow{17}{*}{$(2,1)$} & 
\multirow{5}{*}{
$(1)$}&
\multirow{2}{*}{\begin{tikzpicture}[scale=0.35]
  \draw (0,0)--(0,1);
  \draw (1,0)--(1,1);
  \draw (0,0)--(1,0);
  \draw (0,1)--(1,1);
  \draw (0.5,0.5) node {1};
  \draw (5,1)--(5,3);
  \draw (6,1)--(6,3);
  \draw (5,1)--(6,1);
  \draw (5,2)--(6,2);
  \draw (5,3)--(6,3);
  \draw (5.5,1.5) node {2};
  \draw (5.5,2.5) node {1};
\end{tikzpicture}}
 & 
\multirow{4}{*}{\begin{tikzpicture}[scale=0.35]
  \draw (0,0)--(0,2);
  \draw (1,0)--(1,2);
  \draw (2,0)--(2,2);
  \draw (0,0)--(2,0);
  \draw (0,1)--(2,1);
  \draw (0,2)--(2,2);
  \draw (1.5,0.5) node {1};
\end{tikzpicture}}
 &
\begin{tikzpicture}[scale=0.35]
  \draw (0,0)--(0,2);
  \draw (1,0)--(1,2);
  \draw (2,1)--(2,2);
  \draw (0,0)--(1,0);
  \draw (0,1)--(2,1);
  \draw (0,2)--(2,2);
  \draw (0.5,1.5) node {1};
  \draw (1.5,1.5) node {2};
  \draw (0.5,0.5) node {3};
\end{tikzpicture}
&
\begin{tikzpicture}[scale=0.35]
  \draw (0,0)--(1,1)--(2,2)--(3,1);
\end{tikzpicture}
\\
\cline{5-6}
& & & & 
\begin{tikzpicture}[scale=0.35]
  \draw (0,0)--(0,2);
  \draw (1,0)--(1,2);
  \draw (2,1)--(2,2);
  \draw (0,0)--(1,0);
  \draw (0,1)--(2,1);
  \draw (0,2)--(2,2);
  \draw (0.5,1.5) node {1};
  \draw (1.5,1.5) node {3};
  \draw (0.5,0.5) node {2};
\end{tikzpicture}
&
\begin{tikzpicture}[scale=0.35]
  \draw (0,0)--(1,1)--(2,1)--(3,1);
    \draw (1,-1)--(2,0)--(3,-1);
\end{tikzpicture}
\\
\cline{2-6}
&
\multirow{7}{*}{$(2,1)$}
&
\multirow{6}{*}{\begin{tikzpicture}[scale=0.35]
  \draw (0,0)--(0,2);
  \draw (1,0)--(1,2);
  \draw (0,0)--(1,0);
  \draw (0,1)--(1,1);
  \draw (0,2)--(1,2);
  \draw (0.5,0.5) node {2};
  \draw (0.5,1.5) node {1};
  \draw (2.5,1)--(2.5,2);
  \draw (3.5,1)--(3.5,2);
  \draw (2.5,1)--(3.5,1);
  \draw (2.5,2)--(3.5,2);
  \draw (3,1.5) node {1};
\end{tikzpicture}
}
&  
\multirow{5}{*}{ \begin{tikzpicture}[scale=0.35]
  \draw (0,0)--(0,3);
  \draw (1,0)--(1,3);
  \draw (2,0)--(2,3);
  \draw (0,0)--(2,0);
  \draw (0,1)--(2,1);
  \draw (0,2)--(2,2);
  \draw (0,3)--(2,3);
  \draw (0.5,0.5) node {1};
  \draw (1.5,0.5) node {1};
  \draw (1.5,1.5) node {2};
\end{tikzpicture}}
&
\begin{tikzpicture}[scale=0.35]
  \draw (0,0)--(0,2);
  \draw (1,0)--(1,2);
  \draw (2,1)--(2,2);
  \draw (0,0)--(1,0);
  \draw (0,1)--(2,1);
  \draw (0,2)--(2,2);
  \draw (0.5,1.5) node {1};
  \draw (1.5,1.5) node {2};
  \draw (0.5,0.5) node {3};
\end{tikzpicture}
&
\begin{tikzpicture}[scale=0.35]
  \draw (0,0)--(1,1)--(2,2)--(3,2);
  \draw (2,-1)--(3,0);
  \draw (3,2) node{};
\end{tikzpicture}
\\
\cline{5-6}
& & & & 
\begin{tikzpicture}[scale=0.35]
  \draw (0,0)--(0,2);
  \draw (1,0)--(1,2);
  \draw (2,1)--(2,2);
  \draw (0,0)--(1,0);
  \draw (0,1)--(2,1);
  \draw (0,2)--(2,2);
  \draw (0.5,1.5) node {1};
  \draw (1.5,1.5) node {3};
  \draw (0.5,0.5) node {2};
\end{tikzpicture}
&
\begin{tikzpicture}[scale=0.35]
  \draw (0,0)--(1,1)--(2,1)--(3,2);
  \draw (1,-1)--(2,0);
  \draw (3,2) node{};
\end{tikzpicture}\\
\hline
\multirow{7}{*}{$(3)$} & $(1)$ &
\begin{tikzpicture}[scale=0.35]
  \draw (0,0)--(0,1);
  \draw (1,0)--(1,1);
  \draw (0,0)--(1,0);
  \draw (0,1)--(1,1);
  \draw (0.5,0.5) node {3};
  \draw (5,1)--(5,3);
  \draw (6,1)--(6,3);
  \draw (5,1)--(6,1);
  \draw (5,2)--(6,2);
  \draw (5,3)--(6,3);
  \draw (5.5,1.5) node {2};
  \draw (5.5,2.5) node {1};
\end{tikzpicture}
& 
\begin{tikzpicture}[scale=0.35]
  \draw (0,0)--(0,1);
  \draw (1,0)--(1,1);
  \draw (2,0)--(2,1);
  \draw (3,0)--(3,1);
  \draw (4,0)--(4,1);
  \draw (0,0)--(4,0);
  \draw (0,1)--(4,1);
  \draw (3.5,0.5) node {1};
\end{tikzpicture}
&
\begin{tikzpicture}[scale=0.35]
  \draw (0,0)--(0,1);
  \draw (1,0)--(1,1);
  \draw (2,0)--(2,1);
  \draw (3,0)--(3,1);
  \draw (0,0)--(3,0);
  \draw (0,1)--(3,1);
  \draw (0.5,0.5) node {1};
  \draw (1.5,0.5) node {2};
  \draw (2.5,0.5) node {3};
\end{tikzpicture}
&
\begin{tikzpicture}[scale=0.35]
  \draw (0,0)--(1,1)--(2,0)--(3,1);
\end{tikzpicture}\\
\cline{2-6}
& $(3)$ &
 \begin{tikzpicture}[scale=0.35]
  \draw (0,0)--(0,3);
  \draw (1,0)--(1,3);
  \draw (0,0)--(1,0);
  \draw (0,1)--(1,1);
  \draw (0,2)--(1,2);
  \draw (0,3)--(1,3);
  \draw (0.5,2.5) node {1};
  \draw (0.5,1.5) node {2};
  \draw (0.5,0.5) node {3};
\end{tikzpicture}
&
\begin{tikzpicture}[scale=0.35]
  \draw (0,0)--(0,2);
  \draw (1,0)--(1,2);
  \draw (2,0)--(2,2);
  \draw (3,1)--(3,2);
  \draw (4,1)--(4,2);
  \draw (0,0)--(2,0);
  \draw (0,1)--(4,1);
  \draw (0,2)--(4,2);
  \draw (0.5,0.5) node {1};
  \draw (1.5,0.5) node {1};
  \draw (3.5,1.5) node {1};
\end{tikzpicture}
& 
\begin{tikzpicture}[scale=0.35]
  \draw (0,0)--(0,1);
  \draw (1,0)--(1,1);
  \draw (2,0)--(2,1);
  \draw (3,0)--(3,1);
  \draw (0,0)--(3,0);
  \draw (0,1)--(3,1);
  \draw (0.5,0.5) node {1};
  \draw (1.5,0.5) node {2};
  \draw (2.5,0.5) node {3};
\end{tikzpicture}
&
\begin{tikzpicture}[scale=0.35]
  \draw (0,0)--(1,1)--(2,2)--(3,3);
\end{tikzpicture}\\
\hline
\end{tabular}
\end{table}

\end{document}